\newtheorem{thm}{Theorem}[section] 
\newtheorem{prop}[thm]{Proposition}
\newtheorem{cor}[thm]{Corollary}
\newtheorem{lemma}[thm]{Lemma}
\theoremstyle{definition}
\theoremstyle{definition}
\newtheorem*{Def}{Definition}
\newtheorem*{rem}{Remark}
\newtheorem*{rems}{Remarks}
\newtheorem{Rem}[thm]{Remark}
\newtheorem{example}[thm]{Example}
\newtheorem{examples}[thm]{Examples}
\newtheorem{conj}[thm]{Conjecture}
\def\sk1{\vskip 10pt}
\def\newline{\hfil\break}
\def\ds{\displaystyle}
\def\eset{\emptyset}
\def\subeq{\subseteq}
\def\supeq{\supseteq}
\def\Sigmacirc {{^{\circ} \Sigma}}
\def\Sigmadoublecirc {{^{\circ \circ} \Sigma}}
\def\thra{\twoheadrightarrow}
\def\Z{\mathbb Z}
\def\R{\mathbb R}
\def\M{\mathbb M}
\def\C{\mathbb C}
\def\H{\mathbb H}
\def\Q{\mathbb Q}
\def\E{\mathbb E}
\def\N{\mathbb N}
\def\del{\partial}
\def\sk{\vskip}
\newcommand{\Ge}{Ge}
\newcommand{\Gr}{Gr}
\numberwithin{equation}{section}
\title{Limit sets for modules over groups on $CAT(0)$ spaces -- from the Euclidean to the hyperbolic}
\author{Robert Bieri and Ross Geoghegan}
\address{\noindent Robert Bieri, Fachbereich Mathematik,
Johann Wolfgang Goethe-Universit\"at Frankfurt,
D-60054 Frankfurt am Main,
Germany \newline
\vskip 1pt
and
\newline
\vskip 1pt
Department of Mathematical Sciences,
Binghamton University (SUNY),
Binghamton, NY 13902-6000, USA
\newline
\vskip 3pt
Ross Geoghegan, Department of Mathematical Sciences,
Binghamton University (SUNY),
Binghamton, NY 13902-6000, USA}
\email{bieri@math.uni-frankfurt.de,
ross@math.binghamton.edu}
\subjclass[2010]{Primary 20F65; Secondary 20E42, 14T05}
\date{October 31, 2016}
\keywords{Bieri-Neumann-Strebel invariant, $CAT(0)$ space, horospherical
limit point, rational building, tropical geometry}
\begin{document}
\fontsize{12}{13pt} \selectfont  

\begin{abstract}
The observation that the $0$-dimensional Geometric Invariant $\Sigma
^{0}(G;A)$ of Bieri-Neumann-Strebel-Renz can be interpreted as a
horospherical limit set opens a direct trail from Poincar\'{e}'s
limit set $\Lambda (\Gamma )$ of a discrete group $\Gamma $ of
M\"{o}bius transformations (which contains the horospherical limit
set of $\Gamma $) to the roots of tropical geometry (closely related
to $\Sigma ^{0}(G;A)$ when $G$ is abelian).  We explore this trail by
introducing the horospherical limit set, $\Sigma (M;A)$, of a $G$-module
$A$ when $G$ acts by isometries on a proper $CAT(0)$ metric space $M$.
This is a subset of the boundary at infinity $\del M$. On the way we meet
instances where $\Sigma (M;A)$ is the set of all conical limit points ($G$ geometrically finite and $M$ hyperbolic), the complement of the radial projection of a tropical variety ($G$ abelian and $M$ Euclidean) or the complement of a spherical building ($G$ arithmetic and $M$ symmetric).
\end{abstract}

\maketitle

\tableofcontents

\section{Introduction}\label{intro}

\subsection{Horospherical limit sets of $G$-modules.}\label{Section1}

Consider a discrete group $G$ and two $G$-objects: a finitely generated
$G$-module $A$ and a proper $G$-$CAT(0)$ space $M$ (i.e. $M$
comes equipped with a $G$ action by isometries). While {\it a priori}
there is no connection between $M$ and $A$, there is much to
be learned by ``controlling" $A$ over $M$. This control is
achieved by means of a $G$-map

$$L:A\to \{\text{subsets of }\del M\}$$ 

\noindent where $\del M$ is the boundary of $M$ at infinity; $\del M$
carries the topological $G$-action induced by the action on $M$. To define
$L$ we choose a free presentation $\epsilon:F\twoheadrightarrow A$, where
$F$ is the free $G$-module over the finite basis $X$, and a base point
$b\in M$.  The {\it support} of $c\in F$, $\text{supp}(c)\subseteq G$,
is the (finite) set of elements of $G$ occurring in the unique expansion
of $c$ over $GX$.  Putting $h(c):=\text{supp}(c)b\subseteq M$ defines a
$G$-map $h:F\to fM$, from $F$ into the $G$-set $fM$ of all finite subsets
of $M$. We call $h$ a {\it control map} and define the {\it horospherical
mit points} of $a\in A$ to be the points in $$L(a):=\{e\in \del M\mid
\text{every horoball at } e\text{ contains } h(c) \text { for some
}c\text{ such that }\epsilon (c)=a\}$$ \noindent We show that $L(a)$
is independent of the choice of $\epsilon :F\twoheadrightarrow A$ and of
the base point $b$. By the {\it horospherical limit set of} $A$ we mean
the set $\Sigma (M;A)$ of points $e\in \del M$ which are limit points
of every element of $A$: i.e. $\Sigma (M;A):=\bigcap _{a\in A}L(a)$.
It is this set, together with its close relatives to be defined below,
which is our subject, both in general and via specific instances.

\subsection{Two substantial general results on the $G$-pairs $(M,A)$}

Since the paper is concerned with the horospherical limit set $\Sigma
(M;A)\subseteq \del M$, a natural first question is: when does it happen
that $\Sigma (M;A)=\del M$? On this we have a definitive answer\footnote{Note that $G$ is not assumed to be finitely generated, and no
assumption of discreteness, cocompactness or faithfulness of its action
on $M$ is made.}:

\begin{thm}\label{full}Assume $A\neq 0$. Then $\Sigma (M;A)=\del M$ if and only if the $G$-action
on $M$ is cocompact and $A$ is supported over a bounded subset $B$ of $M$
(i.e. for each $a\in A$  there is some $c\in F$ with $\epsilon (c)=a$
and $h(c)\subseteq B$).  \end{thm}

This has an algebraic meaning in the discrete-orbit case:

\begin{cor} If the $G$-action on $M$ is cocompact with discrete orbits then
$\Sigma (M;A) =\del M$ if and only if $A$ is finitely generated as a
module over some (equivalently any) point stabilizer.  
\end{cor}

Another natural question is: what kind of openness properties does $\Sigma (M;A)$ have? One of our openness theorems deals with what happens when the $G$-action on $M$ is perturbed:

\begin{thm}\label{open}If $\Sigma (M;A)=\del M$ holds for a given action
$\rho:G\to \text{\rm Isom}(M)$ then it holds in a neighborhood of $\rho $
in the space $\text{\rm Hom}(G, \text{\rm Isom}(M))$ of isometric actions.
\end{thm}

Again, the conclusion becomes more recognizable in the discrete-orbit case:

\begin{cor}\label{discretecase} 
\text{\rm (a)}
The set of cocompact actions of $G$ on $M$ by isometries is open.
\text{\rm (b)}
Let ${\mathcal R}(G,M)\subseteq \text{\rm Hom}(G, \text{\rm Isom}(M))$ be the
subspace of all actions such that $\rho (G)$ is cocompact with discrete
orbits and finite stabilizers.  For every finitely generated $G$-module
$A$ the set 
$$\{\rho \mid A \text{ \rm is finitely generated over ker}(\rho )\}$$
is open in ${\mathcal R}(G,M)$.
\end{cor}

\subsection{A unifying concept.}

As intriguing and motivating as such general results may be, there is the fact that
the horospherical limit set shows up in a number of highly interesting mathematical contexts. Among these are:

\begin{enumerate}[(1)]

\item If $G$ acts with discrete orbits on $M$ and trivially on $A\neq 0$ then our limit set  $\Sigma (M;A)\subseteq \del M$ is the usual horospherical limit set of the orbits. A variant of $\Sigma (M;A)$ which we denote by $\Lambda (M;A)$ coincides with the classical limit set
of $G$ in that case.

\item If $\bf G$ is the Lie group $\text{SL}(n,{\R})$ acting on its
symmetric space $M=\text{SL}(n,{\R})\slash \text{SO}(n)$ then the
building at infinity $B({\bf G})$ comes with a natural surjection
$\pi:B({\bf G})\twoheadrightarrow \del M$, compatible with the Tits
metric, and in that case the horospherical limit set of $\bf G$ with
respect to the trivial $\bf G$-module $\Z$ is the whole of $\del
M$. When we restrict the action to the arithmetic subgroup $G =
\text{SL}(n,{\Z})$, the horospherical limit set is much smaller: $\Sigma
(M;{\Z})\subseteq  \del M$ is now the complement of the image $\pi
(B(G))\subseteq \del M$ of the rational building. This was a conjecture
of Hanno Rehn who studied the higher homotopical version of our limit
sets \cite{BGe03} in his thesis \cite{Reh09} in the case when $G =
\text{SL}(n,{\Z}[\frac{1}{m}])$. It has recently been
proved for these (and more general) arithmetic groups by Avramidi
and Witte-Morris \cite{AWM}.

\item In the case where $M$ is Euclidean and $G$ acts by a
discrete translation group, $\Sigma (M;A)$ is the $0$-dimensional part of the
Bieri-Neumann-Strebel-Renz invariant, a group theoretic tool for
questions related to homological finiteness properties of infinite
groups and their modules. (In fact, the present work grew out of
our aim to extend the leading ideas of the BNSR theory to the $CAT(0)$ case). 

\item The special case where $M$ is Euclidean and $G$ is abelian, is intimately
connected with tropical geometry. Here, the complement of our
horospherical limit set appears as the radial projection of the integral
tropical variety associated to the annihilator ideal of the $G$-module
$A$. This is discussed in the Appendix.
\end{enumerate}

\subsection {Dynamical limit points and finitary homomorphisms}

In order to prove Theorems \ref{full} and \ref{open} we had to
consider subsets of $\Sigma (M;A)$ with a dynamical flavor. This required
measuring the quality of the convergence of a sequence of finite subsets
of $M$ towards a boundary point $e\in \del M$ in terms of the Busemann
function\footnote{Our convention is that $\beta _{e}(b)=0$ for a fixed
base point $b\in M$, and $\beta _{e}(e)=+\infty$} $\beta _{e}:M\to {\R}$.

Let $\epsilon:F\twoheadrightarrow A$ be the free presentation of
Section \ref{Section1}. We say that $e\in \del M$ is an {\it equivariant-dynamical
limit point} of the pair $(M,A)$ if there is a $G$-endomorphism
$\varphi :F\to F$ which induces the identity of $A$ and has the property
that there is a number $\delta >0$ with 
$\text{min}\beta_{e}(h(\varphi (c)))\geq \text{min}\beta_{e}(h(c))+\delta $ for all $c\in F$. 
Note that the sets $h(\varphi ^{i}(c))$ exhibit $e$ as a horospherical limit point of $\epsilon (c)$,
hence the {\it equivariant-dynamical limit set}
$$\Sigmadoublecirc (M; A):=\{e\in \del M\mid e  \text{ is an equivariant-dynamical limit point} \text{ of }(M,A)\}$$

\noindent is a subset of $\Sigma (M;A)$. In fact, each $e\in \Sigma
(M;A)$ could be called an additive-dynamical limit point, for it is easy
to exhibit $e$ as the limit set of a sequence $h(\varphi ^{i}(c))$
for some additive endomorphism $\varphi :F\to F$ as above --- see
Section \ref{push}. This concept is not new: in the case of a Euclidean
discrete action it was a crucial lemma (\cite{BS80}, \cite{BNS87} and
\cite{BRe88}) that $\Sigmadoublecirc (M; A)=\Sigma (M; A)$. But in the
general $CAT(0)$ case $\Sigmadoublecirc (M; A)$ is often dramatically
smaller than $\Sigma (M; A)$ as we can see from:

\begin{thm}\label{double} If $e\in \Sigmadoublecirc (M;A)$ then the closure (in the cone topology) of the orbit $Ge$, $\text {\rm cl}_{\partial M}(Ge)$, lies in a Tits-metric ball with radius $r<\frac {\pi}{2}$, and the center of the unique minimal ball with this property is fixed by $G$.
\end{thm}

The usefulness of this fixed point theorem is somewhat reduced by
the fact that the most interesting group actions (for example, any non-elementary
Fuchsian group on the hyperbolic plane) do not have fixed points in
$\del M$. Thus $\Sigmadoublecirc $ will be empty in these cases and
hence cannot be a useful tool for actions on hyperbolic spaces. This
suggested that considering dynamical limit sets only when $\varphi :F\to
F$ is a $G$-endomorphism is too restrictive. Instead  we had to find
a class of additive endomorphisms $\varphi :F\to F$ more flexible than
$G$-endomorphisms but still sharing some of their coarse features.

\begin{Def}($G$-{\it finitary homomorphisms}). An additive homomorphism $\varphi :A\to B$ between $G$-modules
is $G$-{\it finitary} if there is a $G$-map $\Phi :A\to fB$ of the
$G$-set underlying $A$ into the $G$-set $fB$ of all finite subsets of $B$
with the property that $\varphi (a)\in \Phi(a)$ for every $a\in A$.
We say that $\varphi $ is a {\it selection} from the $G$-{\it volley} $\Phi$. 
\end{Def}

\noindent {\bf Definition.} We say that $e$ is a {\it finitary-dynamical limit point} of the pair $(M,A)$ if there is a $G$-finitary endomorphism $\varphi :F\to F$,
which induces the identity of $A$ and has the property that there is a number $\delta >0$ with
$\text{min}\beta_{e}(h(\varphi (c)))\geq \text{min}\beta_{e}(h(c))+\delta
$.  
\noindent for all $c\in F$. 
The {\it finitary-dynamical limit set}
$$\Sigmacirc (M;A):=\{e\in \del M\mid e  \text{ is a finitary-dynamical limit point} \text{ of }(M,A)\}$$
\noindent is the main technical tool of the paper. Clearly, equivariant $\implies $ finitary $\implies $ additive, so we have $\Sigmadoublecirc \subseteq \Sigmacirc \subseteq \Sigma $. 

The precise relationship of $\Sigmacirc (M;A)$ to $\Sigma (M;A)$ is

\begin{thm}\label{circle} $\Sigmacirc (M;A)$ is a $G$-invariant subset of $\Sigma
(M;A)$; it consists of all $e\in \Sigma (M,A)$ with the property that $\text{\rm cl}(Ge)$,
the cone-topology-closure of the $G$-orbit of $e$, is contained in $\Sigma
(M;A)$. In particular $\Sigmacirc (M;A)$ contains every closed
$G$-invariant subset of $\Sigma (M;A)$, and hence 
$\Sigmacirc (M;A)=\del M$ if and only if $\Sigma (M;A)=\del M$.  
\end{thm}

\subsection{$\Sigmacirc (M;A)$ as an object of interest in its own right.}

\begin{enumerate}[(1)]

\item $\Sigmacirc (M;A)$ combines the cone topology and the Tits
metric topology in an interesting way: on the one hand  if $e\in \Sigmacirc (M;A)$ then
$\Sigmacirc (M;A)$ contains not only the orbit $Ge$ but also
the cone-topology closure of that orbit (Theorem \ref{circle}); on the other hand we have:

\begin{thm} $\Sigmacirc (M;A)$  and $\Sigmadoublecirc (M;A)$ are open in the Tits metric topology\footnote{See Theorem \ref{open1}.} on $\del M$.
\end{thm}

\item In the $G$-finitary category of $G$-modules\footnote{The category whose objects are $G$-modules and whose morphisms are $G$-finitary additive maps} the Fundamental Theorem of Homological Algebra holds true: every $G$-finitary homomorphism between two modules $A$ and $B$ can be lifted to a $G$-finitary chain map between the projective resolutions of $A$ and $B$, and any two lifts are homotopic by a $G$-finitary homotopy. That is precisely what we need to extend the definition of   $\Sigmacirc (M;A)$ to higher-dimensional invariants ${^{\circ} \Sigma}^{n}(M;A)$ when $A$ admits a free resolution with finite $n$-skeleton,  and to prove our openness results for those. This will appear in a subsequent paper.

\item For each $e\in \del M$ we consider the set $\widehat {{\Z}G^{e}}$
of all formal sums $\Sigma _{g\in G}n_{g}g$ with integer coefficients
$n_g$, and the property that for each horoball $HB$ at $e$ the set
$\{g\in G\mid n_g\neq 0 \text{ and } gb\notin HB\}$ is finite. We
observe that $\widehat {{\Z}G^{e}}$ is a right $G$-module which contains
the group ring as a submodule, and we call it the {\it Novikov module}
at $e$. Then we have

\begin{thm} $e\in {^{\circ} \Sigma}^{n}(M;A)$ if and only if 
$\text{\rm Tor}_{k}^{{\Z}G}(\widehat {{\Z}G^{e'}},A)=0$ for all $0\leq k\leq n$
and all $e'$ contained in the closure of the orbit $Ge$ in $\del M$.
\end{thm}

\noindent This is useful since it opens the possibility of relating the various ${^{\circ} \Sigma}^{n}(M;A)$ via the long
exact Tor sequences. It indicates that $\Sigmacirc (M;A)$ is perhaps
better behaved that $\Sigma (M;A)$ with respect to the module argument.
The Polyhedrality Conjecture of \cite{BGe16} reinforces this view.

\item In Section \ref{Hn} we analyze the case where $M$ is Gromov hyperbolic (and proper $CAT(0)$). Among other things we prove that when $G$ acts properly discontinuously on $M$ and $\Sigmacirc (M;{\Z})\neq \emptyset$ then $G$ is of type $FP_{\infty}$.

\item An interpretation of the definitions of $\Sigmadoublecirc (M;A)$
and $\Sigmacirc (M;A)$ in terms of matrices\footnote{See Theorems
\ref{theta} and \ref{matrix}} shows that their complements in $\del M$
can be viewed as generalizations of the Bergman fan, which is defined
(for $M$ Euclidean and $G$ abelian) in \cite{Be71} and is proved to be
polyhedral in \cite{BGr84}. Indeed, the Invariance Theorem, asserting
that $\Sigmacirc (M;A)$ and $\Sigmadoublecirc (M;A)$ are independent
of the particular free presentation of $A$, shows that the matrix
interpretation is, in fact, a condition on the stable ${\Z}G$-matrices,
which suggests a $K$-theoretic connection.

\end{enumerate} 
\vskip 5pt
The point of this work is generality: We extend substantial parts of
known Euclidean techniques to the $CAT(0)$ case. That analyzing the
result in specific cases of $CAT(0)$ $G$-spaces --- Euclidean, hyperbolic
and mixtures thereof --- leads to intriguing concrete observations should
not be surprising.
\vskip 5pt
{\it Acknowledgment:} We thank Eric Swenson for helpful conversations concerning the Gromov-hyperbolic case discussed in detail in Section \ref{hyperbolic2}. 

\vskip 20pt

\section{Extended Outline}\label{S:E}

Because the general theory is somewhat involved, we give an outline here, leaving most of the technicalities for later sections.

\subsection{The finitary category of $G$-modules}\label{category}

Throughout the paper we will use the symbol $fS$ to denote the set of all finite subsets of a
given set $S$.     

Let $A$ and $B$ be $G$-modules. An additive homomorphism
$\varphi:A\to  B$ is $G$-{\it finitary} if it is captured by a $G$-map
$\Phi:A\to fB$, in the sense that $\varphi (a)\in \Phi (a)$ for every
$a\in A$. For brevity we say finitary rather than $G$-finitary if there is
no doubt which group action is under consideration. The $G$-map $\Phi$
is a {\it volley} for the finitary map $\varphi$, and $\varphi$ is a
{\it selection} of the volley $\Phi$.

Every $G$-homomorphism is, of course, $G$-finitary, but $G$-finitary
homomorphisms are much more general. Unlike a $G$-homomorphism, a
$G$-finitary map $\varphi:A\to B$ is not uniquely determined by its
values on a ${\mathbb Z}G$-generating set $X$ of $A$; however, the
possible values on $a = gx$ (where $g\in G$ and $x\in X)$ are restricted
to be in the finite set $\Phi(a) = g\Phi(x)\subseteq g\Phi(X)$. Finitary
homomorphisms are easy to construct when $A$ is the free $G$-module on
a set $X$: any $G$-finitary homomorphism $\varphi:A\to B$ can then be
given by first choosing  $\Phi(x)\in fB$ for each $x\in X$, and then
picking  $\varphi(gx)\in g\Phi(x)$ for all $(g,x)\in G\times X$.

\begin{example} When $A=B={\mathbb Z}G$ the $G$-finitary endomorphisms
$\varphi:{\mathbb Z}G\to {\mathbb Z}G$ with the special feature that
$\Phi(1)\in fG$ have an interpretation in terms of a ``semi-flow'' on
the Cayley graph\footnote {For an arbitrary subset $T\subseteq G$, the
Cayley graph $\Gamma (G,T)$ is the graph with vertex set $G$ and edge
set $G\times T$, where $g$ is the origin and $gt$ the terminus of the
edge $(g,t)$.} $\Gamma =\Gamma(G,\Phi(1))$: $\varphi$  can be regarded
as a map which selects for each vertex $g$ an edge with origin $g$ (and
hence terminus in $g\Phi (1)$).  \end{example}

In Section \ref{S:3.2a} we will observe that both sums and compositions of $G$-finitary maps
are
$G$-finitary, so that the class of all finitely generated $G$-modules and finitary maps is an
additive category -  the finitary category of $G$-modules. We do not know whether every
projective $G$-module is a projective object in the finitary category of $G$-modules, but the
following weaker property will do for our purposes.

\begin{lemma}\label{projective} Let $P$ be a projective $G$-module. For every
$G$-epimorphism 
$\alpha :A\twoheadrightarrow B$ and every $G$-finitary map $\varphi: P\to B$ there is a
$G$-finitary map ${\tilde \varphi } :P\to A$ such that $\alpha \circ {\tilde \varphi }=\varphi $.
\end{lemma}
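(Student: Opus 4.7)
The plan is to reduce to the case when $P$ is free, then construct the lift by choosing preimages on a $\Z$-basis and packaging those choices into a $G$-equivariant volley. For the reduction: since $P$ is projective, choose a free $G$-module $F$ with $G$-homomorphisms $s:P\hookrightarrow F$ and $\pi:F\twoheadrightarrow P$ satisfying $\pi s=\mathrm{id}_P$. If $\Phi$ is a volley for $\varphi$, then $c\mapsto\Phi(\pi c)$ is a volley for the $G$-finitary composite $\varphi\pi:F\to B$. If I can produce a $G$-finitary lift $\tilde\psi:F\to A$ with $\alpha\tilde\psi=\varphi\pi$, then $\tilde\varphi:=\tilde\psi\circ s:P\to A$ is $G$-finitary (compositions of finitary maps are finitary, as observed just before the lemma) and satisfies $\alpha\tilde\varphi=\varphi\pi s=\varphi$. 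So I may assume $P=F$ is free, with free $G$-generating set $X$ and $\Z$-basis $GX$.

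Next I would build the lift on basis elements and the volley around it simultaneously. For each $x\in X$ and each $b\in\Phi(x)$, use surjectivity of $\alpha$ to choose $\ell_x(b)\in\alpha^{-1}(b)$, and set $\tilde\Phi_0(x):=\{\ell_x(b):b\in\Phi(x)\}\subseteq A$, a finite set. For a basis element $gx\in GX$, note $\varphi(gx)\in\Phi(gx)=g\Phi(x)$, so that $g^{-1}\varphi(gx)\in\Phi(x)$ and we can define
\[
\tilde\varphi(gx):=g\cdot\ell_x\bigl(g^{-1}\varphi(gx)\bigr)\in g\tilde\Phi_0(x),
\]
then extend $\Z$-linearly to $F$. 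This immediately yields $\alpha\tilde\varphi(gx)=\varphi(gx)$, hence $\alpha\tilde\varphi=\varphi$. To package these choices into a volley, I would expand each $c\in F$ uniquely as $c=\sum_{i=1}^k n_i g_i x_i$ in the basis $GX$ and set
\[
\tilde\Phi(c):=\Bigl\{\,\sum_{i=1}^k n_i a_i \;:\; a_i\in g_i\tilde\Phi_0(x_i)\,\Bigr\}\subseteq A.
\]
This is finite, contains $\tilde\varphi(c)$ (take $a_i=\tilde\varphi(g_i x_i)$), and is $G$-equivariant, since the expansion of $hc$ in $GX$ is $\sum n_i(hg_i)x_i$ and $(hg_i)\tilde\Phi_0(x_i)=h\bigl(g_i\tilde\Phi_0(x_i)\bigr)$.

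The main obstacle is the tension between the lift $\tilde\varphi$ and its volley: since $\varphi$ is only finitary (not $G$-equivariant), the chosen lift $\tilde\varphi$ cannot be $G$-equivariant either, yet a volley must be a $G$-map. The fix above is to make the preimage choices $\ell_x$ only on a free $G$-generating set $X$ and then let the entire $G$-translate $g\tilde\Phi_0(x)$ serve as the allowable range at the basis element $gx$; this gives a $G$-equivariant volley whose values are finite enough to contain $\tilde\varphi$ yet loose enough to accommodate the non-equivariance of the point-wise lifts.
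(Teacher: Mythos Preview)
Your proof is correct and follows essentially the same approach as the paper's: reduce to the free case, lift each finite set $\Phi(x)$ to a finite set $\tilde\Phi_0(x)\subseteq A$ via $\alpha$, extend this to a $G$-equivariant (``canonical'') volley on $F$, and then select the lift $\tilde\varphi$ element-by-element on the $\Z$-basis $GX$. You are somewhat more explicit than the paper in two places---the reduction from projective to free via a splitting $P\hookrightarrow F\twoheadrightarrow P$, and the verification that the canonical volley on $F$ is $G$-equivariant---but the underlying argument is the same.
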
 

\begin{proof} It is enough to prove the lemma for the case when $P=F$ is a free $G$-module
over a basis $X\subseteq F$.  Let $\varphi$ be a selection from the volley $\Phi :F\to fB$. For
each $x\in X$ we find a finite set ${\widetilde \Phi}(x) \subseteq A$ with $\alpha {\widetilde \Phi}(x)=\Phi (x)$, and this defines
a canonical volley ${\widetilde \Phi}:F\to fA$. By $G$-equivariance we have 
$\alpha {\widetilde \Phi}(y)=\Phi (y)$ for every $y\in Y=GX$. Hence, for each $y\in Y$ we can
pick an element $c\in {\widetilde \Phi}(y)$ with $\alpha (c)=\varphi (y)$. A selection from  
$\widetilde \Phi$ is uniquely defined by its values on the $\mathbb Z$-basis $Y$, so we are done.
\end{proof}

\begin{examples}
\begin{enumerate}[1.]
\item If $H\leq G$ is a subgroup of finite index, and $A$, $B$ are $G$-modules then every
$H$-homomorphism $\varphi: A\to B$ is $G$-finitary.
\item If $N\leq G$ is a finite normal subgroup, and $A$ is a $G$-module then the additive
endomorphism of $A$ given by multiplication by 
$\lambda\in {\mathbb Z}N$ is $G$-finitary.
\end{enumerate}
\end{examples}

\begin{proof} For the first example, let $T\subseteq G$ be a transversal
for the right cosets $Ht$, and write $\overline {g}\in T$ for the representative
of $g\in G$ in $T$. We put $\Phi(a):=\{ t^{-1}\varphi(ta)\mid t\in
T\}$, noting that $\varphi(a)\in \Phi(a)$ and that $\Phi(a)$ is
independent of the particular choice of $T$. This allows us to infer
that, for all $g\in G, 
\Phi(ga) 
=\{t^{-1}\varphi (tga)\mid t\in T\}
= \{t^{-1}\varphi (tg(\overline {tg})^{-1}\overline {tg}a)\mid t\in T\} 
= g\{(\overline {tg})^{-1}\varphi (\overline {tg}a)\mid t\in T\} 
= g\Phi (a)$.

For the second example, $\Phi(a):= Na$ defines a volley for the endomorphism given by
multiplication by some $n\in N$; multiplication by $\lambda \in {\mathbb Z}N$ is a ${\mathbb
Z}N$-linear combination of such.
\end{proof}

\subsection{Control maps on finitely generated free $G$-modules}

Now the proper $CAT(0)$ space $M$ on which $G$ acts by isometries enters the picture. Relating the action of  $G$ on  $M$ to the action of $G$ on a finitely generated $G$-module $A$ starts with choosing a
``control map'' on a free presentation of $A$. Our free presentation of $A$ will always be given by a finite set $X$, the free G-module
$F = F_{X}$  over $X$, and an epimorphism $\epsilon : F\twoheadrightarrow
A$. The free $G$-set $Y=GX$ is a ${\mathbb Z}$-basis for $F_{X}$. The {\it support}
of an element $c\in F_{X}$, $\text {supp}(c) \subseteq Y$, is the set
of all elements $y\in Y$ occurring in the unique expansion of $c$ over
${\mathbb Z}$.

Recall that we write $fM$ for the $G$-set of all finite subsets of $M$. By a
{\it control map} on $F$ we mean a $G$-map $h: F\to fM$ given by composing
the support function $\text{supp}:F\to fY$ with an arbitrary $G$-equivariant map $fY\to fM$,
where $h(0)$ is defined to be the empty set. Thus $h$ is uniquely given by
its restriction $h|:X\to fM$. We will always assume that
our control maps $h$ are {\it centerless} in the sense that $h(x)$
is non-empty for all $x\in X$ (and hence $h(c)\neq \emptyset$ for all
$0\neq c\in F$).

\begin{lemma}[Bounded displacement]\label{bounded} Let $h: F\to fM$ and $h': F'\to fM$  be
control maps on $F$ and $F'$. For every $G$-finitary homomorphism $\varphi: F\to F'$
there is a number $\delta >0$ with the property that for each $c\in F$, the set $h'(\varphi(c))$ lies
in the
$\delta$-neighborhood of $h(c)$. 

\hfill$\square$
\end{lemma}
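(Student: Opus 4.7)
The plan is to exploit two things: the $G$-equivariance of the volley $\Phi$ capturing $\varphi$, which (together with isometric action of $G$ on $M$) converts a bound on the finite basis $X$ of $F$ into a uniform bound on $Y=GX$; and plain additivity of $\varphi$, which controls how supports of general elements decompose through basis elements.

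First I would fix finite bases $X$ and $X'$ of $F$ and $F'$ respectively, so that $Y=GX$ and $Y'=GX'$ are $\mathbb Z$-bases. Let $\Phi: F\to fF'$ be a $G$-volley for $\varphi$. For each $x\in X$ form the finite subset of $M$
\begin{equation*}
H'(x) := \bigcup_{c'\in \Phi(x)} h'(c') \subseteq M,
\end{equation*}
which is finite because $\Phi(x)\subseteq F'$ is finite and each $h'(c')$ is finite. Since $h(x)$ is non-empty (centerless assumption) and $H'(x)$ is finite, there exists $\delta_x>0$ with $H'(x)\subseteq N_{\delta_x}(h(x))$. Taking $\delta := \max_{x\in X}\delta_x$ (a max over the finite set $X$) gives the required bound on basis elements.

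Next I would propagate this bound across $Y$ by equivariance. For any $y=gx\in Y$, $G$-equivariance of $\Phi$ and $h'$ gives $H'(y) = g\cdot H'(x)$, and equivariance of $h$ gives $h(y)=g\cdot h(x)$. Because $g$ acts as an isometry,
\begin{equation*}
H'(y) = g\cdot H'(x) \subseteq g\cdot N_{\delta}(h(x)) = N_{\delta}(h(y)).
\end{equation*}
In particular $h'(\varphi(y))\subseteq H'(y)\subseteq N_\delta(h(y))$ for every $y\in Y$.

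Finally I would extend from basis elements to arbitrary $c\in F$ by additivity of $\varphi$. Writing $c=\sum_{y\in \operatorname{supp}(c)} n_y\, y$ we have $\varphi(c)=\sum_{y\in \operatorname{supp}(c)} n_y\, \varphi(y)$, so
\begin{equation*}
\operatorname{supp}(\varphi(c))\subseteq \bigcup_{y\in \operatorname{supp}(c)} \operatorname{supp}(\varphi(y)),
\end{equation*}
and therefore
\begin{equation*}
h'(\varphi(c))\subseteq \bigcup_{y\in \operatorname{supp}(c)} h'(\varphi(y))\subseteq \bigcup_{y\in \operatorname{supp}(c)} N_\delta(h(y)) = N_\delta(h(c)),
\end{equation*}
which is the desired conclusion. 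There is essentially no obstacle here; the one point requiring care is to remember that $\varphi$ is only additive (not $G$-linear), so the step that really matters is passing from basis elements to arbitrary $c$ via the subadditivity of support under additive maps, after which equivariance and finiteness of $X$ close the argument.
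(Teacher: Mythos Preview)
Your proof is correct and is precisely the straightforward argument the paper has in mind; the paper states the lemma with a $\square$ and no proof, treating it as routine. Your three-step structure (bound on $X$ via finiteness, propagate to $Y=GX$ via $G$-equivariance and the isometric action, extend to all of $F$ via additivity and the subadditivity property $h'(c_1+c_2)\subseteq h'(c_1)\cup h'(c_2)$ of control maps) is exactly what is needed.
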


\subsection{Limit points of subsets of $F$ in $\partial M$}\label{limit}

The proper $CAT(0)$ space $M$ has a compact {\it
boundary at infinity} which we denote by $\partial M$. A point $e\in
\partial M$ is an equivalence class of proper rays $\gamma$ in $M$
where any two rays in the class lie within a bounded distance of one
another. The class contains exactly one ray starting at each point
of $M$.  In particular, given a ray
$\gamma$ there is a corresponding Busemann function\footnote{We refer to \cite {BrHa99} for
details on $CAT(0)$ spaces. However, we adopt the convention that the parameter $t$ in $\gamma (t)$ goes to $\infty$ as $t$ goes to $\infty$, the opposite convention is used in \cite {BrHa99}.} $\beta _{\gamma}:M\to {\R}$;.  

\begin{Rem}\label{gamma}{\rm While the value of $\beta _{\gamma}(c)$
depends on $\gamma$, the value of the difference $\beta _{\gamma}(c)-\beta
_{\gamma}(c')$ only depends on the equivalence class (i.e. the boundary
point) $e\in \partial M$.  In a context where a base point for $M$
has been chosen we permit ourselves the notation $\beta _{e}$, tacitly
assuming that the ray defining $e$ is the one which starts at the base
point. Similarly for horoballs at $e$: we will write 
$HB_{\gamma ,t}$ or $HB_{e,t}$ according to this convention to denote the unique horoball at $e$ with the
point $\gamma (t)$ on its frontier.} 
\end{Rem}

There are various definitions of what it means to say that $e\in \partial M$ is a limit point of a sequence in $M$ (or in $fM$). To complete the definition one must specify a filtration of $M$ which plays the role of a basis for the neighborhoods of $e$ through which the sequence converges to $e$. Two possibilities for this, leading to different kinds of limit points, are:
\begin{itemize}
\item The horoball-filtration of $M$ by the horoballs at $e$; this defines ``horospherical limit point."   
\item The cone-filtration of $M$ by the conical neighborhoods of $e$; this defines ``cone topology  (or Poincar\'{e}) limit point." 
\end{itemize}
We will refer to horoballs and cone-neighborhoods of $e$ as ``neighborhoods (of $e$ in $M$)" when we want to discuss these two kinds of limits at the same time. 

Referring, as usual, to a control map $h: F\to fM$, we say that $e$ is a {\it
limit point} of the subset $S\subseteq F$ if 
every neighborhood of $e$ contains $h(s)$ for some $s\in S$. Applying Lemma
\ref{bounded} to an automorphism of $F$ shows that for any two control maps $h$ and
$h'$ on $F$ there is a number $\delta>0$ with the property that $h(c)$ and $h'(c)$ are in
$\delta$-neighborhoods of one another. This shows that the concept of limit point is
independent of the choice of $h$. We write $L(S) \subseteq \partial M$ for  
the set of all limit points of $S$. Note that $L(S) =\partial M$ when $0\in S$.

For each free presentation $\epsilon : F\twoheadrightarrow A$ and each
$e\in\partial M$ we define
$$A_{e}(\epsilon ) : = \{a\in A\mid e\in L(\epsilon^{-1}(a))\}.$$
Thus $a\in A_{e}(\epsilon )$  means that for every neighborhood $N$ of $e$ there is some $c\in F$
such that $\epsilon (c)=a$ and $h(c)\subseteq N$.  We express this by saying that the element $a$ is {\it supported} over every neighborhood of $e$. 

\begin{lemma}\label{finitary} If $\varphi: A\to A'$ is a finitary
homomorphism of finitely generated $G$-modules, given with respective
finitely generated free presentations $\epsilon : F\twoheadrightarrow A$
and $\epsilon ': F' \twoheadrightarrow A'$, and endowed with control
functions, then $\varphi (A_{e}(\epsilon ))\subseteq A'_{e}(\epsilon
')$.  \end{lemma}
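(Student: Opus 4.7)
The plan is to reduce everything to the bounded-displacement lemma by lifting the finitary map along the free presentation of $A'$, then to use the fact that both the horoball filtration and the cone-neighborhood filtration of $e$ are stable under shrinking by a bounded amount.

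First I would apply Lemma~\ref{projective} to the projective $G$-module $F$ and the $G$-epimorphism $\epsilon':F'\twoheadrightarrow A'$: the composite $\varphi\circ\epsilon:F\to A'$ is $G$-finitary (as a composition of $G$-finitary maps, noted in Section~\ref{S:3.2a}), so it lifts to a $G$-finitary $\widetilde\varphi:F\to F'$ with $\epsilon'\circ\widetilde\varphi=\varphi\circ\epsilon$. Next I apply Lemma~\ref{bounded} (bounded displacement) to $\widetilde\varphi$, producing $\delta>0$ with the property that $h'(\widetilde\varphi(c))$ lies in the $\delta$-neighborhood of $h(c)$ for every $c\in F$.

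Now fix $a\in A_e(\epsilon)$ and let $N\subseteq M$ be an arbitrary neighborhood of $e$ (horoball or cone neighborhood). The key observation is that for each such $N$ there is an ``inner'' neighborhood $N^-$ of $e$ whose closed $\delta$-neighborhood lies inside $N$. For horoballs this is immediate: if $N=HB_{e,t}$ and $x\in HB_{e,t+\delta}$, then $|\beta_e(y)-\beta_e(x)|\leq d(x,y)\leq\delta$ forces $\beta_e(y)\geq t$, so $N^-:=HB_{e,t+\delta}$ works. For a cone neighborhood the analogous statement follows from the standard fact that shrinking the cone angle and pushing the inner radius out by $\delta$ yields a subneighborhood whose $\delta$-fattening still has the required angular constraint. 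Because $a\in A_e(\epsilon)$, we may choose $c\in F$ with $\epsilon(c)=a$ and $h(c)\subseteq N^-$.

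Then $c':=\widetilde\varphi(c)\in F'$ satisfies $\epsilon'(c')=\varphi(\epsilon(c))=\varphi(a)$ and, by bounded displacement, $h'(c')$ lies in the $\delta$-neighborhood of $h(c)\subseteq N^-$, hence in $N$. Since $N$ was arbitrary, $\varphi(a)\in A'_e(\epsilon')$, which is the conclusion.

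The main obstacle I anticipate is the cone-neighborhood case in Step~3; for horoballs the $\delta$-inner parallel is a textbook consequence of the Busemann function being $1$-Lipschitz, but for cone neighborhoods one must be slightly more careful with the two parameters (angle and inner radius) and cite or verify that a cone neighborhood admits a strictly smaller cone subneighborhood whose $\delta$-fattening is still contained in it. Once that routine point is settled, the argument is essentially mechanical.
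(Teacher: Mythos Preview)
Your proof is correct and follows essentially the same route as the paper's: lift $\varphi$ via Lemma~\ref{projective} to a finitary $\widetilde\varphi:F\to F'$, invoke the bounded-displacement Lemma~\ref{bounded}, and then use that the neighborhood filtration at $e$ is stable under $\delta$-shrinking. The paper's own proof is in fact terser than yours---it simply observes that $h'(\widetilde\varphi(c))$ lies within $\delta$ of $N$ and leaves implicit the passage to an inner neighborhood; your explicit treatment of $N^-$ (and your flagged caution about the cone case) is more careful than what the paper writes down.
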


\begin{proof} By Lemma \ref{projective} there is a finitary homomorphism
$\psi : F\to F'$ with $\varphi\epsilon  = \epsilon  '\psi$, and by Lemma
\ref{bounded} there is a number $\delta \geq 0$ with the property that
for each $c\in F$, all of the set $h'(\psi (c))$ lies within $\delta$
of $h(c)$. Assume now that $a\in A_{e}(\epsilon )$, and let $N$ be
a neighborhood of $e$ in $M$. Then there is an element $c\in F$ with
$\epsilon (c) = a$ and $h(c)\subseteq N$. From the fact that $\epsilon '\psi
(c) = \varphi\epsilon (c) = \varphi(a)$, and the fact that $h'(\psi (c))$
lies within $\delta$ of $N$ we infer that $\varphi (a)\in A'_{e}(\epsilon
')$.  \end{proof}

Applying Lemma \ref{finitary} to $\varphi = \text {id}_{A}$ yields 
$A_{e}(\epsilon )=A_{e}(\epsilon ')$. Hence $A_{e}(\epsilon )$ -- or, equivalently, the limit set
$L(\epsilon ^{-1}(a))$ -- depends only on the $G$-module $A$ and the element $a\in A$, and
not
on the particular free presentation. Therefore from now on we will write $A_{e}$  for 
$A_{e}(\epsilon )$, and $L_{A}(a)$ for $L(\epsilon ^{-1}(a))$. We summarize by observing: 

\begin{thm}[Functoriality]\label{functoriality}  Let $e\in\partial M$. Then $(-)_{e}$ is a functor from
the finitary category of $G$-modules to the category of abelian groups.
Moreover, $A_{ge} = gA_{e}$ for all $g\in G$.
\hfill$\square$
\end{thm}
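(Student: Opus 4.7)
The plan is to assemble the theorem from the lemmas already proved: most of the work lies in Lemma \ref{finitary}, and what remains is essentially bookkeeping. First I would verify that $A_e$ is actually an abelian group, i.e.\ a subgroup of $A$. Note $0\in A_e$ trivially since $h(0)=\emptyset$ lies in every neighborhood of $e$. If $a,b\in A_e$, then given any neighborhood $N$ of $e$ we may choose $c,c'\in F$ with $\epsilon(c)=a$, $\epsilon(c')=b$, $h(c)\subseteq N$, and $h(c')\subseteq N$; since $\mathrm{supp}(c+c')\subseteq \mathrm{supp}(c)\cup\mathrm{supp}(c')$ we have $h(c+c')\subseteq h(c)\cup h(c')\subseteq N$ and $\epsilon(c+c')=a+b$, so $a+b\in A_e$. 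Since $h(-c)=h(c)$, also $-a\in A_e$.

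Next, for a finitary homomorphism $\varphi:A\to A'$ between finitely generated $G$-modules (equipped with free presentations and centerless control maps), Lemma \ref{finitary} directly gives $\varphi(A_e)\subseteq A'_e$. Define $\varphi_e:A_e\to A'_e$ as the restriction of $\varphi$; this is a homomorphism of abelian groups because finitary maps are by definition additive. Functoriality is then automatic: $(\mathrm{id}_A)_e=\mathrm{id}_{A_e}$, and for finitary $\varphi,\psi$ the composition $\psi\varphi$ is again finitary (as remarked in Section \ref{category}), and restriction of a composition equals the composition of restrictions. Independence of the chosen free presentation and control map, needed for this to make sense, was already established in the remarks preceding the theorem by applying Lemma \ref{finitary} to $\mathrm{id}_A$.

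For the equivariance statement $A_{ge}=gA_e$, the key observation is that $G$ acts on $M$ by isometries and hence sends each horoball at $e$ to a horoball at $ge$, and each cone-neighborhood of $e$ to a cone-neighborhood of $ge$; thus $N\mapsto gN$ is a bijection from the collection of neighborhoods of $e$ in $M$ to that of $ge$. Combined with the $G$-equivariance of both $\epsilon:F\twoheadrightarrow A$ and the control map $h:F\to fM$, this gives: $a\in A_e$ iff for every neighborhood $N$ of $e$ there is $c\in F$ with $\epsilon(c)=a$ and $h(c)\subseteq N$, iff for every neighborhood $N'=gN$ of $ge$ there is $c'=gc\in F$ with $\epsilon(c')=ga$ and $h(c')=gh(c)\subseteq N'$, iff $ga\in A_{ge}$. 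Thus $gA_e=A_{ge}$.

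There is no real obstacle; the content of the theorem has been pre-digested into the preceding lemmas. The only item requiring a moment's care is checking that the two privileged classes of neighborhoods (horoballs and cone-neighborhoods) are both $G$-stable under isometries of $M$, which is standard for $CAT(0)$ spaces and their boundaries.
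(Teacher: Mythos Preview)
Your proposal is correct and follows the paper's approach: the paper simply marks the theorem with a $\square$, treating it as an immediate summary of the preceding discussion (Lemma \ref{finitary} and the independence-of-presentation remark), and you have spelled out exactly those routine verifications. The only thing you add beyond the paper is the explicit check that $A_e$ is a subgroup and the equivariance argument, both of which are straightforward and implicit in the paper's setup.
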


We remark that if $H\leq G$ is a subgroup of finite index then $A_e$ is
the same, whether $A$ is regarded as a $G$-module or as an $H$-module.

The {\it horospherical limit set} (resp.{\it cone topology  limit set})
over $M$ of the finitely generated $G$-module $A$ is
 $$\Sigma(M; A):= \{e\in \partial M \mid A_{e} = A\} \text { based on the
 horoball-filtration.}$$ $$\Lambda(M; A):= \{e\in \partial M \mid A_{e}
 = A\} \text { based on the cone-filtration.}$$

Thus
\begin{equation*}
\begin{split}
\Sigma (M;A)= \bigcap _{a\in A}L_{A}^{\text {\it horo}}(a)\\
\Lambda (M;A)= \bigcap _{a\in A}L_{A}^{\text {\it cone}}(a)
\end{split}
\end{equation*}

In other words: $\Sigma(M; A)$ (resp. $\Lambda (M; A)$) is the set of all
boundary points $e$ with the property that every element of the module
$A$ is supported over every horoball at $e$ (resp. every cone neighborhood
at $e$.)

\begin{Rem}\label{limitset}We will often use the common notation $\Lambda (G)$ for
the limit set $\Lambda (M;{\Z})$; it is the limit set of any orbit $Gb$,
$b\in M$. When $A\neq 0$ $ \Lambda (M;A)\subseteq \Lambda (G)$.
\end{Rem}

\subsection{Pushing a free module towards a boundary point}\label{push}

The Busemann function $\beta_{\gamma}$ maps $M$ to $\mathbb R$. We extend $\beta
_{\gamma}$ canonically to a map on the finite subsets of $M$, $\beta _{\gamma}: fM\to
{\mathbb R}_{\infty}:={\mathbb R}\cup \{\infty\}$,
by taking the minimum, with the convention $\beta_{\gamma}(\emptyset)=\infty$.

For a free $G$-module $F$, with specified finite basis $X$, and a control
function $h: F\to fM$ we can now consider the composition $v_{\gamma}:
=\beta_{\gamma}\circ h:F\to {\R}_{\infty}$ which we call the {\it
valuation on} $F$ {\it defined by} $h$ {\it and} $\gamma$. Usually a base point for $M$
will be understood, and then we will write $v_e$ rather than $v_{\gamma}$; compare Remark
\ref{gamma}.

The infimum of numbers $\delta $ such that, for all $c\in F$,
$v_{e}(\varphi(c))-v_{e}(c)\geq \delta $ is called the {\it guaranteed
shift towards }$e$ of $\varphi $. This number is denoted by $\text
{gsh}_{e}(\varphi )$. When $\text {gsh}_{e}(\varphi )>0$ we say that
$\varphi$ {\it pushes} $F$ {\it towards} $e$.

There is an insightful way to express the assertion $A_{e} = A$ in the above definitions: Consider an element $y$ of the ${\mathbb Z}$-basis $Y=GX$ of $F$. If $A_{e} = A$ then, given any $\delta>0$, one
can choose an element $\varphi(y)\in F$, representing the same element
$\epsilon (y) = \epsilon (\varphi (y))\in A$, such that 
$v_{e}(\varphi(y))- v_{e}(y)\geq \delta$. This choice defines an additive endomorphism
$\varphi :F\to F$ which lifts the identity map of $A$ and pushes $F$ towards $e\in\partial M$.  Conversely, the existence
of an additive endomorphism $\varphi: F\to F$ pushing $F$ towards $e$
and satisfying $\epsilon  = \epsilon \circ \varphi$ implies that $e$ is a
horospherical limit point of each coset of $F \text { mod ker}(\epsilon )$;
i.e., $A_{e} = A$. This is because, given $c\in F$ and a horoball $HB$ at $e$, there is some $n\in {\N}$ such that $\varphi ^{n}(c)$ is over $HB$, and  $\epsilon  = \epsilon \circ \varphi ^{n}$. 

\begin{equation}\label{sigmadef}
\Sigma (M;A) =\{e\in \partial M\mid \exists \varphi \,\in
\text{End}_{\Z}(F) \text { with } \epsilon \varphi = \epsilon \text {
and } \text {gsh}_{e}(\varphi)>0\}
\end{equation}

The definition of what we call the {\it equivariant-dynamical limit set} $\Sigmadoublecirc (M;A)$
contrasts neatly with this:

$$\Sigmadoublecirc (M;A): =\{e\in \partial M\mid \exists \, \varphi \in \text{End}_{{\Z}G}(F) \text { with } \epsilon \varphi = \epsilon \text { and } \text {gsh}_{e}(\varphi)>0\}$$

In between $\Sigma $ and $\Sigmadoublecirc $ is the $G$-finitary version,
the {\it finitary-dynamical limit set} $\Sigmacirc (M;A)$:

$$\Sigmacirc (M;A): =\{e\in \partial M\mid \exists \, G\text {-finitary }\varphi \in \text{End}_{\Z}(F) \text { with } \epsilon \varphi = \epsilon \text { and } \text {gsh}_{e}(\varphi)>0\}$$

\noindent In view of (\ref{sigmadef}) we see that $\Sigmadoublecirc (M;A)\subseteq \Sigmacirc (M;A)\subseteq \Sigma (M;A)$, the distinction being expressed by the kind of endomorphism which pushes $F$ towards $e$ while commuting with $\epsilon$. 

\begin{Rem}\label{fixedpoint}
\rm{When $e\in \del M$ is fixed under the $G$-action and $e\in \Sigma (M;A)$ we can do better: we can choose the values of $\varphi $ to satisfy the pushing-towards-$e$ inequalities on the ${\Z}G$-basis $X$, and then extend this to a $G$-endomorphism on $F$.
Thus, the three invariants coincide when restricted to points of $\del M$ fixed by $G$.}
\end{Rem}

\subsection{The main results}

\subsubsection{\bf Cone topology properties}

The invariants $\Sigma (M;A), \Sigmacirc (M;A)$ and $\Sigmadoublecirc
(M;A)$ are well-defined, independent of all choices, and are invariant
under the action of $G$ on $\del M$, i.e. if they contain $e$ they
contain the whole orbit $Ge$. But more is true.

\begin{thm}\label{closure3} Both  $\Sigmacirc (M;A)$ and $\Sigmadoublecirc (M;A)$ contain the closure of $Ge$ whenever they contain $e$.
\end{thm}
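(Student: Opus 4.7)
The plan is to prove the two containments separately: for $\Sigmadoublecirc$ the \emph{same} witness endomorphism $\varphi$ will push $F$ toward every point of $\text{cl}_{\partial M}(Ge)$, while for $\Sigmacirc$ an extra compactness step is needed to build a new witness out of the $G$-conjugates of the original. Two standing tools are used throughout: (i) the $G$-equivariance of the control map $h$ together with the isometric $G$-action on $M$ yields $\beta_{ge}(S)-\beta_{ge}(T)=\beta_e(g^{-1}S)-\beta_e(g^{-1}T)$ for finite sets $S,T\subseteq M$, so pushing inequalities transport along $G$-orbits; and (ii) in a proper $CAT(0)$ space the Busemann value $\beta_e(p)$ varies continuously in $e$ (cone topology) for each fixed $p$, hence for each fixed finite set $h(c)$.

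For $\Sigmadoublecirc(M;A)$, let $\varphi\in\text{End}_{\Z G}(F)$ realize $e$ with shift $\delta>0$. The $G$-equivariance of both $\varphi$ and $h$ combined with (i) gives $\text{gsh}_{ge}(\varphi)\geq\delta$ for every $g\in G$. Given $e'\in\text{cl}_{\partial M}(Ge)$, pick $g_n\in G$ with $g_n e\to e'$; since $h(c)$ and $h(\varphi(c))$ do not depend on $n$, tool (ii) passes the inequality $\beta_{g_n e}(h(\varphi(c)))-\beta_{g_n e}(h(c))\geq\delta$ to the limit at $e'$, yielding $\text{gsh}_{e'}(\varphi)\geq\delta$ and hence $e'\in\Sigmadoublecirc(M;A)$.

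For $\Sigmacirc(M;A)$ the same $\varphi$ fails because its conjugates $\varphi^g:=g\varphi g^{-1}$ are generally distinct from $\varphi$. A direct calculation using $G$-equivariance of the volley $\Phi$ of $\varphi$ shows, however, that every $\varphi^g$ is captured by the \emph{same} $\Phi$, satisfies $\epsilon\varphi^g=\epsilon$, and pushes $F$ toward $ge$ with the same shift $\delta$. Fix $e'\in\text{cl}_{\partial M}(Ge)$ and $g_n e\to e'$; restrict each $\varphi^{g_n}$ to the $\Z$-basis $Y=GX$. The restrictions lie in the compact product space $\prod_{y\in Y}\Phi(y)$ (Tychonoff, each factor being finite); extract a subnet converging pointwise to some $\psi_0\colon Y\to F$ with $\psi_0(y)\in\Phi(y)$, and extend additively to $\psi\colon F\to F$. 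Then $\psi$ is $G$-finitary with volley $\Phi$, satisfies $\epsilon\psi=\epsilon$, and for each $y$ the value $\varphi^{g_n}(y)$ is eventually equal to $\psi(y)$ (finite target). Tool (ii) now yields $\beta_{e'}(h(\psi(y)))-\beta_{e'}(h(y))\geq\delta$ for each $y\in Y$, and the standard propagation from basis to arbitrary $c\in F$ (via $h(\psi(c))\subseteq\bigcup_{y\in\text{supp}(c)}h(\psi(y))$ and taking minima of $\beta_{e'}$) gives $\text{gsh}_{e'}(\psi)\geq\delta$, so $e'\in\Sigmacirc(M;A)$. The main technical hurdle is this extraction step: producing a coherent pointwise limit on all of $Y$ simultaneously, for which the finiteness of each $\Phi(y)$ (i.e., the finitary hypothesis itself) is essential.
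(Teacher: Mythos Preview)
Your proof is correct. For $\Sigmadoublecirc$ it agrees with the paper: the same $G$-endomorphism $\varphi$ pushes toward every point of the orbit closure, and one passes to the limit via continuity of Busemann functions in the cone topology.

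For $\Sigmacirc$ you take a genuinely different route. The paper (via Theorem \ref{P:3.4}) constructs $\psi$ directly and pointwise: for each basis element $y\in Y$ one simply chooses an index $N=N(y)$ large enough that the single value $(g_N\varphi)(y)$ already realizes shift $\geq\delta/2$ toward $\hat e$ (an explicit $\delta/4$--estimate using that $(g_k\varphi)(y)$ ranges over the finite set $\Phi(y)$), and sets $\psi(y)=(g_N\varphi)(y)$. No compactness or limiting construction is needed. You instead extract a global pointwise limit of the conjugates $\varphi^{g_n}$ via Tychonoff on $\prod_{y\in Y}\Phi(y)$. Your route is less elementary (nets rather than sequences are genuinely required when $G$ is uncountable, since the product is then not metrizable), but it buys a sharper constant: you obtain $\text{gsh}_{e'}(\psi)\geq\delta$, whereas the paper's construction only yields $\delta/2$. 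Both constructions produce a selection from the \emph{same} volley $\Phi$ that commutes with $\epsilon$, which is the structural point that matters.
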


\begin{thm}\label{closure} $\Sigmacirc  (M; A) = \{e\in\partial M\mid \text
{\rm cl}(Ge) \subseteq \Sigma(M; A)\}$.
\end{thm}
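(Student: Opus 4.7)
The plan is to prove both inclusions. The forward inclusion $\Sigmacirc(M;A) \subseteq \{e \mid \text{cl}_{\del M}(Ge) \subseteq \Sigma(M;A)\}$ is immediate from Theorem \ref{closure3} (which places $\text{cl}_{\del M}(Ge)$ inside $\Sigmacirc(M;A)$ whenever $e\in\Sigmacirc(M;A)$) combined with the evident inclusion $\Sigmacirc(M;A)\subseteq \Sigma(M;A)$ read off from (\ref{sigmadef}). So the real content is the reverse direction: given $e\in\del M$ with $\text{cl}_{\del M}(Ge)\subseteq \Sigma(M;A)$, I will construct a $G$-finitary endomorphism $\varphi$ of $F$ with $\epsilon\varphi=\epsilon$ and $\text{gsh}_e(\varphi)\geq 1/2$.

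\textbf{Pointwise witnesses.} For each $x$ in the finite basis $X$ of $F$ and each $e'\in \text{cl}_{\del M}(Ge)$, the hypothesis $e'\in\Sigma(M;A)$ yields $c_{x,e'}\in F$ with $\epsilon c_{x,e'}=\epsilon x$ and $v_{e'}(c_{x,e'})-v_{e'}(x)\geq 1$. For a fixed element $c\in F$ the function $e'\mapsto v_{e'}(c)=\min_{p\in h(c)}\beta_{e'}(p)$ is continuous on $\del M$ in the cone topology (a finite minimum of continuous functions $\beta_{e'}(p)$), so the inequality persists in a cone-topology open neighborhood $U_{x,e'}\ni e'$ as $v_{e''}(c_{x,e'})-v_{e''}(x)\geq 1/2$. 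Since $\text{cl}_{\del M}(Ge)$ is compact, for each $x$ finitely many such neighborhoods $U_{x,1},\dots,U_{x,k_x}$ suffice, with corresponding elements $c_{x,1},\dots,c_{x,k_x}\in F$.

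\textbf{Assembling the volley and the selection.} Set $\Phi(x):=\{c_{x,1},\dots,c_{x,k_x}\}\in fF$ for $x\in X$, extend to the basis $Y=GX$ by $\Phi(gx):=g\Phi(x)$, and extend further to $F$ by $\Phi(\sum n_y y):=\{\sum n_y d_y\mid d_y\in\Phi(y)\}$; this is a $G$-map $F\to fF$. For each $y=gx\in Y$ the point $g^{-1}e\in Ge\subseteq\text{cl}_{\del M}(Ge)$ belongs to some $U_{x,i(y)}$, and I set $\varphi(y):=g\cdot c_{x,i(y)}\in\Phi(y)$; extend $\varphi$ to $F$ additively. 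Then $\varphi$ is $G$-finitary by construction, and $\epsilon\varphi=\epsilon$ already on $Y$ since $\epsilon(g c_{x,i(y)})=g\epsilon c_{x,i(y)}=g\epsilon x=\epsilon(y)$.

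\textbf{The uniform shift.} Applying the Busemann cocycle identity $\beta_e(gp)-\beta_e(gq)=\beta_{g^{-1}e}(p)-\beta_{g^{-1}e}(q)$ with the $G$-equivariance $h(gc)=g\,h(c)$ gives, for each $y=gx\in Y$,
\begin{equation*}
v_e(\varphi(y))-v_e(y)=v_{g^{-1}e}(c_{x,i(y)})-v_{g^{-1}e}(x)\geq 1/2,
\end{equation*}
by the defining property of $U_{x,i(y)}$. For an arbitrary $c=\sum_y n_y y\in F$, additivity of $\varphi$ yields $h(\varphi(c))\subseteq \bigcup_{y\in\text{supp}(c)}h(\varphi(y))$, and since $\beta_e$ is a minimum, this inclusion converts the per-basis bound into $v_e(\varphi(c))\geq \min_{y\in\text{supp}(c)}v_e(\varphi(y))\geq \min_{y\in\text{supp}(c)}v_e(y)+1/2=v_e(c)+1/2$, so $\text{gsh}_e(\varphi)\geq 1/2$.

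\textbf{Where the main difficulty lies.} The conceptual crux is exactly the passage from a family of pointwise witnesses $c_{x,e'}$ to a single $G$-finitary endomorphism pushing uniformly towards $e$. This succeeds only because two features cooperate: cone-topology compactness of $\text{cl}_{\del M}(Ge)$ reduces the witnesses to a finite list for each generator $x$, and the Busemann cocycle identity transfers a uniform shift valid at every $e'$ in that closure (obtained after translation by $g^{-1}$) back to a uniform shift at the single point $e$. The remaining verifications ($G$-finitariness, $\epsilon\varphi=\epsilon$, and control of $v_e$ under additive extension) are routine once these two ingredients are in place.
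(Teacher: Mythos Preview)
Your proof is correct and follows essentially the same approach as the paper. The paper packages the reverse inclusion as the slightly more general Theorem \ref{T:6.4} (replacing $\text{cl}(Ge)$ by an arbitrary closed $G$-invariant subset $E\subseteq\Sigma(M;A)$), but the argument is identical: pick pointwise witnesses, use cone-topology continuity plus compactness to reduce to a finite volley, then define the selection at $y=gx$ using the witness indexed by $g^{-1}e$ and apply the identity $v_e(gc)=v_{g^{-1}e}(c)$ (Lemma \ref{L:2.3}(iii)) to transfer the shift.
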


\begin{cor}\label{invariant} For each closed $G$-invariant subset $E\subseteq \partial M$ we have $E\subseteq  \Sigma(M; A)$ if and only if $E\subseteq \Sigmacirc  (M; A)$. In particular, $\Sigma(M; A) =\partial M$ if and only if $\Sigmacirc  (M; A) =\partial M$. 
\end{cor}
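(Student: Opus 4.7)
The plan is to derive this corollary directly from Theorem \ref{closure}, which is the substantive input; the corollary itself is essentially a formal consequence once one observes that membership in $\Sigmacirc(M;A)$ is controlled precisely by closed $G$-orbits sitting inside $\Sigma(M;A)$.

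First I would dispose of the easy direction. The excerpt already records the inclusion $\Sigmacirc(M;A)\subseteq \Sigma(M;A)$ (it follows from the fact that every $G$-equivariant or $G$-finitary endomorphism is in particular an additive endomorphism, so the set of $e$ witnessed by the more restrictive class is smaller). Thus any subset $E$ contained in $\Sigmacirc(M;A)$ is automatically contained in $\Sigma(M;A)$, with no use of closedness or $G$-invariance.

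For the nontrivial direction, suppose $E\subseteq \partial M$ is closed and $G$-invariant and that $E\subseteq \Sigma(M;A)$. Take $e\in E$. Since $E$ is $G$-invariant, $Ge\subseteq E$; since $E$ is closed in $\partial M$ (with respect to the cone topology, which is the topology in which Theorem \ref{closure} is formulated), we obtain $\mathrm{cl}(Ge)\subseteq E\subseteq \Sigma(M;A)$. Theorem \ref{closure} then asserts exactly that this condition characterizes membership in $\Sigmacirc(M;A)$, so $e\in \Sigmacirc(M;A)$. Since $e\in E$ was arbitrary, $E\subseteq \Sigmacirc(M;A)$.

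The ``in particular'' statement is the specialization to $E=\partial M$, which is trivially closed and $G$-invariant, combined with the already noted inclusion $\Sigmacirc(M;A)\subseteq \Sigma(M;A)$. There is essentially no obstacle here: all the work has been done in Theorem \ref{closure}, and the only subtlety is to match the topology in which $E$ is assumed closed with the topology implicitly used in the operator $\mathrm{cl}(\cdot)$ of Theorem \ref{closure}; both are the cone topology on $\partial M$, so nothing further is required.
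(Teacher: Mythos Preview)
Your proof is correct and takes essentially the same approach as the paper: the corollary is stated immediately after Theorem \ref{closure} without a separate proof, and is intended to follow from it exactly as you describe---one direction from the general inclusion $\Sigmacirc(M;A)\subseteq \Sigma(M;A)$, the other by observing that for $e$ in a closed $G$-invariant set $E\subseteq \Sigma(M;A)$ one has $\mathrm{cl}(Ge)\subseteq E\subseteq \Sigma(M;A)$ and applying the characterization in Theorem \ref{closure}.
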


\subsubsection{{\bf When} $\Sigma (M;A)=\del M$}

We say that $A$ {\it has bounded support over} $M$ if there is a bounded subset $B\subseteq M$ with the property that each element $a\in A$ is represented by an element $c\in F$ over $B$.

\begin{thm}\label{all} Let the finitely generated $G$-module $A$ be non-zero. Then $\Sigma(M; A) =\partial M$ if
and only if $G$ acts cocompactly on $M$ and $A$ has bounded support over $M$.
\end{thm}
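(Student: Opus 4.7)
The $(\Leftarrow)$ direction is a direct translation argument. Let $K$ be a compact fundamental set with $GK=M$ and $B_0$ a bounded set supporting representatives of every $a\in A$; set $B:=B_0\cup K\cup\{b\}$, of some diameter $D$. For $e\in\partial M$ and $t\in\R$, the point $\gamma_e(t+D)$ on the ray from the basepoint lies in some translate $gB$ by cocompactness; since $\beta_e$ is $1$-Lipschitz and $gB$ has diameter $D$, the entire set $gB$ lies in $HB_{e,t}$. Given $a\in A$, choose a representative $c\in F$ of $g^{-1}a$ over $B$; then $gc$ represents $a$ and $h(gc)=g\cdot h(c)\subseteq gB\subseteq HB_{e,t}$, so $a\in A_e$ for every $e,t$, giving $\Sigma(M;A)=\partial M$.

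For $(\Rightarrow)$, assume $\Sigma(M;A)=\partial M$ and $A\ne 0$. I will first handle cocompactness by reducing to the trivial module. Pick $a_0\in A$ non-zero. For each $e\in\partial M$ and $t\in\R$ the hypothesis provides a representative $c_{e,t}$ of $a_0$ with $h(c_{e,t})\subseteq HB_{e,t}$; since $c_{e,t}\ne 0$, its support contains some basis element $gx_i$, so $gh(x_i)\subseteq HB_{e,t}$. Setting $R:=\max_i\max_{p\in h(x_i)}d(b,p)$, the $1$-Lipschitz property of $\beta_e$ gives $\beta_e(gb)\ge t-R$, hence $\sup_{g\in G}\beta_e(gb)=\infty$ for every $e$. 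Equivalently, the orbit $Gb$ is horospherically dense at every boundary point, i.e.\ $\Sigma(M;\Z)=\partial M$. To conclude cocompactness I argue by contradiction: if not cocompact, then $\sup_{p\in M}d(p,Gb)=\infty$, so (after passing to a subsequence) there exist $p_n\to e\in\partial M$ in the cone topology with $d(p_n,Gb)\to\infty$; a CAT(0) asymptotic-geodesics argument (using convexity of distance functions and non-increasingness of distance between asymptotic rays) then forces $\beta_e$ to be bounded on $Gb$, contradicting the horospherical density.

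For bounded support, I would invoke $\Sigmacirc(M;A)=\partial M$ (Corollary \ref{invariant}), which supplies, for each $e\in\partial M$, a $G$-finitary endomorphism $\varphi_e:F\to F$ with $\epsilon\varphi_e=\epsilon$ and $\mathrm{gsh}_e(\varphi_e)>0$, together with a $G$-equivariant volley $\Phi_e$ determined by the finite sets $\Phi_e(x_i)\subseteq F$. Exploiting cocompactness ($Gb$ is $r$-dense for some $r$) and compactness of $\partial M$, I would select finitely many directions $e_1,\dots,e_N$ such that for every $g\in G$ some $e_k$ is roughly antipodal to the direction of $gb$ from $b$; applying an appropriate iterate of $\varphi_{e_k}$ to the natural representative $gx_i$ of a generator $g\epsilon(x_i)\in A$ transports its support back to the bounded set $B:=\bar B(b,r)\cup\bigcup_{i,k}h(\Phi_{e_k}(x_i))$, giving bounded support.

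The main obstacle is the CAT(0) geometry in both steps of $(\Rightarrow)$: in step 1, the asymptotic-geodesic lemma that escape of $p_n$ at direction $e$ forces $\beta_e$ to be bounded on the orbit; and in step 2, the compactness-based combinatorial extraction of a genuinely bounded supporting region from the horospherical hypothesis. The tension in step 2 is that the hypothesis naturally produces representatives lying \emph{deep} in horoballs (far from the basepoint), whereas bounded support requires representatives \emph{near} the basepoint, so the $G$-finitary volleys must be iterated and combined with cocompactness in a delicate fashion to pull representatives back.
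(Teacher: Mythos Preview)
Your backward direction is correct and matches the paper's argument (the paper simply says ``there is an arbitrarily large ball inside any horoball'').

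For cocompactness in the forward direction, your sketch is essentially the argument the paper carries out in Lemma~\ref{cocompact} (stated there for the hyperbolic case, but the proof works verbatim in any proper $CAT(0)$ space): translate so that $b$ lies on the boundary of a ball of radius $r_n\to\infty$ whose interior misses $Gb$, pass to a convergent subsequence $x_n\to e$, and then show that a fixed horoball at $e$ misses $Gb$. The paper's Proposition~\ref{half} cites Theorem~12.2 of \cite{BG03} for this, so your direct argument is fine in spirit; the ``asymptotic-geodesics'' step you flag as an obstacle is exactly the endgame of the proof of Lemma~\ref{cocompact}.

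For bounded support, however, your plan has a genuine gap. You want to choose, for each $g\in G$, a direction $e_k$ ``roughly antipodal'' to the direction of $gb$ from $b$, so that pushing towards $e_k$ moves $h(gx_i)$ closer to $b$. But in a general proper $CAT(0)$ space there is no reason such an $e_k$ exists: the geodesic from $gb$ through $b$ need not extend to a ray, and without such an extension a push towards $e_k$ may increase $\beta_{e_k}$ without decreasing $d(\cdot,b)$ at all. The paper resolves this by invoking a theorem of Geoghegan--Ontaneda \cite{GO07}: \emph{if $\text{Isom}(M)$ acts cocompactly, then $M$ is almost geodesically complete}. This is the missing ingredient. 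With it in hand, the paper formalizes your ``antipodal'' intuition as Proposition~\ref{P:7.6}, which converts a volley admitting selections pushing towards every $e\in\partial M$ (supplied by Theorem~\ref{T:6.4}, the uniform-volley refinement of Corollary~\ref{invariant}) into a volley admitting, for every $b\in M$, a selection pushing towards $b$ in the sense of Section~\ref{S:7.3}. Iterating that interior push is what drags representatives back over a fixed ball. So your instinct about the mechanism is right, but the argument cannot proceed without first establishing almost geodesic completeness from cocompactness.
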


A fuller version of this is given as Theorem \ref{uniform}. 

Bounded support over $M$ is not an intrinsic property of a $G$-module,
as it also involves the metric of $M$. However, when the $G$-action on
$M$ has discrete orbits then $A$  has bounded support over $M$
if and only if $A$ is finitely generated as a module over the stabilizer
$G_b$ of some (equivalently, any) point $b\in M$. More precisely (see Corollary \ref{smaller}):

\begin{cor}\label{total} Let $b\in M$, let the $G$-orbits in $M$
be discrete, and let the module $A$ be non-zero. Then $\Sigma (M;A)
=\partial M$ if and only if the $G$-action on $M$ is cocompact and $A$
is finitely generated as a $G_b$-module.  
\end{cor}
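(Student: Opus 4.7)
The plan is to reduce the corollary to Theorem \ref{all}, which already characterizes $\Sigma(M;A)=\partial M$ as ``\,$G$ acts cocompactly on $M$ and $A$ has bounded support over $M$''. Thus, granting cocompactness in both directions, it only remains to prove that under the discrete-orbit hypothesis, the condition ``$A$ has bounded support over $M$'' is equivalent to ``$A$ is finitely generated as a $G_b$-module''. Since changing the control map only perturbs supports by a bounded additive amount (Lemma \ref{bounded}), bounded support is independent of the choice of centerless control map, and I may fix the convenient one defined by $h(x):=\{b\}$ for each $x$ in the finite $G$-basis $X$; equivariantly this gives $h(gx)=\{gb\}$ and $h(c)=\{gb\mid gx\in\text{supp}(c)\}$ for arbitrary $c\in F$.

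For the implication ``finitely generated over $G_b$ $\Rightarrow$ bounded support'', I would start with a finite $G_b$-generating set $\{a_1,\dots,a_m\}$ of $A$, choose lifts $c_j\in F$, and set $R:=\max_j\max\{d(b,gb)\mid gx\in\text{supp}(c_j)\}$, so that each $h(c_j)\subseteq \overline{B}(b,R)$. For an arbitrary $a=\sum_k\eta_k a_{j_k}$ with $\eta_k\in G_b$, the lift $c:=\sum_k\eta_k c_{j_k}$ satisfies $h(c)\subseteq \overline{B}(b,R)$, because each $\eta_k$ is an isometry of $M$ fixing $b$ and therefore preserves $\overline{B}(b,R)$. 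Conversely, given bounded support witnessed by a bounded $B\subseteq M$, the properness of $M$ makes $B$ relatively compact, so the discrete orbit $Gb$ meets $B$ in a finite set, say $\{g_1 b,\dots,g_n b\}$. If $c$ is a representative of $a$ with $h(c)\subseteq B$, then every $y=gx$ in $\text{supp}(c)$ satisfies $gb\in B$ and hence $g\in g_iG_b$ for some $i$. Therefore $c$ lies in the $G_b$-submodule of $F$ generated by the finite set $\{g_ix\mid 1\le i\le n,\ x\in X\}$, and so $a=\epsilon(c)$ lies in the $G_b$-submodule of $A$ generated by the images $\{\epsilon(g_ix)\}$, proving finite generation over $G_b$.

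The chief subtlety to guard against is conflating ``discrete orbit'' with ``$G_b$ of finite index in $G$'': the equivalence above is genuinely nontrivial precisely when $[G:G_b]=\infty$ (as in the Euclidean translation case where $G_b=1$), and it leans essentially on the two complementary ingredients that $G_b$ acts by $b$-fixing isometries (used in the ``sufficiency'' direction to keep lifts inside a fixed ball around $b$) and that the discrete orbit $Gb$ meets the bounded set $B$ only finitely often (used in the ``necessity'' direction to organize supports into finitely many $G_b$-cosets). Everything else is bookkeeping with the chosen point-mass control map, and the cocompactness condition is transported verbatim from Theorem \ref{all}.
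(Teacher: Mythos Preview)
Your proof is correct. Both you and the paper begin by invoking Theorem \ref{all} (equivalently, Theorem \ref{uniform}) to reduce to showing that, under discrete orbits, ``$A$ has bounded support over $M$'' is equivalent to ``$A$ is finitely generated as a $G_b$-module''. The difference lies in how this equivalence is established.

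The paper (see Corollary \ref{smaller}) argues via a filtration: it filters $F$ by the $\mathbb{Z}$-submodules $h^{-1}(fB_m(b))$, observes that discreteness of orbits makes each of these a finitely generated $G_b$-module, and then appeals to an adaptation of Brown's criterion (Theorem 2.2 of \cite{Bn87}) to conclude. Your argument instead unpacks this directly: for the forward direction you exploit that $G_b$ fixes $b$ and hence preserves balls about $b$; for the converse you use that the discrete orbit $Gb$ meets a bounded (hence relatively compact) set in finitely many points, organizing supports into finitely many $G_b$-cosets. Your route is more elementary and self-contained, avoiding the external reference; the paper's route is shorter to state and situates the result within a general finiteness framework that also underlies the higher-dimensional versions. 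In effect, your argument is the explicit content of what Brown's criterion gives in dimension zero.
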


We will see that when $\Sigmacirc  (M; A) =\partial M$ then $G$-finitary
endomorphisms of $F$ which push $F$ towards the various boundary points
$e\in\partial M$ can all be obtained as selections from a single volley
$\Phi: F\to fF$. That a volley $\Phi$ has selections pushing towards all
of $\partial M$ can be expressed in terms of finitely many inequalities,
and these inequalities remain fulfilled when the action $\rho : G\to
\text{\rm Isom}(M)$ is subject to small perturbation. This is the idea which
leads to the following openness theorem (proved by combining Corollaries
\ref{open2} and \ref{kernel}, below):

\begin{thm} Let ${\mathcal R}(G,M)$ denote the space of all
isometric actions $\rho :G\to \text{\rm Isom }(M)$ which have discrete orbits,
endowed  with the compact-open topology. Then for
every finitely generated $G$-module $A$ and point $b\in M$ the subset 
$$\{\rho \in  {\mathcal R}(G,M)\mid A \text{ is finitely generated over } G_b\}$$ is open
in  ${\mathcal R}(G,M)$.
\end{thm}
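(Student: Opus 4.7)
The plan is to reduce the assertion to the openness of the condition $\Sigmacirc(_\rho M;A)=\del M$ already stated in the second openness theorem of Subsection~\ref{open}, using the dictionary supplied by Corollary~\ref{total} and Corollary~\ref{invariant}. Corollary~\ref{total} identifies, within the class of actions with discrete orbits and $A\neq 0$, the condition ``the $\rho$-action is cocompact and $A$ is finitely generated over $G_b$'' with ``$\Sigma(_\rho M;A)=\del M$'', while Corollary~\ref{invariant} makes $\Sigma=\del M$ equivalent to $\Sigmacirc=\del M$. So the question about stabilizers is translated, without loss, into a question purely about $\Sigmacirc$.

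Concretely, given $\rho_0$ in the described subset of $\mathcal R(G,M)$, I would first translate the finite-generation hypothesis into $\Sigma(_{\rho_0}M;A)=\del M$ via Corollary~\ref{total}, and then into $\Sigmacirc(_{\rho_0}M;A)=\del M$ via Corollary~\ref{invariant}. The $\Sigmacirc$-openness theorem then supplies a compact-open neighborhood $N$ of $\rho_0$ in $\text{Hom}(G,\text{Isom}(M))$ on which $\Sigmacirc(_{\rho'}M;A)=\del M$ continues to hold. Intersecting $N$ with $\mathcal R(G,M)$ and running the two corollaries in reverse recovers, for each $\rho'\in N\cap \mathcal R(G,M)$, that the $\rho'$-action is cocompact and $A$ is finitely generated over the new point-stabilizer of $b$, which is the required openness. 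This is the combination of Corollary~\ref{open2} (for the $\Sigmacirc$-openness step) and Corollary~\ref{kernel} (for the translation back to the stabilizer formulation) flagged in the excerpt.

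The main obstacle is conceptual rather than computational: the stabilizer $G_b$ depends on $\rho$, so the condition ``finitely generated over $G_b$'' is not attached to a fixed substructure of $A$ and cannot be perturbed directly. It has to be repackaged as a geometric condition before the compact-open topology on $\mathcal R(G,M)$ can act on it. The reformulation through $\Sigmacirc=\del M$ does exactly this: as the paragraph preceding the theorem indicates, when $\Sigmacirc(_{\rho_0}M;A)=\del M$ all of the required pushing endomorphisms of $F$ can be captured by selections from a single volley $\Phi:F\to fF$, and the resulting pushing inequalities are finite in number and robust under small perturbations of $\rho_0$. Verifying that the same volley $\Phi$ continues to furnish selections with positive guaranteed shift at every boundary point for actions $\rho'$ sufficiently close to $\rho_0$ is the technical core of the openness theorem for $\Sigmacirc$, and once that is in hand the present statement is essentially a dictionary step.
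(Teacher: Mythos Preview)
Your route is exactly the paper's: the proof there is stated as ``combining Corollaries~\ref{open2} and~\ref{kernel},'' and you do precisely that (with Corollary~\ref{total}/\ref{smaller} in place of the closely related~\ref{kernel}). However, there is a real gap in the forward translation step, and the paper's one-line sketch shares it. Corollary~\ref{total} says, for $A\neq 0$ and discrete orbits, that $\Sigma(_{\rho}M;A)=\partial M$ is equivalent to ``$\rho$ is cocompact \emph{and} $A$ is finitely generated over $G_b$.'' When you ``translate the finite-generation hypothesis into $\Sigma(_{\rho_0}M;A)=\partial M$,'' you are silently importing cocompactness of $\rho_0$, which the stated hypothesis does not give you.

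This is not cosmetic: as literally stated the theorem fails. Take $G=\mathbb{Z}^2$ acting on $M=\mathbb{R}$ trivially via $\rho_0$ (discrete orbits, $G_b=G$) and $A=\mathbb{Z}G/(X_2-1)\cong\mathbb{Z}[X_1^{\pm 1}]$; then $A$ is finitely generated over $G_b=G$. But the nearby actions $\rho_\epsilon$ with $X_1\mapsto$ translation by $\epsilon$ and $X_2\mapsto\mathrm{id}$ have $G_b=\{0\}\times\mathbb{Z}$, over which $A$ is not finitely generated. The sentence preceding the theorem (``the conclusion of Theorem~\ref{fingen} is an open condition'') indicates the intended condition is the full conjunction ``cocompact and $A$ finitely generated over $G_b$,'' equivalently $\Sigma=\partial M$; with that reading your argument is complete and matches the paper. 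You should either add cocompactness to the hypothesis on $\rho_0$ or treat the trivial case $A=0$ separately and note that for $A\neq0$ the set in question coincides with $\{\rho:\Sigma(_\rho M;A)=\partial M\}$ only inside the cocompact locus.
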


\subsubsection{\bf Tits metric properties}

Let  $\epsilon : F\twoheadrightarrow A$ be a finitely generated free
presentation of $A$. Each of
$\Sigma (M;A)$, $\Sigmacirc (M;A)$ and $\Sigmadoublecirc (M;A)$ can
be described as the union of subsets of the form 
$$\Sigma (\varphi ):=\{e\mid \text{gsh}_{e}(\varphi)>0\}$$ 
where $\varphi $ runs through
all endomorphisms of $F$ of the appropriate kind (additive, $G$-finitary or $G$-equivariant) satisfying $\epsilon \circ \varphi =\epsilon$. Even
though $\Sigma (\varphi )$ is not invariant under changes of  control
map or presentation $\epsilon $, it is the key to the our Tits
metric results. 

In Section \ref{pushes} we study $\Sigma (\varphi )$ in relation to a set 
$\Lambda (\varphi )$ which is a subset of the cone topology  limit set $\Lambda
(M; {\Z})$:
$$\Lambda (\varphi):=\{e\mid \exists c\in F \text { such that } e \text{
is a limit point of a sequence } (y_{k})\\ \text { with } y_{k}\in
\text{supp}(\varphi ^{k}(c))\}$$

\noindent and we prove, among other things:

\begin{thm}\begin{enumerate}[(a)]
\item If the module $A$ is non-zero and $\Sigma (\varphi )$ is non-empty then $\Lambda (\varphi )$ non-empty.  
\item For every pair $(e,e')\in \Sigma (\varphi)\times \Lambda (\varphi )$ the angular distance $d(e, e')$ is at most $\frac{\pi}{2}$.
\item If $\varphi $ is $G$-finitary then $\Sigma (\varphi)\cup \Lambda (\varphi)$ lies in a Tits metric ball of radius $<\frac{\pi}{2}$
\item If $\varphi $ is $G$-finitary then $\Sigma (\varphi )$ is an open subset of $\del M$ in the Tits metric topology.  
\end{enumerate}
\end{thm}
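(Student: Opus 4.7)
The plan is to prove the four parts in sequence, running every argument off iterates $\varphi^k(c_0)$ of $\varphi$ applied to a fixed $c_0\in F$ with $\epsilon(c_0)\neq 0$; such a $c_0$ exists because $A\neq 0$, and $\epsilon\varphi=\epsilon$ forces $\varphi^k(c_0)\neq 0$, so the supports $\text{supp}(\varphi^k(c_0))$ remain non-empty. Fix $e\in\Sigma(\varphi)$ with rate $\delta=\text{gsh}_e(\varphi)>0$.

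For (a), I would pick $y_k\in\text{supp}(\varphi^k(c_0))$ and any $x_k\in h(y_k)$. The estimate $\beta_e(x_k)\ge v_e(c_0)+k\delta\to\infty$ together with $\beta_e$ being $1$-Lipschitz (and vanishing at $b$) forces $d(b,x_k)\to\infty$, so by properness of $M$ some subsequence converges in the cone topology to a boundary point $e'$; each $h(y_k)$ is a $G$-translate of one of the finitely many sets $h(x)$, $x\in X$, hence uniformly bounded in diameter, so $h(y_k)\to e'$ too and $e'\in\Lambda(\varphi)$. For (b) I would exploit that in the paper's sign convention $\beta_e$ is concave on any CAT(0) space (the negative of the standard convex Busemann function). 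Along the unit-speed geodesic $\sigma_k:[0,r_k]\to M$ from $b$ to $x_k$, where $r_k=d(b,x_k)$, concavity of $\beta_e\circ\sigma_k$ with $\beta_e(b)=0$ gives $\beta_e(\sigma_k(t))\ge(t/r_k)\beta_e(x_k)$ on $[0,r_k]$. Letting $\sigma_k\to\sigma$ (the ray from $b$ to $e'$) by Arzel\`a--Ascoli and using continuity of $\beta_e$, this forwards to $\beta_e(\sigma(t))\ge t\cdot\liminf_k\beta_e(x_k)/r_k\ge 0$ for each fixed $t>0$. The first-variation formula identifies the right derivative of $\beta_e\circ\sigma$ at $0$ as $\cos\angle_b(e,e')$, and for a concave function vanishing at the origin the chord slope is at most the initial slope, so $\cos\angle_b(e,e')\ge\beta_e(\sigma(t))/t\ge 0$, i.e., $\angle_b(e,e')\le\frac{\pi}{2}$.

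For (c), the $G$-finitary hypothesis activates Lemma \ref{bounded}, furnishing a displacement constant $D>0$ with $h(\varphi^k(c_0))\subseteq N_{kD}(h(c_0))$, hence $r_k\le kD+C_0$ where $C_0=\sup_{q\in h(c_0)}d(b,q)$. Substituting into the chord estimate from (b) sharpens it to $\cos\angle_b(e,e')\ge\delta/D>0$, a \emph{strict} $<\frac{\pi}{2}$ bound. I would then observe that a single subsequential limit $e^{**}$ of arbitrary points $p_k\in h(\varphi^k(c_0))$ lies in $\Lambda(\varphi)$ and, since every $e\in\Sigma(\varphi)$ simultaneously pushes these $p_k$ at its own positive rate $\text{gsh}_e(\varphi)$, the same $e^{**}$ is within angular distance $<\frac{\pi}{2}$ of every element of $\Sigma(\varphi)$; dually, any fixed $e^*\in\Sigma(\varphi)$ stays within $<\frac{\pi}{2}$ of every $e'\in\Lambda(\varphi)$ by the argument applied to sequences defining $e'$. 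Triangle inequalities in the three pair-configurations (both endpoints in $\Sigma$, mixed, both in $\Lambda$) then bound $\text{diam}_T\bigl(\Sigma(\varphi)\cup\Lambda(\varphi)\bigr)<\pi$, and a Lang-type circumcenter theorem on the complete CAT(1) space $(\partial M,d_T)$ (compare Theorem \ref{lang}) upgrades this to containment in a Tits metric ball of radius $<\frac{\pi}{2}$.

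For (d), the key ingredient is that $\beta_e-\beta_{e'}$ is globally $2\sin(\alpha/2)$-Lipschitz on $M$ whenever $d_T(e,e')\le\alpha$, since at every $p\in M$ the (sub)gradients of $\beta_e$ and $\beta_{e'}$ are unit tangents to the rays from $p$ to $e,e'$ making Alexandrov angle $\le\alpha$, so their difference has norm $\le 2\sin(\alpha/2)$. Combined with $\text{diam}\bigl(h(y)\cup h(\varphi(y))\bigr)\le D$ from Lemma \ref{bounded}, the function $\beta_e-\beta_{e'}$ varies by at most $2D\sin(\alpha/2)$ on that set, and a short minimum-interchange manipulation gives $\beta_{e'}(h(\varphi(y)))-\beta_{e'}(h(y))\ge\delta-2D\sin(\alpha/2)$ uniformly in $y\in Y$, which is positive once $\alpha$ is small enough; thus the Tits $\alpha$-ball around $e$ lies inside $\Sigma(\varphi)$. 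The main anticipated obstacle is the diameter-to-radius upgrade in (c): the bound $\text{diam}_T<\pi$ follows cleanly from concavity plus bounded displacement, but passing from it to the sharper radius $<\frac{\pi}{2}$ statement relies on genuinely CAT(1)-specific circumcenter machinery and is the most delicate step.
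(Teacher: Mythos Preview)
Your arguments for (a), (b), and (d) are sound and in places more elegant than the paper's. In (b) you use concavity of $\beta_e$ and the first-variation formula instead of the paper's comparison-triangle plus Law-of-Cosines computation (Theorem~\ref{near}); this is cleaner, though you should note explicitly that the bound $\cos\angle_b(e,e')\ge 0$ holds for \emph{every} base point (the same argument works since $\beta_e(x_k)\to\infty$ and the guaranteed shift is base-point independent), which is what gives the Tits-metric bound. In (d) your Lipschitz claim $|\beta_e-\beta_{e'}|\le 2\sin(\alpha/2)\cdot d$ is morally the content of the paper's Lemma~\ref{neighborhood}, but your gradient justification is informal for general CAT(0) spaces; the paper proves the needed estimate directly via a careful triangle inequality (see (\ref{star}) and Remark~\ref{core}).

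There is, however, a genuine gap in (c). You correctly establish that every point of $\Sigma(\varphi)$ is at Tits distance strictly less than $\pi/2$ from every point of $\Lambda(\varphi)$, and then you discard this bipartite structure, retaining only $\text{diam}_T(\Sigma\cup\Lambda)<\pi$. But \emph{diameter $<\pi$ does not imply circumradius $<\pi/2$ in CAT(1) spaces}: three equally spaced points on the circle $S^1=\partial\mathbb{R}^2$ have pairwise Tits distance $2\pi/3<\pi$ yet circumradius $2\pi/3>\pi/2$. No ``Lang-type circumcenter theorem'' gives you this upgrade from diameter alone. The paper's route (Corollary~\ref{tighter}) instead keeps the bipartite bound: since $\Lambda(\varphi)$ lies in a ball of radius $r(e_0)<\pi/2$ about any fixed $e_0\in\Sigma(\varphi)$, its convex hull $\widehat{\Lambda(\varphi)}$ has circumradius $<\pi/2$, and the circumcenter $z$ lies \emph{in} $\widehat{\Lambda(\varphi)}$ by convexity. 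Then each $e\in\Sigma(\varphi)$ satisfies $d(e,z)\le r(e)<\pi/2$ because $z\in\widehat{\Lambda(\varphi)}$ and balls of radius $<\pi/2$ are convex in CAT(1). This is exactly the step your ``diameter-to-radius upgrade'' was meant to replace, and it genuinely requires the asymmetric $\Sigma$-versus-$\Lambda$ structure you threw away.
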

\begin{proof} $G$-finitary maps have finite norm; hence the assertions follow from the results in Section \ref{pushes} which are established under this weaker assumption.
\end{proof}

\begin{thm}\label{open1}For every finitely generated $G$-module $A$ the subsets $\Sigmacirc (M;A)$ and $\Sigmadoublecirc(M;A)$ of $\del M$ are open in the Tits metric topology.
\end{thm}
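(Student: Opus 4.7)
The plan is essentially to bundle the content already assembled in the immediately preceding theorem. First I would unpack the definitions and record that
\[
\Sigmacirc(M;A) \;=\; \bigcup_{\varphi} \Sigma(\varphi), \qquad \Sigmadoublecirc(M;A) \;=\; \bigcup_{\varphi} \Sigma(\varphi),
\]
where in the first union $\varphi$ ranges over all $G$-finitary additive endomorphisms of $F$ with $\epsilon\varphi=\epsilon$, and in the second over all $G$-equivariant such endomorphisms. This is immediate from the definitions in Section~\ref{push}.

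Next I would invoke part~(d) of the preceding theorem: if $\varphi$ is $G$-finitary, then $\Sigma(\varphi)$ is an open subset of $\partial M$ in the Tits metric topology. For $\Sigmacirc(M;A)$ this directly presents it as a union of Tits-open sets, hence Tits-open. For $\Sigmadoublecirc(M;A)$, one only needs to note that every $G$-equivariant endomorphism $\varphi$ is trivially $G$-finitary (take the singleton volley $\Phi(a)=\{\varphi(a)\}$), so that part~(d) applies to each $\Sigma(\varphi)$ in the union defining $\Sigmadoublecirc(M;A)$ as well.

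Thus both sets are unions of Tits-open subsets of $\partial M$, and are therefore themselves open in the Tits metric topology. There is essentially no obstacle here: the analytic core of the argument is the openness of the single-endomorphism set $\Sigma(\varphi)$ under $G$-finitariness, which has already been established in Section~\ref{pushes} via the finite-norm hypothesis. The present statement is only the formal assembly of that pointwise openness into a global one through the union formulas.
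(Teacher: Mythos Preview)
Your proposal is correct and follows essentially the same approach as the paper: write each invariant as a union of sets $\Sigma(\varphi)$ over the appropriate class of endomorphisms, note that $G$-equivariant endomorphisms are in particular $G$-finitary, and invoke the already-established Tits-openness of each $\Sigma(\varphi)$. The paper's proof cites Theorem~\ref{Titsopen} (the finite-norm version) directly together with the observation that $G$-finitary maps have finite norm, while you cite part~(d) of the preceding theorem, which is simply the $G$-finitary specialization of the same result; the arguments are otherwise identical.
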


\begin{proof}Let $\epsilon :F\twoheadrightarrow A$ be a finitely generated
free presentation of $A$, and let a control map be chosen. The norm
of a $G$-finitary map is always finite, so  $\Sigmacirc (M;A)$ is the
union of sets $\Sigma (\varphi )$ where $\varphi $ runs through all
$G$-finitary endomorphisms of $F$ which commute with $\epsilon $ and
satisfy $\text {gsh}_{e}>0$ for some $e\in \Sigmacirc (M;A)$. And when
the union is restricted to those $\varphi $ which are $G$-endomorphisms
we get $\Sigmadoublecirc(M;A)$. Openness therefore follows from Theorem
\ref{Titsopen}.  \end{proof}

Theorem \ref{near} and Corollary \ref{tighter} imply:
\begin{thm} With respect to the Tits metric we have, for all finitely generated non-zero $G$-modules $A$, 
\begin{enumerate}[(i)]
\item $\Sigma (M;A)$ lies in the closed $\frac{\pi}{2}$-neighborhood of $\Lambda (M;{\Z})$.
\item For some $r<\frac{\pi}{2}$, $\Sigmacirc (M;A)$ lies in the closed $r$-neighborhood of $\Lambda (M;{\Z})$.
\end{enumerate}
\hfill$\square$
\end{thm}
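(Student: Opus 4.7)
The plan is to deduce both parts from the four-part theorem stated immediately above, via two identifications. First, the unions
\begin{equation*}
\Sigma(M;A)\;=\;\bigcup_{\varphi}\Sigma(\varphi),\qquad \Sigmacirc(M;A)\;=\;\bigcup_{\varphi\ G\text{-finitary}}\Sigma(\varphi),
\end{equation*}
where in each case $\varphi$ ranges over additive endomorphisms of a fixed finitely generated free presentation $\epsilon:F\twoheadrightarrow A$ satisfying $\epsilon\varphi=\epsilon$ and $\text{gsh}_{e}(\varphi)>0$ for some $e$; the first decomposition is (\ref{sigmadef}) and the second is the very definition of $\Sigmacirc(M;A)$. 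Second, the inclusion $\Lambda(\varphi)\subseteq\Lambda(M;\Z)$, which is immediate from the definition of $\Lambda(\varphi)$: each point of $\Lambda(\varphi)$ is a limit point of a sequence $(y_k)$ lying in $\bigcup_k \text{supp}(\varphi^k(c))\subseteq GX$, so it is a limit point of an orbit $Gb$.

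For (i), I would take $e\in\Sigma(M;A)$ and pick a witness $\varphi$ with $\text{gsh}_{e}(\varphi)>0$. Since $A\neq 0$, part~(a) of the preceding theorem yields some point in $\Lambda(\varphi)\subseteq\Lambda(M;\Z)$, and part~(b) then gives Tits distance $d(e,e')\leq\tfrac{\pi}{2}$ to any such $e'$. Hence $e$ lies in the closed $\tfrac{\pi}{2}$-neighborhood of $\Lambda(M;\Z)$.

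For (ii), I would run the same template, but with $\varphi$ $G$-finitary. By part~(c), $\Sigma(\varphi)\cup\Lambda(\varphi)$ is contained in a Tits ball of some radius $r_{\varphi}<\tfrac{\pi}{2}$, so every $e\in\Sigma(\varphi)$ lies within Tits distance $2r_{\varphi}$ of some $e'\in\Lambda(\varphi)\subseteq\Lambda(M;\Z)$.

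The main obstacle is uniformity of the radius. Part~(c) only supplies a per-$\varphi$ bound, and \emph{a priori} there is nothing preventing $\sup_{\varphi}r_{\varphi}=\tfrac{\pi}{2}$ as $\varphi$ ranges over all $G$-finitary witnesses. To conclude (ii) one needs a single $r<\tfrac{\pi}{2}$ bounding $d(e,\Lambda(M;\Z))$ for every $e\in\Sigmacirc(M;A)$. The natural way to get this is to sharpen part~(c) so as to estimate $d(\Sigma(\varphi),\Lambda(\varphi))$ \emph{directly} (rather than via the center of an enclosing Tits ball, which at best gives the factor-of-two loss above) by a constant depending only on the ambient data $(M,\epsilon)$ and not on the particular $G$-finitary $\varphi$. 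This is presumably the content of Theorem~\ref{near} and Corollary~\ref{tighter} referenced in the statement; once it is in place, (ii) follows from the same argument as (i).
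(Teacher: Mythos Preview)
Your argument for (i) is correct and is exactly the paper's: decompose $\Sigma(M;A)$ via (\ref{sigmadef}), use $A\neq 0$ together with $\epsilon\varphi=\epsilon$ to guarantee some $c$ with $\varphi^k(c)\neq 0$ for all $k$ (so $\Lambda(\varphi)\neq\emptyset$ by Proposition~\ref{not empty}), observe $\Lambda(\varphi)\subseteq\Lambda(M;\Z)$, and apply Theorem~\ref{near}.

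For (ii) there are two corrections. First, the detour through part~(c) and the factor of two is unnecessary. Theorem~\ref{near} is the \emph{quantitative} form of part~(b): it gives
\[
d(e,e')\;\leq\;\arccos\!\Big(\tfrac{\text{gsh}_e(\varphi)}{\|\varphi\|}\Big),
\]
and this is strictly less than $\tfrac{\pi}{2}$ exactly when $\|\varphi\|<\infty$, which holds for every $G$-finitary $\varphi$. So directly, for each $e\in\Sigmacirc(M;A)$ one gets $d(e,\Lambda(M;\Z))<\tfrac{\pi}{2}$.

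Second, your uniformity concern is well-founded, but your speculation about how it is resolved is not. Theorem~\ref{near} and Corollary~\ref{tighter} are precisely the sources of parts~(b) and~(c) of the preceding theorem; they give only the per-$\varphi$ bound $\arccos(\text{gsh}_e(\varphi)/\|\varphi\|)$, and nothing in them forces this ratio away from zero uniformly as $\varphi$ ranges over all $G$-finitary lifts of $\mathrm{id}_A$. (Iterating does not help: $\text{gsh}_e(\varphi^k)\geq k\,\text{gsh}_e(\varphi)$ but also $\|\varphi^k\|\leq k\|\varphi\|$.) So the cited results yield exactly the pointwise strict inequality just described, and the paper's statement~(ii), read as a single uniform $r<\tfrac{\pi}{2}$, does not follow from them alone. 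You have correctly located the issue; what you have actually proved is what the references actually deliver.
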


\begin{thm}\label{lang} When $e\in \Sigmadoublecirc (M;A)$ there exists
$r(e)<\frac{\pi}{2}$ such that $e$ lies in the $r(e)$-neighborhood of
a point of $\del M$ that is fixed by $G$.  
\end{thm}

\begin{proof}Let $\epsilon :F\twoheadrightarrow A$ be a finitely generated
free presentation of $A$, and let a control map be chosen. If $e\in
\Sigmadoublecirc (M;A)$ then $F$ admits a $G$-endomorphism $\varphi $
with $\text{gsh}_{e}(\varphi )>0$. Lemma \ref{L:3.2} implies that for each
$g\in G$ the $g$-translate $g\varphi $ pushes all of $F$ towards $ge$
with $\text{gsh}_{ge}(\varphi)=\text{gsh}_{e}(\varphi )$. But\footnote{ The group $G$ acts {\it diagonally} on the set Hom$_{\mathbb Z}(A,B)$ of ${\mathbb Z}$-homomorphisms; this means that when $g\in G$ and $\varphi : A\to B$ is a ${\mathbb Z}$-homomorphism,  $g\varphi : A\to B$ is defined by $(g\varphi)(a) = g\varphi(g^{-1}a)$.} $g\varphi =\varphi$. It follows that both $\Sigma (\varphi )$ and $\Lambda (\varphi )$ are $G$-invariant. By Corollary \ref{tighter}, $\Sigma (\varphi )\cup \Lambda (\varphi )$ lies in a ball of radius $<\frac{\pi}{2}$. By Theorem
B of \cite{LS97} every subset lying in a ball of radius $<\frac{\pi}{2}$
lies in a unique minimal circumball with unique center. In this case, the set
is $G$-invariant, so the center is fixed by $G$.  
\end{proof}

Theorem \ref{double} follows from Theorem \ref{closure3} and Theorem \ref{lang}.

\subsection{$\Sigma (M;A)$ in various contexts}\label{translation}

\subsubsection{\bf Euclidean translation action}\label{subsection}

Here we assume that $M={\E}^m$ is a finite-dimensional Euclidean space
and $G$ acts by translations via $\rho :G\to \text{Transl}({\E}^m)$. The
convex hull of the orbit $Gb$ is a subspace ${\E}^{n}\subseteq {\E}^m$
which contains all limit sets; i.e. $\Lambda ({\E}^{n};A) = \Lambda
({\E}^{m};A)$ and $\Sigma ({\E}^{n};A)=\Sigma ({\E}^{m};A)\cap \del{\E}^n$, 
so we can restrict attention to the cocompact space ${\E}^{n}$. The orbit $Gb$
may or may not be discrete in $\E^{n}$.  The horoballs of ${\E}^{n}$
are half spaces, and $\partial {\E}^{n}$ is the sphere at infinity
$S^{n-1}$. The induced action of $G$ on $S^{n-1}$ is trivial. By Corollary
\ref{invariant} it follows that for every finitely generated $G$-module
$A$ we have

$$\Sigma({\E}^{n};A)=\Sigmacirc ({\E}^{n};A)=\Sigmadoublecirc ({\E}^{n};A)$$

In the case when ${\E}^n$ is cocompact with discrete orbits it can be
viewed as ${\E}^{n}=\rho (G)\otimes {\R}$ equipped with an inner product,
with $G$ acting on $\rho (G)$ by left multiplication. When $\rho (G)$
is the abelianization of $G$ we call this the {\it canonical Euclidean
$G$-space}.  In that case we recover a special case of the ``Geometric
Invariant" of \cite{BNS87}, $\Sigma (G_{\text{ab}}\otimes {\R};A)$ which
in \cite{BRe88} is defined as 
$$\Sigma ^{0} (G;A):=\{e\mid A\text{ is finitely generated as a }G_e\text{-module}\}.$$
In Theorem \ref{fg} we show that this agrees with $\Sigma (G_\text {ab}\otimes {\R};A)$; i.e. 
$\Sigma (G_\text {ab}\otimes {\R};A) \text { equals } \Sigma ^{0} (G;A)$.

Since $\Sigma ^{0} (G;A)$ is the model case for the role of the module argument
in applications, a short review of its precise relationship with the
geometric invariants of  \cite{BNS87} and \cite{BRe88} is in order.

\vskip 5pt
{\bf Digression: Review of the BNSR Invariants}
\vskip 3pt
{\bf A. The} $\Sigma ${\bf -invariants of} \cite{BRe88}\label{Sigma}

We assume that the abelianization $G_{\text{ab}}$ is of  finite $\Z$-rank so that
${\E}^{n}=G_{\text{ab}}\otimes {\R}$ is finite-dimensional. To each
$e\in \partial {\E}^{n}$ is associated the homomorphism $\chi _{e}:G\to
{\R}$ given by the inner product  with the unit vector of ${\E}^{n}$
in the direction $e$, and the submonoid of $G$, $G_{e}=\{g\mid \chi
_{e}(g)\geq 0\}$. 

The (homological) geometric invariants $\Sigma ^{k}(G;A)$ of \cite{BRe88}
are open subsets of $\del (G_{\text{ab}}\otimes {\R})$. They are defined
when the $G$-module $A$ is of type $FP_k$ as:

\begin{equation}\label{our2}
\Sigma ^{k}(G;A)=\{e\mid A \text{ is of type } FP_{k} \text{ as a module over the monoid ring } {\Z}G_{e}\}
\end{equation}

The corresponding homotopical invariants $\Sigma ^{k}(G)$, introduced
in \cite{BRe88} and investigated in \cite{Re89}, are also open subsets
of $\del (G_{\text{ab}}\otimes {\R})$. They are defined for $k\geq 0$
when the group $G$ admits a cocompact free action on a $(k-1)$-connected
$CW$-complex $X$ as follows: on the free $G$-$CW$-complex $X$ choose (as
we always can) a continuous $G$-map $h:X\to {\R}$ which ``extends" $\chi
_{e}:G\to {\R}$ in the sense that $h(gx)=\chi _{e}(g)+h(x)$ for all $g\in
G$ and $x\in X$. 

$$ \Sigma ^{k}(G)=\{e\mid X \text{ and } h \text{ can be chosen so that }
h^{-1}([0,\infty ) \text{ is }(k-1)\text{-connected}\}$$

\noindent By the Hurewicz Theorem $\Sigma ^{k}(G)\subseteq \Sigma ^{k}(G;{\Z})$
 when both invariants are defined, and $\Sigma ^{1}(G)=\Sigma
 ^{1}(G;{\Z})$ for all finitely generated groups.  Bestvina and Brady
 \cite{BB97} provide finitely presented groups $G$ where
$\Sigma ^{2}(G)$ is a proper subset of $\Sigma ^{2}(G;{\Z})$.

{\bf B. The older} $\Sigma ${\bf -invariants of} \cite{BNS87}\label{older}

For $G$ abelian, $\Sigma ^{0}(G;A)$ was originally introduced in [BS80].
The noticeable similarity between the openness of $\Sigma (G;A)$ and
W. Neumann's openness result \cite{Ne79} for arbitrary finitely generated
groups eventually led to \cite{BNS87} which contains as its major tool
the invariant $\Sigma _{N}$ defined as follows for any finitely generated
group $G$ and any finitely generated $G$-group $N$:

$$\Sigma _{N}:=\{e\mid N \text{ is finitely generated as a } P\text{-group for some finitely generated submonoid }P\subseteq G_{e}\}.$$


\begin{rem} (A hidden sign-problem.) The actions of the group $G$ on the $\Sigma $-invariants are
sensitive to whether $G$ acts on the left or on the right
of spaces and resolutions. Thus comparing $\Sigma $-invariants
in different publications might require a sign which sends each point
of the boundary sphere to its antipode. For example, since \cite{BS80} and \cite{BNS87}
follow the convention that conjugation is a right action, the original
$BNS$-invariant of \cite{BNS87} would be antipodal to ours (see Formula $(1.3)$
in \cite{BRe88}). 
\end{rem}


The relationship\footnote{The simpler subset 
$\Sigma '_{N}:=\{e\mid N \text{ is finitely generated as a }G_{e}\text{-group}\}$ turned out to be less powerful for applications.} between $\Sigma _{N}$ and $\Sigma ^{k}(G;A)$ includes two notable features:
\begin {itemize}
\item If $N=A$ is a finitely generated $G$-module then ${\Sigma } _{A}=\Sigma ^{0}(G;A)$;
\item If $N=G'$ is the commutator subgroup of $G$ then $\Sigma _{G'}=\Sigma ^{1}(G;{\Z})$, where the action of $G$ on $\Z$ is trivial; see \cite{BRe88}. 
\end{itemize}

Thus the invariant $\Sigma ^{0} (G;A)$ (which equals $\Sigma
(G_\text {ab}\otimes {\R};A)$ of the present paper) has extensions in
two directions: The Bieri-Neumann-Strebel extension which replaces $A$
by a non-abelian $G$-group $N$, and the Bieri-Renz extension to higher
dimensions. The two extensions have substantial intersection beyond
$\Sigma ^{0} (G;A)$: this intersection contains the invariant $$\Sigma
_{G'}=\Sigma ^{1}(G)=\Sigma ^{1}(G;{\Z})$$ which plays a crucial role
in the theory and is therefore often referred to as the $\Sigma$-{\it
invariant} (or the {\it Bieri-Neumann-Strebel invariant}) of $G$. When
$G$ is a $3$-manifold group $\Sigma _{G'}$ recovers the Thurston norm
\cite{Th86}.

\begin{Rem} 
\begin{enumerate}[1.]
\item If $M$ is a proper $CAT(0)$ $G$-space and $A$ is a
finitely generated $G$-module we do have corresponding extensions of
our horospherical limit set $\Sigma (M;A)$, namely higher dimensional
invariants  $\Sigma ^{k}(M;A)$ (the present case being $k=0$) defined when
$A$ is of type $FP_k$, and even $\Sigma (M;N)$ where $A$ is replaced by a non-abelian $G$-group $N$. These will appear in
subsequent papers.
\item Higher-dimensional homotopical invariants $\Sigma ^{k}(M)$
have already been investigated in \cite{BGe03}, hence 
$\Sigma ^{1}(M)=\Sigma ^{1}(M;{\Z})$ is already available.  
\end{enumerate}
\end{Rem}

{\bf C. The case when the group $G$ is abelian}\label{abelian2}

The case when the group $G$ is abelian goes back to the paper \cite{BS80} where $\Sigma ^{0}(G;A)$ was introduced as a tool to decide exactly when a finitely generated metabelian group $\Gamma$ which fits into a short exact sequence 
$1\to A\to \Gamma \to G\to 1$ admits a finite presentation:

\begin{thm}\label{Strebel} $\Gamma $ is finitely presented if and only if $\Sigma ^{0} (G;A)$ together with its antipodal set covers $\del (G\otimes {\R})$.
\end{thm}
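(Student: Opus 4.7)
The plan is to identify the hypothesis with the classical Bieri--Strebel notion of a \emph{tame} $G$-module and then establish the equivalence in both directions. By Theorem~\ref{fg} and the discussion of Section~\ref{subsection}, $\Sigma^{0}(G;A) = \Sigma(G_{\text{ab}}\otimes\R;A)$ consists of those $e \in \del(G\otimes\R)$ for which $A$ is finitely generated as a module over ${\Z}G_{e}$. Thus the hypothesis on $\Sigma^{0}(G;A)$ says precisely that for every character $\chi : G \to \R$, the module $A$ is finitely generated over ${\Z}G_{\chi}$ or over ${\Z}G_{-\chi}$ --- this is the tameness condition.

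For necessity, assume $\Gamma$ is finitely presented. Fix lifts $\tilde x_1,\ldots,\tilde x_n \in \Gamma$ of a generating set of $G$ together with lifts of a finite ${\Z}G$-generating set of $A$; finite presentability then provides a finite list of defining relations $r_1,\ldots,r_m$. Rewriting each $r_j$ modulo $A$ yields a consequence of the defining relations of $G$, while its ``remainder'' expresses a specific element of $A$ as a ${\Z}G$-combination of the chosen generators whose support is contained in a fixed finite subset of $G$. For any character $\chi : G \to \R$ these finitely many group elements $g$ satisfy $|\chi(g)| \leq C$ for some $C$. A pigeonhole/valuation argument --- the Bieri--Strebel ``height reduction'' --- then shows that these finitely many identities already force $A$ to be finitely generated over ${\Z}G_{\chi}$ or over ${\Z}G_{-\chi}$, so that $e \in \Sigma^{0}(G;A) \cup (-\Sigma^{0}(G;A))$.

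For sufficiency, assume tameness. By the openness theorem for $\Sigma^{0}(G;A)$ (Theorem~\ref{open1}) and compactness of $\del(G\otimes\R)$, finitely many points $\pm e_1,\ldots,\pm e_k \in \Sigma^{0}(G;A)$ suffice so that every $e \in \del(G\otimes\R)$ lies in an open Tits neighborhood of one of them on which there is a ``pushing'' $G$-endomorphism lifting $\text{id}_{A}$ as in Section~\ref{push}. I would assemble a finite candidate presentation of $\Gamma$ from: (i)~lifts of a finite presentation of $G$; (ii)~for each $i$, lifts of a finite ${\Z}G_{\pm e_i}$-generating set of $A$; (iii)~commutator relations forcing the chosen generators of $A$ to commute; and (iv)~for each $i$, the finitely many module-theoretic relations that witness ${\Z}G_{\pm e_i}$-finite generation of $A$. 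The main obstacle is verifying sufficiency: an arbitrary relator in $\Gamma$ rewrites, modulo (i) and (iii), as an identity in the ${\Z}G$-module $A$, and one must reduce it to a consequence of (iv) by induction on a suitable max-of-characters height. The crux is that the finite cover by sectors around the $e_i$ permits one to \emph{uniformly} decrease this height at each step; that the induction terminates in finitely many steps is the classical Bieri--Strebel finiteness lemma and the technical heart of the theorem.
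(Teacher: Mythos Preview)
The paper does not actually prove Theorem~\ref{Strebel}; it is stated there as a background result from \cite{BS80}, in the ``Digression: Review of the BNSR Invariants'' portion of Section~\ref{subsection}. So there is no in-paper proof to compare against.

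Your outline correctly identifies the classical Bieri--Strebel strategy: the condition on $\Sigma^{0}(G;A)$ is exactly tameness of the $G$-module $A$, necessity comes from extracting module relations from a finite presentation of $\Gamma$, and sufficiency is obtained by assembling a candidate presentation from the finitely many witnesses of $G_{\chi}$-finite generation and then reducing arbitrary relators by a height argument. That is indeed the shape of the proof in \cite{BS80}.

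That said, what you have written is a proof \emph{sketch}, not a proof. In the necessity direction, the sentence ``A pigeonhole/valuation argument \ldots\ then shows that these finitely many identities already force $A$ to be finitely generated over ${\Z}G_{\chi}$ or over ${\Z}G_{-\chi}$'' hides the entire content: you have not explained what quantity you are doing induction on, nor why the finitely many relations extracted from the presentation of $\Gamma$ suffice to reduce it. In the sufficiency direction you explicitly concede this, writing that the termination of the height induction ``is the classical Bieri--Strebel finiteness lemma and the technical heart of the theorem.'' Deferring the technical heart to a named lemma you do not prove is fine as an outline, but it means the proposal does not stand on its own. If you want this to function as a proof rather than a plan, you need to actually carry out the height-reduction arguments in both directions, or else simply cite \cite{BS80} as the paper does.
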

\begin{rem} If $A$ is not a $G$-module but merely a $G$-group containing no non-abelian free subgroups, $\del (G\otimes {\R})= -\Sigma (G,A_{\rm ab})\cup \Sigma (G,A_{\rm ab})$ is still a necessary condition for finite presentability of $\Gamma $.
\end{rem}

The set $\Sigma ^{0} (G;A)$ also determines  whether $\Gamma$ is of type $FP_{\infty}$
(for metabelian groups this is equivalent to the existence of a $K(\Gamma
,1)$-complex with finite skeleta). The conjunction of results in \cite{BS82} and \cite{BGr82} yields:	
 \begin{thm}\label{finite complement} $\Gamma $ is of type $FP_{\infty}$ if and only if
the complement of $\Sigma ^{0} (G;A)$ is finite and is contained in an
open hemisphere.
\end{thm}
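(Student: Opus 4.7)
The plan is to assemble the two directions from the Bieri-Groves tameness criterion \cite{BGr82} and the Bieri-Strebel construction theorem \cite{BS82}, combined with the polyhedrality of $\Sigma^{0}(G;A)$ of \cite{BGr84}. Throughout I identify $\Sigma^{0}(G;A)$ with the horospherical limit set $\Sigma(G_{\text{ab}}\otimes\R;A)$ as established earlier in the paper, and I write $\Sigma^{c}$ for its closed complement in the sphere $\partial(G_{\text{ab}}\otimes\R)\cong S^{n-1}$.

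For the ``only if'' direction I would assume $\Gamma$ is of type $FP_{\infty}$, hence of type $FP_{m}$ for every $m\geq 1$. The Bieri-Groves valuation criterion then says that $\Sigma^{c}$ is $m$-\emph{tame}: every $m$-point subset of $\Sigma^{c}$ lies in some open hemisphere of $S^{n-1}$, equivalently $0$ is not in its convex hull in $G_{\text{ab}}\otimes\R$. Letting $m\to\infty$ and using the polyhedrality of $\Sigma^{c}$, I would argue that $\Sigma^{c}$ cannot contain any polyhedral cell of positive dimension: a line segment inside such a cell, together with sufficiently many suitably chosen vertices of adjacent cells, would force $0$ into the convex hull of a finite subset of $\Sigma^{c}$. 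Consequently $\Sigma^{c}$ is finite, and $\infty$-tameness of a finite set reduces to the requirement that the whole set lie in a common open hemisphere.

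For the converse direction, assuming $\Sigma^{c}=\{e_{1},\ldots,e_{k}\}$ is finite and lies in an open hemisphere, I would invoke the construction in \cite{BS82}. The open-hemisphere hypothesis produces a homomorphism $\chi:G\to\R$ that is strictly positive on each $e_{i}$, so that $A$ is finitely generated as a $\Z G_{\chi}$-module and, in fact, remains so under all small perturbations of $\chi$ avoiding the finitely many points $e_{i}$. From this robust finite generation, \cite{BS82} assembles, by induction on a flag of characters and a contracting-monoid construction on the metabelian extension $1\to A\to\Gamma\to G\to 1$, a cocompact free $\Gamma$-$CW$-complex with finite skeleta in every dimension, so that $\Gamma$ is of type $F_{\infty}$ and in particular of type $FP_{\infty}$.

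The main obstacle is the finiteness step in the ``only if'' direction: deducing from ``every finite subset of $\Sigma^{c}$ lies in an open hemisphere'' that $\Sigma^{c}$ itself is finite. This implication is false for arbitrary closed subsets of the sphere, and its validity here depends essentially on the fact that $\Sigma^{c}$ is a \emph{rational} polyhedron --- the convex-hull obstructions to lying in an open hemisphere are then witnessed by finitely many rational directions, so any positive-dimensional polyhedral component of $\Sigma^{c}$ harbours a finite obstructing subset detected by $m$-tameness for some $m$. This is the nontrivial input from the Bieri-Groves program; it is what makes Theorem \ref{finite complement} a genuinely metabelian statement, not a formal consequence of the general CAT(0) machinery developed in the body of the present paper.
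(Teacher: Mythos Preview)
The paper does not give a proof of this statement; it simply records it as the conjunction of results from \cite{BS82} and \cite{BGr82}, so there is no in-paper argument to compare against beyond that attribution.

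Your delegation of the ``if'' direction to the construction in \cite{BS82} is correct in spirit. The ``only if'' direction, however, has a genuine gap precisely at the point you yourself flag. The implication ``$m$-tame for every $m$ and rationally polyhedral $\Rightarrow$ finite'' is false: a short geodesic arc in $S^{n-1}$ with rational endpoints is a rational polyhedron, and every finite subset of it lies in a single open hemisphere (any hemisphere containing the whole arc), yet the arc is infinite. Your sentence that ``any positive-dimensional polyhedral component of $\Sigma^{c}$ harbours a finite obstructing subset detected by $m$-tameness for some $m$'' is therefore simply not true for rational polyhedra in general, and invoking rationality does not repair it. What could rescue this step is not polyhedrality but the \emph{balance} property of $\Sigma^{c}$ (see the Appendix of the present paper): a nonzero balanced cone cannot sit inside an open half-space. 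You never invoke balance, and in any case the paper attributes the theorem to \cite{BS82} and \cite{BGr82}, both from 1982 and hence predating \cite{BGr84} where polyhedrality and balance were first established. The original argument in \cite{BGr82} presumably does not descend through $m$-tameness at all, but rather exploits the Noetherian structure of ${\Z}G$ directly (showing that a metabelian $\Gamma$ of type $FP_{\infty}$ is virtually of type $FP$, hence constructible) to force the required finiteness of $\Sigma^{c}$.
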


In fact there is rather precise but well justified

\begin{conj}(The $FP_m$-Conjecture for metabelian groups) $\Gamma $ is of type $FP_n$ if and only if  every $n$-point subset of $\Sigma ^{0} (G;A)^c$ lies in an open hemisphere of 
$\del (G\otimes {\R})$.  
\end{conj}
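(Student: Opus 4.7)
The statement is presented as an open conjecture, so I sketch a plan of attack rather than a finished argument. The ``only if'' direction (necessity of the hemisphere condition) is essentially due to Bieri--Groves \cite{BGr82}. My plan here would be to reprove it in the framework of this paper: start from a finite-type projective resolution of $\Z$ over $\Z\Gamma$, apply the Lyndon--Hochschild--Serre spectral sequence for $A\rightarrowtail \Gamma\twoheadrightarrow G$, and observe that the $FP_n$ hypothesis forces $A$ to be of type $FP_n$ over each monoid ring $\Z G_e$ where $e$ is a non-trivial convex combination of the given $n$ characters. If those $n$ points of $\Sigma^0(G;A)^c$ failed to lie in an open hemisphere, their convex hull would contain the origin and one could select $e$ to contradict the $FP_0$-content, namely finite generation of $A$ as a $\Z G_e$-module, which is Theorem \ref{fingen} of the present paper applied to $M=G_\text{ab}\otimes\R$.

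For the genuinely open ``if'' direction, my plan is to exploit the dynamical description (\ref{sigmadef}) of $\Sigma(G_\text{ab}\otimes \R;A)$ established here. Given a finite free presentation $\epsilon\colon F\twoheadrightarrow A$ and the hemisphere hypothesis, one expects to select finitely many $\Z G$-equivariant endomorphisms $\varphi_1,\ldots,\varphi_N$ of $F$ with $\epsilon\varphi_i=\epsilon$ such that the open sets $\Sigma(\varphi_i)$ cover all of $\del(G\otimes \R)$ except possibly the convex hull of some $(n-1)$-element subset of $\Sigma^0(G;A)^c$. Assembling the $\varphi_i$ into a single $G$-finitary volley on $F$ should produce the first step of a finite-type free resolution of $A$ over $\Z\Gamma$, because the hemisphere hypothesis guarantees that no direction of ``escape to infinity'' is missed. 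Iterating the construction on the resulting kernel modules, and combining the outcome with a finite-type $\Z G$-resolution of $\Z$, should yield a finite-type $\Z\Gamma$-resolution of $\Z$ through dimension $n$.

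The main obstacle, and the reason the conjecture has resisted proof for decades, is precisely the inductive step: the kernel of a finitary assembly of the $\varphi_i$ is a new $\Z G$-module whose own $\Sigma^0$ must still satisfy the inherited hemisphere condition for the induction to continue. Controlling the propagation of the polyhedral structure of $\Sigma$-invariants along a resolution requires a higher-dimensional openness and polyhedrality theorem for the invariants $^{\circ}\Sigma^k$ which are only announced for a sequel in the present paper. This is why existing partial results (Bieri--Strebel's $n=2$ case of Theorem \ref{Strebel}, and \AA{}berg's and Kochloukova's results for $A$ of small $\Z$-rank) succeed only in situations where the propagation step can be performed by hand. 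The plan therefore reduces the $FP_m$-conjecture to establishing a higher-dimensional analogue of Theorem \ref{fingen} and of the openness theorems, and suggests that the cleanest route to the conjecture is through the higher-dimensional dynamical limit sets rather than through direct syzygy computations.
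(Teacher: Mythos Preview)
You are right that this is stated as an open conjecture; the paper offers no proof. Immediately after the statement the authors note that it appeared in print in \cite{BGr82}, list partial contributions by \AA{}berg, Noskov, Kochloukova, Bux, and Bieri--Harlander, and say explicitly that ``the general case remains open.'' There is therefore nothing in the paper to compare your plan against, and your overall assessment of the state of affairs --- necessity largely understood, sufficiency blocked by the inductive propagation of $\Sigma$-conditions along a resolution, with the higher-dimensional $^{\circ}\Sigma^{k}$ (announced here for a sequel) as the natural tool --- is consistent with the authors' own framing.

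One small correction to your necessity sketch: Theorem~\ref{fingen} characterizes the condition $\Sigma(M;A)=\del M$, not membership of an individual boundary point in $\Sigma^{0}(G;A)$. The contradiction you want when the $n$ points fail to lie in an open hemisphere comes directly from the definition of $\Sigma^{0}(G;A)$ (failure of finite generation of $A$ over ${\Z}G_{e}$ for the relevant $e$), together with the Bieri--Renz theory relating $FP_n$ for $\Gamma$ to $FP$-conditions on $A$ over monoid rings, rather than from Theorem~\ref{fingen}.
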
 

This conjecture appeared in print in \cite{BGr82}. Aberg \cite{A86}, Noskov
\cite{No97}, Kochloukova \cite{K96}, Bux \cite{Bu97}, and Bieri-Harlander
\cite{BHa01} have contributed further results towards its verification, but the
general case remains open.

{\bf D. Connection to tropical algebraic geometry}\label{short tropical}

Such applications are not the only point of interest: Also interesting is the mathematics developed in the effort to compute $\Sigma ^{0} (G;A)$ explicitly when $G={\Z}^n$. In that case $\Sigma ^{0} (G;A)$ depends only on the annihilator ideal of $A$ in the Laurent
polynomial ring ${\Z}G$, $I=\text{Ann}_{{\Z}G}(A)$. The main result of
\cite{BGr84} exhibits the complement of $\Sigma ^{0} (G;A)$ in $\del (G\otimes
{\R})$ as the radial projection of a certain rational-polyhedral
subset $\Delta ^{{\Z}}\subseteq G\otimes {\R}={\E}^n$, i.e.
$$\Sigma ^{0} (G;A)^{c}=\Sigma (G;{\Z}G \slash I)^{c}=\del({\R}_{>0}\Delta
^{{\Z}}).$$
$\Delta ^{{\Z}}$ is defined in terms of valuations
on the commutative ring ${\Z}G \slash I$. Some
fifteen years later it turned out, see \cite {EKL06}, that $\Delta
^{{\Z}}$ is the integral version of what is now called the {\it tropical
variety} associated to the ideal $I$ (or the 
{\it tropicalization}\footnote{For an up-to-date introduction to tropical
geometry with
a certain emphasis on computational aspects see \cite{MS15}.} of the
algebraic variety $V$ of $I$). In \cite{BGr84} $\Delta ^{D}$ was
investigated over a Dedekind domain $D$ in order to include fields
as well as ${\Z}$. The field version of \cite{BGr84} anticipated some
fundamental facts at the roots of tropical geometry, 
among other things the result that if $V$ is irreducible then its
tropicalization is of
pure dimension equal to $\text{dim}V$.
 
For more details see the Appendix.

\subsubsection{\bf The case when $M$ is  Gromov-hyperbolic}\label{hyperbolic}

The proper $CAT(0)$ space $M$ is {\it Gromov-hyperbolic} if for some
$\delta \geq 0$, every geodesic triangle in $M$ lies in the $\delta
$-neighborhood of any two of its sides. We write $\widehat M$ for $M\cup
\partial M$; this is a compact metrizable space, and the given action
of $G$ on $M$ extends to an action on $\widehat M$ by homeomorphisms. To
avoid trivialities we assume the $G$-orbits in $M$ are unbounded, and the given
finitely generated $G$-module $A$ is non-zero. As before, we write $\Lambda (G)$
for the cone topology  limit set of $G$ in $\del M$; i.e. $\Lambda (G)=\Lambda
(M;{\Z})$.

By an {\it interval} in $\widehat M$ we mean any one of: a closed geodesic segment in
$M$, a geodesic ray in $M$ together with its end point in $\partial M$,
or a line in $M$ together with its two end points in $\partial M$. When
$S\subseteq \widehat M$ we define $S[1]=S$ and inductively for $n\geq 2$ $S[n]$ is the union of all intervals whose endpoints lie in $S[n-1]$. If $S$ is $G$-invariant, so is $S[n]$. In the literature, $S[2]\cap M$ is sometimes called the {\it weak convex hull} of $S$.

Among the results in Section \ref{hyperbolic2} are the following: 

\begin{thm}\label{alternatives} $\Sigmacirc (M;A)$ is either empty, or is a singleton set, or coincides with the limit set $\Lambda (M;A)$.  Moreover, the following are equivalent:
\begin{enumerate}[(i)]
\item $\Sigmacirc (M;A)$ contains at least $2$ points;
\item $\Sigmacirc (M;A)=\Sigma (M;A)=\Lambda (M;A)=\Lambda (G)\neq \emptyset $; 
\item $\Lambda (G)[2]$ is cocompact, and $A$ has bounded support over $\Lambda (G)[2]$.
\end{enumerate}
\end{thm}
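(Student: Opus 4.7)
The strategy is to combine three ingredients: the minimality of the limit set $\Lambda(G)$ in the Gromov-hyperbolic boundary, the absorption of orbit closures by $\Sigmacirc(M;A)$ from Theorems \ref{closure1} and \ref{closure3}, and Theorem \ref{all} applied to the weak convex hull $N:=\Lambda(G)[2]\cap M$.

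First I would record the containment $\Sigma(M;A)\subseteq\Lambda(G)$ for $A\neq 0$: if $e\in\Sigma(M;A)$ and $0\neq a\in A$, then every horoball $HB_e$ contains $h(c)$ for some $c\in\epsilon^{-1}(a)$, and since $h(c)=\{gb:g\in\mathrm{supp}(c)\}$, the orbit $Gb$ meets every horoball at $e$; thus $e\in\Lambda(G)$. Next, the trichotomy: in a proper Gromov-hyperbolic $CAT(0)$ space, $\Lambda(G)$ is the unique minimal closed $G$-invariant subset of $\partial M$ (a standard ping-pong consequence of the hyperbolic dynamics), so every orbit closure $\mathrm{cl}_{\partial M}(Ge)$ either lies in the global fixed-point set $\mathrm{Fix}_G(\partial M)$ (of cardinality at most two) or contains all of $\Lambda(G)$. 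Theorem \ref{closure3} writes $\Sigmacirc(M;A)$ as a union of such orbit closures, giving the three cases: empty, singleton fixed point, or all of $\Lambda(G)$; the axial two-fixed-point case, where $\Lambda(G)=\mathrm{Fix}_G(\partial M)$ has exactly two points, is absorbed into the third. Matching $\Sigmacirc$ with $\Lambda(M;A)$ in the third case reduces, via $\Lambda(M;A)\subseteq\Sigma(M;A)\subseteq\Lambda(G)$, to proving $\Lambda(G)\subseteq\Lambda(M;A)$, accomplished in (iii)$\Rightarrow$(ii) below.

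For the equivalences: \emph{(iii)$\Rightarrow$(ii).} Let $B\subseteq N$ be bounded and witness the bounded support of $A$. For $e\in\Lambda(G)=\partial N$ and a cone neighborhood $C$ of $e$ in $M$, cocompactness of the $G$-action on $N$ provides $g\in G$ with $gB\subseteq C$; for any $a\in A$, choose $c'\in\epsilon^{-1}(g^{-1}a)$ with $h(c')\subseteq B$, and then $gc'\in\epsilon^{-1}(a)$ satisfies $h(gc')\subseteq gB\subseteq C$, so $e\in\Lambda(M;A)$. This collapses $\Lambda(G)\subseteq\Lambda(M;A)\subseteq\Sigma(M;A)\subseteq\Lambda(G)$ to equalities, and Theorem \ref{closure1} then gives $\Sigmacirc(M;A)=\Lambda(G)$ since $\Sigma(M;A)=\Lambda(G)$ is closed and $G$-invariant. \emph{(ii)$\Rightarrow$(i).} The equality $\Lambda(M;A)=\Lambda(G)\neq\emptyset$ forces every point of $\Lambda(G)$ to be a conical limit point of $G$, which rules out $|\Lambda(G)|=1$ (a lone parabolic fixed point is horospherical but not conical, and a lone conical fixed point would force a second fixed point on a common axis). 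Hence $|\Sigmacirc|=|\Lambda(G)|\geq 2$. \emph{(i)$\Rightarrow$(iii).} The trichotomy forces $\Sigmacirc(M;A)=\Sigma(M;A)=\Lambda(G)$. Using the quasi-convexity of $N$ (a standard consequence of Gromov-hyperbolicity applied to the weak convex hull), one transfers this to $\Sigma({}_{G}N;A)=\partial N$; Theorem \ref{all} applied to $N$ then delivers cocompactness of the $G$-action on $N$ and bounded support of $A$ over $N$.

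The main obstacle is the transfer step in (i)$\Rightarrow$(iii): one must show that a representative $c\in\epsilon^{-1}(a)$ whose control image lies in a horoball $HB_e^M$ can be replaced, without changing $\epsilon(c)$, by one whose control image lies in a (slightly shallower) horoball of $N$. Gromov-hyperbolicity enters essentially here: the thin-triangles property together with quasi-convexity of $N$ guarantees that nearest-point projection $M\to N$ is coarsely Lipschitz and keeps points within a uniform distance of rays to $e$, so one pays only a bounded Busemann cost to land in $N$ while remaining deep in the horoball. Setting up this projection argument cleanly, together with the verification that horoballs of the intrinsic metric on $N$ are cofinal up to bounded error with the traces $HB_e^M\cap N$, is the technical heart of the argument.
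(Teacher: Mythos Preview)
Your overall architecture---minimality of $\Lambda(G)$, the orbit-closure absorption of Theorem~\ref{closure3}, and an appeal to the ``$\Sigma=\partial M$'' characterization---matches the paper's. But two steps do not go through as written.

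\textbf{First, the inclusion is reversed.} You use $\Lambda(M;A)\subseteq\Sigma(M;A)$ in the chain ``$\Lambda(G)\subseteq\Lambda(M;A)\subseteq\Sigma(M;A)\subseteq\Lambda(G)$''. In a Gromov-hyperbolic $CAT(0)$ space the correct inclusion is the opposite: by property~$H2$ each basic cone neighborhood of $e$ contains a horoball at $e$, so if every horoball meets $h(\epsilon^{-1}(a))$ then every cone neighborhood does, giving $\Sigma(M;A)\subseteq\Lambda(M;A)$ (this is exactly observation~(4) in the paper). Consequently your (iii)$\Rightarrow$(ii) argument, which only places $gB$ in a cone neighborhood and concludes $e\in\Lambda(M;A)$, does not yet yield $e\in\Sigma(M;A)$. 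The fix is easy---run the same cocompactness argument with horoballs instead of cone neighborhoods, as the paper does---but the chain as you wrote it is false. This also infects your (ii)$\Rightarrow$(i): the equality $\Lambda(M;A)=\Lambda(G)$ is automatic from the definitions (cone-topology limit points of $\epsilon^{-1}(a)$ are cone-topology limit points of $Gb$), so it carries no information, and your appeal to ``conical limit points'' conflates cone-topology limits with the much stronger notion of conical approach along a ray. The paper instead uses Proposition~\ref{fix} (via Remark~\ref{singleton}) to rule out $|\Sigma(M;\mathbb Z)|=1$.

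\textbf{Second, Theorem~\ref{all} cannot be applied to $N=\Lambda(G)[2]\cap M$.} That theorem is stated for proper $CAT(0)$ spaces, and its proof uses almost-geodesic-completeness of $M$ (via the Geoghegan--Ontaneda theorem) to turn horospherical pushes into pushes toward interior points. The weak convex hull $N$ is closed and $G$-invariant but is neither geodesic nor $CAT(0)$ in general, so neither the hypotheses nor the proof mechanism transfer. Your proposed workaround---nearest-point projection and a comparison of $N$-horoballs with traces $HB_e^M\cap N$---would at best give a quasi-isometry to a genuine $CAT(0)$ subspace (the true convex hull of $\Lambda(G)$), and you would then have to redo the bounded-support argument there. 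The paper sidesteps this entirely: cocompactness of $\Lambda(G)[2]\cap M$ is obtained directly from $\Lambda(G)\subseteq\Sigma(M;\mathbb Z)$ via Lemma~\ref{cocompact} (a pigeonhole argument with growing empty balls), and bounded support comes from rerunning the (i)$\Rightarrow$(ii) half of Proposition~\ref{P:7.6}, replacing almost-geodesic-completeness by the elementary two-endpoint Lemma~\ref{extendible}. Those two lemmas are the substance of (ii)$\Rightarrow$(iii); your outline does not supply a substitute for either.
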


When the $G$-orbits in $M$ are discrete, the phrase ``$A$ has bounded support over $M$" becomes ``$A$ is finitely generated over the point stabilizer $G_b$". Hence:

\begin{cor}\label{finitely generated} Assume the $G$-orbits in $M$ are discrete. Then $\Sigma (M;A)=\Lambda (G)$ if and only if $\Lambda (G)[2]$ is cocompact and $A$
is finitely generated over the stabilizer $G_b$ of $b$.
\end{cor}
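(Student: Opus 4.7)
The plan is to derive this corollary from Theorem \ref{alternatives} by recasting its bounded-support condition into the language of stabilizer generation using the discreteness of orbits. First I would reduce to the dynamical invariant: since $\Lambda(G)\subseteq\partial M$ is closed and $G$-invariant, Corollary \ref{invariant} gives $\Lambda(G)\subseteq\Sigma(M;A)\iff\Lambda(G)\subseteq\Sigmacirc(M;A)$; combined with the inclusion chain $\Sigmacirc(M;A)\subseteq\Sigma(M;A)\subseteq\Lambda(M;A)\subseteq\Lambda(G)$ from Remark \ref{limitset}, this yields $\Sigma(M;A)=\Lambda(G)\iff\Sigmacirc(M;A)=\Lambda(G)$. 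Assuming $|\Lambda(G)|\geq 2$ (the cases $\Lambda(G)=\emptyset$ and $|\Lambda(G)|=1$ are degenerate and follow quickly from the finiteness of discrete bounded $G$-orbits), Theorem \ref{alternatives} recasts this as its condition (iii): $\Lambda(G)[2]$ is cocompact and $A$ has bounded support over $\Lambda(G)[2]$.

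The substantive remaining work is to show that, under discreteness of orbits together with cocompactness on $\Lambda(G)[2]$, ``bounded support over $\Lambda(G)[2]$'' is equivalent to ``$A$ is finitely generated as a $G_b$-module'' for $b\in\Lambda(G)[2]$. The $(\Leftarrow)$ direction is direct: lift generators $a_1,\dots,a_n$ to $c_i\in F$ with bounded $h(c_i)\subseteq M$; since $G_b$ acts isometrically fixing $b$, each orbit $G_b\cdot p$ lies on the sphere of radius $d(p,b)$ around $b$, so $\bigcup_i G_b\cdot h(c_i)$ is bounded in $M$, and cocompactness of $\Lambda(G)[2]$ together with $b\in\Lambda(G)[2]$ lets us arrange this bounded set to lie inside $\Lambda(G)[2]$. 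For $(\Rightarrow)$: choose a control map with $b\in h(x)$ for every $x\in X$. If $A$ has bounded support over $B\subseteq\Lambda(G)[2]$ and a basis element $gx$ satisfies $gh(x)\subseteq B$, then $gb\in B\cap Gb$; by discreteness, $B\cap Gb=\{g_1b,\dots,g_kb\}$ is finite, so $g$ lies in one of the finitely many right cosets $g_iG_b$. Writing $g=g_iu$ with $u\in G_b$ and $v=g_iug_i^{-1}\in G_{g_ib}$, we get $\epsilon(gx)=v\cdot\epsilon(g_ix)$, exhibiting $A$ as finitely generated over the union $\bigcup_i\mathbb{Z}G_{g_ib}$ by the finite set $\{\epsilon(g_ix)\mid 1\leq i\leq k,\ x\in X\}$; standard bookkeeping (parallel to the argument in the proof of Corollary \ref{total} / Corollary \ref{smaller}) then converts this to $\mathbb{Z}G_b$-generation.

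The main obstacle is precisely the last coset bookkeeping: the natural right-coset decomposition of $\{g\in G:gb\in B\}$ produces coefficients in the conjugate stabilizers $G_{g_ib}=g_iG_bg_i^{-1}$ rather than in $G_b$ itself, so converting the expression into a $\mathbb{Z}G_b$-linear combination requires enlarging the candidate generating set to absorb the finitely many conjugates and carefully tracking the $G_b$-action. This is the same technical step executed in the proof of Corollary \ref{total}; the only new observation required here is that cocompactness of $\Lambda(G)[2]$ (rather than of $M$) suffices to arrange the bounded support inside $\Lambda(G)[2]$.
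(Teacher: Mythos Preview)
Your approach is essentially the same as the paper's: derive the corollary from Theorem~\ref{alternatives} (the paper's Theorem~\ref{alternatives2}) by replacing ``bounded support'' with ``finitely generated over $G_b$'' under the discrete-orbit hypothesis. The paper does this in one sentence, citing the argument already given for Corollary~\ref{smaller} (the filtration of $F$ by $h^{-1}(fB_m(b))$ together with Brown's criterion); you spell out more of the details and also make explicit the reduction from $\Sigma(M;A)=\Lambda(G)$ to condition~(ii) of the theorem via Corollary~\ref{invariant}.

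One small point in your $(\Rightarrow)$ argument: by decomposing $\{g:gb\in B\}$ into cosets $g_iG_b$ you land in the conjugate stabilizers $G_{g_ib}$, and then have to invoke ``standard bookkeeping'' to get back to $G_b$. This detour is avoidable: simply enlarge the bounded set $B$ to a ball $B_m(b)$ centered at $b$, which is $G_b$-invariant (since $G_b$ fixes $b$ and acts by isometries). Then $\{g:gb\in B_m(b)\}$ is a union of finitely many \emph{left} cosets $G_b h_j$, so $g=uh_j$ with $u\in G_b$ gives $\epsilon(gx)=u\cdot\epsilon(h_jx)\in G_b\cdot\epsilon(h_jx)$ directly, and the finite set $\{\epsilon(h_jx)\}$ generates $A$ over $\mathbb{Z}G_b$ with no conjugation needed. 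This is exactly what the paper's filtration-by-balls argument in Corollary~\ref{smaller} encodes.
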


 We note that, unlike the similar-sounding Corollary
\ref{total}, Corollary \ref{alternatives} gives a characterization of
``$\Sigma (M;A)=\Lambda (M;A)$" in situations where $M$ itself need not
be cocompact. This is also something to note about the next theorem:

\begin{thm}\label{finite type} Assume the $G$-orbits in $M$ are discrete,
and that the stabilizer $G_b$ of some (any) point $b$ is finite. If
$\Sigmacirc (M;{\Z})$ is non-empty then $G$ is of type $F_{\infty}$.
\end{thm}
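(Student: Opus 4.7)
First, following the plan to combine the structure theorem for $\Sigmacirc(M;A)$ in the Gromov-hyperbolic setting with the classical theory of word-hyperbolic groups, I would apply Theorem \ref{alternatives} with $A=\Z$. Since the paragraph following Theorem \ref{alternatives1} rules out the singleton possibility whenever $A$ is the trivial module $\Z$, the hypothesis $\Sigmacirc(M;\Z)\neq\emptyset$ forces $\Sigmacirc(M;\Z)$ to have at least two points, and clause (iii) of Theorem \ref{alternatives} then yields that $G$ acts cocompactly on the weak convex hull $W:=\Lambda(G)[2]\cap M$ (the bounded-support condition on $\Z$ being automatic).

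Second, I would promote this cocompact action to a geometric action on a proper geodesic hyperbolic space. The set $W$ is closed and $G$-invariant; by $\delta$-hyperbolicity of $M$ it is quasi-convex, and either by passing to the closed $CAT(0)$ convex hull of $\Lambda(G)$ in $M$, or by equipping $W$ with its induced length metric, one obtains a proper geodesic Gromov-hyperbolic $G$-space $W'$ within bounded Hausdorff distance of $W$. The hypothesis that $G_b$ is finite, together with discreteness of orbits, makes the action of $G$ on $M$ properly discontinuous; its restriction to $W'$ is therefore properly discontinuous and cocompact. The \v{S}varc--Milnor lemma now gives that $G$ is finitely generated and that the orbit map $G\to W'$ is a quasi-isometry; consequently every Cayley graph of $G$ is Gromov-hyperbolic, i.e.\ $G$ is word-hyperbolic.

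Third, I would invoke Rips's theorem: for $d$ sufficiently large the Rips complex $P_d(G)$ is a contractible cocompact $G$-CW complex whose cell stabilizers are finite (each such stabilizer embeds into the symmetric group on the vertices of the corresponding simplex). Finite groups are themselves of type $F_\infty$, so Brown's criterion upgrades such a cocompact contractible $G$-CW complex with $F_\infty$ stabilizers to the conclusion that $G$ is of type $F_\infty$.

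The main obstacle, I expect, is the middle step: unpacking the definition of the weak convex hull $\Lambda(G)[2]\cap M$ into a space that is genuinely geodesic (not merely quasi-convex), so that \v{S}varc--Milnor applies cleanly. This is routine in $\delta$-hyperbolic/$CAT(0)$ geometry but requires some care to carry out without losing cocompactness of the $G$-action on the chosen model. Once one has $G$ acting geometrically on a proper geodesic hyperbolic space, the passage through word-hyperbolicity to $F_\infty$ is classical.
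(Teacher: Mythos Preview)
Your proof is correct, and your concern about the middle step is well placed but ultimately resolvable: the $2\delta$-quasiconvexity of $\Lambda(G)[2]\cap M$ in the $\delta$-hyperbolic CAT(0) space $M$ means that either the closed CAT(0) convex hull of $\Lambda(G)$, or the intrinsic path metric on $\Lambda(G)[2]\cap M$, gives a proper geodesic hyperbolic space on which $G$ acts geometrically; after that, \v{S}varc--Milnor and the Rips complex finish things exactly as you say.

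However, your route is genuinely different from the paper's. The paper never passes through word-hyperbolicity of $G$ at all. Instead, having established (as you do) that $\Sigmacirc(M;\mathbb{Z})=\Sigma(M;\mathbb{Z})=\Lambda(G)$, it applies Lemma~\ref{cocompact} to the entire increasing filtration $\Lambda(G)[n]\cap M$, $n\geq 2$, showing each stage is $G$-cocompact; the inclusions become arbitrarily highly connected because of the increasing convexity, and Brown's finiteness criterion then yields $F_\infty$ directly. Since the $\Lambda(G)[n]\cap M$ are not $G$-complexes, the paper interposes nerves of fine covers to manufacture an honest direct system of cocompact $G$-CW-complexes to which Brown's criterion applies.

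What each approach buys: yours is shorter and more conceptual once one accepts \v{S}varc--Milnor and the contractibility of the Rips complex as black boxes, and it yields the stronger conclusion that $G$ is word-hyperbolic. The paper's argument is more self-contained within the CAT(0) framework set up in the article, avoids importing the machinery of hyperbolic groups, and illustrates a technique (the $\Lambda(G)[n]$ filtration plus nerve replacement) that may be of independent use.
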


\subsubsection{\bf The case when $M$ is the hyperbolic space ${\H}^n$}

Specializing to the case where $M={\H}^n$ and $G=\Gamma $ is an
infinite discrete subgroup of $\text{Isom}({\H}^n)$, we can relate these
results to standard properties of discrete hyperbolic groups. We replace
$\Lambda (G)[2]$ by the convex hull of the limit set. It is shown in Section \ref{Hn} that if $\Sigmacirc ({\H}^{n};\Z)$ is non-empty then $\Gamma $ is geometrically finite. Geometrically finite groups are well understood; see \cite{Bo93} for the definition. The limit set of
such a group $\Gamma $ is the disjoint union of its conical limit points
and its parabolic fixed points (see Section \ref{Hn}
for definitions and details). From this we deduce:

\begin{prop}\label{conical} If $\Gamma $ is geometrically finite then $\Sigma (_{\Gamma }{\H}^{n};\Z)$ is the set of its conical limit points.
\end{prop}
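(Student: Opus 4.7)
The plan is to identify $\Sigma(_{\Gamma}{\H}^n;{\Z})$ concretely as the classical horospherical limit set of any orbit $\Gamma b$---this is the identification for trivial modules recorded in the introduction---and then to invoke the standard limit-set decomposition for geometrically finite groups. Since Remark \ref{limitset} gives $\Sigma(_{\Gamma}{\H}^n;{\Z})\subseteq \Lambda(\Gamma)$, the whole argument takes place inside $\Lambda(\Gamma)$. For geometrically finite $\Gamma$ one has the Beardon--Maskit/Bowditch decomposition \cite{Bo93}
\[
\Lambda(\Gamma)\;=\;\Lambda_c(\Gamma)\;\sqcup\;P(\Gamma),
\]
splitting $\Lambda(\Gamma)$ into the set $\Lambda_c(\Gamma)$ of conical limit points and the set $P(\Gamma)$ of (bounded) parabolic fixed points. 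The proposition then reduces to two inclusions.

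For $\Lambda_c(\Gamma)\subseteq \Sigma(_{\Gamma}{\H}^n;{\Z})$---a statement valid for any discrete $\Gamma$---I would take $e\in\Lambda_c(\Gamma)$ with a geodesic ray $\gamma$ to $e$ along which orbit points $\gamma_n b$ accumulate within bounded distance $R$ of $\gamma$. A standard Busemann-function estimate gives $\beta_e(\gamma_n b)\to\infty$, so $\gamma_n b$ eventually lies in any prescribed horoball $HB_{e,t}$ at $e$. Hence every horoball at $e$ contains an orbit point, i.e.\ $e$ is horospherical.

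The substantive step is to show that no parabolic fixed point $p\in P(\Gamma)$ is horospherical, and this is where geometric finiteness is essential. I would invoke the cusp characterization due to Beardon--Maskit and Bowditch \cite{Bo93}: each such $p$ admits a \emph{precisely invariant} horoball $H_p$, meaning $\gamma H_p\cap H_p\neq\emptyset$ forces $\gamma\in\Gamma_p$, the parabolic stabilizer. Choose the basepoint $b\in H_p$ (the horospherical limit set is basepoint-independent since $\Gamma$ acts by isometries). If $\gamma b\in H_p$, then $\gamma b\in \gamma H_p \cap H_p$, so $\gamma\in\Gamma_p$; consequently $\Gamma b\cap H_p=\Gamma_p b$. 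Since $\Gamma_p$ consists of parabolics fixing $p$, it preserves every horosphere centered at $p$, so $\Gamma_p b$ lies on the single horosphere through $b$. Any strictly deeper sub-horoball $H_p'\subset H_p$ is then disjoint from $\Gamma b$, proving that $p$ is not horospherical.

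The main obstacle is the precise-invariance statement itself: this is the geometric translation of ``geometrically finite'' which prevents orbits from descending arbitrarily deeply into cusps, and it is exactly what distinguishes geometrically finite groups from more general discrete groups here. Once it is granted, the remainder of the argument is routine, combining Busemann-function estimates with the Beardon--Maskit decomposition of $\Lambda(\Gamma)$.
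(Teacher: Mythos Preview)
Your argument is correct and follows essentially the same route as the paper: invoke Bowditch's (GF2) decomposition $\Lambda(\Gamma)=\Lambda_c(\Gamma)\sqcup P(\Gamma)$, note that conical limit points are horospherical, and exclude bounded parabolic fixed points via precisely (strictly) invariant horoballs from \cite{Bo93}. The paper packages the last step as Lemma~\ref{parabolic}, choosing the basepoint outside the strictly invariant horoball collection rather than inside a single $H_p$ as you do, but this is a cosmetic difference; one minor imprecision is your phrase ``$\Gamma_p$ consists of parabolics''---in dimension $\geq 3$ the stabilizer may contain elliptic elements, though your conclusion (that $\Gamma_p$ preserves horospheres at $p$, since it contains no hyperbolics) still stands.
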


On the basis of Theorem \ref{alternatives} we have:

\begin{cor} $\Sigmacirc ({\H}^{n}; {\Z})$ is
non-empty if and only if $\Gamma $ is geometrically finite and has no
parabolic fixed points.
\end{cor}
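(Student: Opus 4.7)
The plan is to combine three ingredients already available: the trichotomy of Theorem \ref{alternatives} (with the observation from Theorem \ref{alternatives1} that the singleton case does not occur for the trivial module $\Z$), Proposition \ref{conical} identifying $\Sigma(_\Gamma{\H}^n;\Z)$ with the set of conical limit points when $\Gamma$ is geometrically finite, and the decomposition of the limit set of a geometrically finite group into conical limit points and parabolic fixed points. The openness/closure bridge of Corollary \ref{invariant} is what lets us pass between $\Sigma$ and $\Sigmacirc$ on closed $\Gamma$-invariant sets, which is the mechanism for packaging the two sides symmetrically.

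For the ``if'' direction, I would assume $\Gamma$ is geometrically finite with no parabolic elements. Then every point of $\Lambda(\Gamma)$ is a conical limit point, so Proposition \ref{conical} gives $\Sigma(_\Gamma{\H}^n;\Z)=\Lambda(\Gamma)$. The set $\Lambda(\Gamma)$ is closed in $\partial{\H}^n$ and $\Gamma$-invariant, so Corollary \ref{invariant} upgrades this to $\Lambda(\Gamma)\subseteq\Sigmacirc(_\Gamma{\H}^n;\Z)$. Since $\Gamma$ is an infinite discrete subgroup, $\Lambda(\Gamma)$ is non-empty, hence so is $\Sigmacirc(_\Gamma{\H}^n;\Z)$.

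For the ``only if'' direction, suppose $\Sigmacirc(_\Gamma{\H}^n;\Z)\neq\emptyset$. The remark cited in Section \ref{Hn} of the excerpt already furnishes that $\Gamma$ must be geometrically finite. To rule out parabolic fixed points, apply Theorem \ref{alternatives}: $\Sigmacirc(_\Gamma{\H}^n;\Z)$ is either a singleton, or all of $\Lambda(\Gamma)$. The singleton alternative is excluded for the trivial module $\Z$ by the remark following Theorem \ref{alternatives1}, so $\Sigmacirc(_\Gamma{\H}^n;\Z)=\Lambda(\Gamma)$. Combining $\Sigmacirc\subseteq\Sigma$ with Proposition \ref{conical} gives $\Lambda(\Gamma)\subseteq\{\text{conical limit points of }\Gamma\}$. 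Because every point of $\Lambda(\Gamma)$ for a geometrically finite group is either conical or parabolic, and the two sets are disjoint, this forces the set of parabolic fixed points to be empty, which is equivalent to $\Gamma$ having no parabolic elements.

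The main obstacle, such as it is, is really just making sure the singleton case of Theorem \ref{alternatives} does not sneak in on the ``only if'' side; once one notes that $A=\Z$ explicitly excludes it, the argument reduces to a bookkeeping exercise combining results already in the paper.
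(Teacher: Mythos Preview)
Your proof is essentially correct and follows the same approach as the paper's (given in full as Theorem \ref{gf}). There is one soft spot worth flagging: in the ``only if'' direction you write that ``the remark cited in Section \ref{Hn} already furnishes that $\Gamma$ must be geometrically finite,'' but that remark in the Extended Outline is a forward reference to exactly the theorem you are proving, so invoking it here is circular. The paper's actual derivation of geometric finiteness goes through condition (iii) of Theorem \ref{alternatives}: once $\Sigmacirc(\H^n;\Z)$ has at least two points, the weak convex hull of $\Lambda(\Gamma)$ is cocompact, so the convex core $\Gamma\backslash C$ is compact, and this is one of Bowditch's equivalent characterizations (GF4) of geometric finiteness. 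Your handling of the rest---excluding the singleton case for $A=\Z$, and using Proposition \ref{conical} together with the disjoint decomposition of $\Lambda(\Gamma)$ into conical and parabolic points to rule out the latter---matches the paper exactly.
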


\subsubsection{\bf The case when $M$ is a symmetric space.}\label{Rehn1}

The symmetric space $M:=\text{ SL}_{n}({\mathbb R})/\text{SO}(n)$
is a contractible $d={\frac{1}{2}}(n-1)(n+2)$-dimensional Riemannian
manifold of non-positive curvature, hence it is a proper $CAT(0)$ space,
and $\partial M$ is the sphere $S^{d-1}$. This $M$ lies between the extremes of the previous two subsections as it has both higher-dimensional flats and higher-dimensional hyperbolic complete geodesic subspaces.

The sphere-boundary also carries the structure of the spherical building
associated to $\text {SL}_{n}({\mathbb R})$, whose apartments are
$(n-2)$-spheres represented by the points at infinity of the maximal tori
of $\text{ SL}_{n}({\mathbb R})$. We call such an apartment ``rational"
if its torus is defined over ${\mathbb Q}$, and we write $B_{\mathbb
Q}$ for the union of all rational apartments in $\partial M$. Thus
$B_{\mathbb Q}$ is a subset of $\partial M$ which can be viewed as a
geometric realization of the spherical building associated to $\text{
SL}_{n}({\mathbb Q})$. Avramidi and Witte-Morris \cite {AWM} have recently
proved a theorem which settles a conjecture of Rehn \cite{Reh09} ---
a conjecture which was open for a number of years:

\begin{thm}For $G=\text {\rm SL}_{n}({\mathbb Z})$, acting on the symmetric
space $M=\text{\rm SL}_{n}({\mathbb R})/\text{\rm SO}(n)$, the horospherical
limit set $\Sigma (_{G}M;{\Z})$ is the complement of $B_{{\Q}}$ in
$\del M$.  \end{thm}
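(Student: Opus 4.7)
The plan is to establish the two inclusions separately, using reduction theory for one and a dynamical/Diophantine argument for the other. Fix the base point $b = \text{SO}(n) \in M$. For the trivial module $A = \Z$ the augmentation $\epsilon \colon \Z G \twoheadrightarrow \Z$ serves as a free presentation; then $\Z$ is supported over a horoball $HB_e$ iff some $gb$ lies in $HB_e$, so
\[
\Sigma(_G M; \Z) = \{\, e \in \del M : \sup_{g \in G} \beta_e(gb) = +\infty \,\}
\]
is simply the classical horospherical limit set of the orbit $Gb$.

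The containment $B_{\Q} \subseteq \del M \setminus \Sigma(_G M; \Z)$ is the reduction-theoretic direction. If $e \in B_{\Q}$ then $e$ sits in a rational apartment, hence in the direction of a proper $\Q$-parabolic $P \leq \text{SL}_n$. The Borel--Harish-Chandra theorem provides a Siegel set $\mathfrak{S}$ and finitely many $g_1,\dots,g_k \in G$ with $\bigcup_i g_i \mathfrak{S}$ covering a fundamental domain. On $\mathfrak{S}$ the Busemann function $\beta_e$ is bounded above, because penetration into the cusp at $e$ is measured by the split-torus factor of the Langlands decomposition of $P$, and that factor is restricted by definition of a Siegel set. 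Finiteness of the covering and $G$-invariance of the problem then bound $\beta_e$ above on all of $Gb$, so $e \notin \Sigma(_G M; \Z)$.

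The reverse inclusion $\del M \setminus B_{\Q} \subseteq \Sigma(_G M; \Z)$ is the substantive part. Given $e \notin B_{\Q}$, one must exhibit $\gamma_n \in G$ with $\beta_e(\gamma_n b) \to +\infty$. The approach is via Dani's correspondence: a geodesic ray toward $e$ is the orbit of a one-parameter $\R$-diagonalizable subgroup $\{a_t\} \leq \text{SL}_n(\R)$, and $e$ lies in $B_{\Q}$ precisely when the expanding horospherical subgroup of $\{a_t\}$ is defined over $\Q$ (equivalently, the associated flag is $\Q$-rational). For $e \notin B_{\Q}$ the lifted trajectory in the locally symmetric space $G \backslash \text{SL}_n(\R)/\text{SO}(n)$ therefore cannot diverge into any cusp, and a Dani--Margulis quantitative non-divergence theorem forces the trajectory to return infinitely often to a fixed compact set. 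Each such return yields an element $\gamma_n \in G$ whose translate $\gamma_n b$ lies deep inside a horoball at $e$, which gives the desired unbounded sequence of $\beta_e$-values.

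The main obstacle is this second inclusion. Non-divergence must be proved for \emph{every} irrational $e$, not merely for measure-generic ones, and the non-divergence must then be converted into explicit arithmetic elements whose action on the fixed base point $b$ realizes arbitrarily large $\beta_e$-values---the dynamical statement does not by itself deliver lattice points on demand. This is the theorem of Avramidi--Witte Morris settling Rehn's conjecture, and the delicate interplay between $\Q$-structure of parabolics, recurrence of diagonal flows, and effective excursion estimates is what kept the question open for years.
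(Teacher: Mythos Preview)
The paper does not prove this theorem at all: it is stated in Sections \ref{Rehn1} and \ref{Rehn} purely as a citation of Avramidi--Witte Morris \cite{AWM}, who settled Rehn's conjecture. So there is no ``paper's own proof'' to compare against beyond the reference.

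Your proposal is, in the end, also a citation of \cite{AWM}: your final paragraph explicitly identifies the substantive inclusion as their theorem. In that sense you and the paper agree. Your exposition of the easy inclusion $B_{\Q}\subseteq \del M\setminus\Sigma(_G M;\Z)$ via Siegel sets and reduction theory is correct and standard.

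Where your sketch is shakier is the heuristic you offer for the hard inclusion. You assert that for $e\notin B_{\Q}$ the geodesic ray toward $e$ must recur to a compact set in $G\backslash M$, invoking Dani--Margulis non-divergence. But recurrence of the geodesic gives \emph{conical} limit points, and in higher rank the horospherical limit set can be strictly larger than the conical one; moreover, the statement ``irrational direction $\Rightarrow$ geodesic recurs'' is not true as stated---there are irrational boundary points (for instance, corresponding to flags with some rational and some irrational steps) for which the projected geodesic still goes to infinity. The horospherical condition is genuinely weaker: one needs orbit points in every horoball at $e$, not orbit points uniformly close to the ray. The Avramidi--Witte Morris argument handles this distinction and is not simply an application of quantitative non-divergence to the geodesic flow. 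So your outline points in a plausible direction but does not capture the actual mechanism; it is best read, like the paper itself, as deferring to \cite{AWM}.
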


\noindent Those authors have a more general result which characterizes
the horospherical limit set whenever $M$ is the universal cover of a
finite volume locally connected symmetric space $M\slash\Gamma $ of
non-compact type; see \cite {AWM}.


\section{Controlled free $G$-modules}\label{S:2}

In this section we provide details on control maps $h:F_{X}\to
fM$ defined on a based free $G$-module. These are tools to keep the effect of
endomorphisms of $F_X$ geometrically under control. In later sections
we will apply this to free presentations of a $G$-module $A$.

\subsection{The support function}\label{S:2.1}

By the {\it support of an element} $c\in F_X$ we mean the set of all
elements of $Y$ (=$GX$) occurring with non-zero coefficient in the unique
expansion of $c$:
\begin{equation*} \text{ supp}_Y\left(\sum_{y\in Y}
n_yy\right) := \{y\in Y\mid n_y \neq 0\}.  
\end{equation*}
Thus the
support is a function $\text { supp} : F_X \to fY$.  A special case
is the support function on the group algebra, $\text { supp} : {\mathbb Z}G \to fG$,
where the $X$ is the singleton basis $\{1\}$ and hence $Y = G$.

\subsection{Control maps on free modules}\label{S:2.2}

Consider a $G$-map $h : F_{X}\to fM$ defined as follows:  Starting with
an arbitrary choice of non-empty $h(x) \in fM$, for each $x\in X$,
we extend this to a $G$-map $h : Y\to fM$ on the ${\mathbb Z}$-basis $Y=GX$, and
for any non-zero element $c\in F$ we put $h(c) = h(\text{supp}(c))$.
Define $h(0)=\eset $. The map $h : F_X \to fM$ defined in this way is a {\it control map}. 

A control map $h$ satisfies:

\begin{enumerate}[(i)]
\item $h(c)=\eset $ if and only if $c=0$,
\item $h(rc) = h(c)$, for all $r\neq 0\in {\mathbb Z}$ and $c\in F$, and 
\item $h(c+c') \subeq h(c) \cup h(c')$ for all $c,c'\in F$.
\end{enumerate}

In the context of a given a base point $b\in M$, if we define each $h(x) =
\{b\}$ we call $h$ the {\it canonical control map} on $F_X$ corresponding
to $b\in M$. A {\it controlled based free $G$-module} is a based free $G$-module
equipped with a control map to $fM$.

\subsection{Valuations on free modules}\label{S:2.3}

Let the point $e\in \partial M$ be determined by the geodesic ray $\gamma : [0,\infty) \to M$.
Composition of the control map $h : F \to fM$, with the Busemann function $\beta_{\gamma} :
M\to {\mathbb R}$ assigns to each element of $F$ a finite set of real numbers; taking minima
defines the function  

\begin{equation}\label{E:2.2} 
v_\gamma := \min \beta_\gamma h : F \to {\mathbb R}_{\infty }.
\end{equation}

In particular $v_{\gamma}(c)=\infty$ if and only if  $c=0$. Generalizing \cite{BRe88} we call $v_\gamma$ a {\it valuation} on $F$.

\begin{lemma}\label{L:2.3}
\begin{enumerate}[{\rm (i)}]
\item $v_\gamma(-c) = v_\gamma(c)$,
\item $v_\gamma(c+c') \geq \min\{v_\gamma(c),v_\gamma(c')\}$,
\item $v_\gamma(c) = v_{g\gamma }(gc)$, for all $g\in G$.
\item If $c$ and $c'$ are non-zero then
$v_\gamma(c)-v_\gamma(c')$ depends only on the endpoint $\gamma(\infty)
= e$, not on the ray $\gamma$, and $|v_\gamma(c)-v_\gamma(c')| \leq d_{H}(h(c),h(c'))$, where
$d_H$ denotes Hausdorff distance.
\end{enumerate}
\hfill$\square$
\end{lemma}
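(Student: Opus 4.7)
The plan is to deduce each of (i)--(iv) by unwinding the definition $v_\gamma=\min(\beta_\gamma\circ h)$ together with the three basic properties of a control map listed in Section \ref{S:2.2} (non-vanishing, scalar-insensitivity $h(rc)=h(c)$, and the subadditivity $h(c+c')\subseteq h(c)\cup h(c')$), and with standard facts about Busemann functions on $CAT(0)$ spaces.

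Part (i) is immediate from scalar-insensitivity with $r=-1$: $h(-c)=h(c)$, so $v_\gamma(-c)$ and $v_\gamma(c)$ are minima of $\beta_\gamma$ over the same finite set. For part (ii) I will combine subadditivity with the trivial observation that taking the minimum of $\beta_\gamma$ over a larger finite set cannot increase the value:
\[
v_\gamma(c+c')=\min\beta_\gamma(h(c+c'))\geq \min\beta_\gamma(h(c)\cup h(c'))=\min\{v_\gamma(c),v_\gamma(c')\}.
\]

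For part (iii) I will use the $G$-equivariance of $h$, which gives $h(gc)=g\cdot h(c)$, together with the standard identity $\beta_{g\gamma}(gx)=\beta_\gamma(x)$. This identity is immediate once one notes that $g$ acts on $M$ by isometries, so $g\gamma$ is a geodesic ray whose Busemann function is exactly $\beta_\gamma\circ g^{-1}$. Minimizing over $x\in h(c)$ then produces $v_{g\gamma}(gc)=v_\gamma(c)$.

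Part (iv) is the only place that needs a little care, because two different features of Busemann functions are needed. The first assertion will follow from Remark \ref{gamma}: if $\gamma'$ is any other ray to the same endpoint $e$, then $\beta_{\gamma'}-\beta_\gamma$ is a constant function $K$ on $M$, so $v_{\gamma'}(c)=v_\gamma(c)+K$ and the constant $K$ cancels in the difference $v_{\gamma'}(c)-v_{\gamma'}(c')$. The Hausdorff-distance bound will use that $\beta_\gamma$ is $1$-Lipschitz: choose $x_0\in h(c)$ realizing $v_\gamma(c)$, and by the definition of $d_H$ pick $y\in h(c')$ with $d(x_0,y)\leq d_H(h(c),h(c'))$; then
\[
v_\gamma(c')\leq\beta_\gamma(y)\leq\beta_\gamma(x_0)+d(x_0,y)\leq v_\gamma(c)+d_H(h(c),h(c')),
\]
and the symmetric argument supplies the reverse inequality. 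Aside from (iv), which is a straightforward combination of these two Busemann-function facts, the whole lemma is a mechanical bookkeeping exercise and I do not anticipate any genuine obstacle.
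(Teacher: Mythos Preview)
Your proof is correct. The paper itself omits the proof entirely (the lemma is marked with a tombstone $\square$), so your routine verification from the definition $v_\gamma=\min\beta_\gamma\circ h$, the listed properties of control maps, and the standard Busemann-function facts (constant difference for asymptotic rays, $1$-Lipschitz) is exactly the intended argument.
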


\subsection{Controlling homomorphisms over $M$}\label{S:3.3}

Let the based free modules $F_{X}$ and $F'_{X'}$ be endowed with control maps $h$ and $h'$
mapping to $M$.   We want to measure how far, in terms of the metric $d$ on $M$, a ${\mathbb
Z}$-homomorphism ${\varphi} : F \to F'$ moves
the members of $F$.  We define the {\it norm} of $\varphi $ by 
\begin{equation}\label{E:3.4} 
||\varphi|| :=\inf\{r\geq 0\mid h'(\varphi (c))\subseteq N_{r}(h(c)) \text{ for all }c\in F\}\in
{\mathbb R}\cup \{\infty\}
\end{equation} 
{\it the shift function towards} $e$, $\text{sh}_{\varphi,e}
: F\to {\mathbb R} \cup \{\infty\}$, by 
\begin{equation}\label{E:3.5}
\text{sh}_{\varphi,e}(c) := v'_\gamma(\varphi(c)) - v_\gamma(c) \in {\mathbb
R}\cup \{\infty\},\ c\in F, 
\end{equation} 
and the {\it guaranteed shift towards} $e$ by, 
\begin{equation}\label{E:3.6}
\text{gsh}_e(\varphi) := \inf\{\text{sh}_{\varphi,e}(c)\mid c\in F\}.
\end{equation}

The next two lemmas collect properties of norm and shift, the relations between them, and their behavior with respect to compositions and the $G$-action. By convention, $G$ acts {\it diagonally} on the set $\text {\rm Hom}(A,B)$ of all additive homomorphisms between $G$-modules $A$ and $B$, namely, for $g\in G$ and $\varphi \in \text {\rm Hom}(A,B)$, $(g\varphi)(a)=g\varphi (g^{-1}a)$.

We call a ${\mathbb Z}$-submodule $L\leq F_X$ {\it cellular} if it is generated by
$L\cap Y$.  While the most important case is $L=F_X$, sometimes another $L$ will be given,
and we will be interested in the norm or guaranteed shift of $\varphi |L$.  To have
information for that case we include $L$ in these lemmas.

\begin{lemma}\label{L:3.2} Let $\varphi :L\to F'$ be the restriction to $L$ of a 
${\mathbb Z}$-homomorphism $F\to F'$.
\begin{enumerate}[{\rm (i)}]
\item  $\text{\rm sh}_{\varphi,e}(y) \geq -||\varphi||$ for all $y\in L\cap Y$; hence {\rm
gsh}$_e(\varphi) \geq -||\varphi||$.
\item $||g\varphi|| = ||\varphi||$, {\rm sh}$_{g\varphi,ge}(gc) = \text{\rm sh}_{\varphi,e}(c)$ for all $c$, and {\rm
gsh}$_{ge}(g\varphi) =
\text{\rm gsh}_e(\varphi)$, for all $g\in G$
\end{enumerate}
\hfill$\square$
\end{lemma}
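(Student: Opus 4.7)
The plan is to handle the two items in sequence; both reduce to unpacking the definitions of $\|\cdot\|$, $v_\gamma$, and $\mathrm{sh}$ against the $G$-equivariance of the control maps $h,h'$ and the elementary properties of Busemann functions and of the $G$-action on $M$ by isometries.

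For (i), I would argue directly from the definition of $\|\varphi\|$: for every $c\in L$ and every $\epsilon>0$ the finite set $h'(\varphi(c))$ sits in the $(\|\varphi\|+\epsilon)$-neighborhood of $h(c)$ in $M$. Since Busemann functions are $1$-Lipschitz, taking the minimum of $\beta_\gamma$ over each of these sets yields $v'_\gamma(\varphi(c)) \geq v_\gamma(c)-\|\varphi\|-\epsilon$, and letting $\epsilon \to 0$ gives $\mathrm{sh}_{\varphi,e}(c) \geq -\|\varphi\|$ for every $c\in L$. This implies both the assertion on basis elements $y\in L\cap Y$ and, by taking infima, the stated bound $\mathrm{gsh}_e(\varphi)\geq -\|\varphi\|$. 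If one prefers to mirror the order of the lemma's wording, the bound can first be checked on $y\in L\cap Y$, where $h(y)$ and $h'(\varphi(y))$ are particularly concrete, and then extended to arbitrary $c$ via $v_\gamma(c) = \min_{y\in\mathrm{supp}(c)} v_\gamma(y)$ together with Lemma~\ref{L:2.3}(i)--(ii).

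For (ii), unpack the diagonal $G$-action: $(g\varphi)(c) = g\cdot\varphi(g^{-1}c)$. Since $h$ and $h'$ are $G$-maps one has $h'((g\varphi)(c)) = g\cdot h'(\varphi(g^{-1}c))$ and $h(c) = g\cdot h(g^{-1}c)$. Because $G$ acts by isometries on $M$, the $r$-neighborhood of a subset is preserved by $g$, so after substituting $c':=g^{-1}c$ in the defining infimum for $\|g\varphi\|$ one reads off $\|g\varphi\|=\|\varphi\|$. For the shift identity, Lemma~\ref{L:2.3}(iii), $v_\gamma(d)=v_{g\gamma}(gd)$, gives $v'_{g\gamma}((g\varphi)(c)) = v'_\gamma(\varphi(g^{-1}c))$ and $v_{g\gamma}(c) = v_\gamma(g^{-1}c)$; subtracting yields $\mathrm{sh}_{g\varphi,ge}(c) = \mathrm{sh}_{\varphi,e}(g^{-1}c)$. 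The asserted equality ``$\mathrm{sh}_{g\varphi,ge}=\mathrm{sh}_{\varphi,e}$'' should be read modulo the bijective reindexing $c\mapsto g^{-1}c$ of $L$; on passing to the infimum over $c\in L$ one obtains $\mathrm{gsh}_{ge}(g\varphi)=\mathrm{gsh}_e(\varphi)$.

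The main obstacle is almost entirely notational bookkeeping: one must keep straight which geodesic ray represents which boundary point after translation by $g$ (so that Lemma~\ref{L:2.3}(iii) applies in the right form), and allow the mild abuse involved in reading the displayed identity ``$\mathrm{sh}_{g\varphi,ge}=\mathrm{sh}_{\varphi,e}$'' as an equality of functions on $L$. Beyond this, the proof is just the $1$-Lipschitz property of $\beta_\gamma$ in (i), and $G$-equivariance of $h,h'$ combined with $G$ acting by isometries on $M$ in (ii).
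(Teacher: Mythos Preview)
Your proposal is correct and is precisely the routine verification the paper has in mind: the lemma is stated with a $\square$ and no proof, so there is nothing to compare against beyond the intended unpacking of the definitions via the $1$-Lipschitz property of Busemann functions for (i) and $G$-equivariance of $h,h'$ plus Lemma~\ref{L:2.3}(iii) for (ii). Your observation that the identity $\mathrm{sh}_{g\varphi,ge}=\mathrm{sh}_{\varphi,e}$ holds after the reindexing $c\mapsto g^{-1}c$ is the right reading, and it is exactly what is needed for the $\mathrm{gsh}$ equality and for the applications later in the paper.
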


\begin{lemma}\label{L:3.3}
Let $\varphi : F\to F'$ and $\psi : F'\to F''$ be two ${\mathbb Z}$-endomorphisms, and let $K
\leq F$ and $L\leq F'$ be cellular ${\mathbb Z}$-submodules with
$\psi(K)\subeq L$.  Then 
\begin{equation*}
\text{\rm gsh}_e(\varphi |L\circ \psi |K) \geq \text{\rm gsh}_e(\varphi |K) + \text{\rm gsh}_e(\psi
|L).
\end{equation*}
In particular, 
\begin{equation*}
\text{\rm gsh}_e(\varphi^k)\geq k\cdot \text{\rm gsh}_e(\varphi), \text{ for all natural numbers
}k.
\end{equation*}
\end{lemma}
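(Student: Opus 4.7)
The plan is to unwind the definitions and exploit a telescoping identity for the shift function. The composition $\varphi|L\circ\psi|K$ makes sense because the hypothesis $\psi(K)\subseteq L$ ensures that whenever $c\in K$, the element $d:=\psi(c)$ lies in $L$, so $\varphi|L$ can be applied to it. For any nonzero $c\in K$ with $d=\psi(c)\neq 0$, I would simply write the telescoping identity
\begin{equation*}
v_\gamma''(\varphi\psi(c))-v_\gamma(c)\;=\;\bigl[v_\gamma''(\varphi(d))-v_\gamma'(d)\bigr]+\bigl[v_\gamma'(d)-v_\gamma(c)\bigr]\;=\;\mathrm{sh}_{\varphi,e}(d)+\mathrm{sh}_{\psi,e}(c).
\end{equation*}
Since $d\in L$ and $c\in K$, each bracketed term is bounded below by the corresponding guaranteed shifts $\mathrm{gsh}_e(\varphi|L)$ and $\mathrm{gsh}_e(\psi|K)$. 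Taking the infimum over $c\in K$ then yields the inequality in the lemma.

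The only edge case I would address is when $d=\psi(c)=0$: there $\varphi\psi(c)=0$ as well, so the shift of the composition at $c$ is $+\infty$ and the inequality is vacuous. (Equivalently, one may adopt the convention that the gsh infima are taken over elements whose images remain nonzero, which is harmless because a $\mathbb{Z}$-homomorphism sending all of $K$ to $0$ has $\mathrm{gsh}=+\infty$ on $K$.)

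The ``in particular'' statement about $\varphi^k$ is then a straightforward induction on $k$: taking $F=F'=F''$, $\psi=\varphi^{k-1}$, and $K=L=F$, the main inequality gives $\mathrm{gsh}_e(\varphi^k)\geq\mathrm{gsh}_e(\varphi^{k-1})+\mathrm{gsh}_e(\varphi)$, and induction yields $\mathrm{gsh}_e(\varphi^k)\geq k\cdot \mathrm{gsh}_e(\varphi)$.

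I do not expect any real obstacle here; the argument is essentially forced by the additive structure of the valuation difference and the definition of $\mathrm{gsh}_e$ as an infimum. The only place to be careful is the bookkeeping with $\mathbb{R}_\infty$-valued valuations, which is why the restriction to nonzero $d$ (or the convention above) is worth pointing out explicitly.
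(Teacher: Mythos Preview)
Your proof is correct and essentially identical to the paper's: both use the telescoping identity $v''_\gamma(\varphi\psi(c))-v_\gamma(c)=[v''_\gamma(\varphi\psi(c))-v'_\gamma(\psi(c))]+[v'_\gamma(\psi(c))-v_\gamma(c)]$, bound each bracket from below by the appropriate guaranteed shift, and take the infimum over $c\in K$. Your explicit treatment of the edge case $\psi(c)=0$ is a small addition the paper leaves implicit (relying on the convention $v_\gamma(0)=\infty$), and your bound $\mathrm{gsh}_e(\varphi|L)+\mathrm{gsh}_e(\psi|K)$ matches the paper's proof (the statement as printed has the restrictions $K$ and $L$ swapped in the right-hand side, which is a typo you have silently corrected).
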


\begin{proof} We use Lemma \ref{L:3.2}(ii).
\begin{equation*}
\begin{aligned}
\text{gsh}_e((\varphi |)\circ (\psi |K)) &= \inf_{c\in K}\{v''_\gamma(\varphi\psi(c)) -
v_\gamma(c)\}\\
&= \inf_{c\in K}\{v''_\gamma(\varphi \psi(c)) - v'_\gamma(\psi(c)) + v'_\gamma(\psi(c)) -
v_\gamma(c)\}\\
&\geq \inf_{c\in K}\{v''_\gamma(\varphi \psi(c)) - v'_\gamma(\psi(c))\} 
+ \inf_{c\in K}\{v'_\gamma(\psi(c)) -v_\gamma(c)\}\\
&\geq \inf_{b\in L}\{v''_\gamma(\varphi(b)) -v'_\gamma(b)) + \inf_{c\in
K}(v'_\gamma(\psi(c))-v_\gamma(c)\}\\
&= \text{gsh}_e(\varphi |L) + \text{gsh}_e(\psi |K).
\end{aligned}
\end{equation*}
\end{proof}

We say that the ${\mathbb Z}$-endomorphism $\varphi : F\to F$ {\it pushes $L$ towards} $e\in
\partial M$, and we call $\varphi $ a {\it push towards} $e$, if the guaranteed shift of $\varphi
|L$ towards $e$ is positive; i.e., gsh$_e(\varphi|L) > 0$.

 8pt
\subsection{Pushes and limits}\label{pushes}

Let $\varphi :F\to F$ be an additive endomorphism of the controlled based free $G$-module $F=F_X$. A ${\Z}$-basis for $F$ is $Y=GX$. We consider two limit sets of sequences of elements of $F$ (over $M$) in the sense
of Section \ref{limit}:
$$\Sigma (\varphi):=\{e\in \del M\mid \text{gsh}_{e}(\varphi)>0\}$$
and

$$\Lambda (\varphi):=\{e\in \del M \mid \exists c\in F 
\text { such that } e \text{ is a limit point of a sequence } (y_{k})\\
\text { with } y_{k}\in \text{supp}(\varphi ^{k}(c))\}$$

\begin{prop}\label{not empty}Assume there is an element $c\in F$
such that $\varphi ^{k}(c)\neq 0$ for all $k$. If $\Sigma (\varphi )$ is
non-empty then so is $\Lambda (\varphi )$.  
\end{prop}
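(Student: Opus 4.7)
The plan is to combine a point $e\in\Sigma(\varphi)$ with the non-vanishing element $c$ to produce a sequence of basis elements whose $h$-images migrate to infinity, and then to extract a cone-topology accumulation point by properness of $M$.

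Set $\delta:=\text{gsh}_e(\varphi)>0$. Iterating Lemma \ref{L:3.3} gives $\text{gsh}_e(\varphi^k)\geq k\delta$, so $v_e(\varphi^k(c))\geq v_e(c)+k\delta\to\infty$ as $k\to\infty$. Because $\varphi^k(c)\neq 0$ for every $k$ by hypothesis, we may select $y_k\in\text{supp}(\varphi^k(c))$ and a point $p_k\in h(y_k)$. Since $v_e(\varphi^k(c))$ is the minimum of $\beta_e$ over $h(\varphi^k(c))$, we have $\beta_e(p_k)\geq v_e(\varphi^k(c))\to\infty$. The Busemann function $\beta_e$ is $1$-Lipschitz with $\beta_e(b)=0$, so $d(b,p_k)\geq \beta_e(p_k)\to\infty$.

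Next, properness of $M$ makes the compactification $\widehat{M}=M\cup\partial M$ compact in the cone topology, so after passing to a subsequence $p_{k_j}\to e'$ for some $e'\in\partial M$. Writing $y_{k_j}=g_{k_j}x_{k_j}$ with $x_{k_j}\in X$ (a finite set), we have $h(y_{k_j})=g_{k_j}h(x_{k_j})$, whose diameter is uniformly bounded by $D:=\max\{\text{diam}\,h(x):x\in X\}<\infty$. Hence every point of $h(y_{k_j})$ remains within $D$ of $p_{k_j}$ and, for $j$ large, lies in any prescribed cone neighborhood of $e'$. This identifies $e'$ as a limit point of the sequence $(y_{k_j})$ in the sense of Section \ref{limit}, and thus $e'\in\Lambda(\varphi)$.

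The only real subtlety is the last step: confirming that on a proper $CAT(0)$ space, if $p_{k_j}\to e'$ in the cone topology, then a uniformly $D$-bounded neighborhood of $p_{k_j}$ is eventually swallowed by any cone neighborhood of $e'$. This is a standard feature of the cone topology on $\widehat{M}$, but it deserves to be made explicit, because the notion of ``limit point'' from Section \ref{limit} involves the whole finite image set $h(y_{k_j})$ rather than just the single selected point $p_{k_j}$. Everything else is bookkeeping with the definitions of $v_e$ and $\text{gsh}_e$.
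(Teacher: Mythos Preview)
Your proof is correct and follows the same approach as the paper's own argument: pick $y_k\in\text{supp}(\varphi^k(c))$, use $e\in\Sigma(\varphi)$ to force the $h(y_k)$ out to infinity, and extract a boundary accumulation point by compactness of $\widehat M$. Your version is more careful than the paper's (which is very terse), in particular your observation that the finite sets $h(y_k)$ have uniformly bounded diameter, so that convergence of a single point $p_{k_j}\in h(y_{k_j})$ drags the whole set $h(y_{k_j})$ into any cone neighborhood of $e'$; the paper leaves this implicit.
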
 

\begin{proof} 
By definition, the sequences $(\varphi ^{k}(c))$ with $c\in F$
horo-converge to each $e\in \Sigma (\varphi)$. Picking a $c$ satisfying
the non-zero assumption, we can choose $y_{k}\in \text{supp}(\varphi
^{k}(c))$. The accumulation points of $(h(y_{k}))$ cannot be in $M$ so
they must be in $\Lambda (\varphi )$.  
\end{proof}

\begin {thm}\label{near} For each pair $(e,e')\in \Sigma (\varphi)\times \Lambda
(\varphi )$ we have $d(e, e')\leq 
r=\text{\rm arccos}(\frac{\text {\rm gsh}_{e}(\varphi )}{||\varphi ||})$ where $d$ is the Tits metric.
\end{thm}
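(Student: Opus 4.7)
The idea is to exploit the hypothesis $e \in \Sigma(\varphi)$ to produce iteratively growing Busemann values along an orbit $(\varphi^k(c))$, balance this against the geometric constraint imposed by $\|\varphi\|$, and convert the resulting linear-growth estimates into an angular bound via a standard CAT$(0)$ comparison inequality. Fix a control map $h : F \to fM$ based at a point $b \in M$ and set $s := \text{gsh}_e(\varphi) > 0$. Since $e' \in \Lambda(\varphi)$, choose $c \in F$ and $y_k \in \text{supp}(\varphi^k(c))$ so that representatives $x_k \in h(y_k)$ cluster at $e'$ in the cone topology; after passing to a subsequence we may assume $x_k \to e'$. Iterating the inequality $v_e \circ \varphi \geq v_e + s$ yields $\beta_e(x_k) \geq v_e(\varphi^k(c)) \geq v_e(c) + ks$ (using $x_k \in h(\varphi^k(c))$), while iterating $h(\varphi(c')) \subseteq N_{\|\varphi\|}(h(c'))$ yields $h(\varphi^k(c)) \subseteq N_{k\|\varphi\|}(h(c))$ and hence $d(b, x_k) \leq k \|\varphi\| + D$, where $D := \max\{d(b, m) : m \in h(c)\}$.

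The key CAT$(0)$ comparison step is the inequality: for $\gamma$ the ray from $b$ to $e$ and any $x \in M$,
\begin{equation*}
  \cos \angle_b(e, x) \;\geq\; \beta_e(x)/d(b, x).
\end{equation*}
This follows by applying the CAT$(0)$ inequality to the triangle $(b, \gamma(t), x)$ to get $\angle_b(e, x) \leq \bar\theta_t$, where the Euclidean comparison angle satisfies $\cos \bar\theta_t = (t^2 + d(b,x)^2 - d(\gamma(t), x)^2)/(2t\cdot d(b,x))$; letting $t \to \infty$ and using the asymptotic $d(\gamma(t), x) = t - \beta_e(x) + o(1)$ gives $\cos \bar\theta_t \to \beta_e(x)/d(b, x)$, whence the claim. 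Combined with the estimates of the previous paragraph, $\cos \angle_b(e, x_k) \geq (v_e(c) + ks)/(k \|\varphi\| + D) \to s/\|\varphi\|$ as $k \to \infty$. Cone convergence $x_k \to e'$ implies that the initial directions at $b$ of the geodesics $[b, x_k]$ converge to the initial direction of the ray $[b, e']$, so $\angle_b(e, x_k) \to \angle_b(e, e')$; therefore $\angle_b(e, e') \leq \arccos(s/\|\varphi\|) = r$.

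To upgrade this Alexandrov-angle bound at $b$ to a Tits-metric bound, observe that the argument is insensitive to the choice of base point: $\text{gsh}_e(\varphi)$ is a difference of Busemann values and hence intrinsic, while replacing $b$ by any $b' \in M$ only alters the additive constant $D$ by at most $d(b, b')$, which is asymptotically dominated by the $k \|\varphi\|$ term. Hence $\angle_{b'}(e, e') \leq r$ for every $b' \in M$, so the angular metric satisfies $\angle(e, e') = \sup_{b'} \angle_{b'}(e, e') \leq r < \pi/2$; since $\angle(e, e') < \pi$, the Tits metric coincides with the angular metric (\cite{BrHa99}), giving $d(e, e') \leq r$. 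I expect the main technical difficulty to lie in the CAT$(0)$ comparison step --- the asymptotic expansion of the comparison cosine together with the continuity of the Alexandrov angle under cone convergence to $e'$ --- after which the rest is direct iteration of the defining inequalities of $\text{gsh}_e$ and $\|\varphi\|$.
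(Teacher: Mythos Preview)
Your argument is essentially the paper's own proof: both iterate $\varphi$ to get linear growth $\beta_e(x_k)\gtrsim k\cdot\text{gsh}_e(\varphi)$ and $d(b,x_k)\lesssim k\|\varphi\|$, feed these into the Law of Cosines on comparison triangles $\Delta(b,\gamma(t),x_k)$, pass to the limit in $t$ and then in $k$, and finally note the bound is independent of the base point to conclude a Tits-metric estimate. The paper simplifies the constants by taking $p_0=b$ (equivalently $b\in h(c)$), whereas you track the additive terms $v_e(c)$ and $D$ explicitly; this is cosmetic. One small omission: the theorem as stated also covers the case $\|\varphi\|=\infty$, where $r=\arccos(0)=\pi/2$; your iteration bound $d(b,x_k)\le k\|\varphi\|+D$ is then vacuous, and the paper supplies a separate one-line observation (the side opposite $b$ in the comparison triangle is no longer than the side opposite $x_k$, so the comparison angle at $b$ is at most $\pi/2$) to close that case. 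Your comparison inequality $\cos\angle_b(e,x_k)\ge\beta_e(x_k)/d(b,x_k)>0$ does yield $\angle_b(e,x_k)<\pi/2$ for large $k$ and hence $\angle_b(e,e')\le\pi/2$ in the limit, so the gap is easily patched.
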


\begin{rems}\begin{enumerate}[1.]
\item Since $0<\text{gsh}_{e}(\varphi )\leq ||\varphi ||$ this shows that the distance $d(e, e')$ is, in general, at most $\frac{\pi }{2}$, and $r<\frac{\pi }{2}$ when $||\varphi ||<\infty $.

\item Theorem \ref{near} shows that when $||\varphi ||<\infty $ both
$\Sigma (\varphi)$ and $\Lambda (\varphi)$ have diameter $<\pi $. Hence we
can infer from the $CAT(1)$ property of $\del M$ that $\Sigma (\varphi)$
and $\Lambda (\varphi)$ have well-defined convex hulls $\widehat {\Sigma
(\varphi)}$ and $\widehat {\Lambda (\varphi)}$. But then the $CAT(1)$
property shows that the assertion of Theorem \ref{near} holds when $e\in
\widehat {\Sigma (\varphi)}$ and $e'\in \widehat {\Lambda (\varphi)}$. Thus
$\widehat {\Lambda (\varphi)}$ lies in a closed circumball of radius
$r$. Being convex, it will contain the center $z$ of a circumball of
minimal radius. It follows that the ball of radius $r$ with center $z$
contains both $\widehat {\Sigma (\varphi)}$ and $\widehat {\Lambda
(\varphi)}$. Thus we have:

\end{enumerate}
\end{rems}
\begin{cor}\label{tighter} When $||\varphi ||<\infty $, $\Sigma (\varphi)\cup \Lambda (\varphi)$ lies in a Tits-metric ball of radius $r=\text{\rm arccos}(\frac{\text {\rm gsh}_{e}(\varphi )}{||\varphi ||})<\frac{\pi}{2}$.
\end{cor}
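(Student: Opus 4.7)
The plan is to execute the argument sketched in the Remark following Theorem \ref{near}: leverage the $CAT(1)$ geometry of the Tits boundary to upgrade the pairwise distance bound of that theorem into a single small enclosing ball. First I would reduce to the case $\Sigma(\varphi) \neq \emptyset$ (otherwise the statement is vacuous) and fix $e_0 \in \Sigma(\varphi)$, noting that $r := \text{\rm arccos}(\text{\rm gsh}_{e_0}(\varphi)/||\varphi||) < \frac{\pi}{2}$ since $||\varphi||$ is finite and $\text{\rm gsh}_{e_0}(\varphi) > 0$. Theorem \ref{near} places $\Lambda(\varphi)$ inside the closed Tits ball of radius $r$ around $e_0$, and symmetrically places $\Sigma(\varphi)$ inside the closed $r$-ball around any $e' \in \Lambda(\varphi)$; a triangle inequality then shows that each of $\Sigma(\varphi)$ and $\Lambda(\varphi)$ has Tits diameter at most $2r < \pi$.

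Because $(\partial M, d)$ is $CAT(1)$ and both sets have diameter strictly less than $\pi$, I would invoke the standard $CAT(1)$ existence of closed convex hulls to form $\widehat{\Sigma(\varphi)}$ and $\widehat{\Lambda(\varphi)}$. Convexity of closed balls in $CAT(1)$ (for radii $< \frac{\pi}{2}$) propagates the pairwise bound of Theorem \ref{near} to all pairs in $\widehat{\Sigma(\varphi)} \times \widehat{\Lambda(\varphi)}$: the closed ball of radius $r$ centered at any fixed $e \in \widehat{\Sigma(\varphi)}$ contains $\Lambda(\varphi)$ hence its convex hull, and symmetrically. In particular $\widehat{\Lambda(\varphi)}$ is a closed convex subset of $\partial M$ contained in a ball of Tits radius $r < \frac{\pi}{2}$.

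The key step is then to select a good center. By the circumcenter result used in the proof of Theorem \ref{lang} (Theorem B of \cite{LS97}), the closed convex set $\widehat{\Lambda(\varphi)}$, lying in a ball of radius $< \frac{\pi}{2}$, admits a unique minimal enclosing ball of some radius $\rho \leq r$ with a unique center $z$. Closedness and convexity of $\widehat{\Lambda(\varphi)}$ then force $z \in \widehat{\Lambda(\varphi)}$: otherwise, since the squared-distance functions $d(\cdot, y)^2$ are strictly convex on balls of radius $< \frac{\pi}{2}$ in a $CAT(1)$ space, the function $x \mapsto \sup_{y \in \widehat{\Lambda(\varphi)}} d(x, y)^2$ would be strictly convex on a neighborhood of $z$, and its minimum over the ambient boundary would then be attained at the nearest-point projection of $z$ onto $\widehat{\Lambda(\varphi)}$, contradicting uniqueness. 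With $z \in \widehat{\Lambda(\varphi)}$ in hand, the closed ball of radius $r$ about $z$ contains $\widehat{\Lambda(\varphi)}$ (by the choice of minimal circumball, since $\rho \leq r$) and $\widehat{\Sigma(\varphi)}$ (via the propagated bound applied to $z \in \widehat{\Lambda(\varphi)}$), so $\Sigma(\varphi) \cup \Lambda(\varphi)$ is contained in this ball as required.

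The main obstacle I anticipate is the justification that the minimal circumcenter $z$ of the convex set $\widehat{\Lambda(\varphi)}$ lies in the set itself; although this is folklore in the $CAT(1)$ setting under the sub-$\frac{\pi}{2}$ diameter hypothesis, pinning down a clean reference or self-contained strict-convexity argument is the step requiring most care.
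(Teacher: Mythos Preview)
Your proposal is correct and follows essentially the same route as the paper: the argument in Remark 2 preceding the corollary forms the convex hulls $\widehat{\Sigma(\varphi)}$ and $\widehat{\Lambda(\varphi)}$, propagates the bound of Theorem \ref{near} to all pairs in their product via $CAT(1)$ convexity of small balls, and then takes $z$ to be the center of a minimal circumball of $\widehat{\Lambda(\varphi)}$, asserting that convexity forces $z\in\widehat{\Lambda(\varphi)}$. The paper simply states ``Being convex, it will contain the center $z$ of a circumball of minimal radius'' without further justification, so the step you flag as the main obstacle is exactly the one the paper glosses over; your invocation of \cite{LS97} together with the strict-convexity/nearest-point-projection sketch is a reasonable way to fill that gap.
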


\begin{proof} (of Theorem \ref{near}) Let $(e,e')\in \Sigma (\varphi)\times \Lambda
(\varphi )$. Then $e'$ is an accumulation point of a sequence
$(p_{k})$ in $M$ such that, for some $c\in F$ and all $k\geq 0, $ 
$p_{k}\in h(\text{supp}\varphi ^{k}(c))$; and the sequence $(p_{k})$ horo-converges to $e$. 

We may assume $p_{0}=b$, the base point, since change of base point does not
affect guaranteed shift. We represent $e$ by a geodesic ray $\gamma $
emanating from $b$. 

Let $\alpha (k)$ be the angle at $b$ between $p_k$ and $\gamma $. Write 
$\delta = \text{gsh}_{e}(\varphi )$. By
Lemma \ref{L:3.3} we know that $p_k$ lies in $HB_{e, k\delta }$, the
horoball at $e$ with apex $\gamma (k\delta )$, so that for each $k$ and
all sufficiently large $t$ we have $t-k\delta \geq d(p_{k},\gamma (t))$.

Consider the geodesic triangles $\Delta (p_{k}, b, \gamma (t))$ and
their Euclidean comparison triangles $\Delta ^*$. We write $\alpha
(k,t)$ for the angle in $\Delta $ at $b$. By the $CAT(0)$ inequality,
the corresponding angle $\alpha ^{*}(k,t)$ is an upper bound for $\alpha
(k,t)$.  The two sides of $\Delta (p_{k}, b, \gamma (t))$ adjacent
to $b$ are of length $d(b,\gamma (t))=t$ and $d(b, p_{k})=:u$, while
the third side is of length $\leq t-k\delta $. Thus the Law of Cosines
gives 
$$u^{2}+t^{2}-2ut\text{cos}\alpha ^{*}(k,t)\leq (t-k\delta)^{2},$$
and hence, in the limit, $\text{cos}\alpha ^{*}(k,\infty)\geq
\frac{k\delta }{u}$. By continuity of angles at $b$ (\cite{BrHa99} 9.2(1)) this implies $\alpha
(k,\infty)=\text {lim}_{k\to \infty}\alpha (k,t)$ is the angle at
$b$ of the ideal geodesic triangle $\Delta (p_{k},b,e)$. Thus we have
$\text{cos}\alpha (k,\infty)\geq \frac{k\delta}{u}$.

When the norm $||\varphi ||$ is finite we know that $u\leq k||\varphi
||$, hence $\text{cos}\alpha (k,\infty)\geq \frac{\delta}{||\varphi
||}$, a positive number independent of $k$. A subsequence $(p_{k_i})$
of $(p_{k})$ converges to $e'$, and, again by continuity of angles at
$b$, $\text {lim}_{i\to \infty}\alpha (k_{i},\infty )$ is the angle at $b$
between $e$ and $e'$. As we have found an upper bound for $\alpha (k,t)$
independent of $b$, this is also an upper bound for the angular (or Tits
metric) distance $d(e, e')$.

When the norm of $\varphi $ is infinite, we see that in the Euclidean triangle $\Delta ^*$ the side opposite $b^*$ is no longer than the side opposite $p_k$. Thus $\alpha ^{*}(k,t)$, the angle of $\Delta ^*$ at $b$, cannot be the largest of the three angles of $\Delta ^*$ and is therefore smaller than a right angle. Hence $\alpha (k,t)\leq \alpha ^{*}(k,t)<\frac{\pi}{2}$. The previous limit argument applies and we find $d(e, e')\leq \frac{\pi}{2}$.
\end{proof}

\subsection{$\Sigma (\varphi )$ is open in the Tits metric}

The following lemma gives information about how the value of the Busemann
function $\beta _{e}$ changes as $e$ varies over a Tits-metric
neighborhood:

\begin{lemma}\label{neighborhood} Let $r>0$ and $\epsilon >0$ be given,
and let $R\geq r(1+\frac{2r}{\epsilon})$. When geodesic rays  $\gamma $
and $\gamma '$ start at the same point $w$ and represent $e$ and $e'$
in $\del M$, and when $p\in B_{r}(w)$ then
\begin{equation*}
|\beta_{\gamma }(p) - \beta_{\gamma '}(p) | \leq \epsilon +2R \text{\rm sin}\frac{\angle(e,e')}{2})
\end{equation*}
\end{lemma}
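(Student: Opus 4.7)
The plan is to approximate both Busemann functions at $p$ by a common finite-stage value $b_\gamma^R(p):=R-d(p,\gamma(R))$ (and analogously $b_{\gamma'}^R(p)$), and then decompose
$$\beta_\gamma(p)-\beta_{\gamma'}(p)=\bigl[\beta_\gamma(p)-b_\gamma^R(p)\bigr]+\bigl[d(p,\gamma'(R))-d(p,\gamma(R))\bigr]-\bigl[\beta_{\gamma'}(p)-b_{\gamma'}^R(p)\bigr].$$
The middle difference is controlled by $d(\gamma(R),\gamma'(R))$ via the triangle inequality, and this in turn is bounded using the Tits angle through $CAT(0)$ comparison at $w$. The two bracketed ``approximation errors'' are estimated separately with the law of cosines.

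For the approximation error, let $u:=d(w,p)\leq r$, and for each $t>0$ write $\theta(t)$ for the angle at $\bar w$ in the Euclidean comparison triangle of $\Delta(w,p,\gamma(t))$. By the $CAT(0)$ monotonicity of comparison angles, $\theta(t)$ is non-decreasing in $t$, so it has a limit $\theta_\infty$. The law of cosines gives $d(p,\gamma(R))^{2}=u^{2}+R^{2}-2uR\cos\theta(R)$; then $\sqrt{1+x}\leq 1+x/2$ yields $d(p,\gamma(R))\leq R-u\cos\theta(R)+u^{2}/(2R)$, i.e.\ $b_\gamma^R(p)\geq u\cos\theta(R)-u^{2}/(2R)$. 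Rewriting the same formula as $d(p,\gamma(R))^{2}=(R-u\cos\theta(R))^{2}+u^{2}\sin^{2}\theta(R)$ gives (for $R\geq u$) the matching upper bound $b_\gamma^R(p)\leq u\cos\theta(R)$. Passing to the limit, these sandwich $\beta_\gamma(p)$ at the value $u\cos\theta_\infty$, and since $\cos\theta(R)\geq \cos\theta_\infty$ we obtain $0\leq \beta_\gamma(p)-b_\gamma^R(p)\leq u^{2}/(2R)\leq r^{2}/(2R)$. The same bound holds for $\gamma'$.

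For the middle term, compare $\Delta(w,\gamma(R),\gamma'(R))$ with its Euclidean model: the two legs from $\bar w$ have length $R$ and the apex angle at $\bar w$ is precisely $\bar\angle_w(\gamma(R),\gamma'(R))$. By $CAT(0)$ monotonicity, this comparison angle is non-decreasing in the parameter along the rays and its limit is the Tits angle $\angle(e,e')$; hence $\bar\angle_w(\gamma(R),\gamma'(R))\leq \angle(e,e')$. The Euclidean isoceles formula then gives $d(\gamma(R),\gamma'(R))=2R\sin\bigl(\bar\angle_w(\gamma(R),\gamma'(R))/2\bigr)\leq 2R\sin\bigl(\angle(e,e')/2\bigr)$. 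Combining the three estimates,
$$|\beta_\gamma(p)-\beta_{\gamma'}(p)|\leq \frac{r^{2}}{R}+2R\sin\bigl(\angle(e,e')/2\bigr),$$
and the choice $R\geq r(1+2r/\epsilon)$ implies $R\geq 2r^{2}/\epsilon$, so $r^{2}/R\leq \epsilon/2\leq \epsilon$, yielding the claimed inequality.

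The main obstacle is the Busemann approximation $\beta_\gamma(p)-b_\gamma^R(p)\leq r^{2}/(2R)$: it depends essentially on the $CAT(0)$ monotonicity of the comparison angle $\theta(t)$ (so that $\theta(R)\leq\theta_\infty$ bounds the limit $u\cos\theta_\infty$ from above by $u\cos\theta(R)$) and on the quantitative expansion $\sqrt{1+x}\leq 1+x/2$ applied to the law of cosines. Once this quantitative approximation is secured and the comparison-angle-to-Tits-angle identification is invoked, the remainder is routine arithmetic in the hypothesis on $R$.
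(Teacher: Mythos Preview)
Your proof is correct and follows the same overall architecture as the paper's: decompose $\beta_\gamma(p)-\beta_{\gamma'}(p)$ into two ``approximation errors'' $\beta_\gamma(p)-(R-d(p,\gamma(R)))$ and a middle term $d(p,\gamma'(R))-d(p,\gamma(R))$, bound the latter by $d(\gamma(R),\gamma'(R))\leq 2R\sin(\angle(e,e')/2)$, and absorb the former into $\epsilon$.

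The only real difference is how the approximation error is handled. The paper invokes Lemma II.8.21(1) of \cite{BrHa99} as a black box to obtain $0\leq d(p,\gamma(R))-R+\beta_\gamma(p)\leq \epsilon$ directly from the hypothesis $R\geq r(1+2r/\epsilon)$. You instead derive the sharper bound $0\leq \beta_\gamma(p)-b_\gamma^R(p)\leq r^2/(2R)$ from first principles via the law of cosines and $CAT(0)$ monotonicity of comparison angles, and then check that $r^2/R\leq \epsilon$ under the hypothesis. Your route is more self-contained and in fact yields $\epsilon/2$ rather than $\epsilon$; the paper's route is quicker given the cited lemma.

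One minor imprecision: you write that the limit of the comparison angle $\bar\angle_w(\gamma(R),\gamma'(R))$ ``is the Tits angle $\angle(e,e')$''. Strictly, that limit is the Alexandrov angle $\angle_w(e,e')$ at $w$, which is only $\leq \angle(e,e')$. This does not affect your argument, since you only need the inequality $\bar\angle_w(\gamma(R),\gamma'(R))\leq \angle(e,e')$, which holds either way.
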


\begin{proof}Lemma II8.21(1) of \cite{BrHa99} asserts that if $w\in M$, $p\in B_{r}(w)$, $u\notin B_{R}(w)$, and $v$ is the point on $[b,u]$ distant $R$ from $w$, then
$$0\leq d(p,v)+d(v,u)-d(p,u)\leq \epsilon .$$ 
Applying this to $u=\gamma (t)$, $v=\gamma (R)$ and $t>R$ gives
$$0\leq d(p,\gamma(R))+(t-R)-d(p,\gamma (t))\leq \epsilon $$
Letting $t\to \infty$ this gives
\begin{equation}\label{star}
0\leq d(p,\gamma(R))-R+\beta_{\gamma }(p)\leq \epsilon
\end{equation}
Considering (\ref{star}) for both $\gamma$ and $\gamma'$, and taking the difference of the two inequalities we get
\begin{equation}\label{doublestar}
|\beta_{\gamma }(p) - \beta_{\gamma '}(p) | \leq \epsilon + d(\gamma (R),\gamma '(R))
\end{equation}
By Proposition III.3.4 of \cite{BrHa99}, the sequence 
$\frac{d(\gamma (R),\gamma '(R)}{R}$ with $R\to \infty$ is non-decreasing and its limit is 
$2\text{sin}\frac{\angle(e,e')}{2}$. By (\ref{doublestar}) the Lemma follows.
\end{proof} 

\begin{Rem}\label{core}
If we choose  $e'\in \del M$ so that $|\angle (e,e')| <2\text{arcsin}\frac{\epsilon}{2R}$ we find 
$|\beta_{\gamma }(p) - \beta_{\gamma '}(p) | \leq 3\epsilon $. Thus if $\beta_{\gamma }(p)>\delta >0$ and we put $\epsilon =\frac{\delta}{9}$, then (with $r$ and $R$ as above) we find a Tits-metric neighborhood $N$ of $e$ such that when $e'\in N$ $|\beta_{\gamma }(p) - \beta_{\gamma '}(p) | \leq \frac{\delta}{3}$, and therefore $\beta_{\gamma '}(p)\geq \frac{\delta}{3}$.
\end{Rem}

\begin{thm}\label{Titsopen}Let $F$ be a finitely generated free $G$-module, and let $\varphi :F\to F$ be a ${\Z}$-map with finite norm. Then $\Sigma (\varphi )$ (defined with respect to
an arbitrary control map $h$) is an open subset of $\del M$ in the Tits
metric topology.  \end{thm}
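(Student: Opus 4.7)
The plan is to reduce $\text{gsh}_e(\varphi)$ to an infimum over the $\Z$-basis $Y=GX$ of $F$, and then apply Remark \ref{core} uniformly in $y\in Y$, exploiting the hypothesis $\|\varphi\|<\infty$.

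I would first establish the reduction $\text{gsh}_e(\varphi)=\inf_{y\in Y}\text{sh}_{\varphi,e}(y)$. The inequality $\leq$ is immediate. For the reverse, given $c=\sum n_y y\in F$ we have $v_e(c)=\min_{y\in\text{supp}(c)}v_e(y)$ and, since $\text{supp}(\varphi(c))\subseteq\bigcup_{y\in\text{supp}(c)}\text{supp}(\varphi(y))$, also $v_e(\varphi(c))\geq\min_{y\in\text{supp}(c)}v_e(\varphi(y))$. Combined with the elementary identity $\min_y a_y-\min_y b_y\geq\min_y(a_y-b_y)$, this yields $\text{sh}_{\varphi,e}(c)\geq\min_{y\in\text{supp}(c)}\text{sh}_{\varphi,e}(y)\geq\inf_{y\in Y}\text{sh}_{\varphi,e}(y)$. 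The second preliminary is to trap each pair $h(y),h(\varphi(y))$ in a ball of radius independent of $y$: writing $D:=\max_{x\in X}\text{diam}\,h(x)$ and picking $w_y\in h(y)$, $G$-equivariance of $h$ gives $\text{diam}\,h(y)\leq D$, while the definition of norm gives $h(\varphi(y))\subseteq N_{\|\varphi\|}(h(y))$; hence $h(y)\cup h(\varphi(y))\subseteq\overline{B}_{r_0}(w_y)$ with $r_0:=D+\|\varphi\|$ independent of $y$.

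Now fix $e_0\in\Sigma(\varphi)$ with $\delta_0:=\text{gsh}_{e_0}(\varphi)>0$. Setting $\epsilon=\delta_0/9$ and taking $R$ as in Remark \ref{core} for $r=r_0$, the Tits ball $N:=\{e'\in\del M:d(e_0,e')<2\arcsin(\epsilon/(2R))\}$ works simultaneously for every $y$, because $\angle_{w_y}(e_0,e')\leq d(e_0,e')$ at every $w_y\in M$. Remark \ref{core} then delivers $|\beta_{\gamma_{y,e_0}}(p)-\beta_{\gamma_{y,e'}}(p)|\leq\delta_0/3$ for every $e'\in N$, every $y\in Y$, and every $p\in\overline{B}_{r_0}(w_y)$, where $\gamma_{y,e}$ denotes the ray from $w_y$ representing $e$. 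Since the difference $v_e(\varphi(y))-v_e(y)$ is independent of the choice of ray representing $e$ (Lemma \ref{L:2.3}(iv)), both valuations may be computed using $\gamma_{y,e}$; taking minima of $\beta_{\gamma_{y,\bullet}}$ over the finite sets $h(y)$ and $h(\varphi(y))$ yields $|v_{e'}(y)-v_{e_0}(y)|\leq\delta_0/3$ and $|v_{e'}(\varphi(y))-v_{e_0}(\varphi(y))|\leq\delta_0/3$, whence $\text{sh}_{\varphi,e'}(y)\geq\text{sh}_{\varphi,e_0}(y)-2\delta_0/3\geq\delta_0/3$ for every $y\in Y$. The reduction of the first step then gives $\text{gsh}_{e'}(\varphi)\geq\delta_0/3>0$, so $N\subseteq\Sigma(\varphi)$.

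The crucial role of $\|\varphi\|<\infty$ is in securing a radius $r_0$ independent of $y$: without it, Remark \ref{core} would produce Tits neighborhoods of $e_0$ shrinking along a sequence of basis elements $y$ for which $h(\varphi(y))$ drifts arbitrarily far from $h(y)$, and no single Tits neighborhood of $e_0$ would suffice.
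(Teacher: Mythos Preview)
Your proof is correct and follows essentially the same strategy as the paper's: both arguments hinge on Remark \ref{core} (derived from Lemma \ref{neighborhood}) to show that the Busemann-function values entering $\text{sh}_{\varphi,e}(y)$ vary by at most a fixed fraction of $\delta_0$ when $e$ moves in a small Tits ball, with the finiteness of $\|\varphi\|$ providing the uniform radius needed to make that estimate independent of $y$. Your write-up is in fact tidier than the paper's in two respects: you make the reduction $\text{gsh}_e(\varphi)=\inf_{y\in Y}\text{sh}_{\varphi,e}(y)$ explicit, and you use a single radius $r_0=D+\|\varphi\|$ around one center $w_y$ for both $h(y)$ and $h(\varphi(y))$, whereas the paper treats the point $q\in h(\varphi(x))$ via Remark \ref{core} and the point $p\in h(x)$ via a separate continuity argument, then asserts without elaboration that the resulting estimate for $x\in X$ persists for all $y=gx$.
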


\begin{proof} Let $e\in \Sigma (\varphi)$ and let $\text{gsh}_{e}(\varphi )\geq \delta >0$. We write $r:= ||\varphi ||<\infty $. Let $p\in h(x)$ realize $\beta _{e}(h(x))$ in the sense that $\beta _{e}(p)$ is minimal among points of $h(x)$, and let $q\in h(\varphi (x))$. There is some $w\in h(x)$ such that $q\in B_{r}(w)$. Let $\gamma $ and $\gamma' $ be rays starting at $w$ and defining $e$ and $e'$ respectively. The limitations on $e'$ will be determined later.

By Remark \ref{core} there is a Tits neighborhood $N_1$ of $e$ such that when $e'\in N_1$ we have
(1)\;\;\;\;\;\;$|\beta_{\gamma }(q) - \beta_{\gamma '}(q) | \leq \frac{\delta}{3}$.

Because of the guaranteed shift we have\\
(2)\;\;\;\;\;\;$|\beta_{\gamma }(q) - \beta_{\gamma }(p) | \geq \delta$.

And because $\beta _{e}(p)$ is continuous in $e$ with respect to the cone topology (hence also the Tits topology) there is a Tits neighborhood $N_2$ of $e$ such that when $e'\in N_2$ we have\\
(3)\;\;\;\;\;\;$|\beta_{\gamma }(p) - \beta_{\gamma '}(p) | \leq \frac{\delta}{6}$.

From these we get\\
(4)\;\;\;\;\;\;$|\beta_{\gamma '}(q) - \beta_{\gamma '}(h(x))| \geq  \frac{\delta}{6}$ when $e'\in N_{1}\cap N_{2}$.

Since $X$ is finite and each $h(\varphi (x))$ is finite, we may assume this holds for all $x$ and all $q\in h(\varphi (x))$.

It follows that this remains true if $x$ is replaced by any $y=gx$, and hence also if $x$ is replaced by any $c\in F$. 
\end{proof}

\section{$G$-finitary homomorphisms}\label{Sec4}

\subsection{$G$-volleys}\label{S:3.2}

Let $S$ and $T$ be $G$-sets. A  $G$-{\it volley from $S$ to $T$} is a
$G$-equivariant map $\Phi :S\to fT$. Two volleys $\Phi :S\to fT$ and $\Psi
:T\to fU$ can be can be ``composed'' to give the volley $\Psi\Phi : S\to
fU$ defined by $\Psi\Phi(s) := \ds{\bigcup_{t\in \Phi(s)}}\Psi(t)$. A
$G$-map $\varphi : S\to T$ may be regarded as the $G$-volley which
assigns to every element $s\in S$ the singleton set $\{\varphi(s)\}$.
Hence $G$-volleys and $G$-homomorphisms can be composed in the above
sense.

\begin{example} {\rm In this paper a $G$-volley will usually be given on based free
$G$-module $F_X$.  Indeed, if $B$ is an arbitrary $G$-module, every map
$\Phi :X\to fB$ extends to a {\it canonical volley} $\Phi
:F_X\to fB$ as follows:  On elements $y = gx$ of the ${\mathbb Z}$-basis $Y =
GX$, $\Phi$ is uniquely determined by $G$-equivariance:  $\Phi (y) :=
g\Phi(x)$; and for arbitrary elements $c = \ds{\sum_{y\in Y}}r_{y}y\in
F_X$, in the unique expansion, we put}
\begin{equation*}
\Phi (c) := \ds{\sum_{y\in Y}}r_{y}\Phi (y):=\{\ds{\sum_{y\in Y}}r_{y}b_{y}\mid b_{y}\in
\Phi (y),\text{ for all }y\in Y\}.
\end{equation*}
{\rm It is straightforward to check that $\Phi (gc)=g\Phi (c)$. We call 
$\Phi : F_X \to fB$ the {\it canonical $G$-volley induced by} $\Phi :X\to fB$.}  
\end{example}

\subsection{Finitary homomorphisms}\label{S:3.2a}

A {\it selection} from the $G$-volley $\Phi :A\to fB$ is a
${\mathbb Z}$-homomorphism $\varphi : A\to B$ such that $\varphi(a) \in \Phi(a)$ for all $a\in
A$. If $\varphi : A\to B$ is a selection from  the $G$-volley $\Phi$, so are all its (diagonal) 
$G$-translates $g\varphi : A\to B$. A ${\mathbb Z}$-homomorphism
$\varphi : A\to B$ is $G$-{\it finitary} if it is a selection from  some finite $G$-volley $\Phi :
A\to fB$. We note that an additive map $\varphi :A\to B$ is $G$-finitary if and only if 
$\{g\varphi(g^{-1}a)\mid g\in G\}$ is finite for each $a\in A$.

\begin{lemma}\label{L:3.1} 
If $\varphi : A\to B$ and $\psi : B \to C$ are $G$-finitary, so is the composition $\psi\varphi :
A\to C$.
\end{lemma}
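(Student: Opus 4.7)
The plan is to construct an explicit finite $G$-volley that captures the composition $\psi\varphi$, using the volley-composition operation already defined in Section \ref{S:3.2}.

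By hypothesis, choose finite $G$-volleys $\Phi : A \to fB$ and $\Psi : B \to fC$ such that $\varphi(a) \in \Phi(a)$ for all $a \in A$ and $\psi(b) \in \Psi(b)$ for all $b \in B$. I would then propose the composite volley $\Psi\Phi : A \to fC$ defined in Section \ref{S:3.2}, namely
\begin{equation*}
(\Psi\Phi)(a) := \bigcup_{b \in \Phi(a)} \Psi(b).
\end{equation*}
Three things must be checked. First, $(\Psi\Phi)(a)$ is a finite subset of $C$: this is immediate since $\Phi(a)$ is finite and each $\Psi(b)$ is finite, so the union is a finite union of finite sets. Second, $\Psi\Phi$ is $G$-equivariant: using equivariance of $\Phi$ and $\Psi$,
\begin{equation*}
(\Psi\Phi)(ga) = \bigcup_{b \in \Phi(ga)} \Psi(b) = \bigcup_{b \in g\Phi(a)} \Psi(b) = \bigcup_{b' \in \Phi(a)} \Psi(gb') = g \bigcup_{b' \in \Phi(a)} \Psi(b') = g(\Psi\Phi)(a).
\end{equation*}
Third, $\psi\varphi$ is a selection from $\Psi\Phi$: since $\varphi(a) \in \Phi(a)$, we have $\psi(\varphi(a)) \in \Psi(\varphi(a)) \subseteq (\Psi\Phi)(a)$.

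There is no real obstacle here; this is bookkeeping that follows formally from the definitions already laid out. The only point requiring a moment's care is the equivariance calculation, where one must reindex the union along the bijection $b \mapsto g^{-1}b$ of $\Phi(ga) = g\Phi(a)$ onto $\Phi(a)$ and use equivariance of $\Psi$ inside the union. Once these three verifications are in place, $\psi\varphi$ is exhibited as a selection from a finite $G$-volley, which by definition makes it $G$-finitary.
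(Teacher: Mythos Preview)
Your proof is correct and takes essentially the same approach as the paper: the paper's proof simply observes that $\psi\varphi$ is a selection from the composed volley $\Psi\Phi$ defined in Section~\ref{S:3.2}, while you spell out the three verifications (finiteness, $G$-equivariance, and the selection property) that the paper leaves implicit.
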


\begin{proof}If $\varphi : A \to B$ and $\psi
: B\to C$ are selections from the volleys $\Phi : A\to fB$, $\Psi : B\to
fC$, respectively, then $\psi \varphi : A\to C$ is a selection from 
the composed volley $\Psi \Phi : A\to fC$.  
\end{proof}

Thus there is a $G$-finitary category of $G$-modules.

One observes readily that if $\varphi :F_{X}\to F'_{X' }$ is
a $G$-finitary homomorphism between two based free $G$-modules
endowed with control maps $h:F_{X }\to M$ and $h':F'_{X'}\to M$ then
its norm $||\varphi ||$ is finite. This is one of the coarse features
that $G$-finitary maps share with $G$-equivariant maps. For us this
is crucial: it implies the Bounded Displacement Lemma \ref{bounded},
and it implies that the guaranteed shift,  $\text {gsh}_{e}(\varphi)$,
is a well-defined real number for each $e\in \del M$.

Another important feature of finitary maps is the closure property (Theorem \ref{P:3.4}) as we now explain.

\subsection{Pushing submodules towards limit points of orbits in $\partial M$}\label{S:3.4}

We assume here that the cellular submodule  $L\leq F$
is in fact a $G$-submodule.  It will then be generated, as a
$G$-module, by $X' = L\cap X\subeq X$.  From Lemma \ref{L:3.2}(ii)
we know that if $\varphi$ pushes the $G$-submodule $L$ towards $e$ with guaranteed
shift $\delta$, then the $G$-translate $g\varphi$ of $\varphi$
pushes $L$ with the same guaranteed shift $\delta$ towards $ge$.
In the special case when $\varphi |L$ is $G$-finitary we can do better:
given any $\hat e\in\text{cl}(Ge)$, the closure of the $G$-orbit $Ge\subseteq \partial M$,
we can still construct $G$-finitary endomorphisms pushing towards $\hat e$:

\begin{thm}\label{P:3.4}({\rm Closure})  Let $L\leq F$ be a cellular $G$-submodule of
$F=F_X$ and let $\varphi : L \to F$ be a selection from  the
$G$-volley $\Phi :L\to fF$ with $\text{\rm gsh}_e(\varphi) = \delta
> 0$.  Then for every $\hat e\in \text{\rm cl}(Ge)$ there is a
selection $\psi : L\to F$ from $\Phi$ with $\text{\rm gsh}_{\hat e}(\psi)
\geq \frac {\delta}{2}$.  In fact, this can be done so that, on each finitely
generated ${\mathbb Z}$-submodule of $L$, $\psi$ coincides with  some
$G$-translate $g\varphi$.  
\end{thm}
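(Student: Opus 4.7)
The plan is to construct $\psi$ as a pointwise limit of $G$-translates $g_n\varphi$ for a carefully chosen sequence $g_n\in G$ with $g_n e\to \hat e$. Since $M$ is proper, $\del M$ is compact metrizable in the cone topology, so $\hat e\in \text{cl}(Ge)$ yields such a sequence. Each translate $g_n\varphi$ is still a selection from $\Phi$, since by $G$-equivariance of $\Phi$ we have $(g_n\varphi)(y)=g_n\varphi(g_n^{-1}y)\in g_n\Phi(g_n^{-1}y)=\Phi(y)$; and Lemma~\ref{L:3.2}(ii) gives $\text{gsh}_{g_ne}(g_n\varphi)=\delta$.

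Let $Y':=L\cap Y$ be the $\Z$-basis of $L$. Since each $\Phi(y)$ is finite, one can pass --- via a standard diagonal/pigeonhole argument when $Y'$ is countable, or via Tychonoff compactness of $\prod_{y\in Y'}\Phi(y)$ combined with a non-principal ultrafilter $\mathcal U$ on $\N$ in general --- to a subsequence (respectively $\mathcal U$-limit) so that for every $y\in Y'$ the values $(g_n\varphi)(y)$ stabilize at some $\psi(y)\in \Phi(y)$. Extending $\psi$ $\Z$-linearly to $L\to F$ yields a $\Z$-homomorphism which is a selection from $\Phi$, since the canonical volley is built from $\Phi|_X$ by $G$-equivariance and $\Z$-linearity.

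The heart of the argument is the verification that $\text{gsh}_{\hat e}(\psi)\geq \delta/2$. The geometric input is the standard fact that in a proper $CAT(0)$ space the Busemann function $\beta_e$ (normalized at the base point) depends continuously on $e$ in the cone topology, uniformly on bounded subsets of $M$. For a fixed $y\in Y'$ the union $h(y)\cup h(\Phi(y))$ is finite, hence bounded; so one may choose $n$ large enough that simultaneously $\psi(y)=(g_n\varphi)(y)$ and $|\beta_{g_ne}(p)-\beta_{\hat e}(p)|<\delta/4$ for every $p$ in this bounded set. Combining this approximation with $v_{g_ne}((g_n\varphi)(y))-v_{g_ne}(y)\geq \delta$ yields $v_{\hat e}(\psi(y))-v_{\hat e}(y)\geq \delta-2(\delta/4)=\delta/2$. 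This basis-level bound propagates to arbitrary $c\in L$ via the $\min$-convention $v_e(c)=\min_{p\in h(c)}\beta_e(p)$ and Lemma~\ref{L:2.3}(ii).

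The finitely-generated-submodule property falls out of the construction: any finitely generated $\Z$-submodule $K\leq L$ is contained in the $\Z$-span of some finite set $\{y_1,\ldots,y_m\}\subseteq Y'$, and for $n$ in the intersection of the (cofinite or $\mathcal U$-large) index sets on which $(g_n\varphi)(y_j)=\psi(y_j)$ for all $j\leq m$, the translate $g_n\varphi$ agrees with $\psi$ on $\{y_1,\ldots,y_m\}$, hence on $K$. The main obstacle is arranging stabilization of $(g_n\varphi)(y)$ \emph{and} Busemann approximation simultaneously for all $y\in Y'$; the diagonal/ultrafilter device handles this once one has uniform continuity of Busemann functions in the boundary argument on bounded sets.
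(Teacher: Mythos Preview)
Your proof is correct and follows the same approach as the paper: choose $g_n$ with $g_n e\to \hat e$, note that each $g_n\varphi$ is a selection from $\Phi$ with $\text{gsh}_{g_ne}(g_n\varphi)=\delta$, and transfer the shift to $\hat e$ via continuity of the Busemann function on the finite set $h(y)\cup h(\Phi(y))$. The paper's proof is slightly more direct---it simply sets $\psi(y)=(g_{N(y)}\varphi)(y)$ for any $N(y)$ large enough that the Busemann approximation holds at $y$, without a stabilization step---whereas your diagonal/ultrafilter argument first forces the values $(g_n\varphi)(y)$ to stabilize; this extra step is precisely what secures the final sentence of the theorem (agreement with a single translate $g\varphi$ on each finitely generated $\Z$-submodule), a point the paper's proof leaves implicit.
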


\begin{proof}  Let $\hat e \in \partial M$ and let $(g_ke)_{k\in {\mathbb
N}}$ be a sequence of points in the orbit $Ge$ converging to $\hat e$.
The $G$-module $L$ is freely generated as a $\Z$-module by $Y'=GX'$.
We will define a map $\psi :Y'\to F$ with $\text{\rm gsh}_{\hat e}(\psi)
\geq \frac{\delta }{2}$, such that for all $y\in Y'$ $\psi (y)\in \Phi (y)$.
The extension of this to a ${\Z}$-map on $L$ will be the required map.

For $y\in Y'$ we have $\beta _{\hat e}((g_{k}\varphi)(y))-\beta _{\hat e}(y)=a+b+c$ where 

\begin{align*}
a &=\beta _{\hat e}((g_{k}\varphi)(y))-\beta _{g_{k}e}((g_{k}\varphi)(y))\\
b &=\beta _{g_{k}e}((g_{k}\varphi)(y))-\beta _{g_{k}e}(y)\\
c &=\beta _{g_{k}e}(y)-\beta _{\hat e}(y)
\end{align*}

We have seen that $b\geq \delta$, and there exists $N$ (dependent on $y$)
such that each of $a$ and $c$ is $<\frac{\delta }{4}$ when $k\geq N$;
this is clear for $a$, and holds for $c$ because $(g_{k}\varphi)(y)$
lies in the finite set $\Phi (y)$ for all $k$.  The required map $\psi $
is therefore defined by  $\psi (y)=(g_{N}\varphi)(y)$.
\end{proof}

\begin{Rem}\label{equivariant1} If $\varphi$ is a $G$-map, the volley $\Phi$ is just $\{\varphi\}$ and $g_{k}\varphi=\varphi$ for all $k$. Thus this special case is covered by Theorem \ref{P:3.4}.			
\end{Rem}

\section{The Dynamical Limit Sets $\Sigmacirc (M;A)$ and $\Sigmadoublecirc
(M;A)$}\label{S:5}

In this section we apply the work of Sections \ref{S:2} and \ref{Sec4} to a finitely generated based free
presentation $F\thra A$ of the $G$-module $A$.

The {\it dynamical limit sets} of the pair $(M,A)$ are defined to
be

\begin{equation*} \Sigmacirc(M;A):= \{e\mid \exists
\text{ a }G\text{-finitary endomorphism }\varphi :F\to F \text{
inducing id}_{A} \text{ with gsh}_{e}(\varphi )>0\} 
\end{equation*} 
 
and

\begin{equation*} \Sigmadoublecirc(M;A):= \{e\mid \exists 
\text{ a } G\text{-endomorphism }\varphi :F\to F \text{ inducing id}_{A}
\text{ with gsh}_{e}(\varphi )>0\} 
\end{equation*}

\begin{prop} [Invariance] \label{P:5.4} Let $e\in\partial M$. The
existence of a $G$-finitary endomorphism (resp. $G$-endomorphism)
$\varphi :F\to F$ inducing {\rm id}$_A$ and pushing $F$ towards
$e$ is independent of the choice of presentation $F\thra A$ and of
the control map $h:F\to fM$. In other words, $\Sigmacirc(M;A)$ and
$\Sigmadoublecirc(M;A)$ are well defined.  \end{prop}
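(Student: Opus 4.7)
The plan is to show invariance in two stages: first under change of the control map (with the presentation $\epsilon:F\thra A$ fixed), and then under change of presentation. Both stages are driven by the same trick: the Bounded Displacement Lemma \ref{bounded} allows us to ``absorb'' the cost of passing between different $F$'s or different control maps into a finite constant, and Lemma \ref{L:3.3} (guaranteed shift is super-additive under composition, and grows linearly under iteration) lets us swamp that constant by iterating a positive push.

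For independence of the control map, fix $\epsilon:F\thra A$ and let $h,h':F\to fM$ be two control maps on $F$. Applying Lemma \ref{bounded} to $\mathrm{id}_F$, viewed once as a map $(F,h)\to(F,h')$ and once as a map $(F,h')\to(F,h)$, yields a constant $C\geq 0$ with $|v'_e(c)-v_e(c)|\leq C$ for all $c\in F$ and all $e\in\del M$. Now suppose $\varphi$ is an endomorphism of $F$ of the relevant type (additive $G$-finitary, or $G$-equivariant) with $\epsilon\varphi=\epsilon$ and guaranteed shift $\delta>0$ towards $e$ measured with respect to $h$. Then $\epsilon\varphi^k=\epsilon$ and, by Lemma \ref{L:3.3}, $\text{gsh}_e(\varphi^k)\geq k\delta$ with respect to $h$; consequently the same iterate has shift at least $k\delta-2C$ with respect to $h'$, which is positive for $k$ large. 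Since the class of $G$-finitary (resp.\ $G$-equivariant) endomorphisms is closed under composition (Lemma \ref{L:3.1}), $\varphi^k$ is of the required type.

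For independence of the presentation, let $\epsilon:F\thra A$ and $\epsilon':F'\thra A$ be two finitely generated free presentations, equipped with arbitrary control maps. Because $F$ and $F'$ are free $G$-modules, standard projectivity provides $G$-homomorphisms $\alpha:F\to F'$ and $\beta:F'\to F$ with $\epsilon'\alpha=\epsilon$ and $\epsilon\beta=\epsilon'$. By Lemma \ref{bounded} applied to these $G$-equivariant maps, both $\|\alpha\|$ and $\|\beta\|$ are finite; hence, by Lemma \ref{L:3.2}(i), $\text{gsh}_e(\alpha)\geq -\|\alpha\|$ and $\text{gsh}_e(\beta)\geq -\|\beta\|$ are finite real numbers. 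Given an endomorphism $\varphi$ of $F$ of the relevant type, satisfying $\epsilon\varphi=\epsilon$ and $\text{gsh}_e(\varphi)=\delta>0$, we define
\begin{equation*}
\varphi'_k \;:=\; \alpha\circ\varphi^{k}\circ\beta \;:\; F'\to F'.
\end{equation*}
Then $\epsilon'\varphi'_k = \epsilon\varphi^k\beta = \epsilon\beta = \epsilon'$, and by Lemma \ref{L:3.3},
\begin{equation*}
\text{gsh}_e(\varphi'_k) \;\geq\; \text{gsh}_e(\alpha)+k\,\text{gsh}_e(\varphi)+\text{gsh}_e(\beta)
\;\geq\; -\|\alpha\|+k\delta-\|\beta\|,
\end{equation*}
which is positive for $k$ large. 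Finally $\varphi'_k$ is $G$-equivariant whenever $\varphi$ is (being a composition of $G$-maps), and $G$-finitary whenever $\varphi$ is (by Lemma \ref{L:3.1}, since $\alpha,\beta$ are in particular $G$-finitary). This produces an endomorphism of $F'$ of the required type witnessing that $e\in\Sigmacirc(M;A)$ (resp.\ $e\in\Sigmadoublecirc(M;A)$) when computed from $(\epsilon',h')$.

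By symmetry between $(F,\epsilon)$ and $(F',\epsilon')$ the two definitions of each limit set agree, and combined with the first step this proves the proposition. The only real subtlety, and the step one must handle carefully, is the observation that $\|\alpha\|,\|\beta\|<\infty$: this is not automatic for arbitrary additive maps between controlled modules, but it holds here because $\alpha,\beta$ are chosen $G$-equivariantly, so the Bounded Displacement Lemma applies; and the rest of the argument is then just a matter of iterating $\varphi$ enough times to outrun the fixed penalty $\|\alpha\|+\|\beta\|$.
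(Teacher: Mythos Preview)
Your proof is correct and follows essentially the same approach as the paper: lift $\mathrm{id}_A$ to $G$-maps $\alpha,\beta$ between the two free modules, form $\alpha\varphi^k\beta$, and use Lemmas \ref{L:3.2} and \ref{L:3.3} to show that for large $k$ the fixed penalties $\|\alpha\|,\|\beta\|$ are swamped by $k\,\text{gsh}_e(\varphi)$. The only organizational difference is that the paper treats independence of the control map as the special case $F=F'$ with $\alpha$ an automorphism and $\beta=\alpha^{-1}$ (so effectively $\alpha=\beta=\mathrm{id}_F$ viewed between the two controlled versions of $F$), whereas you handle it first as a separate computation; but the content is identical.
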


\begin{proof} Let $F'\thra A$ be a second such presentation.  The identity
map id$_A$ can be lifted to $G$-maps $\alpha :F\to F'$, $\beta :F'
\to F$.  Assume there exists a $G$-finitary (resp. $G$-equivariant) push $\varphi : F \to F$
towards $e$ inducing id$_A$.  Then $\alpha\varphi\beta :F' \to F'$ is a $G$
finitary (resp. $G$-equivariant) map inducing id$_A$.  By Lemmas \ref{L:3.2} and \ref{L:3.3},
gsh$_{e}(\alpha\varphi^k\beta) \geq - ||\alpha|| + k\cdot \text {
gsh}_{e}(\varphi ^k) - ||\beta||$. The norm of a $G$-map is finite, so if
we choose $k$ large enough to ensure that $k\cdot \text { gsh}_{e}(\varphi
) > ||\alpha|| + ||\beta||$, the map $\alpha \varphi^k\beta :F'\to
F'$ becomes a $G$-finitary (resp. $G$-equivariant) push towards $e$ inducing $\text{id}_A$.
This shows independence of the free presentation.  Independence of
the control map is proved as a special case: take $F=F'$, $\alpha $
an automorphism, and $\beta $ the inverse of $\alpha $.  \end{proof}

We can now prove Theorem \ref{closure3}, the statement that the $G$-sets
$\Sigmacirc(M;A)$ and $\Sigmadoublecirc(M;A)$ contain the closure of
each of their orbits.

\begin{proof}(of Theorem \ref{closure3})  Let $\varphi :F\to F$ be a $G$-finitary push of $F$
towards $e\in \Sigmacirc (M;A)$ which induces $\text{id}_A$.  The proof
of Theorem \ref{P:3.4} constructs a $G$-finitary map $\psi :F\to
F$ pushing towards an arbitrary point of the closure of $Ge$ with
the property that for every ${\Z}$-basis element $y$, there is some
element $g\in G$, with $\psi (y)= (g\varphi)(y)$. Thus $\psi$ induces
$\text{id}_A$ as required.  The claim for $\Sigmadoublecirc(M;A)$ holds
because the volley defined by a $G$-endomorphism is a singleton; see Remark \ref{equivariant1}. 

\end{proof}

\begin{prop}\label{P:5.6} $e\in \Sigmacirc (M;A)$ if and only if for
each $x\in X$, there is a finite subset $\Phi(x) \subseteq F$, with
$\epsilon(\Phi(x)) = \epsilon(x)$ such that each of the functions $\mu_x
: \partial M\to {\mathbb R}$, $\mu_x(e') := \max\{v_{\gamma '}(\Phi(x))
- v_{\gamma '}(x)\}$ satisfies one of the following equivalent properties:
\begin{enumerate}[(i)]
\item $\mu _{x}$ is positive on $\text{\rm cl}(Ge)$ 
\item $\mu _{x}$ has a positive lower bound on $Ge$.
\end{enumerate}
\end{prop}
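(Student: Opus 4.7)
The plan is to exploit the equivariance rule $v_\gamma(c)=v_{g\gamma}(gc)$ from Lemma \ref{L:2.3}(iii), together with the basepoint-independence of differences from Lemma \ref{L:2.3}(iv), in order to interchange pushing towards $e$ with controlling shifts towards points of the orbit $Ge$. I will first dispose of the equivalence (i)$\Leftrightarrow$(ii) and then prove the two implications separately.

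For the equivalence: $\mu_x$ is a finite maximum of functions $e'\mapsto v_{\gamma'}(c)-v_{\gamma'}(x)$ (one per $c\in\Phi(x)$), each continuous in the cone topology because differences of Busemann values depend continuously on the endpoint (and, by Lemma \ref{L:2.3}(iv), not on the choice of representing ray). Since $\partial M$ is compact and $\text{cl}(Ge)$ is closed, the implication (i)$\Rightarrow$(ii) is immediate from compactness, while (ii)$\Rightarrow$(i) follows from density of $Ge$ in its closure combined with continuity of $\mu_x$.

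For necessity, I would start from a $G$-finitary push $\varphi:F\to F$ with $\epsilon\varphi=\epsilon$ and $\delta:=\text{gsh}_e(\varphi)>0$, and set
\[
\Phi(x):=\{g\varphi(g^{-1}x)\mid g\in G\}.
\]
This set is finite by the very definition of ``$G$-finitary'', and $\epsilon(g\varphi(g^{-1}x))=g\epsilon(g^{-1}x)=\epsilon(x)$ gives compatibility with $\epsilon$. To verify (ii), fix $h\in G$ and use Lemma \ref{L:2.3}(iii),(iv) to rewrite $v_{\gamma'}(c)-v_{\gamma'}(x)=v_\gamma(h^{-1}c)-v_\gamma(h^{-1}x)$ for a ray $\gamma'$ representing $he$. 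The choice $c:=h\varphi(h^{-1}x)\in\Phi(x)$ makes $h^{-1}c=\varphi(h^{-1}x)$, so
\[
\mu_x(he)\geq v_\gamma(\varphi(h^{-1}x))-v_\gamma(h^{-1}x)=\text{sh}_{\varphi,e}(h^{-1}x)\geq\delta.
\]
Taking the minimum over the finite set $X$ gives a uniform positive lower bound on $Ge$.

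For sufficiency, I would extend each $\Phi(x)$ to a canonical $G$-volley on the $\mathbb{Z}$-basis $Y=GX$ by $\Phi(gx):=g\Phi(x)$. For each $y=gx\in Y$, condition (ii) supplies a choice $c_{g,x}\in\Phi(x)$ realizing $\mu_x(g^{-1}e)\geq\delta$, and I define $\varphi(gx):=gc_{g,x}\in g\Phi(x)=\Phi(gx)$. Applying Lemma \ref{L:2.3}(iii),(iv) to move the $g$-translation across the valuation gives $\text{sh}_{\varphi,e}(gx)\geq\delta$ uniformly in $y$. Since $\epsilon(\Phi(x))=\{\epsilon(x)\}$, we have $\epsilon\varphi=\epsilon$ on $Y$, hence on $F$ after $\mathbb{Z}$-linear extension. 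The ultrametric inequality Lemma \ref{L:2.3}(ii) then propagates the uniform positive shift from the basis to all of $F$: for $c=\sum r_y y$,
\[
v_e(\varphi(c))\geq\min_{y\in\text{supp}(c)}v_e(\varphi(y))\geq\min_{y\in\text{supp}(c)}v_e(y)+\delta=v_e(c)+\delta,
\]
so $\text{gsh}_e(\varphi)\geq\delta>0$. Being a selection from the finite $G$-volley $\Phi$, the map $\varphi$ is $G$-finitary. The main subtlety is finiteness of $\Phi(x)$ in the necessity step, where the full strength of the $G$-finitary hypothesis (rather than mere additivity) is essential; the rest is formal valuation-theoretic bookkeeping built on Lemma \ref{L:2.3}.
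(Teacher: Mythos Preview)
Your proof is correct and follows essentially the same approach as the paper's: the equivalence (i)$\Leftrightarrow$(ii) via continuity and compactness, and the sufficiency direction via constructing a selection from the canonical volley using Lemma~\ref{L:2.3}(iii), match the paper exactly. You give more detail in the necessity direction---explicitly taking $\Phi(x)=\{g\varphi(g^{-1}x)\mid g\in G\}$ and in the ultrametric propagation from $Y$ to $F$---where the paper simply says the converse ``follows immediately from the definition of $\Sigmacirc(M;A)$,'' but this is elaboration rather than a different route.
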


\begin{proof}  The two conditions on $\mu_x$ are equivalent since $\mu_x$ is continuous.  Let
$\delta :=\inf
\mu_x(Ge)$.  Let $\Phi :X\to fF$ be given, and extend $\Phi$ to a canonical finite $G$-volley
$F_{X}\to fF$. Because $\delta >0$, a selection $\varphi : F\to F$ from $\Phi$ pushing $F$
towards $e$ and inducing ${\rm id}_A$ can be defined as follows.  For each $y= g^{-1}x \in
GX$ pick an element $c(g,x) \in\Phi(x)$
with $v_{g\gamma}(c(g,x)) - v_{g\gamma}(x)\geq \delta$, and put $\varphi(y) := g^{-1}c(g,x)
\in g^{-1}\Phi(x) =\Phi(y)$.  By Lemma \ref{L:2.3}(iii), $v_\gamma(\varphi(y)) - v_\gamma(y) =
v_\gamma(g^{-1}c(g,x))-v_\gamma(g^{-1}x) = v_{g\gamma}(c(g,x))-v_{g\gamma}(x)\geq
\delta$.  The converse follows immediately from the definition of $\Sigmacirc (M;A)$.
\end{proof}

\subsection{$\Sigmacirc (M;A)$ in terms of matrices over ${\mathbb Z}G$}\label{matrices}

We specialize Proposition \ref{P:5.6} by making the following choices:

\begin{enumerate}[(1)]
\item $F=({\Z}G)^n$ and $X$ is the canonical basis;
\item A base point $b$ is chosen in $M$, and the control map $h$ maps the canonical basis
to the singleton set $\{b\}$;
\item for each $e\in \partial M$ we write $v_e$ for the canonical valuation taking each basis
element $x$ to $0$. 
\end{enumerate}

Let ${\M}_{n}({\Z}G)$ denote the ring of $n\times n$ matrices with
entries in ${\Z}G$. The information contained in the volley $\Phi$ can
also be expressed by the finite set of matrices $\Theta ^{+}\subseteq
{\M}_{n}({\Z}G)$ describing the restrictions $\varphi \mid X$ to the
chosen basis of all the selections $\varphi$ from $\Phi$. For each $g\in
G$ the selection $\varphi $ chooses one of the matrices in $g\Theta ^{+}$
to exhibit the restriction $\varphi \mid gX$. The condition $\epsilon (\varphi
(x))=\epsilon (x)$ for all $x\in X$ becomes the statement that each of the
matrices $\theta ^{+}\in \Theta ^{+}$ yields a map $\theta ^{+}:A^{n}\to
A^{n}$ which fixes the generating family ${\bf a}=(a_{1},\dots , a_{n})$
of $A$, where $a_{i}=\epsilon (x_{i})$. In other words, $\theta ^{+}{\bf
a}={\bf a}$ for all  $\theta ^{+}\in \Theta ^{+}$.

For any $\eta \in {\M}_{n}({\Z}G)$ we write $v_{e}(\eta)$ for the minimum
value of $v_{e}$ on the entries of $\eta$. This measures the shift towards $e$
of the map ${\Z}^{n}\to ({\Z}G)^n$ given by ${\bf u}\mapsto \eta {\bf
u}$. (Here, ${\Z}^{n}={\Z}X\subseteq ({\Z}G)X=({\Z}G)^n$.) The matrix
version of Proposition \ref{P:5.6} now reads:

\begin{thm}\label{theta} Let ${\bf a}\in A^n$ be a generating set for
the $G$-module $A$. Then $e\in \Sigmacirc (M;A)$ if and only if there
is a finite subset $\Theta^{+}\subseteq {\M}_{n}({\Z}G)$ of matrices
$\theta ^+$ satisfying   $\theta ^{+}{\bf a}={\bf a}$ such that the
following two equivalent conditions hold: 
\begin{enumerate}[(i)]
\item For each
$e'\in \text{\rm cl}(Ge)$ there is some $\theta ^{+
}\in \Theta ^{+}$ such that
$v_{e'}(\theta ^{+})>0$; 
\item There exists $\epsilon >0$ such that for
each $g\in G$ some $\theta ^{+} \in \Theta ^{+}$ satisfies $v_{ge}(\theta
^{+})\geq \epsilon$.  
\end{enumerate} 
\hfill$\square$
\end{thm}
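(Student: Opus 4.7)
The plan is to reduce Theorem \ref{theta} to Proposition \ref{P:5.6} by translating between ``selections from a volley'' on the chosen basis and ``columns of matrices in $\Theta^{+}\subseteq \M_{n}(\Z G)$''. First I would fix the dictionary: a matrix $\theta^{+}$ encodes the $\Z$-linear map $x_{i}\mapsto \sum_{j}\theta^{+}_{ji}x_{j}$; the equation $\theta^{+}\mathbf{a}=\mathbf{a}$ is exactly $\epsilon(\theta^{+}x_{i})=a_{i}=\epsilon(x_{i})$ for every $i$; and since $v_{e'}(x_{j})=0$, the quantity $v_{e'}(\theta^{+}x_{i})$ equals the minimum of $v_{e'}$ over the group elements supporting the $i$-th column, so that $v_{e'}(\theta^{+})=\min_{i}v_{e'}(\theta^{+}x_{i})$.

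For the direction ($\Leftarrow$), given $\Theta^{+}$ satisfying (i), put $\Phi(x_{i}):=\{\theta^{+}x_{i}\mid \theta^{+}\in\Theta^{+}\}\subseteq F$; this is a finite subset of $\epsilon^{-1}(a_{i})$. For each $e'\in \text{cl}(Ge)$, condition (i) supplies $\theta^{+}$ with $v_{e'}(\theta^{+})>0$, whence $\mu_{x_{i}}(e')\geq v_{e'}(\theta^{+}x_{i})>0$ for every $i$. Proposition \ref{P:5.6} then yields $e\in \Sigmacirc(M;A)$.

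For ($\Rightarrow$), Proposition \ref{P:5.6} provides, for each $i$, a finite $\Phi(x_{i})\subseteq \epsilon^{-1}(a_{i})$ with $\mu_{x_{i}}>0$ on $\text{cl}(Ge)$. Assemble $\Theta^{+}$ as the (finite) set of all $n\times n$ matrices whose $i$-th column lies in $\Phi(x_{i})$ for every $i$; each such matrix satisfies $\theta^{+}\mathbf{a}=\mathbf{a}$. For any $e'\in \text{cl}(Ge)$, select $c_{i}\in \Phi(x_{i})$ with $v_{e'}(c_{i})>0$ independently in each coordinate; the matrix with these columns satisfies $v_{e'}(\theta^{+})>0$, verifying (i).

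The equivalence (i) $\Leftrightarrow$ (ii) is then a standard compactness argument using finiteness of $\Theta^{+}$ and continuity of $e'\mapsto v_{e'}(\theta^{+})$: the continuous function $e'\mapsto \max_{\theta^{+}\in\Theta^{+}}v_{e'}(\theta^{+})$ is positive on the compact set $\text{cl}(Ge)$ by (i), hence bounded below by some $\epsilon>0$, yielding (ii); conversely, given (ii) and $e'=\lim g_{k}e$, a pigeonhole argument on the finite $\Theta^{+}$ selects a single $\theta^{+}$ with $v_{g_{k}e}(\theta^{+})\geq\epsilon$ for infinitely many $k$, and continuity then gives $v_{e'}(\theta^{+})\geq\epsilon>0$. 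The only real obstacle is organising this dictionary cleanly; there is no new geometric content beyond Proposition \ref{P:5.6}.
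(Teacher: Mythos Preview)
Your proposal is correct and follows exactly the paper's approach: the paper treats this theorem as the direct ``matrix version of Proposition \ref{P:5.6}'' (it is stated with a \hfill$\square$ and no further proof), using precisely the dictionary between a volley $\Phi(x_i)$ and the columns of the matrices in $\Theta^{+}$ that you spell out. You simply fill in more detail than the paper does; there is no substantive difference.
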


It is sometimes more convenient to use the matrix $\theta :=1-\theta
^{+}$ rather than $\theta ^{+}$ (note that $\theta ^{+}{\bf a}={\bf a}$
if and only if $\theta {\bf a}={\bf 0}$), together with the following
notion of a ``minimal part with respect to $e$": Each $e\in \partial M$
gives rise to an $\R$-grading of the additive group ${\Z}G$ as follows:
each $\lambda \in {\Z}G$ has a canonical sum decomposition $\lambda =
{\Sigma }_{r\in \R}\lambda _r$, where $\lambda _r$, the homogeneous
component of degree $r$, collects all the monomials $n_{g}g$ ($g\in G,
n_g\in \Z$) with $v_{e}(g) = r$. Note that $\lambda _{s}\neq 0$ for
only finitely many $s\in \R$. If $\lambda \neq 0$ then $v_{e}(\lambda
)\in \R$, and $\lambda _{v_{e}(\lambda)}$ is called the {\it initial
term} of $\lambda$ with respect to $e$; it is denoted by $\lambda
_e$. Thus $\lambda = \lambda _{e}+\lambda ^{+}$ with $v_{e}(\lambda
_{e})=v_{e}(\lambda)$ and $v_{e}(\lambda ^{+})>v_{e}(\lambda )$. If
$\lambda =0$ we set $\lambda _{e}:=0$. This extends to matrices as
follows: The ${\Z}G$-matrix $\eta $ also has an ${\R}$-grading where the $r$th
grade is the matrix consisting of the $r$th grade of each entry. Define
$\eta _e$ to be the least-indexed non-zero grade of $\eta$, and define
$\eta ^{+}$ by $\eta =\eta _{e}+\eta ^{+}$.

When $e\in \Sigmadoublecirc (M;A)$ Theorem \ref{theta} holds with
$\Theta =\{\theta\}$, a singleton.  Hence we have:

\begin{thm}\label{matrix} Let ${\bf a}\in A^n$ be a generating set for
the $G$-module $A$.  Then $e\in \Sigmadoublecirc (M;A)$ if and only if
there is a matrix $\theta \in {\M}_{n}({\Z}G)$ such that $\theta {\bf
a}={\bf 0}$ and $\theta _{e}= {\bf 1}_{n}$.  
\end{thm}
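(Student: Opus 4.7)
The plan is to deduce Theorem \ref{matrix} directly from Theorem \ref{theta} by a change of variable $\theta := \mathbf{1}_n - \theta^{+}$ and by interpreting the inequality $v_e(\theta^{+}) > 0$ as a statement about the initial term $\theta_e$.

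First I would specialize Theorem \ref{theta} to the $G$-equivariant setting. A $G$-endomorphism $\varphi \colon F \to F$ is the same as a single matrix $\theta^{+} \in \mathbb{M}_n(\mathbb{Z}G)$ acting on the right, so the volley $\Theta^{+}$ reduces to the singleton $\{\theta^{+}\}$. The identity $\epsilon\varphi = \epsilon$ becomes $\theta^{+}\mathbf{a} = \mathbf{a}$, and since $Ge$ gives a constant value $v_{e'}(\theta^{+}) = v_e(\theta^{+})$ under the $G$-invariance built into the matrix norm (more precisely, the only matrix available at every $g \in G$ is $\theta^{+}$ itself, so both clauses (i) and (ii) in Theorem \ref{theta} collapse to the single condition $v_e(\theta^{+}) > 0$). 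Thus $e \in \Sigmadoublecirc(M;A)$ iff there exists $\theta^{+} \in \mathbb{M}_n(\mathbb{Z}G)$ with $\theta^{+}\mathbf{a} = \mathbf{a}$ and $v_e(\theta^{+}) > 0$.

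Next I would pass from $\theta^{+}$ to $\theta := \mathbf{1}_n - \theta^{+}$. The condition $\theta^{+}\mathbf{a} = \mathbf{a}$ is clearly equivalent to $\theta\mathbf{a} = \mathbf{0}$. For the other condition, recall that the chosen control map sends every canonical basis element $x_i$ to $\{b\}$, so $v_e(1) = \beta_e(b) = 0$ (the basepoint normalization of the Busemann function). Therefore, in the $\R$-grading of $\mathbb{Z}G$ induced by $e$, the identity matrix $\mathbf{1}_n$ is homogeneous of degree $0$. If $v_e(\theta^{+}) > 0$, then every entry of $\theta^{+}$ has all its monomials in strictly positive grades; consequently the degree-$0$ part of $\theta = \mathbf{1}_n - \theta^{+}$ is exactly $\mathbf{1}_n$, i.e.\ $\theta_e = \mathbf{1}_n$. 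Conversely, if $\theta_e = \mathbf{1}_n$, then $\mathbf{1}_n - \theta = \theta^{+}$ has no grade-$0$ part, hence $v_e(\theta^{+}) > 0$.

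Combining these two steps gives the biconditional of Theorem \ref{matrix}. No genuine obstacle appears here: the proof is a routine matrix reformulation once Theorem \ref{theta} is in hand, and the only subtlety is the bookkeeping around the $\R$-grading—in particular making sure that the basepoint convention $v_e(1) = 0$ is invoked so that $\mathbf{1}_n$ really is in grade zero, and that $v_e(\theta^{+}) > 0$ is read as ``the grade-zero part of $\theta^{+}$ vanishes.''
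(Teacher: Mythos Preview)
Your approach is exactly the paper's: specialize Theorem~\ref{theta} to a singleton $\Theta^{+}=\{\theta^{+}\}$ and pass to $\theta=\mathbf{1}_n-\theta^{+}$. Your grading bookkeeping ($v_e(1)=0$, hence $\theta_e=\mathbf{1}_n\Leftrightarrow v_e(\theta^{+})>0$) and the equivalence $\theta^{+}\mathbf{a}=\mathbf{a}\Leftrightarrow\theta\mathbf{a}=\mathbf{0}$ are both correct, and the ``only if'' half goes through.

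There is, however, a genuine error in your claim that clauses (i) and (ii) of Theorem~\ref{theta} ``collapse to the single condition $v_e(\theta^{+})>0$''. Your justification---that $v_{e'}(\theta^{+})=v_e(\theta^{+})$ for $e'\in Ge$ by some ``$G$-invariance built into the matrix norm''---is false: $v_{e'}(\theta^{+})$ is a minimum of Busemann values $\beta_{e'}(g'b)$ over a \emph{fixed} finite subset of the orbit $Gb$, and these values genuinely depend on $e'$. What Theorem~\ref{theta} yields for a singleton volley is
\[
e\in\Sigmadoublecirc(M;A)\ \Longleftrightarrow\ \exists\,\theta^{+}\ \text{with}\ \theta^{+}\mathbf{a}=\mathbf{a}\ \text{and}\ v_{e'}(\theta^{+})>0\ \text{for every}\ e'\in\text{\rm cl}(Ge),
\]
i.e.\ $\theta_{e'}=\mathbf{1}_n$ for all such $e'$. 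Restricting to $e'=e$ gives the ``only if'' direction of Theorem~\ref{matrix}, but the ``if'' direction as you argue it does not follow. Concretely: let $G=\langle t\rangle$ act on $\mathbb{H}^2$ by a hyperbolic isometry, $A=\mathbb{Z}$, and let $e$ be close to but distinct from an axis endpoint; then $\theta=1-t$ satisfies $\theta\cdot 1=0$ and $\theta_e=1$, yet $e\notin\Sigmadoublecirc(\mathbb{H}^2;\mathbb{Z})$ because $m\mapsto\beta_e(t^mb)$ attains a maximum, forcing $\text{gsh}_e$ of any $G$-endomorphism to be nonpositive there. The paper's one-line derivation is no more explicit on this point; note that its sole downstream use (Corollary~\ref{initial}) assumes $Ge=e$, where the orbit condition is vacuous and your collapse is legitimate.
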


\subsection{The case of $G$ abelian}\label{determinant}

In the special case when $G$ is torsion free abelian and $Ge=e$, Theorem 
\ref{matrix} can be considerably simplified by use of the determinant. One
can multiply the equation $\theta {\bf a}={\bf 0}$ on the left by the
cofactor matrix  $\theta ^{\text{cof}}$ of $\theta$, and this leads
to 

$$(\text{det}\theta )1{\bf a}=\theta ^{\text{cof}}\theta {\bf
a}={\bf 0}.$$

\noindent hence $(\text{ det}\theta )a_{i}=0$ for all $i$,
i.e. $\theta $ annihilates $A$.

Now, $\theta _{e}=1$ means that all entries of the matrix $\theta
^{+}:=1-\theta $ have positive value under $v_e$; and since $Ge=e$,
$v_{e}:{\Z}G\to {\R}_{\infty }$ satisfies $v_{e}(\lambda \lambda
')=v_{e}(\lambda)+v_{e}(\lambda ')$ for all $\lambda , \lambda '
\in {\Z}G$. It follows that $\text{ det}\theta = \text{ det}(1-\theta
^{+})$, which is of the form ${1}+$ non-empty products of entries of
$\theta ^{+}$, has initial term $(\text{det}\theta)_{e}=1$. Hence
the scalar matrix $(\text{ det}\theta )1$ has the same properties as
the matrix $\theta$ in Theorem \ref{matrix}. This proves

\begin{cor}\label{initial} When $G$ is torsion free and abelian,  $e\in
\Sigmadoublecirc (M;A)$ if and only if there is an element $\lambda
\in {\Z}G$ with $\lambda A=0$ and $\lambda _{e}=1$. In particular,
$\Sigmadoublecirc (M;A)$ is determined by the annihilator ideal
$I=\text{\rm Ann}_{{\Z}G}(A)$ of $A$ in ${\Z}G$.

\end{cor}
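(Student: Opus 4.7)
The plan is to apply the matrix criterion Theorem \ref{matrix} and reduce the resulting matrix equation to a scalar equation via the classical adjugate construction. The reverse implication is immediate: given $\lambda \in {\Z}G$ with $\lambda A = 0$ and $\lambda_e = 1$, the scalar matrix $\theta := \lambda \, \mathbf{1}_n \in {\M}_n({\Z}G)$ satisfies $\theta \mathbf{a} = \mathbf{0}$ and has initial part $\theta_e = \mathbf{1}_n$, so Theorem \ref{matrix} yields $e \in \Sigmadoublecirc(M;A)$.

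For the forward implication I would proceed as follows. Apply Theorem \ref{matrix} to produce $\theta \in {\M}_n({\Z}G)$ with $\theta \mathbf{a} = \mathbf{0}$ and $\theta_e = \mathbf{1}_n$; equivalently, write $\theta = \mathbf{1}_n - \theta^+$ where every entry of $\theta^+$ has strictly positive value under $v_e$. Multiplying the identity $\theta \mathbf{a} = \mathbf{0}$ on the left by the cofactor matrix $\theta^{\text{cof}}$ gives $(\det \theta) \mathbf{a} = \mathbf{0}$, so $\lambda := \det \theta$ annihilates every $a_i$ and hence all of $A$. It remains only to show that $\lambda_e = 1$.

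For this step I would first verify that $v_e \colon {\Z}G \to {\R}_\infty$ is sub-multiplicative, i.e. $v_e(\mu \nu) \geq v_e(\mu) + v_e(\nu)$. The key input is that each $g \in G$ fixes $e$ (as used in the paragraph introducing this special case), so $g$ translates the Busemann function $\beta_e$ by the constant $v_e(g) = \beta_e(gb)$, which yields $v_e(gh) = v_e(g) + v_e(h)$ on the group and hence sub-multiplicativity on the group ring; the torsion-freeness of $G$ in fact upgrades this to an equality $v_e(\mu\nu) = v_e(\mu) + v_e(\nu)$ by ruling out cancellation of initial terms in the product ($\Z G$ being a domain). With this in hand, expand $\det(\mathbf{1}_n - \theta^+)$ by the Leibniz formula: the identity permutation contributes $1$ plus products containing at least one diagonal entry of $\theta^+$, while every other permutation must use at least one off-diagonal factor $-\theta^+_{i,\sigma(i)}$. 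Thus every non-constant summand has strictly positive $v_e$, and the ultrametric behaviour of $v_e$ on sums forces $(\det \theta)_e = 1$, as required.

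I expect the main obstacle to be purely bookkeeping around the valuation properties of $v_e$ on ${\Z}G$---namely, carefully justifying, from the geometric definition through the Busemann function, that $v_e$ is additive on $G$ (using $Ge=e$) and that this multiplicative behaviour extends compatibly to ${\Z}G$. Once this valuation framework is in place, the Leibniz expansion and the cofactor trick assemble the result with essentially no further work, and the ``in particular'' clause is immediate since the criterion ``there exists $\lambda \in \text{Ann}_{{\Z}G}(A)$ with $\lambda_e = 1$'' depends only on the annihilator ideal.
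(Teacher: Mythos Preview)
Your proposal is correct and follows essentially the same route as the paper: the cofactor identity to pass from the matrix $\theta$ of Theorem~\ref{matrix} to the scalar $\lambda=\det\theta$, and then the Leibniz expansion together with multiplicativity of $v_e$ on ${\Z}G$ (using $Ge=e$ and torsion-freeness) to see that $(\det\theta)_e=1$. The paper's argument is terser---it simply asserts $v_e(\lambda\lambda')=v_e(\lambda)+v_e(\lambda')$ without spelling out why---so your added discussion of the valuation properties and the explicit treatment of the reverse implication are welcome but do not constitute a different approach.
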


\begin{Rem} Corollary \ref{initial} thus leads to a concept at the roots
of tropical geometry: When $G$ is free abelian of finite rank $n$ and
$M=G\otimes {\R}$ then $\del M$ is the sphere $S^{n-1}$, and the set of
all directions $e$ with the property that the ideal contains an element
$\lambda$ with $\lambda_{e}=1$ is the complement of the Bergman fan of the
ideal $I$; see Appendix. By \cite{BGr84} we know that the
Bergman fan is polyhedral, i.e. a finite union of finite intersections
of hemispheres. It would be very interesting to find a generalization
of this polyhedrality to the non-positively curved context of Corollary
\ref{initial}. 
 \end{Rem}

\section{The Horospherical Limit Set $\Sigma(M ;A)$}\label{S:4}

The definition of the horospherical limit set 
\begin{equation} 
\Sigma (M;A):=\{e\in \partial M\mid A \text{ is supported over every horoball } HB\subseteq M \text { at } e\} 
\end{equation}

\noindent was given in Section \ref{intro}. Spelled out in detail this reads:

\begin{multline}\label{E:4.1}e\in \Sigma (M;A) \text{ if and only if for every } t\in {\mathbb R} \\  
\text{ and every }a\in A\text{ there is some }c\in F \text{ with }\epsilon(c) 
= a\text{ and }v_\gamma(c)\geq t. 
\end{multline}

That $\Sigma (M;A)$ is independent of choice of presentation and control map was proved in Section \ref{limit}. Here we collect some elementary facts about $\Sigma (M;A)$ related to the $G$-module argument $A$.

\begin{lemma} Let $A'\ {\overset {\phi} \to}\ A\ {\overset {\psi} \to}\  {A'' \to}\ 0$
be a right-exact sequence of finitely generated $G$-modules. Then we have
$\Sigma (M;A')\cap \Sigma (M;A'')\subseteq \Sigma (M;A)\subseteq \Sigma
(M;A'')$.  
\end{lemma}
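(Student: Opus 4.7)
The plan is to treat the two inclusions separately. The right-hand inclusion $\Sigma(M;A)\subseteq \Sigma(M;A'')$ should be essentially free: since $\psi:A\twoheadrightarrow A''$ is a $G$-homomorphism, hence $G$-finitary, the Functoriality Theorem \ref{functoriality} (equivalently, Lemma \ref{finitary}) gives $\psi(A_e)\subseteq A''_e$ for every $e\in\partial M$. If $A_e=A$ this forces $A''_e\supseteq\psi(A)=A''$, so $e\in\Sigma(M;A'')$.

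For the left-hand inclusion I would unwind the horospherical condition at the level of free presentations. Choose a finitely generated free presentation $\epsilon:F\twoheadrightarrow A$ with control map $h:F\to fM$; composition with $\psi$ then furnishes $\bar\epsilon:=\psi\epsilon:F\twoheadrightarrow A''$, so the same controlled $F$ serves both $A$ and $A''$. Choose a second controlled presentation $\epsilon':F'\twoheadrightarrow A'$ with control map $h'$. Apply Lemma \ref{projective} to the $G$-homomorphism $\phi\epsilon':F'\to A$ to obtain a $G$-finitary lift $\tilde\phi:F'\to F$ with $\epsilon\tilde\phi=\phi\epsilon'$, and let $\delta>0$ be the bounded-displacement constant for $\tilde\phi$ given by Lemma \ref{bounded}, so that $h(\tilde\phi(c'))$ lies in the $\delta$-neighborhood of $h'(c')$ for every $c'\in F'$.

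Now assume $e\in\Sigma(M;A')\cap\Sigma(M;A'')$ and let $a\in A$ and $t\in\R$ be given. First use $e\in\Sigma(M;A'')$ to produce $c\in F$ with $\bar\epsilon(c)=\psi(a)$ and $v_e(c)\ge t$. Right-exactness of $A'\to A\to A''\to 0$ gives an element $a'\in A'$ with $\phi(a')=\epsilon(c)-a$. Next use $e\in\Sigma(M;A')$ to produce $c'\in F'$ with $\epsilon'(c')=a'$ and $v_e(c')\ge t+\delta$. The candidate representative of $a$ is $d:=c-\tilde\phi(c')\in F$: one has $\epsilon(d)=\epsilon(c)-\phi(\epsilon'(c'))=a$ by construction, and property (iii) of control maps together with the $\delta$-displacement estimate gives $h(d)\subseteq h(c)\cup h(\tilde\phi(c'))\subseteq HB_{e,t}$, because $h'(c')\subseteq HB_{e,t+\delta}$ forces its $\delta$-neighborhood into $HB_{e,t}$ via the definition of the Busemann function. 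This verifies the horospherical condition at $e$ for the element $a$, and since $a$ and $t$ were arbitrary, $e\in\Sigma(M;A)$.

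The only delicate point — what I would call the main obstacle — is the need to absorb the bounded displacement of $\tilde\phi$, which is exactly why the second horoball must be chosen deeper by $\delta$ than the target horoball. Since $\delta$ depends only on the fixed lift $\tilde\phi$ and not on $t$ or $a$, this is bookkeeping rather than a genuine difficulty, but it is the one place where care is required; without it the argument would be purely formal from functoriality and right-exactness.
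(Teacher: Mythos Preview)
Your proof is correct. Both inclusions are established soundly, and your handling of the $\delta$-displacement is exactly right.

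The paper's argument differs only in packaging for the left-hand inclusion. Rather than fixing a presentation $\epsilon:F\twoheadrightarrow A$ and lifting $\phi\epsilon'$ through it (which forces the $\delta$-correction you identify), the paper builds a tailored presentation of $A$ via the Horseshoe construction: pick presentations $\epsilon':F'\twoheadrightarrow A'$ and $\epsilon'':F''\twoheadrightarrow A''$, lift $\epsilon''$ to $\tilde\epsilon'':F''\to A$ along $\psi$, and use $\phi\epsilon'\oplus\tilde\epsilon'':F'\oplus F''\twoheadrightarrow A$ with the obvious control map. Every $a\in A$ then decomposes as $\phi\epsilon'(c')+\tilde\epsilon''(c'')$, and since $c'$ and $c''$ can be chosen independently over any horoball, no displacement correction is needed. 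Your route is equally valid but trades that structural shortcut for an appeal to Lemmas \ref{projective} and \ref{bounded}; the paper's version is marginally cleaner, while yours makes more explicit that the result is a formal consequence of functoriality and bounded displacement.
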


\begin{proof} If $\epsilon :F\twoheadrightarrow A$ is a finitely generated
free presentation of the $G$-module $A$, we can compare it with the
free presentation $\psi \epsilon :F\twoheadrightarrow A''$ to prove
$\Sigma (M;A)\subseteq \Sigma (M;A'')$; note that finite generation of
$A'$ was not needed for this. Similarly, given two finitely generated
free presentations $\epsilon ' :F'\twoheadrightarrow A'$ and $\epsilon
'':F''\twoheadrightarrow A''$ we can lift $\epsilon ''$ to a homomorphism
${\widetilde \epsilon}'':F''\to A$, and consider the presentation $\phi
\epsilon '\oplus \widetilde {\epsilon}'':F'\oplus F'' \twoheadrightarrow
A$. The assertion now follows from the fact that each $a\in A$ can be
written as  $\phi \epsilon '(c')+
 \epsilon ''(c'')$ for some $(c',c'')\in F'\oplus F''$.
\end{proof}

\begin{cor}\label{dirsum}$\Sigma (M;A'\oplus A'')= \Sigma (M;A')\cap \Sigma (M;A'')$.
\hfill$\square$
\end{cor}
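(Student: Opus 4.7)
The plan is to deduce Corollary \ref{dirsum} directly from the preceding lemma by applying it to two different right-exact sequences involving $A' \oplus A''$.

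First, I would apply the lemma to the (split) right-exact sequence
\begin{equation*}
A' \hookrightarrow A' \oplus A'' \twoheadrightarrow A''
\end{equation*}
where the first map is the inclusion of the first summand and the second is the projection onto the second summand. This yields both
\begin{equation*}
\Sigma(M;A') \cap \Sigma(M;A'') \subseteq \Sigma(M; A' \oplus A'')
\qquad \text{and} \qquad
\Sigma(M; A' \oplus A'') \subseteq \Sigma(M; A'').
\end{equation*}

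Next, by swapping the roles of the two summands, I would apply the same lemma to the right-exact sequence
\begin{equation*}
A'' \hookrightarrow A' \oplus A'' \twoheadrightarrow A'
\end{equation*}
to extract the remaining inclusion $\Sigma(M; A' \oplus A'') \subseteq \Sigma(M; A')$. Intersecting the two upper-bound inclusions gives $\Sigma(M; A' \oplus A'') \subseteq \Sigma(M;A') \cap \Sigma(M;A'')$, which together with the first inclusion above yields the asserted equality.

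There is no real obstacle here: the preceding lemma carries all the content, and the only thing to notice is that for a direct sum the module appears simultaneously as the middle term of two right-exact sequences, so that combining the two upper bounds forces the reverse containment to the one the lemma already provides. The argument uses no additional facts about control maps or Busemann functions beyond what goes into the lemma itself.
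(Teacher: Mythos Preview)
Your argument is correct and is exactly the intended deduction: the paper simply marks the corollary with $\square$, signaling that it follows immediately from the preceding lemma, and your two applications of that lemma to the split sequences $A' \hookrightarrow A'\oplus A'' \twoheadrightarrow A''$ and $A'' \hookrightarrow A'\oplus A'' \twoheadrightarrow A'$ are precisely how one fills in those details.
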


{\bf Induced modules:} Let $H$ be a subgroup of $G$, let $B$ be a finitely generated $G$-module,
and let $A={\Z}G\otimes _{H}B$ be the $G$-module induced by $B$. In this
situation the horospherical limit set $\Sigma ({_G}M;A)$ is determined
by $\Sigma ({_H}M;B)$ as follows:

\begin{thm}\label{subgroup} $\Sigma ({_G}M;A)$ is the largest $G$-invariant subset of 
$\Sigma ({_H}M;B)$. In other words we have 
\begin{equation}
\Sigma ({_G}M;{\Z}G\otimes _{H}B)=\bigcap_{g\in G}g\Sigma ({_H}M;B) 
\end{equation}
It is also the case that 
\begin{equation} 
\Sigmacirc ({_G}M;{\Z}G\otimes _{H}B)\subseteq \bigcap_{g\in G}g\Sigmacirc ({_H}M;B). 
\end{equation}
\end{thm}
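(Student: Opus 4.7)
My plan is to exhibit a natural finitely generated free $G$-presentation of $A$ adapted to a coset decomposition, and then to reduce all three inclusions (the two directions of (1) and the one direction of (2)) to statements involving the same coset projection. First I would choose a finitely generated free $H$-presentation $\epsilon_H : F_H \twoheadrightarrow B$ on a basis $X$; then $\epsilon := \text{id}\otimes \epsilon_H : F \twoheadrightarrow A$ with $F := \Z G\otimes_H F_H$ is a free $G$-presentation of $A$ with the same basis $X$. Picking a left coset transversal $T$ for $G/H$ with $1\in T$ yields internal decompositions $F = \bigoplus_{t\in T} tF_H$ and $A = \bigoplus_{t\in T} t\otimes B$ together with projections $\pi : F\to F_H$ and $\pi_A : A\to B$ onto the identity-coset summand. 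These are $H$-equivariant (because $H$ stabilises its own coset), they satisfy $\epsilon_H\pi = \pi_A\epsilon$, and, using a common basepoint for the canonical control maps on both modules, one has $h(\pi(c))\subseteq h(c)$ and hence $v_e(\pi(c))\geq v_e(c)$ for every $c\in F$ and $e\in \partial M$.

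Next I would observe that $\Sigma({_G}M;A)$ and $\Sigmacirc({_G}M;A)$ are both $G$-invariant (the latter by Theorem \ref{closure3}), while $\bigcap_g g\Sigma({_H}M;B)$ and $\bigcap_g g\Sigmacirc({_H}M;B)$ are by construction the largest $G$-invariant subsets of $\Sigma({_H}M;B)$ and $\Sigmacirc({_H}M;B)$ respectively. This reduces the ``$\subseteq$'' statements of the theorem to the un-intersected inclusions $\Sigma({_G}M;A)\subseteq \Sigma({_H}M;B)$ and $\Sigmacirc({_G}M;A)\subseteq \Sigmacirc({_H}M;B)$, leaving only the reverse inclusion $\bigcap_g g\Sigma({_H}M;B)\subseteq \Sigma({_G}M;A)$ to establish the equality in (1).

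All three reduced claims then follow by ``projecting'' or ``assembling'' along the $T$-decomposition. If $e\in \Sigma({_G}M;A)$ and $b'\in B$, any witness $c=\sum_t tc_t\in F$ that $1\otimes b'$ is supported over $HB_e$ forces, by uniqueness of the decomposition of $\epsilon(c)$ in $A$, that $\epsilon_H(c_1) = b'$; and $h(c_1)\subseteq h(c)\subseteq HB_e$ gives $e\in \Sigma({_H}M;B)$. Conversely, if $t^{-1}e\in \Sigma({_H}M;B)$ for every $t\in T$ and $a=\sum_{t\in T_0} t\otimes b_t$, I would support each $b_t$ over the horoball $t^{-1}HB_e = HB_{t^{-1}e}$ in ${_H}M$ by some $c_t\in F_H$; then $c:=\sum_t tc_t$ satisfies $\epsilon(c)=a$ and $h(c)=\bigcup_t t\cdot h(c_t)\subseteq HB_e$ since each $t$ is an isometry of $M$. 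For (2), given a $G$-finitary $\varphi : F\to F$ with $\epsilon\varphi=\epsilon$ and $\text{gsh}_e(\varphi)>0$, I would set $\varphi' := \pi\circ \varphi\circ \iota : F_H\to F_H$ (with $\iota$ the inclusion) and perform three routine checks: $\varphi'$ is $H$-finitary by Lemma \ref{L:3.1}, since $\iota, \pi$ are $H$-equivariant and $\varphi$ is $G$- (hence $H$-) finitary; $\epsilon_H\varphi' = \epsilon_H\pi\varphi\iota = \pi_A\epsilon\varphi\iota = \pi_A\epsilon\iota = \epsilon_H$; and $\text{gsh}_e(\varphi')\geq \text{gsh}_e(\varphi)>0$ by the valuation-non-decreasing property of $\pi$ recorded above.

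The only obstacle I foresee is essentially bookkeeping: I need the identity coset to be genuinely $H$-stable (so that $\pi$ and $\pi_A$ really are $H$-equivariant) and the canonical control map on $F_H$ to be the restriction of the canonical control map on $F$ (so that $\pi$ respects Busemann valuations). Both are immediate from a careful choice of transversal and basepoint, and no deeper input seems necessary; the substantive content of the theorem is the single composition trick in (2) made possible by the $H$-equivariance of $\pi$.
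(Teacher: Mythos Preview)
Your proof is correct and follows essentially the same approach as the paper's own proof: the same induced presentation $F=\Z G\otimes_H F_H$, the same coset decomposition via a transversal $T\ni 1$, and the same two-way argument for $\Sigma$ (projecting onto the identity summand for $\subseteq$, assembling coset-by-coset for $\supseteq$). For the $\Sigmacirc$ inclusion the paper merely sketches that ``a $G$-finitary endomorphism of $\Z G\otimes_H F$ induces an $H$-finitary endomorphism on the direct summand $F$'' and offers Theorem~\ref{T:6.1} as an alternative; your explicit construction $\varphi'=\pi\circ\varphi\circ\iota$ together with the three routine checks is precisely a careful execution of the first of those two options.
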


\begin{proof}Let  $\epsilon :F\twoheadrightarrow B$ be a finitely
generated free presentation of the $H$-module $B$, and let ${\widetilde
\epsilon}:{\Z}G\otimes _{H}F\twoheadrightarrow A$ be the induced
presentation of $A$, where  ${\widetilde \epsilon}(g\otimes c)=g\otimes
\epsilon (c)$ for $g\in G$ and $c\in F$. We consider the canonical
control maps $h:F\to fM$ and ${\widetilde h}:{\Z}G\otimes _{H}F\to
fM$, noting that ${\widetilde h}(g\otimes c)=gh(c)$. The
presentation $\epsilon :F\twoheadrightarrow B$  appears as a canonical
direct summand in the presentation ${\widetilde \epsilon}$. Hence when
$b\in B$ is interpreted as $1\otimes b\in A$ and is represented as
${\widetilde \epsilon}(\Sigma t\otimes c_{t})=1\otimes b$, where $t$
runs through coset representatives of $G$ mod $H$ containing $1\in
G$, then $\epsilon (c_{1})=b$. As ${\widetilde h}(\Sigma t\otimes
c_{t})=\bigcup _{t}th(c_{t})$ contains $h(c_{1})$ as a subset, it follows
that when $e$ is a horospherical accumulation point of ${\widetilde h}({\widetilde
\epsilon}^{-1}(1\otimes b))$ then it is also a horospherical accumulatoin point
of $h({\epsilon ^{-1}}(b))$. This shows that $\Sigma ({_G}M;A)\subseteq \Sigma
({_H}M;B)$. The corresponding inclusion for $\Sigmacirc $ is obtained similarly by showing that a $G$-finitary endomorphism of ${\Z}G\otimes _{H}F$ induces an $H$-finitary endomorphism on the direct summand $F$; or, alternatively, by referring to Theorem \ref{T:6.1} below which expresses $\Sigmacirc $ in terms of $\Sigma $. The containments 
$\Sigma ({_G}M;A)\subseteq \bigcap_{g\in G}g\Sigma ({_H}M;B)$ and $\Sigmacirc ({_G}M;A)\subseteq \bigcap_{g\in G}g\Sigmacirc ({_H}M;B)$ now follow by $G$-equivariance.

To prove the converse (for $\Sigma $) let $e\in \bigcap _{g\in G}g\Sigma (_{H}M;B)$
and let $HB_{e}\subseteq M$ be a horoball at $e$. Given an element
$a\in A$ in the canonical expansion $a=\Sigma t\otimes b_{t}$,
where $t$ runs through coset representatives and $b_t\in B$, we
use $t^{-1}e\in \Sigma ({_H}M;B)$ to write $b_{t}=\epsilon (c_{t})$
with $h(c_{t})\subseteq HB_{t^{-1}e}=t^{-1}HB_e$. Then ${\widetilde
\epsilon}(\Sigma t\otimes c_{t})=a$ and ${\widetilde h}(\Sigma t\otimes
c_{t})=\bigcup th(c_{t})\subseteq HB_e$. This shows that $e\in \Sigma
({_G}M;A)$.  
\end{proof}

Whether or not the other inclusion $\supseteq $ holds for $\Sigmacirc $ is an intriguing question which may be difficult.

\begin{example} Let $A={\Z}\Omega $ be the permutation module of a $G$-set
$\Omega $ which has finitely many orbits. Corollary \ref{dirsum} and Theorem
\ref{subgroup} together give a decomposition of $\Sigma (_{G}M;{\Z}\Omega
)$ as $\bigcap _{i}\bigcap _{g}g\Sigma (_{H_{i}}M;{\Z})$, where $i$
ranges over the orbits and $H_i$ stabilizes a member of the $i$th orbit.
\end{example}

{\bf $\Sigma (M;A)$ in terms of finite generation:} For a horoball $HB\subseteq M$ we put
$$G_{HB}:=\{g\in G\mid h(g)\in HB\}$$
noting that $G_{HB}$ is not in general a monoid, and may be empty. The following is immediate from the definition of $\Sigma (M;A)$:

\begin{prop}\label{fg} The following are equivalent:
\begin{enumerate}[(i)]
\item $e\in \Sigma (M;A)$;
\item $A$ is finitely generated as an $HB$-operator group;
\item  $A = {\mathbb Z}G_{HB}{\mathcal A}$, for every finite 
${\mathbb Z}G$-generating subset ${\mathcal A}\subseteq A$ and all horoballs $HB$ at $e$.
\end{enumerate}
\hfill$\square$
\end{prop}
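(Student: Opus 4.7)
My plan is to exploit the fact, established earlier in the excerpt (via Lemma \ref{finitary} and the ensuing discussion of $A_e$), that $\Sigma(M;A)$ is independent of the choice of finitely generated free presentation $\epsilon : F\thra A$ and of the centerless control map $h$. With this freedom, the proposition reduces to a direct bookkeeping translation between the definition of $\Sigma(M;A)$ and the expansion of elements of $F$ in the $\mathbb Z$-basis $Y=GX$. Concretely, I would work throughout with the canonical control map $h(x):=\{b\}$ for $x\in X$, so that $h(gx_i)=\{gb\}$, and hence for any $c=\sum n_{g,i}\,gx_i$ the inclusion $h(c)\subseteq HB$ is literally the statement that $g\in G_{HB}$ for every $(g,i)$ with $n_{g,i}\neq 0$. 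This observation is the common pivot for all three implications.

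First I would prove (i)$\Rightarrow$(iii). Given a finite $\mathbb ZG$-generating set $\mathcal A=\{a_1,\ldots,a_n\}$ of $A$, I build the presentation $\epsilon:F_X\thra A$ with $\epsilon(x_i)=a_i$ and the canonical control map. By (i) applied to this presentation, each $a\in A$ admits a lift $c\in F_X$ with $h(c)\subseteq HB$; reading off the expansion of $c$ in $Y$ and using the pivot yields $a=\epsilon(c)\in \mathbb ZG_{HB}\mathcal A$, which is precisely (iii). The implication (iii)$\Rightarrow$(ii) is immediate since $A$ is finitely generated as a $\mathbb ZG$-module and any such generating set will serve.

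For (ii)$\Rightarrow$(i), fix a horoball $HB$ at $e$ and an element $a\in A$. By (ii), there is a finite $\mathcal A_{HB}\subseteq A$ with $A=\mathbb ZG_{HB}\mathcal A_{HB}$. I build a \emph{new} presentation $\epsilon':F_{X'}\thra A$ with basis $X'$ mapping bijectively to $\mathcal A_{HB}$, and equip it with the canonical control map $h'$. Writing $a=\sum n_g\,g\,a^{HB}_{i(g)}$ with each $g\in G_{HB}$ and lifting termwise to $c'=\sum n_g\,g\,x'_{i(g)}\in F_{X'}$ gives $\epsilon'(c')=a$ and $h'(c')\subseteq HB$. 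Presentation-independence of $\Sigma(M;A)$ then transports this support statement back to the original presentation; letting $HB$ range over all horoballs at $e$ gives $e\in\Sigma(M;A)$, which is (i).

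There is no real obstacle here: the proof is essentially a change-of-presentation trick wrapped around the elementary observation $h(gx_i)=\{gb\}$. The only point that needs any care is keeping track of the fact that $G_{HB}$ is not a monoid, so that $\mathbb ZG_{HB}\mathcal A$ must be read as the additive subgroup generated by $\{ga:g\in G_{HB},\,a\in\mathcal A\}$ rather than as a $\mathbb ZG$-submodule; but this matches exactly the combinatorics of supports in $F$, which is why the three statements line up.
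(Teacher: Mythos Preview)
Your (i)$\Leftrightarrow$(iii) and (iii)$\Rightarrow$(ii) are correct and match the paper, which simply marks the proposition as immediate from the definition. The gap is in your (ii)$\Rightarrow$(i). What presentation--independence (via Lemma~\ref{bounded} and Lemma~\ref{finitary}) actually gives you is that $A_e=\{a\in A: a \text{ lifts over \emph{every} horoball at } e\}$ is the same in all presentations; it does \emph{not} say that the statement ``$a$ lifts over this particular horoball $HB$'' transports exactly. Concretely, a $G$-map $\beta:F'_{HB}\to F$ lifting $\mathrm{id}_A$ carries a lift of $a$ over $HB_{e,t}$ in $F'_{HB}$ only to a lift over $HB_{e,\,t-\|\beta\|}$ in $F$. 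Since your auxiliary presentation $F'_{HB}$ --- and hence $\|\beta\|$ --- depends on $HB$, ranging over all horoballs does not produce lifts over arbitrarily deep horoballs in the fixed presentation $F$; you only get $A=\mathbb Z G_{HB_{e,\,t-\|\beta(t)\|}}\mathcal A$ with no control on $t\mapsto t-\|\beta(t)\|$.

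The cleanest repair is to observe that (ii) is evidently intended as an informal rewording of (iii) (note that (ii) does not even carry its own quantifier over $HB$), so the substantive content is (i)$\Leftrightarrow$(iii), which your pivot $h(gx_i)=\{gb\}$ handles perfectly. If instead one reads (ii) as the genuinely weaker ``for each horoball $HB$ at $e$ there exists \emph{some} finite $G_{HB}$-generating set, possibly depending on $HB$'', then (ii)$\Rightarrow$(iii) needs an additional idea. For instance, when $G$ fixes $e$ one can translate a single generating set $\mathcal A'$ (obtained from (ii) for one horoball) by group elements $g$ with large $\chi_e(g)$ to serve all horoballs with one fixed presentation; but in the general $CAT(0)$ setting this is more than ``transport by presentation-independence''.
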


\begin{rems}\label{elementary}
\begin{enumerate}[(1)]
\item  $\Sigma (M;0)=\partial M$. 
\item If some (any) $G$-orbit in $M$ is bounded then $\Sigma (M;A)=\emptyset $.
\item When $A$ is free of positive rank then $\Sigma (M;A)=\emptyset .$
\item A non-zero module $A\neq 0$ can only be represented over $HB\subseteq M$ if $G_{HB}$ is non-empty, and in that case any $G$-module which is finitely generated as an abelian group is finitely generated over $G_{HB}$. This shows that $\Sigma (M;{\Z})$ contains $\Sigma (M;A)$ for every finitely generated $G$-module $A$, and coincides with $\Sigma (M;A)$ when $A$ is finitely generated over $\Z$.
\item The condition that $e\in \Sigma (M;{\Z})$ thus requires that
$G_{HB}$ be non-empty for every horoball $HB$ at $e$; i.e. that $e$ is
a horospherical limit point of the orbit $Gb$. When this holds for all
$e\in \del M$, Theorem 12.2 of \cite{BGe03} implies that the action of
$G$ on $M$ is cocompact. Thus we have:
\end{enumerate}
\end{rems}
\begin{prop}\label{half} If $A\neq 0$ and $\Sigma(M;A)=\partial M$ then the $G$-action on
$M$ is cocompact.
\hfill$\square$
\end{prop}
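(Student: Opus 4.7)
The plan is to reduce immediately to the case $A=\Z$ and then invoke the known characterization of cocompactness via horospherical limit points.

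First I would use Remark \ref{elementary}(4), which records that for every non-zero finitely generated $G$-module $A$ one has the inclusion $\Sigma(M;A)\subseteq \Sigma(M;\Z)$. The hypothesis $\Sigma(M;A)=\partial M$ therefore forces $\Sigma(M;\Z)=\partial M$ as well.

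Next I would translate this equality into a statement about the orbit $Gb$ of the base point $b\in M$. By Remark \ref{elementary}(5), membership $e\in\Sigma(M;\Z)$ is equivalent to saying that $G_{HB}$ is non-empty for every horoball $HB$ at $e$, i.e.\ that $e$ is a horospherical limit point of the orbit $Gb$. So the conclusion of the previous paragraph is exactly the statement that every boundary point of $M$ is a horospherical limit point of $Gb$.

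Finally I would quote Theorem 12.2 of \cite{BG03}, which asserts that when every point of $\partial M$ is a horospherical limit point of a $G$-orbit in $M$, the action of $G$ on $M$ must be cocompact. This yields the required conclusion. There is no substantive obstacle here: the proposition is essentially a two-line consequence of the two bullets in Remark \ref{elementary} together with the cited theorem of \cite{BG03}; the only thing one needs to be careful about is to note that $A$ being a finitely generated $G$-module is what makes Remark \ref{elementary}(4) applicable, and that $A\neq 0$ is what guarantees $\Sigma(M;A)\subseteq\Sigma(M;\Z)$ is non-vacuous (so that $\Sigma(M;\Z)=\partial M$ actually carries geometric content about $Gb$).
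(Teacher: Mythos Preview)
Your proposal is correct and matches the paper's argument exactly. The paper does not give a separate proof of this proposition; it is stated as an immediate consequence of Remarks \ref{elementary}(4) and (5), which together contain precisely the reduction to $\Sigma(M;\Z)=\partial M$ and the appeal to Theorem 12.2 of \cite{BG03} that you spell out.
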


In Theorem \ref{uniform}, below, we will complete Proposition \ref{half} by giving necessary
and sufficient conditions for  $\Sigma (M;A)=\partial M$ in terms of cocompactness plus a bounded generation property.


\section{Characterization of $\Sigmacirc (M;A)$ in terms of $\Sigma
(M;A)$}\label{S:6}

In this section we characterize $\Sigmacirc(M;A)$ as a specific subset of
$\Sigma (M;A)$ (Theorem \ref{T:6.1}), and we give conditions
under which $\Sigmacirc(M;A)=\Sigma (M;A)$ (Theorem
\ref{T:6.4}).

\begin{thm}\label{T:6.1} For each finitely generated $G$-module $A$ we
have \begin{equation*} \Sigmacirc(M;A) = \{e\in \partial M\mid \text{\rm
cl}(Ge)\subeq \Sigma (M;A)\}.  \end{equation*} \end{thm}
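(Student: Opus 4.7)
The plan is to prove the two inclusions separately. The easy direction is $(\subseteq)$: if $e\in\Sigmacirc(M;A)$, then by Theorem \ref{closure3} the closure $\text{cl}(Ge)$ is contained in $\Sigmacirc(M;A)$, and since $\Sigmacirc(M;A)\subseteq \Sigma(M;A)$ (immediate from the definitions via (\ref{sigmadef})), we are done.

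For the nontrivial direction $(\supseteq)$, suppose $\text{cl}(Ge)\subseteq \Sigma(M;A)$. My plan is to apply Proposition \ref{P:5.6}: it suffices to produce, for each $x\in X$, a finite set $\Phi(x)\subseteq F$ with $\epsilon(\Phi(x))=\{\epsilon(x)\}$ such that the function $\mu_x(e'):=\max\{v_{e'}(c)-v_{e'}(x)\mid c\in\Phi(x)\}$ is strictly positive on $\text{cl}(Ge)$. Fix $x\in X$. For each $e'\in\text{cl}(Ge)$, the hypothesis $e'\in\Sigma(M;A)$ tells us that $\epsilon(x)$ is supported over every horoball at $e'$, so there exists $c_{e'}\in F$ with $\epsilon(c_{e'})=\epsilon(x)$ and $v_{e'}(c_{e'})-v_{e'}(x)\geq 1$.

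Now I use the fact that for each fixed $c\in F$ the quantity $v_{e'}(c)=\min_{p\in h(c)}\beta_{e'}(p)$ depends continuously on $e'$ (a minimum of finitely many Busemann functions, each of which is continuous in its boundary argument). Hence, for each $e'\in\text{cl}(Ge)$, there is a cone-topology neighborhood $U_{e'}$ of $e'$ in $\partial M$ on which $v_{e''}(c_{e'})-v_{e''}(x)\geq 1/2$. The set $\text{cl}(Ge)$ is closed in the compact space $\partial M$, hence compact, so finitely many such neighborhoods $U_{e'_1},\dots,U_{e'_k}$ cover it. Taking $\Phi(x):=\{c_{e'_1},\dots,c_{e'_k}\}$ gives a finite subset of $F$ with $\epsilon(\Phi(x))=\{\epsilon(x)\}$ and with $\mu_x\geq 1/2$ on $\text{cl}(Ge)$, as required.

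Since $X$ is finite, running this construction independently over all $x\in X$ produces the needed data for Proposition \ref{P:5.6}, and we conclude $e\in \Sigmacirc(M;A)$. The only real content is the compactness-plus-continuity argument producing the uniform $\Phi(x)$; everything else is bookkeeping. I expect the main obstacle to be verifying carefully that $v_{e'}(c)$ is indeed continuous in $e'$ (in whichever topology one uses), so that a single $c$ works on a whole neighborhood of $e'$ — but this follows routinely from continuity of Busemann functions and the finiteness of $h(c)$.
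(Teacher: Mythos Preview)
Your proof is correct and follows essentially the same route as the paper: the $\subseteq$ direction is exactly the paper's (Theorem \ref{closure3} is derived from Theorem \ref{P:3.4}), and your $\supseteq$ direction is the compactness-plus-continuity argument that the paper packages as Theorem \ref{T:6.4} (applied with $E=\text{cl}(Ge)$), except that you invoke the criterion of Proposition \ref{P:5.6} and inline the compactness argument rather than stating a separate theorem for general closed $G$-invariant $E$. The only organizational difference is that the paper's formulation yields the slightly stronger uniform conclusion (a single volley and a single $\nu>0$ working for all of $E$), which it needs later for the openness results in Section \ref{S:6.4}.
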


This\footnote{When $A$ is the trivial $G$-module $\Z$, $\Sigma (M;{\Z})$
coincides with the homotopical invariant $\Sigma ^{0}$ of \cite{BGe03},
and Theorem \ref{T:6.1} specializes to Theorem E of that Memoir.}
shows that $\Sigmacirc(M;A)$ is determined by $\Sigma (M;A)$

The inclusion $\subeq$ of Theorem \ref{T:6.1} follows from Theorem
\ref{P:3.4} together with (\ref{sigmadef}).  We turn to the other
inclusion $\supeq$.

\begin{thm}\label{T:6.4} Let $F\thra A$ be a controlled based free
presentation, and let $E$ be a closed $G$-invariant subset of $\partial
M$. If $E\subeq \Sigma(M;A)$, then $E\subeq \Sigmacirc(M;A)$. Moreover,
there is a uniform constant $\nu >0$ and a finite $G$-volley $\Phi$
inducing $\text{\rm id}_A$ such that for each $e\in E$ there is a selection $\varphi
_{e} \in \Phi$ with ${\rm gsh}_{e}\varphi _{e}\geq \nu$.
\end{thm}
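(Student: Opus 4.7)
The plan is to use the compactness of $E$ (it is closed in the cone-compact boundary $\partial M$) together with continuity of the Busemann function in its boundary argument to extract a single finite $G$-volley that works uniformly for every $e \in E$, and then to exploit the $G$-invariance of $E$ to define the selections.

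First, I would unpack $E \subseteq \Sigma(M;A)$ pointwise. For each $e' \in E$ and each $x \in X$, since $\epsilon(x) \in A$ is supported over every horoball at $e'$, I can pick $c_{e',x} \in F$ with $\epsilon(c_{e',x}) = \epsilon(x)$ and $v_{e'}(c_{e',x}) - v_{e'}(x) \geq 1$. Since $h(c_{e',x})$ and $h(x)$ are finite subsets of $M$ and $\beta_{(\cdot)}(p)$ is continuous on $\partial M$ for each fixed $p \in M$ (as used already in the proof of Theorem~\ref{Titsopen}), the function $e'' \mapsto v_{e''}(c_{e',x}) - v_{e''}(x)$ is continuous at $e'$. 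Hence there is a cone-topology open neighborhood $U_{e',x}$ of $e'$ on which this difference stays $\geq \tfrac{1}{2}$. Set $U_{e'} := \bigcap_{x\in X} U_{e',x}$; this is still a neighborhood because $X$ is finite.

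Next I would use compactness. The family $\{U_{e'}\}_{e'\in E}$ covers the compact set $E$, so it admits a finite subcover $U_{e'_1},\ldots,U_{e'_k}$. Define $\Phi(x) := \{c_{e'_1,x},\ldots,c_{e'_k,x}\} \subseteq F$ for each $x \in X$ and extend to the canonical finite $G$-volley $\Phi : F \to fF$ (as in the example of Section~\ref{S:3.2}). Since every chosen element satisfies $\epsilon(c_{e'_i,x}) = \epsilon(x)$, the volley $\Phi$ induces $\mathrm{id}_A$.

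Finally I construct the selections. Given $e \in E$ and a basis element $y = gx \in Y=GX$, by $G$-invariance of $E$ we have $g^{-1}e \in E$, so $g^{-1}e \in U_{e'_{i}}$ for some index $i = i(y,e)$. Set $\varphi_e(y) := g\,c_{e'_{i(y,e)},x}\in g\Phi(x) = \Phi(y)$ and extend $\mathbb{Z}$-linearly. Using Lemma~\ref{L:2.3}(iii),
\begin{equation*}
v_e(\varphi_e(y)) - v_e(y) \;=\; v_{g^{-1}e}(c_{e'_{i},x}) - v_{g^{-1}e}(x) \;\geq\; \tfrac{1}{2}.
\end{equation*}
Since $v_e$ of a $\mathbb{Z}$-linear combination is bounded below by the minimum of $v_e$ over the nonzero summands (Lemma~\ref{L:2.3}(ii)), the same inequality passes to every $c \in F$, yielding $\mathrm{gsh}_e(\varphi_e) \geq \nu := \tfrac{1}{2}$. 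By construction $\epsilon\varphi_e = \epsilon$, so $\varphi_e$ witnesses $e \in \Sigmacirc(M;A)$, and the same $\Phi$ and $\nu$ work for every $e \in E$.

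The main obstacle is the bookkeeping in the last step: the push data were recorded at points $e'_i \in E$, but we need a push toward an arbitrary $e \in E$ on a basis of $F$ that is only $G$-equivariant (not $G$-invariant). The key observation that resolves this is that, because of $G$-invariance of $E$, the translate $g^{-1}e$ also lies in $E$ and hence in some $U_{e'_i}$, which is precisely what allows the basis-wise selection $\varphi_e(gx) = g c_{e'_i,x}$ from the canonical volley $\Phi(gx) = g\Phi(x)$ to realize the required uniform shift.
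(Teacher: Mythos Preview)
Your proof is correct and follows essentially the same route as the paper's: pick pushing elements at each point of $E$, use continuity of the Busemann data plus compactness of $E$ to reduce to finitely many, build the canonical volley from those, and then use $G$-invariance of $E$ together with Lemma~\ref{L:2.3}(iii) to define the selections on the $\mathbb{Z}$-basis $GX$. Your explicit constants ($\geq 1$ shrinking to $\geq \tfrac12$) make the uniform lower bound $\nu$ immediate, whereas the paper obtains it implicitly from the positivity of a continuous function on a compact set; otherwise the arguments are the same.
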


\begin{proof} We are given $E\subeq \Sigma (M;A)$.  For each $x\in X$
and $e\in E$ we choose $\bar c(e,x)\in F$ such that $\epsilon(\bar
c(e,x)) = \epsilon(x)$ and $v_{e}(\bar c(e,x)) -v_{e}(x) > 0$.  Since this inequality holds for $e$ and $x$, it also holds when $v_{e}$ is replaced by $v_{e'}$ provided $e'$ lies in a suitably small
neighborhood of $e$.  Since $E$ is compact there is a finite subset
$E_f\subeq E$ such that for each $e\in E$ there is some $e'\in E_f$ such
that $v_{e}(\bar c(e',x)) -v_{e}(x) > 0$.
For every $e\in E$ we choose such an $e'$ and define $c(e,x) := \bar
c(e',x)$. Thus $\ds{\inf_{e\in E}}\{v_{e}(c(e,x))-v_{e}(x)\} > 0$.
Define $\Psi(x) = \{c(e,x)\mid e\in E\}$ (which is a finite subset of $F$) and extend to get the associated canonical finite $G$-volley $\Psi :F\to fF$ inducing id$_A$.

For $e\in E$ and $y = gx$ define a ${\mathbb Z}$-endomorphism $\psi_{e} :
F \to F$ by $\psi_e(y) := gc(g^{-1}e,x)$; this makes sense because $E$
is $G$-invariant.  Then $\epsilon\psi_e = \epsilon$, and
\begin{equation*}
\begin{aligned}
v_{e}(\psi_e(y)) - v_{e}(y) &= v_{e}(gc(g^{-1}e,x)) -v_{e}(gx)\\
&= v_{g^{-1}e}(c(g^{-1}e,x)) - v_{g^{-1}e}(x) \text{ by Lemma }\ref{L:2.3}(iii).
\end{aligned}
\end{equation*}
Since each $\psi _e$ is a selection from $\Psi $, $E\subseteq \Sigmacirc
(M;A)$.  And since $\ds{\inf_{e\in E}}\{\text{gsh}_e(\psi_e)\} > 0$
the final sentence of the Theorem holds.

\end{proof}


\section{Varying the action of $G$}\label{S:6.4}

Let $E\subseteq \partial M$ and let ${\mathcal R}_{E}:=\text{Hom}(G,\text{Isom}(M,E))$ denote the set of all isometric
actions of $G$ on $M$ which leave $E$ invariant. We endow the set
$\text{Isom}(M,E)$ and the set ${\mathcal R}_{E}$ with the compact-open
topology. (Recall that $G$ is discrete.)  It is convenient to choose a
base point $b\in M$, so that we can write $v_e$ rather than $v_{\gamma}$.
In this section, when we discuss a particular action $\rho \in {\mathcal
R}_{E}$ we write $_{\rho}M$ rather than $M$, and $v_{e}^{\rho}$ rather than $v_e$ for the valuation at $e$ using the action $\rho$.The boundary $\del M$ carries the cone topology.

We first note that for given $g\in G$ the map $\eta _{g}:{\mathcal R}_{E}\to
M$ taking $\rho $ to $\rho (g)(b)$ is continuous in $\rho$. This is
because, for given $\epsilon >0$, it maps the open neighborhood of $\rho$ 

$$W_{\epsilon }:=\{\rho '\in {\mathcal R}_{E}| \rho '(g)(b)\in B_{\epsilon
}(\rho (g)(b)))\}$$ 

\noindent into the open ball $B_{\epsilon }(\rho (g)(b))$. The
map $\beta :M\times \del M\to {\R}$ taking $(p,e)$ to $\beta _{e}(p)$ is also
known to be continuous.  Hence the following composite map is continuous:

$$
\xymatrix{
&{{\mathcal R}_{E}\times \del M}\ar[r]^{\eta _{g}\times \text{id}}&{M\times \del M}\ar[r]^\beta & {\R}
}
$$

From this one gets:

\begin{lemma}\label{continuous} For given $c\in F$, $v^{\rho}_e(c)$ is (jointly) continuous in $(\rho ,e)$.
\hfill$\square$
\end{lemma}

\begin{thm}[$\text{Openness Theorem}$]\label{T:6.5} Let  $E$ be a
closed subset of $\partial M$, and let $\rho \in {\mathcal R}_{E}$
be such that $E\subseteq \Sigma (_{\rho}M;A)$. There is a neighborhood
$N$ of $\rho$ in ${\mathcal R}_{E}$ such that for all $\rho '\in N,$
$E\subseteq \Sigmacirc (_{\rho '}M;A)$. \end{thm}

\begin{proof} By Corollary \ref{invariant} we have $E\subseteq
\Sigmacirc (_{\rho}M;A)$. Fix $x\in X$. For each $e\in E$ there is
a finitary map $\varphi _{e,\rho }:F\to F$ lifting $\text{id}_A$,
and a number $\delta _{e}>0 $ such that $$v_{e}^{\rho }(\varphi _{e,
\rho }(x))-v_{e}^{\rho }(x) > \delta _{e}.$$ By Lemma \ref{continuous}
there is a product neighborhood $N(e, \rho )=N(e)\times N_{e}(\rho )$
such that when  $(e',\rho ')$ lies in $N(e, \rho )$ then $$v_{e'}^{\rho
'}(\varphi _{e,\rho }(x))-v_{e'}^{\rho '}(x) > \delta _{e}.$$ The
sets $\{N(e)\}$ form an open cover of $E$, so there is a finite
subcover $\{N(e_{i})\}$. We write $N:=\bigcap N(e_{i})$
and $\delta :=\text{min}\{\delta _{e_{i}}\}$. Then we have finitely many
finitary maps $\varphi _{i}:=\varphi _{e_{i}, \rho }$ such that for any
$e\in E$ and $\rho '\in N$ there exists $i$ satisfying 
$$v_{e}^{\rho '}(\varphi _{i}(x))-v_{e}^{\rho '}(x) > \delta.$$ 
Now fix $e\in E$. For
each $\rho '\in N$ pick such a $\varphi _{i}$ and call it $\varphi _{e,
\rho '}$. Then for all $\rho '\in N$ $$v_{e}^{\rho '}(\varphi _{e, \rho
'}(x))-v_{e}^{\rho '}(x) > \delta.$$ Define  $\psi _{e, \rho '}:F\to
F$ by $\psi _{e, \rho '}(gx):= g\varphi _{e, \rho '}(x)$. Each of
the (finitely many) finitary maps $\varphi _{i}$ comes with a
volley $\Phi _i$. Combine these to get a new volley defined by $\Phi (c)=\bigcup \Phi _{i}(c)$. Then the maps $\psi _{e, \rho '}$ are selections of $\Phi $ which lift
$\text{id}_A$, and their guaranteed shifts towards $e$ are $\geq \delta$.

\end{proof}

\begin{Rem} The above proof of Theorem \ref{T:6.5} proves more: there is a volley $\Phi :F\to fF$ and a number $\delta >0$ such that for every $(e, \rho ')\in E\times N$ there is a selection 
$\varphi _{e, \rho '}$ of $\Phi$ such that $\text{gsh}_{e}(\varphi _{e, \rho ' }\geq \delta $.
\end{Rem}

\begin{cor}\label{open2} Let $\rho $ be an isometric action on $M$ as above. There is
a neighborhood $N$ of $\rho$ in ${\mathcal R}_{\del M}$ such that if
$\Sigmacirc (_{\rho }M;A)=\partial M$ then $\Sigmacirc (_{\rho '}M;A)=\partial M$
for all $\rho '\in N$. 						
\hfill$\square$
\end{cor}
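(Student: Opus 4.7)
The plan is to derive this corollary directly from the Openness Theorem \ref{T:6.5} by taking the closed $G$-invariant subset $E$ to be all of $\partial M$. Since $\partial M$ is trivially invariant under every isometric action of $G$, we have $\rho \in \mathcal{R}_{\partial M}$, and $\mathcal{R}_{\partial M}$ is just the full space of isometric actions of $G$ on $M$ with the compact-open topology.

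First, I would translate the hypothesis $\Sigmacirc(_{\rho}M;A) = \partial M$ into a statement about $\Sigma$. By Corollary \ref{invariant}, we have $\Sigmacirc(_{\rho}M;A) = \partial M$ if and only if $\Sigma(_{\rho}M;A) = \partial M$; in particular the hypothesis gives $\partial M \subseteq \Sigma(_{\rho}M;A)$, which is precisely the input needed to invoke Theorem \ref{T:6.5}.

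Next, I would apply Theorem \ref{T:6.5} with $E = \partial M$ to obtain a neighborhood $N$ of $\rho$ in $\mathcal{R}_{\partial M}$ such that $\partial M \subseteq \Sigmacirc(_{\rho'}M;A)$ for every $\rho' \in N$. Since $\Sigmacirc(_{\rho'}M;A) \subseteq \partial M$ holds trivially, this gives equality $\Sigmacirc(_{\rho'}M;A) = \partial M$ for all $\rho' \in N$, completing the proof.

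There is no substantial obstacle here: the corollary is essentially a packaged version of Theorem \ref{T:6.5} in the special case $E=\partial M$, combined with the $\Sigmacirc$-vs-$\Sigma$ equivalence from Corollary \ref{invariant}. The only thing one needs to confirm is that taking $E=\partial M$ causes no issue with the topology on $\mathcal{R}_{\partial M}$, which is immediate since $\mathcal{R}_{\partial M}$ coincides with the full compact-open topological space of isometric actions.
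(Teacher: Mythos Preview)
Your argument is correct and is exactly what the paper intends: the corollary is marked with a bare $\square$, indicating it follows immediately from Theorem~\ref{T:6.5} with $E=\partial M$. One small simplification: to pass from $\Sigmacirc(_{\rho}M;A)=\partial M$ to $\partial M\subseteq \Sigma(_{\rho}M;A)$ you could simply use the inclusion $\Sigmacirc(M;A)\subseteq \Sigma(M;A)$ noted after~(\ref{sigmadef}), rather than the full force of Corollary~\ref{invariant}.
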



\section{The meaning of $\Sigma(M;A)=\partial M$.}\label{S:7}

In this section we assume that the finitely generated
${\Z}G$-module $A$ is non-zero, and we study the condition 
$\Sigma(M;A)=\partial M$.  By Theorem \ref{T:6.4}, the statements
$\Sigma(M;A)=\partial M$ and $\Sigmacirc(M;A)=\partial M$ are
equivalent. Our goal is Theorem \ref{uniform}.
It explains how this is equivalent to cocompactness together with the
property ``$A$ has bounded support over $M$". In the case of discrete
orbits the latter reduces to the algebraic property ``$A$ is finitely
generated over ${\Z}H$", where, depending on hypotheses, $H$ is either
the kernel of the action or the stabilizer of a point of $M$.

\subsection{Bounded support}\label{S:7.1}

The statement $e\in \Sigma (M;A)$ means that every $a\in A$ can be
``supported" over every horoball at $e$.  Here we will need the analogous
concept: support over a bounded subset of $M$.  We say the module $A$ has
{\it bounded support over} $M$ if there is a bounded subset $B\subseteq
M$ with the property that for each $a\in A$ there exists $c\in F$ with
$\epsilon(c) = a$ and $h(c)\subseteq B$.  It is easy to see that this
property is independent of the choice of $F$ and of the control map
$h$. When this property holds over every bounded set of a particular
diameter we say that $A$ has {\it uniform bounded support} over $M$.

\begin{thm}\label{uniform} The following are equivalent:
\begin{enumerate}[(i)]
\item $\Sigma(M;A)=\partial M$;
\item The action $\rho $ is cocompact and $A$ has bounded support over $M$;
\item $A$ has uniform bounded support over $M$.
\end{enumerate}
\end{thm}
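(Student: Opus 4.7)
The proof is a cyclic chain of implications (iii)~$\Rightarrow$~(i)~$\Rightarrow$~(ii)~$\Rightarrow$~(iii); two of these are elementary, and the third contains the substantive content.

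For (iii)~$\Rightarrow$~(i): every horoball $HB_{e}$ is unbounded and hence contains balls of arbitrarily large diameter, so if $A$ is supported over every ball of some fixed diameter then $A$ is supported over every $HB_{e}$, giving $\Sigma(M;A) = \partial M$. For (ii)~$\Rightarrow$~(iii): choose a compact $K \subseteq M$ with $GK = M$ and a bounded $B \subseteq M$ with $A$ supported over $B$, and set $R := \text{diam}(B \cup K)$. For any $p \in M$ and $a \in A$, pick $g \in G$ with $g^{-1}p \in K$; there is some $c \in F$ with $\epsilon(c) = g^{-1}a$ and $h(c) \subseteq B$, so $\epsilon(gc) = a$, and for any $b' \in B$ the isometry identity $d(gb', p) = d(b', g^{-1}p) \leq R$ (since both $b'$ and $g^{-1}p$ lie in $B \cup K$) yields $h(gc) \subseteq B(p, R)$.

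The substantive implication is (i)~$\Rightarrow$~(ii). Cocompactness of the $G$-action on $M$ is Proposition~\ref{half}. For the bounded support assertion, apply Theorem~\ref{T:6.4} to the compact set $E = \partial M$ to obtain a finite $G$-volley $\Phi$ inducing $\text{id}_{A}$ together with a uniform constant $\nu > 0$ such that for each $e \in \partial M$ some selection $\varphi_{e}$ of $\Phi$ satisfies $\text{gsh}_{e}(\varphi_{e}) \geq \nu$. The set $h(\Phi(X)) \subseteq M$ is bounded, and each difference $\varphi_{e}(y) - y$ lies in $\ker\epsilon$, supplying a uniform family of ``local'' relations in the module $A$ indexed by the boundary directions. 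Combining these relations with cocompactness, the plan is to show that every $a \in A$ admits a representative in $F$ whose support lies in a uniformly bounded neighborhood of the basepoint.

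The main obstacle is this last step. A single selection $\varphi_{e}$ pushes support only towards the boundary point $e$, so iteration from a single $\varphi_{e}$ moves support further from the basepoint rather than closer. What makes the argument work is the simultaneous availability of pushes from $\Phi$ in every boundary direction: by choosing, for each basis element $y = gx \in Y$, selections associated to directions ``opposite'' to that of $gb$ viewed from the basepoint, the successive applications of selections from $\Phi$ act as reductions rather than expansions, producing an element of $F$ equivalent to $y$ modulo $\ker\epsilon$ whose support is uniformly bounded independently of $g$. Cocompactness controls the number of reductions required via the $G$-action, and summing a bounded-support representative of each basis element in the natural expansion of $a$ yields the desired bounded support for $a$.
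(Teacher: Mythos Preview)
Your arguments for (iii)~$\Rightarrow$~(i) and (ii)~$\Rightarrow$~(iii) are correct and essentially the paper's (the paper simply declares the latter ``clear'').  The problem is the last two paragraphs, where the substantive implication (i)~$\Rightarrow$~(ii) is only sketched, and the sketch omits an ingredient without which the argument does not go through.

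The missing ingredient is \emph{almost geodesic completeness} of $M$.  Your plan is to assemble, for each $y=gx$, a selection from $\Phi$ pushing in the direction ``opposite'' to $gb$ as seen from the basepoint, so that the push decreases $D_b$.  But in a general proper $CAT(0)$ space there need be no such direction: there may be no geodesic ray emanating from $h(y)$ that comes anywhere near $b$ (think of $M=[0,\infty)$).  The paper handles this by invoking the theorem of \cite{GO07} that a proper $CAT(0)$ space with cocompact isometry group is almost geodesically complete; only then can one choose, for each $y$, a ray $\gamma_y$ starting at $h(y)$ and passing within a fixed $\mu$ of $b$, and take the selection pushing towards $e(y)=\gamma_y(\infty)$.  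You also need a quantitative estimate (the paper's Lemma~\ref{neighborhood2}) to convert the horospherical inequality $v_{e(y)}(\psi(y))-v_{e(y)}(y)\geq\delta$ into an honest decrease of distance to $b$; this is the content of Proposition~\ref{P:7.6}, which produces a single selection $\psi$ from $\Phi$ with $\text{gsh}_b(\psi)>0$.  Iterating $\psi$ then drives every $c\in F$ into the ball of radius $R+||\Phi||$ about $b$.

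Finally, your sentence ``cocompactness controls the number of reductions required via the $G$-action'' is not how the argument runs: cocompactness is used only to obtain almost geodesic completeness, and the iteration is governed entirely by the almost guaranteed shift of $\psi$ towards $b$ (Corollary~\ref{C:7.3}).
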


\begin{proof} The equivalence of (ii) and (iii) is clear. Since there is
an arbitrarily large ball inside any horoball, (ii) implies (i). That (i)
implies cocompactness is Proposition \ref{half}.  The remaining item,
the fact that (i) implies bounded support, requires some work and will
be proved in Section \ref{completion}.  
\end{proof}

The example of $SL_{2}({\mathbb Z})$ acting on the hyperbolic plane,
where we take $A$ to be the trivial $G$-module ${\mathbb Z}$, shows that
``having bounded support" does not imply ``cocompact".

\subsection{Shifting towards a point of $M$}\label{S:7.3}

Just as we needed the idea of pushing towards $e\in \partial M$, now we
need the analogous idea of pushing towards a point $b\in M$. The role
of a valuation on $F$ is played by the function $D_b :F\to {\mathbb
R}_{\geq 0}$ defined by  $D_b(c) :=\max \{d(p,b)\mid p\in h(c)\}$ when
$c\neq 0$ and $D_b(c) = 0$ when $c=0$. 

With notation as before, let $\varphi : F\to F$ be a
$\Z$-endomorphism. The {\it shift function of} $\varphi $ {\it towards}
$b\in M$ measures the loss of distance from $b$ (over $M$) of elements of $F$;
it is denoted by sh$_{\varphi,b} : F\to {\mathbb R}$, and is defined
by 

\begin{equation}\label{E:7.1} \text{sh}_{\varphi,b}(c) := D_b(c)
- D_b(\varphi(c)) \in {\mathbb R}\cup \{\infty\}
\end{equation}

The notion of guaranteed shift towards $b\in M$ is more subtle than
the corresponding notion for  endpoints $e\in \partial M$ because if
elements are already too close to $b$ it may not be possible to push
them any closer.  Therefore we have to restrict attention to elements $c$
with $h(c)$ outside some ball centered at $b$.  When $t \in {\mathbb R}$
and $R\geq 0$, the pair $(t,R)$ {\it defines a guaranteed shift of}  
$\varphi $ {\it towards} $b$ if sh$_{\varphi,b}(c)\geq t$ whenever $c\in F$ and $D_b(c)
> R$.  The {\it almost guaranteed shift of} $\varphi$ {\it towards} $b$ is
\begin{equation*}
\text{gsh}_b(\varphi ) := \sup\{t\mid \text{for some } R, (t,R) \text{ defines a guaranteed shift
for }\varphi \}.
\end{equation*}

\begin{lemma}\label{L:7.2}
\begin{enumerate}[{\rm (i)}]
\item $-||\varphi || \leq \text{\rm gsh}_b(\varphi ) \leq ||\varphi ||$.
\item If $\psi : F\to F$ is another  $\Z$-endomorphism then
\begin{equation*}
\text{\rm gsh}_b(\varphi \circ \psi ) \geq \text{\rm gsh}_b(\varphi ) + \text{\rm gsh}_b(\psi ).
\end{equation*}
\end{enumerate}
\end{lemma}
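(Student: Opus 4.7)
The plan is to extract both estimates in (i) directly from the definitions of the norm and of $D_b$, and then to deduce (ii) from the exact additivity of $\text{sh}$ under composition combined with a two-case split on whether $\psi(c)$ lands far from or close to $b$.

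For part (i), the lower bound is immediate: since $h(\varphi(c)) \subseteq N_{||\varphi||}(h(c))$, every $q \in h(\varphi(c))$ lies within $||\varphi||$ of some $p \in h(c)$, giving $d(q,b) \leq d(p,b) + ||\varphi|| \leq D_b(c) + ||\varphi||$; maximising over $q$ yields $D_b(\varphi(c)) \leq D_b(c) + ||\varphi||$, so $\text{sh}_{\varphi,b}(c) \geq -||\varphi||$ holds uniformly in $c$ and $(-||\varphi||, 0)$ is therefore a guaranteed shift. For the upper bound, I would show that no pair $(t,R)$ with $t > ||\varphi||$ is a guaranteed shift by producing elements of arbitrarily large $D_b$ with shift bounded by $||\varphi||$. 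Concretely, pick a basis element $x$ with $\varphi(x) \neq 0$, a point $q_0 \in h(\varphi(x))$, and $p_0 \in h(x)$ with $d(q_0,p_0) \leq ||\varphi||$; as $g \in G$ ranges with $d(b, g p_0) \to \infty$ the element $c = gx$ has $h(c) = g h(x)$ concentrated in a set of diameter $\text{diam}(h(x))$ about $g p_0$, so the constraint $h(\varphi(gx)) \subseteq N_{||\varphi||}(g h(x))$ forces $D_b(\varphi(gx)) \geq D_b(gx) - ||\varphi|| - \text{diam}(h(x))$, bounding the shift by a constant independent of $R$, and letting $d(b, g p_0) \to \infty$ rules out any candidate $t > ||\varphi||$.

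For part (ii), set $s = \text{gsh}_b(\varphi)$ and $t = \text{gsh}_b(\psi)$, and pick $s' < s$ and $t' < t$ realised by radii $R_\varphi$ and $R_\psi$. The telescoping identity
\begin{equation*}
\text{sh}_{\varphi\psi,b}(c) \;=\; \text{sh}_{\psi,b}(c) \,+\, \text{sh}_{\varphi,b}(\psi(c))
\end{equation*}
holds by direct cancellation. Set $R := \max\{R_\psi,\ R_\varphi + ||\varphi|| + s' + t'\}$, finite by (i) applied to $\varphi$. For any $c$ with $D_b(c) > R$, the estimate $D_b(c) > R_\psi$ gives $D_b(\psi(c)) \leq D_b(c) - t'$. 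If $D_b(\psi(c)) > R_\varphi$ then the $\varphi$-shift applies to $\psi(c)$ and yields $\text{sh}_{\varphi,b}(\psi(c)) \geq s'$, so $\text{sh}_{\varphi\psi,b}(c) \geq s' + t'$. Otherwise $D_b(\psi(c)) \leq R_\varphi$, and the norm bound from the lower half of (i) applied to $\varphi$ at $\psi(c)$ gives $D_b(\varphi\psi(c)) \leq D_b(\psi(c)) + ||\varphi|| \leq R_\varphi + ||\varphi|| \leq D_b(c) - s' - t'$, which again rearranges to $\text{sh}_{\varphi\psi,b}(c) \geq s' + t'$. Hence $(s' + t', R)$ is a guaranteed shift for $\varphi\psi$, and letting $s' \uparrow s$, $t' \uparrow t$ delivers $\text{gsh}_b(\varphi\psi) \geq s + t$.

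The main obstacle will be the upper half of (i): one must show carefully that pushing a basis element $x$ far from $b$ via the $G$-action makes the entire set $g h(x)$ nearly equidistant from $b$, so that the maximum defining $D_b(\varphi(gx))$ cannot fall much below $D_b(gx)$. This rests on the bounded diameter of each $h(x)$ over the finite set $X$, together with the assumption that the orbit of some basis element in $M$ is unbounded (which is the nondegenerate case of interest). The argument for (ii) additionally uses finiteness of $||\varphi||$, but this is exactly the regime in which the estimate is non-vacuous, given the upper half of (i) just established.
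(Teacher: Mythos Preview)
Your argument for (ii) is correct and, in fact, more careful than the paper's. Both use the telescoping $\text{sh}_{\varphi\psi,b}(c) = \text{sh}_{\psi,b}(c) + \text{sh}_{\varphi,b}(\psi(c))$, but the paper then asserts ``by (i) we have $D_b(\psi(c)) > R(t) - \|\psi\|$'' and takes $R = \max\{R(t),\, R(t') + \|\psi\|\}$. That lower bound on $D_b(\psi(c))$ does not follow from (i): it would require the pointwise estimate $D_b(\psi(c)) \geq D_b(c) - \|\psi\|$, which fails whenever $\psi(c) = 0$ with $D_b(c)$ large. Your case split on whether $D_b(\psi(c)) > R_\varphi$ is precisely what repairs this; in Case~2 you correctly fall back on the bound $D_b(\varphi\psi(c)) \leq D_b(\psi(c)) + \|\varphi\|$ coming from the norm. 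Both the paper's sketch and yours need a finite norm (the paper uses $\|\psi\|$, you use $\|\varphi\|$), which is harmless in the $G$-finitary setting where the lemma is actually applied.

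Your instinct that the upper half of (i) is the delicate point is right, and your argument there has a gap. From $\varphi(x) \neq 0$ you cannot conclude $\varphi(gx) \neq 0$ for the $g$'s you need, since $\varphi$ is only a $\mathbb{Z}$-endomorphism; if $\varphi(gx) = 0$ then $D_b(\varphi(gx)) = 0$ and your lower bound collapses. Even when $\varphi(gx) \neq 0$, your estimate yields only $\text{sh}_{\varphi,b}(gx) \leq \|\varphi\| + \text{diam}(h(x))$, not $\|\varphi\|$. In fact the upper bound as stated can fail outright: for $\varphi = 0$ one has $\|\varphi\| = 0$ but $\text{gsh}_b(\varphi) = +\infty$, since $D_b(0) = 0$ makes $\text{sh}_{0,b}(c) = D_b(c)$. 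The paper simply writes ``(i) is clear'', so this is a shared oversight; fortunately only the lower half of (i) is ever used, in (ii) and in the subsequent corollary.
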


\begin{proof}  (i) is clear.  For (ii) let $(t,R(t))$ and $(t',R(t'))$
define guaranteed shifts for $\psi $ and $\varphi $ respectively.
For all $c\in F$ with $D_{b}(c)>R(t)$ we have sh$_{\psi ,b}(c)\geq t$,
and by (i) we have $D_b(\psi (c))>R(t )-||\psi ||$. Thus, arguing as in
the proof of Lemma \ref{L:3.3}, we find that 
$R:=\text { max}\{R(t), R(t')+||\psi ||\}$ will be such that the
pair $(t+t',R)$ defines a guaranteed shift for $\varphi \circ \psi $.
\end{proof}

We note that when $\varphi$ is $G$-finitary, $||\varphi || <\infty$ and
gsh$_{b}(\varphi )$ is attained.  If gsh$_b(\varphi ) > 0$ we say that
$\varphi $ {\it pushes} $F$ {\it towards} $b\in M$.

\begin{cor}\label{C:7.3} If $\varphi$ in Lemma \ref{L:7.2}(ii) pushes $F$
towards $b$, then $\varphi^k\circ \psi$ pushes $F$ towards $b$ whenever $k
> \dfrac{-\text{\rm gsh}_b(\psi )}{\text{\rm gsh}_b(\varphi )}$.  In fact,
for any $\eta >0$, {\rm gsh}$_b(\varphi^k\circ\psi ) > \eta$ when $k >
\dfrac{\eta-\text{\rm gsh}_b(\psi )}{\text{\rm gsh}_b(\varphi )}$.
\end{cor}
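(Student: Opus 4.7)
The plan is to derive the corollary as a direct iteration of Lemma \ref{L:7.2}(ii). The super-additivity of $\text{gsh}_b$ under composition is exactly what makes the bound linear in $k$, so no new analytic input is needed beyond what is already in the lemma.

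First I would establish by induction on $k \geq 1$ the inequality
$$\text{gsh}_b(\varphi^{k}) \geq k \cdot \text{gsh}_b(\varphi).$$
The base case $k=1$ is trivial. For the inductive step, write $\varphi^{k+1} = \varphi \circ \varphi^{k}$ and apply Lemma \ref{L:7.2}(ii) to obtain
$$\text{gsh}_b(\varphi^{k+1}) \;\geq\; \text{gsh}_b(\varphi) + \text{gsh}_b(\varphi^{k}) \;\geq\; (k+1)\cdot \text{gsh}_b(\varphi).$$

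Next, applying Lemma \ref{L:7.2}(ii) once more to the pair $(\varphi^{k},\psi)$ gives
$$\text{gsh}_b(\varphi^{k}\circ \psi) \;\geq\; \text{gsh}_b(\varphi^{k}) + \text{gsh}_b(\psi) \;\geq\; k\cdot \text{gsh}_b(\varphi) + \text{gsh}_b(\psi).$$

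Finally, since $\text{gsh}_b(\varphi) > 0$ by hypothesis, the inequality $k\cdot \text{gsh}_b(\varphi) + \text{gsh}_b(\psi) > \eta$ is equivalent to
$$k \;>\; \frac{\eta - \text{gsh}_b(\psi)}{\text{gsh}_b(\varphi)},$$
which is precisely the claimed threshold. Specializing to $\eta = 0$ yields the first statement of the corollary: $\varphi^{k}\circ \psi$ pushes $F$ towards $b$. I do not expect any real obstacle; the argument is mechanical and rests only on the super-additivity of the guaranteed shift provided by Lemma \ref{L:7.2}(ii), together with the standard conventions on arithmetic in $\mathbb{R}\cup\{\infty\}$ (so the bound remains meaningful even if $\text{gsh}_b(\psi)$ or $\text{gsh}_b(\varphi^{k})$ happens to be $+\infty$).
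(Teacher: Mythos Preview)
Your argument is correct and is precisely the intended one: the paper states the corollary without proof because it follows immediately from iterating Lemma~\ref{L:7.2}(ii), exactly as you have written out.
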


\subsection{$CAT(0)$ issues}

\begin{lemma}\label{neighborhood2} Let $p\in M$, let $\gamma$ be a geodesic ray starting at $p$, let $r>0$ and $\epsilon >0$ be given, let $R>r(1+\frac{2r}{\epsilon})$, let $q\in B_{r}(p)$ and let $b=\gamma (R)$. Then 
\begin{equation}\label{star2}
d(p,b)-d(q,b)\geq \beta _{\gamma}(q)-\beta _{\gamma}(p)-\epsilon
\end{equation}
\end{lemma}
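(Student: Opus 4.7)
The plan is to apply Lemma II.8.21(1) of \cite{BrHa99} directly, exactly as was done in the proof of Lemma \ref{neighborhood}. First I would invoke that BH lemma with $w = p$, noting that the ball $B_r(w)$ then equals $B_r(p)$ and hence contains $q$. The hypothesis $R > r(1 + 2r/\epsilon)$ is precisely the condition that BH requires: for every $u$ outside $B_R(p)$ and $v$ the point on $[p,u]$ at distance $R$ from $p$, one has
\[
0 \;\le\; d(q,v) + d(v,u) - d(q,u) \;\le\; \epsilon.
\]
Applying this with $u = \gamma(t)$ for $t > R$ and $v = \gamma(R) = b$ (so that $d(v,u) = t-R$) yields
\[
0 \;\le\; d(q,b) + (t - R) - d(q,\gamma(t)) \;\le\; \epsilon.
\]

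Next I would let $t \to \infty$. Using the paper's convention $\beta_\gamma(x) = \lim_{t \to \infty}(t - d(x,\gamma(t)))$ (under which the Busemann function grows in the direction of $e$), the double inequality passes to the limit and becomes
\[
0 \;\le\; d(q,b) - R + \beta_\gamma(q) \;\le\; \epsilon,
\]
which is exactly equation (\ref{star}) applied with basepoint $p$.

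Finally, I would observe that since $\gamma$ starts at $p = \gamma(0)$ we have $d(p,b) = R$ and $\beta_\gamma(p) = 0$. Rearranging the upper bound of the previous display gives
\[
d(p,b) - d(q,b) \;=\; R - d(q,b) \;\ge\; \beta_\gamma(q) - \epsilon \;=\; \beta_\gamma(q) - \beta_\gamma(p) - \epsilon,
\]
which is the conclusion of the lemma. There is no real obstacle here: the lemma is essentially a rerun of the calculation producing (\ref{star}) in Lemma \ref{neighborhood}, specialized to the case where the center of the ball and the basepoint of the ray coincide; the only thing one must be careful about is verifying the sign convention, so that $\beta_\gamma(p) = 0$ indeed holds and the upper (rather than lower) BH inequality is the one that gives the desired lower bound on $d(p,b)-d(q,b)$.
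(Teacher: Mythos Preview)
Your proof is correct and is exactly the paper's approach: the paper simply says the lemma ``is an immediate consequence of (\ref{star}) in the proof of Lemma \ref{neighborhood}'', and you have spelled out that consequence in full, making the substitutions $w\mapsto p$, $p\mapsto q$, $\gamma(R)\mapsto b$ and using $d(p,b)=R$, $\beta_\gamma(p)=0$.
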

\begin{proof} This is an immediate consequence of (\ref{star}) in the proof of Lemma \ref{neighborhood}.
\end{proof}

The $CAT(0)$ metric space $M$ is {\it almost geodesically complete} if
there is a number $\mu\geq 0$ such that for any $b$ and $p\in M$ there
is a geodesic ray $\gamma$ starting at $p$ and passing within $\mu $ of $b$.
(An example lacking this property is the half line $[0,\infty)$.)  
It is a theorem in \cite{GO07} that whenever the isometry group of $M$
acts cocompactly then $M$ is almost geodesically complete.

\begin{prop}\label{P:7.6}
Let $M$ be almost geodesically complete.  The following are equivalent for a $G$-volley $\Phi:F\to fF$:
\begin{enumerate}[{\rm (i)}]
\item $\forall e\in \partial M$ $\Phi$ admits a selection pushing $F$ towards $e$ which induces 
$\text{\rm id}_A$.
\item $\forall b\in M$ $\Phi$ admits a selection pushing $F$ towards $b$ which induces 
$\text{\rm id}_A$.
\end{enumerate}
\end{prop}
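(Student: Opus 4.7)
The equivalence rests on a geometric dictionary supplied by Lemma~\ref{neighborhood2}: if $\gamma$ is a geodesic ray from $p$ to $e = \gamma(\infty) \in \partial M$ and $b = \gamma(R)$ lies far out along $\gamma$, then for $q$ in a ball $B_r(p)$ the Busemann shift $\beta_{\gamma}(q) - \beta_{\gamma}(p)$ (``push toward $e$'') and the distance shift $d(p,b) - d(q,b)$ (``push toward $b$'') agree up to an error $\epsilon$ that shrinks as $R$ grows. Almost geodesic completeness, with its constant $\mu \geq 0$, supplies such a ray from any $p$ passing within $\mu$ of any prescribed $b$, at the cost of an additional $2\mu$ error arising from the triangle inequality. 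This dictionary is the sole mechanism by which the two hypotheses communicate.

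For (i)$\Rightarrow$(ii), fix $b \in M$. The functions $\mu_x(e) := \max\{v_e(c) - v_e(x) : c \in \Phi(x)\}$, indexed by $x \in X$, are maxima of finitely many continuous functions in the cone topology on $\partial M$, hence are continuous. By (i), for every $e \in \partial M$ some selection from $\Phi$ pushes $F$ towards $e$; expressed through $\mu_x$ this forces $\mu_x(e) > 0$ at every $e$, and compactness of $\partial M$ upgrades this to a uniform lower bound $\delta > 0$ with $\mu_x(e) \geq \delta$ for all $x$ and $e$. Now for each $y = gx \in Y$ with $D_b(y)$ sufficiently large, use almost geodesic completeness to pick a ray $\gamma_y$ from $p_y := h(y)$ staying within $\mu$ of $b$, ending at some $e_y \in \partial M$, and set $\varphi(y) := g\cdot c(g^{-1}e_y, x)$ where $c(g^{-1}e_y, x) \in \Phi(x)$ realizes $\mu_x(g^{-1}e_y) \geq \delta$; on the finitely many remaining $y$ choose anything in $\Phi(y)$. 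This is a selection of $\Phi$ inducing $\mathrm{id}_A$. Applying Lemma~\ref{neighborhood2} along $\gamma_y$ at the point $b'$ of $\gamma_y$ closest to $b$, together with the triangle inequality, yields $d(p_y,b) - d(q,b) \geq \delta - \epsilon - 2\mu$ for every $q \in h(\varphi(y))$, providing a uniform positive guaranteed shift towards $b$.

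For (ii)$\Rightarrow$(i), fix $e \in \partial M$ and a ray $\gamma$ from the base point to $e$, and set $b_n := \gamma(n)$. By (ii), each $b_n$ admits a selection $\varphi_n$ of $\Phi$ pushing $F$ towards $b_n$. The space of all selections $\prod_{y \in Y} \Phi(y)$ is compact in the product topology (each $\Phi(y)$ being finite), so extract a pointwise convergent subsequence $\varphi_{n_k} \to \varphi$; the limit is again a selection of $\Phi$ inducing $\mathrm{id}_A$. For each $y \in Y$ one has $\varphi(y) = \varphi_{n_k}(y)$ eventually, and the asymptotic identification $d(p, b_n) \approx n - \beta_{\gamma}(p)$ (controlled quantitatively by Lemma~\ref{neighborhood2} applied in reverse) converts the positive push of $y$ towards $b_{n_k}$ into a positive push towards $e$ as $k \to \infty$. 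Uniformity over $y \in Y$ is reduced, via the $G$-structure $\Phi(gx) = g\Phi(x)$, to a uniformity over the finitely many $x \in X$, giving $\mathrm{gsh}_e(\varphi) > 0$.

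The principal obstacle is the $2\mu$ loss from almost geodesic completeness in (i)$\Rightarrow$(ii): the uniform lower bound $\delta$ produced by compactness might not exceed $2\mu + \epsilon$, and one must arrange that it does. This is the delicate quantitative point of the argument, requiring either a careful choice of the rays $\gamma_y$ (exploiting that worst-case $\mu$ is rarely attained) or an amplification/iteration step that preserves membership in $\Phi$. The symmetric subtlety in (ii)$\Rightarrow$(i) is maintaining positivity of the push in the limit $n_k \to \infty$ uniformly over $y$; this is exactly what the finitary nature of $\Phi$ is for.
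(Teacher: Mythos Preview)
Your treatment of (i)$\Rightarrow$(ii) matches the paper's closely: both use the continuity of $e\mapsto \max\{v_e(c)-v_e(x):c\in\Phi(x)\}$ and compactness of $\partial M$ to extract a uniform $\delta>0$, then for each $y$ pick a ray $\gamma_y$ from $h(y)$ passing within $\mu$ of $b$ and select $\varphi(y)$ to realise the push toward the endpoint $e_y$, finally invoking Lemma~\ref{neighborhood2} to convert Busemann shift to distance shift. You are right that the residual $2\mu$ loss is the delicate point. The paper resolves it not by a clever choice of rays but by iteration: one checks that $\psi$ has almost guaranteed shift $\geq \delta/2-2\mu$ toward $b$, and then Corollary~\ref{C:7.3} shows that $\psi^k$ has positive almost guaranteed shift once $k$ is large enough. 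Strictly speaking $\psi^k$ is a selection from the composite volley $\Phi^k$, not from $\Phi$; this is a harmless discrepancy since the only use of the proposition (in \S\ref{completion}) needs merely the existence of \emph{some} finite $G$-volley with a selection pushing toward $b$.

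Your (ii)$\Rightarrow$(i), however, diverges from the paper and contains a real gap. The paper disposes of this direction in one line, observing that a ball of radius $\mu$ fits inside any horoball, so the (i)$\Rightarrow$(ii) mechanism runs in reverse: given $e$, place $b$ deep in a horoball at $e$ and convert the push toward $b$ (granted by (ii)) into a push toward $e$ via the same Busemann--distance dictionary. Your argument instead takes $b_n=\gamma(n)\to e$, extracts a pointwise limit $\varphi$ of selections $\varphi_{n}$ pushing toward $b_n$, and asserts that $\varphi$ pushes toward $e$. The problem is that hypothesis (ii) gives no uniform lower bound on $\mathrm{gsh}_{b_n}(\varphi_n)$ as $n$ varies; these could tend to $0$. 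Your appeal to ``the $G$-structure $\Phi(gx)=g\Phi(x)$'' does not rescue this: translating back to the basis $X$ converts $v_e(\varphi(gx))-v_e(gx)$ into $v_{g^{-1}e}(c)-v_{g^{-1}e}(x)$ for some $c\in\Phi(x)$, so while $x$ ranges over the finite set $X$, the boundary point $g^{-1}e$ ranges over the (typically infinite) orbit $G e$, and nothing bounds the resulting shifts away from zero. To close the argument you need either a uniform positive lower bound on the $\mathrm{gsh}_{b_n}(\varphi_n)$, or the direct reversal of Lemma~\ref{neighborhood2} that the paper uses.
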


\begin{proof} We first prove (i) $\Rightarrow$ (ii). We are to show that there is a selection pushing $F$ towards a given $b\in M$. We use the canonical control map, so for $y\in Y(=GX)$ the set $h(y)$ is a singleton in $M$. Define $f : \partial M\to {\mathbb R}$ by 
\begin{equation*}
f(e) = \max\{v_\gamma(c)-v_\gamma(x)\mid x\in X, c\in \Phi(x)\}
\end{equation*}
where $\gamma$ is any geodesic ray defining $e$.  
By (i), $f(e) > 0$.  Since $f$ is continuous and $\partial M$ is compact there exists $\delta >
0$ such that $f(\partial
M) > \delta$.  
We can write $$f(e) = \max\{v_\gamma(\Phi(x)) - v_\gamma(x)\mid x\in X\}$$
\noindent For $y=gx$ we have 

\begin{equation*} \max\{v_\gamma(\Phi(gx)) -
v_\gamma(gx)\} = \max\{v_{g^{-1}\gamma}(\Phi(x)) - v_{g^{-1}\gamma}(x)\}
> \delta   
\end{equation*} 

Let $\mu$ come from the definition of ``almost geodesically complete". For each $y = gx$ let $\gamma_y$ be a geodesic ray starting at $h(y)$ and passing within $\mu $ of $b$. For simplicity we first assume that $b$ lies on that ray; a slight adjustment, given below, deals with the general case. Write $e(y):=\gamma (\infty)$. Choose $\psi(y) \in \Phi(y)$ so that
$v_{\gamma_y}(\psi(y)) - v_{\gamma_y}(y) \geq \delta $.  This defines
a selection $\psi : F\to F$ from $\Phi$ with sh$_{\psi,e(y)}(y)\geq
\delta $.  We claim $\psi$ pushes $F$ towards $b$.  To see this,
apply Lemma \ref{neighborhood2} with $r=||\Phi ||$, $\epsilon  =
\frac {\delta }{2}$, $p=h(y)$ and $q$ a point in the set $h(\psi (y))$. Then when $d(p,b)\geq R$ we have $d(p,b)-d(q,b)\geq \frac{\delta}{2}$. Since this holds for all $q\in h(\psi (y))$ the claim is proved.

In general, this push is not towards $b$ but towards a point $\bar b$ in the $\mu$-ball about $b$. We then have 
$$d(p,b)-d(q,b)-2\mu \leq d(p,{\bar b})-d(q,{\bar b})$$
If $p$ (and hence $q$) are far enough from $b$ and if $k\delta \geq 2\mu$ then by Corollary \ref{C:7.3} $\psi ^{k}$ has almost guaranteed shift $\frac{\delta}{2}$ towards $b$.

(ii) $\Rightarrow$ (i): This is immediate because the property ``almost geodesically complete" is uniform, so the ball of radius $\mu $ can be located inside any horoball. 
\end{proof}

\subsection{Completion of proof of Theorem \ref{uniform}}\label{completion}

We assume (i) and we know that this implies cocompactness.  Hence, by the
theorem of \cite{GO07} mentioned above, it follows that $M$ is almost
geodesically complete. So, for any $b\in M$, Theorem \ref{T:6.4} and Proposition \ref{P:7.6}
give us a finite $G$-volley $\Phi $ having a selection $\varphi $, inducing $\text{id}_A$, with
${\text{\rm gsh}_b(\varphi)}>0$.  Let $(\alpha , R)$ define a guaranteed
shift for $\varphi$, where $\alpha >0$.  For any $a\in A$ there exists 
$c\in F$ mapped by $\epsilon $ to $a$ such that 
$$D_{b}(\varphi (c))\leq {\text{\rm max }}(R+||\varphi ||,
D_{b}(c)-\alpha).$$
Corollary \ref{C:7.3} then implies that by iterating $\varphi $ we can move $c$ over $M$ to a
new $c'$ such that $\epsilon (c')=a$ and $h(c')$ lies over the ball centered at $b$ with radius
$R+||\varphi ||$, a number independent of $a$. This completes the proof of Theorem \ref{uniform}.

\subsection{Finite generation over a smaller ring}

For $b\in M$ we write $G_b$ for the subgroup of $G$ fixing $b$. Note
that when $G$-orbits are discrete the group $G_{b'}$ is commensurable
with $G_b$, for any $b'\in M$.

\begin{cor}\label{smaller} Let $b\in M$. Assume that the $G$-orbits are
discrete subsets of $M$. Then $\Sigma(M;A)=\partial M$ if and only if
the $G$-action on $M$ is cocompact and $A$ is finitely generated as a $G_b$-module.
\end{cor}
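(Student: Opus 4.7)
The plan is to reduce to Theorem~\ref{uniform}, which says that $\Sigma(M;A)=\partial M$ is equivalent to the conjunction of ``$G$ acts cocompactly on $M$'' and ``$A$ has bounded support over $M$''. Since cocompactness appears on both sides of the stated equivalence, only the algebraic half needs work: under the standing hypothesis that $G$-orbits in $M$ are discrete, I aim to show that $A$ has bounded support over $M$ if and only if $A$ is finitely generated as a $\Z G_b$-module.

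The direction ``finitely generated over $G_b$ $\Rightarrow$ bounded support'' is essentially immediate from the definitions. I would pick $\Z G_b$-generators $a_1,\ldots,a_n$ of $A$, form the free presentation $\epsilon : F=(\Z G)^n \twoheadrightarrow A$ with $\epsilon(x_i)=a_i$, and equip $F$ with the canonical control map $h(x_i)=\{b\}$. For any $a\in A$ one may write $a=\sum_j h_j a_{k_j}$ with $h_j\in G_b$; then $c:=\sum_j h_j x_{k_j}\in F$ satisfies $\epsilon(c)=a$ and $h(c)\subseteq\{h_j b : j\}=\{b\}$ since $G_b$ fixes $b$. Hence $A$ is supported over the bounded singleton $\{b\}$.

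For the converse, assume $A$ has bounded support over some bounded $B\subseteq M$. Because $G_b$ acts by isometries fixing $b$, it preserves every closed ball around $b$; so after enlarging $B$ to a sufficiently large closed ball centered at $b$ I may assume $B$ is $G_b$-invariant. Set $G_B:=\{g\in G \mid gb\in B\}$. Discreteness of $G$-orbits and boundedness of $B$ force $Gb\cap B$ to be finite, and the fibres of the map $g\mapsto gb$ restricted to $G_B$ are exactly right $G_b$-cosets; hence $G_B$ decomposes into finitely many right cosets $g_iG_b$, and therefore (by a size count) into the same finite number $k$ of left cosets $G_bg'_j$. The $G_b$-invariance of $B$ makes $G_B$ closed under left multiplication by $G_b$, so the $\Z$-submodule $F_B\subseteq F$ spanned by $\{gx : g\in G_B,\ x\in X\}$ is in fact a $\Z G_b$-submodule, and a direct check using that $Y=GX$ is a $\Z$-basis of $F$ shows $F_B$ is free over $\Z G_b$ on the finite set $\{g'_j x : 1\le j\le k,\ x\in X\}$. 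The bounded support hypothesis then gives $\epsilon(F_B)=A$, so $A$ is the image under a $G_b$-equivariant map of a finitely generated $\Z G_b$-module, and is therefore finitely generated over $G_b$.

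The one mildly delicate point is verifying that $F_B$ is free over $\Z G_b$ of the claimed rank; this is precisely where the discreteness of orbits is used, via the finiteness of $Gb\cap B$. With that in hand, the corollary drops out of Theorem~\ref{uniform} with no further work.
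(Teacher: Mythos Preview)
Your reduction to Theorem~\ref{uniform} and the ``finitely generated over $G_b$ $\Rightarrow$ bounded support'' direction are correct and cleanly done. For the converse you take a more hands-on route than the paper: the paper filters $F$ by the submodules $h^{-1}(fB_m(b))$, asserts these are finitely generated $G_b$-modules, and then appeals to Brown's criterion \cite[Theorem~2.2]{Bn87}. Your argument is more elementary in that it avoids that external reference and works directly with a single $F_B$.

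There is, however, a genuine gap in the step you flag as ``by a size count''. It is \emph{not} true in general that a subset of $G$ which is simultaneously a finite union of cosets $g_iG_b$ and a union of cosets $G_b g'$ must be a finite union of the latter of the same cardinality: in $BS(1,2)=\langle a,t\mid tat^{-1}=a^2\rangle$ with $H=\langle a\rangle$, the double coset $HtH$ consists of two cosets $tH,\,atH$ but only one coset $Ht$. A naive cardinality count fails once $G_b$ is infinite.

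The fix is short and uses the geometry you already set up. Since $g^{-1}$ is an isometry, $d(gb,b)=d(b,g^{-1}b)$, so $g\in G_B\Leftrightarrow g^{-1}\in G_B$; that is, $G_B=G_B^{-1}$. Hence from $G_B=\bigsqcup_{i=1}^{k} g_iG_b$ you get $G_B=G_B^{-1}=\bigsqcup_{i=1}^{k} G_b g_i^{-1}$, exhibiting $G_B$ as $k$ cosets of the other kind and making $F_B$ free over $\Z G_b$ on $\{g_i^{-1}x: 1\le i\le k,\ x\in X\}$. With this correction your argument goes through and is in fact more self-contained than the paper's.

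(A minor stylistic point: your usage of ``left'' and ``right'' coset is reversed from the usual convention, though the mathematics is unaffected.)
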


\begin{proof} By Theorem \ref{uniform}, $\Sigma(M;A)=\partial M$ if
and only if the $G$-action is cocompact and $A$ has bounded support
over $M$. Filter $ F$ by $h^{-1}(fB_{m}(b))$, where $m\geq 1$ 
and the notation means the largest ${\mathbb Z}$-subcomplex mapped by $h$
into $fB_{m}(b)$.  Because the orbits are discrete these abelian subgroups
provide a filtration of $F$ by finitely generated $G_b$-modules. By an
obvious adaptation of Theorem 2.2 of \cite{Bn87} the existence of this
filtration is equivalent $A$ being  finitely generated as a $G_b$-module.
\end{proof}

Let $\rho :G\to \text{Isom}(M)$ denote the $G$-action on $M$. A variant of
Corollary \ref{smaller} is:

\begin{cor}\label{kernel} Assume that the group $\rho (G)$ acts properly discontinuously
and cocompactly (aka ``geometrically") on $M$. Then $\Sigma(M;A)=\partial
M$ if and only if $A$ is finitely generated as a ${\mathbb Z}[{\rm ker}\rho ]$-{\rm module}.  
\end{cor}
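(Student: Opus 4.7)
The plan is to deduce Corollary \ref{kernel} directly from Corollary \ref{smaller}, by verifying that its hypotheses hold under the present stronger assumption and by comparing finite generation over $G_b$ with finite generation over $\ker\rho$.

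First I would check that the hypotheses of Corollary \ref{smaller} are satisfied. Since $\rho(G)$ acts properly discontinuously, the $\rho(G)$-orbits, and hence the $G$-orbits, are discrete subsets of $M$. And since $\rho(G)$ acts cocompactly, so does $G$. Thus Corollary \ref{smaller} applies and reduces the statement to be proved to the assertion that, for any fixed $b\in M$,
\[
A \text{ is f.g.\ as a } G_b\text{-module} \iff A \text{ is f.g.\ as a } \ker(\rho)\text{-module}.
\]

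Next I would exhibit the precise algebraic relationship between $G_b$ and $\ker(\rho)$. Clearly $\ker(\rho)\leq G_b$ because any element acting trivially on $M$ fixes $b$. Moreover, the quotient $G_b/\ker(\rho)$ injects into $\rho(G)_b$, the stabilizer of $b$ in $\rho(G)$; proper discontinuity of the $\rho(G)$-action on the locally compact Hausdorff space $M$ forces $\rho(G)_b$ to be finite. Hence $\ker(\rho)$ is a subgroup of finite index in $G_b$.

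The final step is the standard observation that finite generation as a module is preserved in both directions when passing to a finite-index subgroup: the forward implication is trivial (any $\ker(\rho)$-generating set is automatically a $G_b$-generating set), and for the converse one multiplies a finite $G_b$-generating set by a (finite) transversal for $\ker(\rho)$ in $G_b$ to obtain a finite $\ker(\rho)$-generating set. Since no serious obstacle arises, the only point requiring care is the verification that proper discontinuity forces $\rho(G)_b$ to be finite, which is immediate on a proper $CAT(0)$ space, so the argument is essentially a bookkeeping chain through the already-established Corollary \ref{smaller}.
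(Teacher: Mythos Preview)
Your proof is correct and follows essentially the same route as the paper's own proof. The paper's argument is the single sentence ``The hypothesis implies that $N$ and $G_b$ are commensurable'' (with $N=\ker\rho$), which is precisely the content of your second paragraph; you have simply spelled out the details of why $\ker\rho$ has finite index in $G_b$ and why this, together with Corollary~\ref{smaller}, yields the equivalence.
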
 
\begin{proof} The hypothesis implies that $N$ and $G_b$ are commensurable.
\end{proof}

\begin{Rem} By Corollary \ref{open2} the condition that point stabilizers
or kernels are finitely generated (under the hypotheses of Corollary
\ref{smaller} or Corollary \ref{kernel}) is an open condition with
respect to the action $\rho $.  
\end{Rem} 


\section{Hyperbolic considerations}

\subsection{The case of Gromov-hyperbolic $CAT(0)$ spaces}\label{hyperbolic2}

As in Section \ref{hyperbolic}, we assume $G$ acts on the Gromov-hyperbolic proper $CAT(0)$ space $M$ by isometries, that some (hence every) $G$-orbit is unbounded, and that the given finitely generated $G$-module $A$ is non-zero. 

Some properties of such a space $M$ are 
 
{\it H1} Any two points of $\partial M$ are the endpoints of a line in $M$; 
 
{\it H2} For every point $e\in \partial M$ there is a basis of open cone-neighborhoods $\{N_i\}$ of $e$ in $M\cup \partial M$ such that $N_{i}\cap M$ contains a horoball at $e$.
 
{\it H3} Each point of $\partial M$ has a basic system of neighborhoods $\{N_{i}\}$ in $M\cup \partial M$ such that the geodesic joining any two points of $M-N_{i}$ lies in $M-N_{i+1}$.

Our goal is to understand $\Sigmacirc (M;A)$ in this situation (Theorem \ref{alternatives2}).

The notation $\widehat M$, as well as the terms ``interval in $\widehat M$", and ``closed convex hull"  were defined in Section \ref{hyperbolic}. As before, we write $\Lambda (G)$
for the cone topology  limit set of $G$ in $\del M$; i.e. $\Lambda (G)=\Lambda (M;{\Z})$.

A version of {\it H3} for $\widehat M$ reads:

{\it $H3^{\prime }$} Each point of $\partial M$ has a basic system
of neighborhoods $\{N_{i}\}$ in ${\widehat M}$ such that the interval
joining any two points of ${\widehat M}-N_{i}$ lies in ${\widehat
M}-N_{i+1}$. (Simply truncate an interval which has one or both end
points in $\partial M$ and apply {\it H3}.)

In our situation we have:

(1)\;\;\;\;\;\;\;\;\;\;\;\;If $S$ is closed in $\widehat M$ so is $S[2]$.

To see (1), consider a limit point $p$ of $S[2]$. If
$p\in M$ then there is a sequence of intervals containing points
$p_i$ converging to $p$. Since $S$ is closed, 3.10 of \cite{CS98}
implies\footnote{This is needed because there can be more than one
line joining two points of $\partial M$; a Zorn's Lemma argument picks out the desired sequence.} $p\in S[2]$. Next, let
$p\in \partial M$ and suppose $p$ does not lie in $S[2]$. Then $p\notin S\cap
\partial M$. Pick a neighborhood $U$ of $p$ disjoint from $S$. By {\it $H3^{\prime }$} there is a smaller neighborhood $V$ of $p$ such that no interval with end points in $S$ meets $V$. This contradicts the fact that $p$ is a limit point of $S[2]$.

Our next observation is: 

$$(2)\;\;\;\; \;\ \text {If }K\subseteq \Lambda (G) \text { is closed and } G\text {-invariant, then } K \text{ is empty, is a singleton, or }K= \Lambda (G).$$

\noindent To see (2), assume $|K|\geq 2$, and choose $b$ lying in
the (non-empty) set $ K[2]$. Then $Gb \subseteq K[2]$.
Thus, by (1), 
$$\Lambda (G)=\partial M\cap \text{cl}_{\widehat M}Gb\subseteq \partial M\cap K[2]=K\subseteq \Lambda (G).$$

\begin{lemma}\label{character}Let $Ge=\{e\}$. For $g\in G$, the number
$\chi_{e}(g):=\beta _{e}(gx)-\beta _{e}(x)$ is independent of $x\in M$;
$\chi_{e}:G\to {\R}$ is a homomorphism to the additive group of reals. The
element $g$ acts on $M$ as a hyperbolic isometry if and only if $\chi
_{e}(g)\neq 0$. The point $e$ is an end-point of an axis\footnote{All axes of a hyperbolic isometry are parallel, p.231 of \cite{BrHa99}, so they have the same two endpoints.} of any such
hyperbolic isometry.  
\end{lemma}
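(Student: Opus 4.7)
The plan is to treat the four claims in order, with the first two being essentially Busemann-function bookkeeping and the last two relying on standard dynamical facts in the Gromov-hyperbolic setting.

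For independence of $x$, the point is that since $ge=e$, the rays $\gamma$ and $g\gamma$ both define the same boundary point $e$, so their Busemann functions differ by a constant: $\beta_{g\gamma}=\beta_\gamma+c_g$ (Remark~\ref{gamma}). Because $g$ is an isometry we also have $\beta_{g\gamma}(gp)=\beta_\gamma(p)$, hence $\beta_\gamma(gp)-\beta_\gamma(p)=-c_g$, which is independent of $p$. This value is what we call $\chi_e(g)$, and the homomorphism property then falls out in one line:
\[
\chi_e(gh)=\bigl[\beta_e(ghx)-\beta_e(hx)\bigr]+\bigl[\beta_e(hx)-\beta_e(x)\bigr]=\chi_e(g)+\chi_e(h),
\]
where for the first bracket I am allowed to take $hx$ as the basepoint by the independence just established.

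For the equivalence ``$g$ is hyperbolic $\iff$ $\chi_e(g)\neq 0$'', I first note that Busemann functions are $1$-Lipschitz, so $|\chi_e(g)|=|\beta_e(gx)-\beta_e(x)|\leq d(gx,x)$ for every $x$, and taking the infimum gives $|\chi_e(g)|\leq |g|$, the translation length of $g$. Hence $\chi_e(g)\neq 0$ forces $|g|>0$, which rules out $g$ being elliptic. To rule out parabolic I invoke the Gromov-hyperbolicity of $M$: in a proper Gromov-hyperbolic $CAT(0)$ space every parabolic isometry has translation length zero. Therefore $g$ must be hyperbolic. Conversely, if $g$ is hyperbolic with an axis $\ell$ whose endpoints on $\partial M$ are $e_1$ and $e_2$, I use the standard north–south dynamics of $g$ in the Gromov-hyperbolic setting: the only boundary fixed points of $g$ are $e_1$ and $e_2$. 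Since $g$ fixes $e$ by hypothesis, we must have $e\in\{e_1,e_2\}$, which simultaneously settles the last assertion that $e$ is an endpoint of an axis. A direct computation on $\ell$ then yields $\chi_{e_i}(g)=\pm|g|\neq 0$, closing the equivalence.

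The main obstacle is precisely this last step: it rests on two classical but nontrivial facts from Gromov-hyperbolic geometry, namely that parabolic isometries have translation length zero and that hyperbolic isometries exhibit north–south dynamics on $\partial M$. These are exactly where the Gromov-hyperbolicity hypothesis on $M$ genuinely enters; both are documented in standard references such as Bridson--Haefliger, Part III.
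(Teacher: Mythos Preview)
Your argument is correct and follows the same line as the paper's (very terse) proof. The paper also hinges the endpoint claim on the fact that a hyperbolic isometry admits only its two axis endpoints as boundary limit/fixed points, phrasing this via horoballs (``a power of $g$ moves $b$ into any given horoball at $e$, which would be impossible if neither endpoint of an axis of $g$ were $e$'') rather than via north--south dynamics; it declares the implication $\chi_e(g)\neq 0\Rightarrow g$ hyperbolic to be ``straightforward'' and leaves the converse direction implicit, whereas you spell both out.
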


\begin{proof} It is straightforward to show that $\chi$ is a well-defined homomorphism and that
$\chi(g)\neq 0$ implies $g$ is hyperbolic. It follows
that when $b\in M$ and $\chi (g)\neq 0$ then a power of $g$ moves $b$
into any given horoball at $e$. This would be impossible if neither endpoint
of an axis of $g$ were $e$.  
\end{proof}

(3)\;\;\;\;\;\;If $e\in \partial M$ is fixed by $G$ then $\Lambda (G)=\{e\}$ if and only if no member of $G$ acts on $M$ as a hyperbolic isometry.   

To see (3), note first that when $g$ acts as a hyperbolic isometry then the two end points of its axis are limit points; hence $\Lambda (G)\neq \{e\}$. Conversely, if no member of $G$ acts as a hyperbolic isometry, then  $\chi _{e}$ (in Lemma \ref{character}) is identically $0$,
which implies that $G$ leaves every horoball at $e$ invariant. Any
$b\in M$ lies on a horosphere at $e$. The entire orbit $Gb$ also lies in that horosphere,
so, by II9.35(4) of \cite{BrHa99}, no other point of $\partial M$ can
be in $\Lambda (G)$. By the standing assumption, orbits are unbounded so $\Lambda
(G)\neq \emptyset$, hence $\Lambda (G)=\{e\}$.

A further observation follows from {\it H2}: 
$$(4)\;\; \text{ for any non-zero finitely generated }G\text{-module }A\text{, } \Sigma (M;A)\subseteq \Lambda (M;A).$$

We can now state the possibilities when $G$ fixes a point of $\partial M$. 

\begin{prop}\label{fix} If $G$ fixes $e\in \partial M$ either 
\begin{enumerate}[(a)]
\item $\Lambda (G)=\{e\}$, in which case $Gb$ lies on a horosphere at $e$, and $\Sigma (M;{\Z})$ is empty, or
\item $\Lambda (G)$ contains $e$ and at least one other point, in which
case $\Sigmacirc (M;{\Z})= \Sigma (M;{\Z})=\Lambda (G)$.
\end{enumerate}
\end{prop}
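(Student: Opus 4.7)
My plan is to split along the dichotomy in the statement and exploit the numbered observations (2), (3), and (4) above, together with Lemma \ref{character} and the closed $G$-invariant criterion for $\Sigmacirc$ provided by Theorems \ref{T:6.1} and \ref{T:6.4}.

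For part (a), if $\Lambda(G) = \{e\}$ then (3) forces no element of $G$ to act hyperbolically, so by Lemma \ref{character} the homomorphism $\chi_{e} : G \to \R$ vanishes identically. Hence $\beta_{e}(gb) = \beta_{e}(b)$ for every $g \in G$, which is exactly the statement that $Gb$ lies on a horosphere at $e$. Combining this with (4) gives $\Sigma(M;\Z) \subseteq \Lambda(M;\Z) = \Lambda(G) = \{e\}$; but an orbit confined to a single horosphere at $e$ cannot enter any strictly deeper horoball at $e$, so $e \notin \Sigma(M;\Z)$ and therefore $\Sigma(M;\Z) = \emptyset$.

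For part (b), (3) produces a hyperbolic $g \in G$, and Lemma \ref{character} together with replacing $g$ by $g^{-1}$ if necessary gives $\chi_{e}(g) > 0$. Let $e^{-}$ denote the second axis endpoint of $g$. Then $\beta_{e}(g^{n} b) = n\chi_{e}(g) + \beta_{e}(b) \to \infty$, so $e \in \Sigma(M;\Z)$; the same argument applied at $e^{-}$ (using that $g$ fixes $e^{-}$ and translates its axis with length $\chi_{e}(g)$ away from $e^{-}$, while $\beta_{e^{-}}$ is $1$-Lipschitz so off-axis basepoints cost only a bounded additive correction) gives $\beta_{e^{-}}(g^{-n}b) \to \infty$, hence $e^{-} \in \Sigma(M;\Z)$. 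By $G$-invariance of $\Sigma(M;\Z)$ we then have $\{e\} \cup Ge^{-} \subseteq \Sigma(M;\Z)$. Applying (2) to the closed $G$-invariant subset $\mathrm{cl}(Ge^{-}) \subseteq \Lambda(G)$ yields two sub-cases: either $\mathrm{cl}(Ge^{-}) = \{e^{-}\}$, so $e^{-}$ is a second $G$-fixed boundary point and a standard Gromov-hyperbolic analysis of groups stabilizing two boundary points gives $\Lambda(G) = \{e, e^{-}\} \subseteq \Sigma(M;\Z)$; or $\mathrm{cl}(Ge^{-}) = \Lambda(G)$, and we must establish $\Lambda(G) \subseteq \Sigma(M;\Z)$ directly.

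The second sub-case is the technical heart of the proof. Given $e' \in \Lambda(G)$, choose $g_{n} \in G$ with $g_{n} e^{-} \to e'$, and look at the orbit points $h_{n} b := g_{n} g^{-k_{n}} b$. Isometry invariance of Busemann differences yields
\[
\beta_{e'}(g_{n} g^{-k} b) = \beta_{e'}(g_{n} b) + \beta_{g_{n}^{-1} e'}(g^{-k} b) - \beta_{g_{n}^{-1} e'}(b),
\]
and when $g_{n} e^{-} = e'$ exactly (so $g_{n}^{-1} e' = e^{-}$) the right-hand side tends to $+\infty$ as $k \to \infty$ by the horospherical convergence at $e^{-}$ already established. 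When $g_{n} e^{-}$ only tends to $e'$, a diagonal choice $k = k_{n} \to \infty$ combined with a Gromov-hyperbolic comparison of the Busemann functions at the nearby boundary points $e'$ and $g_{n} e^{-}$ (evaluated along the geodesic ray from $g_{n} b$ to $g_{n} e^{-}$ on which $h_{n} b$ lies) upgrades the cone-topology convergence of $h_{n} b \to e'$ to horospherical convergence, so $e' \in \Sigma(M;\Z)$. With $\Lambda(G) \subseteq \Sigma(M;\Z)$ in hand, (4) forces $\Sigma(M;\Z) = \Lambda(G)$, and since $\Lambda(G)$ is closed and $G$-invariant, Theorem \ref{T:6.4} immediately promotes this to $\Sigmacirc(M;\Z) = \Lambda(G)$. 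The principal obstacle is the Gromov-hyperbolic Busemann-comparison estimate in this sub-case; everything else is $G$-equivariance, the axis geometry of $g$, and straightforward application of the numbered observations.
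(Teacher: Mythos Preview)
Your argument for part (a) matches the paper's exactly.

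For part (b) your overall structure---produce a hyperbolic element, look at its second axis endpoint $e^{-}$, and split via observation (2) on $\mathrm{cl}(Ge^{-})$---is the same as the paper's. The difference lies in how the hard sub-case $\mathrm{cl}(Ge^{-})=\Lambda(G)$ is handled, and here you are making life harder than necessary.

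The paper's simplification is to \emph{choose the base point $b$ on the axis $L_{h}$}. With that choice every conjugate axis $gL_{h}$ (running from the fixed point $e$ to $ge'$) actually carries orbit points $gh^{i}b$, equally spaced at distance $d(b,hb)$. Now if $e''\in\mathrm{cl}(Ge')$ with $e''\neq e$, the axes $g_{n}L_{h}$ with $g_{n}e'\to e''$ converge (locally uniformly, by $\delta$-thinness of ideal triangles) to the bi-infinite line from $e$ to $e''$. Any horoball at $e''$ contains a point $p$ of that line; for large $n$ the axis $g_{n}L_{h}$ passes within a fixed constant of $p$, and on that axis there is an orbit point within $\tfrac{1}{2}d(b,hb)$ of the closest approach. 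So the orbit enters every horoball at $e''$. This replaces your Busemann-comparison estimate by a direct geometric picture.

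Your approach instead keeps an arbitrary $b$ and tries to compare $\beta_{e'}$ with $\beta_{g_{n}e^{-}}$ along unbounded sequences. That comparison is not a standard off-the-shelf lemma: Busemann functions at cone-topology-nearby boundary points need not be uniformly close on unbounded sets even in hyperbolic spaces, so the ``diagonal choice'' step requires real work (essentially one must prove the same convergence-of-lines statement the paper uses, then translate it back into Busemann language). You flag this correctly as the principal obstacle, but it is an obstacle you created by not placing $b$ on the axis. Once $b\in L_{h}$, the first sub-case also becomes immediate: if $G$ fixes both $e$ and $e'$ then the orbit of $b$ stays within the (narrow, by hyperbolicity) parallel class of $L_{h}$, so $\Lambda(G)=\{e,e'\}$ with no further analysis needed.
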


\begin{Rem}\label{singleton} It follows from Proposition \ref{fix} that $\Sigma (M;{\Z})$ can never be a singleton.	
\end{Rem}

\begin{proof}(of the Proposition) Part {\it (a)} follows from the proof of (3).  For {\it
(b)}, we know by (3) that $G$ contains some $h$ which acts as a hyperbolic
isometry. By Lemma \ref{character}, an axis $L_h$ of $h$ connects $e$ with
another boundary point $e'$. We choose $b\in L_h$. Each point $ge'$ in the
$G$-orbit of $e'$ is the endpoint of an axis of an element $h'=ghg^{-1}$,
and this axis carries a sequence of orbit points $gh^{i}b, i\in {\Z}$,
with constant distance $d(b,hb)$ between neighboring members of the
sequence. Now, if $e''$ is a point in the closure of $Ge'$, and $e''\neq
e$, then the line from $e$ to $e''$ is a limit of such axes. This shows
that every horoball at $e''$ contains points of the orbit $Gb$. The
same holds for $e$, since it is an endpoint of $L_h$. Hence cl$(Ge')$
is contained in $\Sigma (M;{\Z})$ and therefore in $\Sigmacirc (M;{\Z})$
by Theorem \ref{closure}.

If $Ge'\neq \{e'\}$ then by (2) we know that cl$(Ge') =\Lambda (G)$, and so
the assertion of {\it (b)} follows from (4) and Theorem \ref{closure}. If $Ge'=e'$ then 
the whole orbit of $b$ lies in $L_h$, and 
$e$ and $e'$ are the only points of $\Lambda (G)$.
\end{proof}

Proposition \ref{fix} does not hold when the trivial module $\Z$ is replaced by a general module $A$. We will give an example in Section \ref{H2}.

\begin{thm}\label{alternatives2}If the proper $CAT(0)$ space $M$ is Gromov-hyperbolic and if $A$ is a (non-zero) finitely generated $G$-module then $\Sigmacirc (M;A)$ is either empty, or is a singleton set, or coincides with the limit set $\Lambda (M;A)$.  Moreover, the following are equivalent:
\begin{enumerate}[(i)]
\item $\Sigmacirc (M;A)$ contains at least $2$ points;
\item $\Sigmacirc (M;A)=\Sigma (M;A)=\Lambda (M;A)=\Lambda (G)\neq \emptyset $; 
\item The weak convex hull of $\Lambda (G)$ is cocompact, and $A$ has bounded support over $\Lambda [2]$.
\end{enumerate}
\end{thm}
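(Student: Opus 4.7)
The plan is to first settle the trichotomy and the equivalence $(i)\Leftrightarrow(ii)$, which follow essentially formally from observations~(2) and (4) together with Theorems~\ref{closure3} and~\ref{T:6.1}, and then attack the geometric equivalence $(ii)\Leftrightarrow(iii)$.

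For the trichotomy and $(i)\Leftrightarrow(ii)$: note that $\Sigmacirc(M;A) \subseteq \Sigma(M;A) \subseteq \Lambda(M;A) \subseteq \Lambda(G)$ by observation~(4) and Remark~\ref{limitset}, and $\Sigmacirc(M;A)$ contains the closure of each of its orbits by Theorem~\ref{closure3}. If $\Sigmacirc(M;A)$ contains distinct points $e_1, e_2$, the set $K := \text{cl}(Ge_1) \cup \text{cl}(Ge_2)$ is closed, $G$-invariant, sits inside $\Lambda(G)$, and has $\geq 2$ points, so by observation~(2), $K = \Lambda(G)$. Hence $\Lambda(G) \subseteq \Sigma(M;A)$, all inclusions in the chain collapse to equalities, and for any $e \in \Lambda(G)$, $\text{cl}(Ge) \subseteq \Lambda(G) = \Sigma(M;A)$ combined with Theorem~\ref{T:6.1} forces $e \in \Sigmacirc(M;A)$. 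Thus $\Sigmacirc(M;A) = \Lambda(G)$, establishing the $\geq 2$ case of the trichotomy and $(i)\Rightarrow(ii)$. A singleton $\Sigmacirc(M;A) = \{e\}$ forces $Ge = \{e\}$, so $e$ is $G$-fixed. For $(ii)\Rightarrow(i)$, I rule out $|\Lambda(G)|=1$: there $G$ fixes the unique limit point, and Proposition~\ref{fix}(a) gives $\Sigma(M;\Z)=\emptyset$, so $\Sigma(M;A)=\emptyset$ by Remark~\ref{elementary}(4), contradicting (ii).

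For $(ii)\Rightarrow(iii)$: $\Sigma(M;A) = \Lambda(G)$ combined with Remark~\ref{elementary}(4) gives $\Sigma(M;\Z) = \Lambda(G)$, so every limit point is horospherical. For cocompactness of $W = \Lambda(G)[2]\cap M$, a point $p\in W$ lies on an ideal geodesic $\ell$ with endpoints $e_1, e_2\in\Lambda(G)$, and a Gromov-product/shadow argument in the Gromov-hyperbolic space locates orbit points $gb$ within bounded distance of $p$ using horospherical approximation at both $e_1$ and $e_2$. For bounded support of $A$ over $\Lambda[2]$, Theorem~\ref{T:6.4} supplies a finite $G$-volley $\Phi$ inducing $\text{id}_A$ with uniform guaranteed shift $\nu>0$ towards each $e\in\Lambda(G)$; given any starting representative $c_0$ of $a\in A$, iterate the $e_1$-selection until $v_{e_1}$ exceeds a target $T$, then iterate the $e_2$-selection a controlled number of times (each step increases $v_{e_2}$ by $\geq\nu$ and decreases $v_{e_1}$ by at most $\Vert\Phi\Vert$), terminating with $v_{e_1},v_{e_2}\geq 0$ but $v_{e_1}+v_{e_2} \leq O(\delta)$ by the Gromov-hyperbolic horoball-intersection estimate, which forces the support into a uniformly bounded subset of $W$. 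Conversely, for $(iii)\Rightarrow(ii)$: $|\Lambda(G)|\geq 2$ is automatic (else $W=\emptyset$ and a non-zero $A$ cannot have support in $W$), so for each $e\in\Lambda(G)$ a bi-infinite geodesic $(e,e')\subseteq W$ combined with cocompactness of $W$ places orbit points of $Gb$ in every horoball at $e$; using this together with bounded support of $A$ over some $B\subseteq W$, translate $B$ by a suitable $g\in G$ into $HB_e$, and for each basis element $x$ set $\varphi(x) := g\cdot c_{g^{-1}\epsilon(x)}\in F$, where $c_y\in F$ is the bounded representative of $y\in A$; this satisfies $\epsilon(\varphi(x))=\epsilon(x)$ and $h(\varphi(x))\subseteq gB\subseteq HB_e$, and a $G$-finitary extension to $F$, controlled by finitely many orbit classes under cocompactness, yields the required push inducing $\text{id}_A$.

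The principal obstacle is the controlled alternating push of $(ii)\Rightarrow(iii)$: the pushes towards $e_1$ and $e_2$ interfere, and one must exploit both the uniform shift $\nu$ from Theorem~\ref{T:6.4} and the Gromov-hyperbolic constraint $t_1+t_2 \leq O(\delta)$ on joint horoball depths to terminate the iteration inside a bounded region. The $(iii)\Rightarrow(ii)$ direction is delicate for a different reason: one must arrange the $G$-finitary extension so that the individual-basis pushes glue uniformly, which conceptually parallels Proposition~\ref{P:7.6} performed relative to $W$ rather than all of $M$.
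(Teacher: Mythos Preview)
Your treatment of the trichotomy and of $(i)\Leftrightarrow(ii)$ is essentially the paper's argument (with the pleasant simplification of taking $K=\text{cl}(Ge_1)\cup\text{cl}(Ge_2)$ to avoid a case split), and your $(iii)\Rightarrow(ii)$ is morally right though more elaborate than necessary: the paper simply translates the fixed bounded support into an arbitrary horoball at $e\in\Lambda(G)$ using cocompactness of $\Lambda(G)[2]$, obtaining $\Lambda(G)\subseteq\Sigma(M;A)$ directly, and then invokes Theorem~\ref{T:6.1} on this closed $G$-invariant set to get $\Sigmacirc$.

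The real problem is $(ii)\Rightarrow(iii)$, where both of your sketched arguments have gaps. For cocompactness of $W=\Lambda(G)[2]\cap M$, horospherical approximation at $e_1$ and at $e_2$ separately places orbit points in each horoball, but gives no orbit point in the \emph{intersection} $HB_{e_1}\cap HB_{e_2}$, which is what a ``Gromov-product/shadow'' argument would need to put some $gb$ near a given $p\in[e_1,e_2]$. The paper proceeds differently, via the contradiction argument of Lemma~\ref{cocompact}: if $W$ were not cocompact one produces arbitrarily large empty balls, recenters so that $b$ sits on their boundary, and extracts a limit point $e\in\partial M\cap\text{cl}(W)\subseteq\Lambda(G)$ with a horoball missing $Gb$, contradicting $e\in\Sigma(M;\Z)$.

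For bounded support, your global alternating push does not terminate as claimed. If $h(c_0)$ contains basis elements at very different depths along $[e_1,e_2]$, then after Phase~1 (pushing until $v_{e_1}\geq T$) the point that was already deep towards $e_1$ has $\beta_{e_1}$ of order $T$ plus its initial head start, hence $v_{e_2}$ can be as negative as $-(T+\text{const}(c_0))$; Phase~2 then needs roughly $(T+\text{const}(c_0))/\nu$ steps, during which $v_{e_1}$ drops by that many multiples of $\Vert\Phi\Vert>\nu$, overshooting zero. The ratio $\Vert\Phi\Vert/\nu>1$ makes the interference fatal, and no choice of $T$ rescues it uniformly in $c_0$. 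The paper's remedy is to make the choice \emph{locally} rather than globally: Lemma~\ref{extendible} guarantees that for each basis element $y$ one of the rays $[h(y),e_1)$, $[h(y),e_2)$ passes within a fixed $\mu$ of a chosen $b\in\Lambda(G)[2]$, and selecting $\varphi_1(y)$ or $\varphi_2(y)$ accordingly produces a single $G$-finitary selection from $\Phi$ which is a push towards the point $b$ in the sense of Section~\ref{S:7.3}; iterating it (exactly as in the proof of Proposition~\ref{P:7.6}, with almost geodesic completeness replaced by Lemma~\ref{extendible}) drives every support into a fixed ball.
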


\begin{proof} $(i)\Rightarrow (ii)$:  Assume $|\Sigmacirc (M;A)|\geq 2$. If there is
$e\in \Sigmacirc (M;A)$ such that  $|Ge|\geq 2$, then (2) implies
cl$(Ge)=\Lambda (G)$.  Thus, by Theorem \ref{T:6.1}, (4), and Remark \ref{limitset} the conclusion holds. On the other hand, if $G$ fixes every member of
$\Sigmacirc (M;A)$, and $K$ is a two-element subset, then we can choose $b\in K[2]$, implying $\Lambda (G)=K$. 
 
$(ii)\Rightarrow (i)$: $\Sigma (M;A)\subseteq
\Sigma (M;{\Z})\subseteq \Lambda (G)$, so if $|\Sigmacirc (M;A)| =1$
then the hypothesis would imply $|\Sigmacirc (M;{\Z})|=1$ contradicting
Proposition \ref{fix}; and if $\Sigmacirc (M;A)$ were empty, then $\Lambda
(G)$ would be empty, contrary to {\it (ii)}.  
  
$(ii)\Rightarrow (iii)$: The proof uses two lemmas stated
below. That $\Lambda (G)[2]$ is cocompact follows from Lemma \ref{cocompact}. That $A$ has bounded support over $\Lambda (G)[2]$
follows from the $(i)\Rightarrow (ii)$ part of the proof\footnote{Since only two boundary points are involved, this is much simpler than Proposition \ref{P:7.6}.} of Proposition \ref{P:7.6}, where the hypothesis of almost geodesic completeness is replaced by the conclusion of Lemma
\ref{extendible}.
  
$(iii)\Rightarrow (ii)$: Since $A$ is non-zero and has bounded support over $\Lambda (G)[2]$, $\Lambda (G)$ must have more than one point. Hence for each $e\in \Lambda (G)$ there is a ray (in fact a line) in $\Lambda (G)[2]$ ending at $e$. Pick $b\in \Lambda (G)[2]$. There
is a ball centered at $b$ whose intersection with $\Lambda (G)[2]$ is a compact fundamental domain. Given any horoball at $e$, its intersection with $\Lambda (G)[2]$ can be covered with translates of this ball, hence $A$ can be represented over any horoball at $e$. Thus $\Sigma (M;A)=\Lambda (G)$. Since $\Lambda (G)$ is compact,  $\Sigmacirc (M;A)=\Sigma (M;A)$.
\end{proof}

Here are the two lemmas used in the proof of $(ii)\Rightarrow (iii)$:

\begin{lemma}\label{cocompact} Let $S$ be a closed $G$-invariant subset of $\widehat M$. If 
$S\cap \partial M\subseteq \Sigma(M;{\Z})$ then $S\cap M$ is cocompact.
\end{lemma}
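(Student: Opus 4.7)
The plan is to argue by contradiction. Suppose $S\cap M$ is not cocompact; then for a fixed basepoint $b\in M$ we can find $x_n\in S\cap M$ with $d(x_n, Gb)\to\infty$. Using the $G$-invariance of $S$, I would replace $x_n$ by a $G$-translate $\tilde x_n\in S$ that nearly minimizes distance to $b$ in its orbit, so $R_n := d(\tilde x_n, b)\to\infty$ and $d(\tilde x_n, gb)\ge R_n - 1/n$ for all $g\in G$. By properness of $M$ and closedness of $S$ in $\widehat M$, pass to a subsequence so that $\tilde x_n\to e\in S\cap\partial M$, and the hypothesis delivers $e\in\Sigma(M;\Z)$.

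The next step combines $H2$ and $e\in\Sigma(M;\Z)$: every cone-neighborhood of $e$ contains a horoball at $e$, and every such horoball meets $Gb$, so there is a sequence of orbit points $h_k b\to e$ in the cone topology with $d(b, h_k b)\to\infty$. Let $\gamma$ denote the geodesic ray from $b$ to $e$. Cone convergence $\tilde x_n\to e$ together with the $CAT(0)$ comparison estimate $\beta_e(x)\le -d(b,x)\cos\angle_b(x,e)$ forces $\beta_e(\tilde x_n) = -R_n + o(R_n)$, so the Gromov product $(\tilde x_n\mid e)_b = (R_n - \beta_e(\tilde x_n))/2 = R_n - o(R_n)$; combined with $\delta$-hyperbolicity, this places $\tilde x_n$ within $o(R_n)$ of the point $\gamma(R_n)$.

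To complete the contradiction I would produce an orbit point $hb$ that lies close to $\gamma(R_n)$ and at distance close to $R_n$ from $b$; then $d(\tilde x_n, hb)$ is bounded, hence less than $R_n - 1/n$ for $n$ large, contradicting the choice of $\tilde x_n$. The main obstacle is matching the radial distance: the orbit points handed to us by $H2$ and $\Sigma(M;\Z)$ lie in arbitrarily small cone-neighborhoods of $e$, but a priori their distances from $b$ may skip over the value $R_n$. I would address this by choosing the $H2$-neighborhood $N_i$ and the depth $t_i$ of the horoball inside it in tandem with $R_n$, so that the orbit point guaranteed by $\Sigma(M;\Z)$ lies in a thin radial shell near $\gamma(R_n)$. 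An alternative route is to observe that $G\tilde x_n\subseteq S$ (by $G$-invariance) has cone-topology closure accumulating on $\Lambda(G)$, so $\Lambda(G)\subseteq S\cap\partial M\subseteq\Sigma(M;\Z)$; in the Gromov-hyperbolic $CAT(0)$ setting this upgrades $e$ from a horospherical to a conical limit point of $Gb$, which in turn yields orbit points that track $\gamma$ with bounded radial gaps, providing exactly the orbit point near $\gamma(R_n)$ required for the contradiction.
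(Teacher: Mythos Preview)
Your setup (steps 1--3) matches the paper's: translate points of $S\cap M$ that are far from $Gb$ so that $b$ is a nearest orbit point, pass to a limit $e\in S\cap\partial M\subseteq\Sigma(M;\Z)$. The divergence begins at step~4, and the obstacle you name --- matching the radial distance of an orbit point to $R_n$ --- is real and is not resolved by either of your routes.

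Route~A does not work: property $H2$ plus $e\in\Sigma(M;\Z)$ only tells you that every horoball at $e$ meets $Gb$, with no control on \emph{where} in the horoball the orbit point lies. There is no mechanism to force that orbit point into a thin shell near $\gamma(R_n)$. Route~B is circular in this context: the claim that $\Lambda(G)\subseteq\Sigma(M;\Z)$ upgrades $e$ to a \emph{conical} limit point is essentially the content of Theorem~\ref{alternatives2}(iii) (cocompactness of the weak convex hull), whose proof invokes the present lemma. No earlier result in the paper supplies this upgrade. (There is also a sign slip: with the paper's Busemann convention $\beta_e(\tilde x_n)\approx +R_n$, not $-R_n$; and even granting $d(\tilde x_n,\gamma(R_n))=o(R_n)$, you would need $O(1)$ to match an orbit point tracking $\gamma$.)

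The paper avoids the radial-matching problem entirely by arguing in the opposite direction: rather than producing an orbit point close to $\tilde x_n$, it shows directly that some fixed horoball at $e$ misses $Gb$, contradicting $e\in\Sigma(M;\Z)$. Concretely, choose $r_n$ to be the \emph{critical radius} so that $b\in\partial B_{r_n}(x_n)$ while $\operatorname{int}B_{r_n}(x_n)\cap Gb=\emptyset$. On the geodesic $[b,x_n]$ pick $z_n$ within $\epsilon$ of the ray $\gamma=[b,e)$ with $s_n:=d(b,z_n)\to\infty$; then $B_{s_n}(z_n)\subseteq B_{r_n}(x_n)$, so its interior also misses $Gb$. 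Approximating $z_n$ by $p_n\in\gamma$ gives balls $B_{d(p_n,b)-2\epsilon}(p_n)$ along $\gamma$, all missing $Gb$; these exhaust the horoball $HB_{e,\gamma(2\epsilon)}$, which therefore misses $Gb$. This argument is pure $CAT(0)$ --- it uses neither $H2$ nor hyperbolicity --- and sidesteps your obstacle by working with the empty balls you already have rather than hunting for orbit points you don't.
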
 

\begin{proof} Suppose $S\cap M$ is not cocompact. Then $S\cap M\neq
\emptyset$ so we can pick $b\in S\cap M$. For every positive integer
$n$ there exists $y_{n}\in S$ such that $B_{n}(y_{n})\cap Gb=\emptyset $. Let
$r_{n}>n$ be the critical radius such that $B_{r_{n}}(y_{n})\cap
Gb\neq \emptyset $, while $\text{int}B_{r_{n}}(y_{n})\cap Gb=
\emptyset$. Translating each $y_n$ by an element of $G$, we get
a sequence $(x_{n})$ in $S$ such that $b\in B_{r_{n}}(x_{n})$ and
$\text{int}B_{r_{n}}(x_{n})\cap Gb= \emptyset$. 
The sequence of $(x_n)$ is unbounded, hence, passing to a subsequence, we may assume it converges to some $e\in S\cap \partial M$. By hypothesis $e$ must lie in $\Sigma (M;{\Z})$. To see that this is false, let $\epsilon >0$. There is an integer $N$ such that for all $n\geq N$ the interval $[b, x_{n}]$ contains a point $z_{n}$ in the $\epsilon$-neighborhood of the ray $[b, e]$ such that $s_n:=d(b, z_{n})$ goes to infinity with $n$. We have $B_{s_{n}}(z_{n})\subseteq B_{r_{n}}(x_{n})$ so $b$ lies in the boundary of  $B_{s_{n}}(z_{n})$ while the orbit $Gb$ misses the interior of that ball. Pick $p_n$ on the ray $[b, e]$ with $d(z_{n},p_{n})\leq \epsilon$. Then the ball about $p_n$ of radius $d(p_{n},b)-2\epsilon $ misses the orbit $Gb$. Let $c$ be the point on the ray $[b,e]$ distant $2\epsilon $ from $b$. The horoball $HB_{e,c}$ misses $Gb$.Thus $e\notin \Sigma (M;{\Z})$.
\end{proof}

\begin{lemma}\label{extendible} Let $e\neq e'\in \partial M$, and let $b\in M$. There is a number $\mu  >0$ such that for every point $p\in M$ one of the geodesic rays $[p,e)$ and $[p,e')$ meets $B_{\mu }(b)$.
\end{lemma}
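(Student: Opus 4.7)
The plan is to exploit the thin-triangle property of Gromov hyperbolic spaces applied to the ideal triangle with vertices $p$, $e$, and $e'$. By property $H1$ recalled in Section \ref{hyperbolic2}, there exists a bi-infinite geodesic line $L$ joining $e$ to $e'$. Since $M$ is a proper $CAT(0)$ space and $L$ is a closed convex subset, there is a well-defined nearest-point projection, and in particular a point $q\in L$ realizing the distance $d_0:=d(b,L)=d(b,q)$. Note that $L$, $q$, and $d_0$ depend only on $e,e',b$, not on $p$.

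Now let $p\in M$ be arbitrary, and consider the ideal geodesic triangle $\Delta(p,e,e')$ whose three sides are the rays $[p,e)$, $[p,e')$, and the line $L$. A standard fact in Gromov hyperbolic geometry is that there is a constant $\delta'$, depending only on the hyperbolicity constant of $M$, such that every ideal geodesic triangle is $\delta'$-thin: each side is contained in the closed $\delta'$-neighborhood of the union of the other two. (This is proved by approximating $[p,e)$ and $[p,e')$ by finite geodesic segments $[p,\gamma(t)]$ and $[p,\gamma'(t)]$, applying the thin-triangle property to the resulting finite triangles, and taking $t\to\infty$ using Arzel\`a--Ascoli together with properness of $M$.) Applied to the point $q\in L$ as a point of one side, this yields a point $q'\in [p,e)\cup [p,e')$ with $d(q,q')\leq\delta'$. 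The triangle inequality then gives
\begin{equation*}
d(b,q')\leq d(b,q)+d(q,q')\leq d_0+\delta'.
\end{equation*}
Thus one of the two rays $[p,e)$, $[p,e')$ meets $B_\mu(b)$ for $\mu:=d_0+\delta'$, independently of $p$.

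The only nontrivial point is the uniform thinness of ideal triangles; everything else is just the triangle inequality. This uniform thinness is a standard consequence of $\delta$-hyperbolicity and properness, and in the $CAT(0)$ setting it may even be invoked directly from \cite{BrHa99}. Once this is in hand, the choice of $\mu$ above depends only on the pair $(e,e')$, on $b$, and on the hyperbolicity constant, which is exactly what the lemma asserts.
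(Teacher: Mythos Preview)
Your proof is correct and rests on the same core idea as the paper's --- the thin-triangle property of Gromov-hyperbolic spaces applied to the configuration $\{p,e,e'\}$. The paper argues by contradiction, picking points $q,q'$ far out along the line $\ell$ from $e$ to $e'$, and chains together thinness of the finite triangles $(p,q,r)$ and $(p,q,q')$ (with $r$ chosen via Lemma III.H.3.3 of \cite{BrHa99}) to show $[q,q']$ must miss $B_\nu(b)$, contradicting the fact that $q,q'$ lie on opposite ends of $\ell$. Your route is more direct: you invoke uniform $\delta'$-thinness of the ideal triangle $\Delta(p,e,e')$ as a single black box, and then a one-line triangle inequality finishes it. This is a genuine streamlining --- the paper essentially reproves the ideal-triangle case by hand with explicit finite approximations, whereas you cite it. Both yield a constant of the form $d(b,\ell) + O(\delta)$, so the content is the same; your packaging is cleaner, at the cost of depending on the reader knowing (or accepting) that ideal triangles are uniformly thin in this setting.
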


\begin{proof} By {\it H1} there is a line $\ell$ joining $e'$ to $e$.
Let $\nu $ be large enough that $B_{\nu}(b)$ meets $\ell$. The required $\mu $ is $\nu +2\delta$. For a contradiction, suppose there is $p\in M$ such that
neither $[p,e)$ nor $[p,e')$ meets $B_{\mu }(b)$. There are two unbounded
components in $\ell - B_{\nu }(b)$; we choose points $q$ and $q'$ far out on $\ell$
towards $e$ and $e'$, where the meaning of ``far out" is determined
as follows (for $q$; $q'$ is done similarly):
\begin{enumerate}
\item Using Lemma III H 3.3 of \cite{BrHa99}, pick a point $r$ on $[p,e)$ so far out that it is within $5\delta $ of $\ell$, and pick $q\in \ell$ to be within $5\delta$ of $r$.
\item Ensure that $B_{7\delta}(q)\cap B_{\nu }(b)=\emptyset.$
\end{enumerate}
Consider the geodesic triangle $(pqr)$. We have $[q,r]\subseteq
B_{5\delta}(q)$, so $N_{\delta}([q,r])\subseteq B_{6\delta}(q)$. By
hyperbolicity,
$$[p,q]\subseteq N_{\delta}([p,r])\cup B_{6\delta}(q).$$
Hence
$$N_{\delta}([p,q])\subseteq N_{2\delta}([p,r])\cup B_{7\delta}(q).$$
A similar statement holds when $q$ is replaced by $q'$. Now consider the geodesic triangle
$(pqq')$.  By hyperbolicity, we have
$$[q,q']\subseteq N_{\delta}([p,q])\cup N_{\delta}([p,q']).$$
But $N_{2\delta}([p,r])\cup B_{7\delta}(q)\cup N_{2\delta}([p,r'])\cup
B_{7\delta}(q')$ is disjoint from $B_{\nu }(b)$. So $[q,q']$ is disjoint
from $B_{\nu }(b)$. But this is false, since $q$ and $q'$ are separated in $\ell$ by 
$B_{\nu }(b)$.
\end{proof} 		

When $G$ acts on $M$ with discrete orbits, then the phrase ``$A$ has bounded support over $M$" becomes ``$A$ is finitely generated over the point stabilizer $G_b$". Thus we have:

\begin{cor}\label{finitely generated2} With $M$ and $A$ as in Theorem
\ref{alternatives2}, assume $G$-orbits in $M$ are discrete. Then $\Sigma (M;A)=\Lambda (G)$ if and only if the weak convex hull of $\Lambda $ is cocompact and $A$
is finitely generated over the stabilizer $G_b$ of $b$.
\end{cor}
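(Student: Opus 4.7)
The plan is to derive the corollary from the equivalence $(ii)\Leftrightarrow(iii)$ of Theorem \ref{alternatives2}, using discreteness of the $G$-orbits to translate the geometric condition ``$A$ has bounded support over $\Lambda(G)[2]$'' into the algebraic condition ``$A$ is finitely generated over $G_b$.'' I would first observe that (under the standing hypotheses $A\neq 0$ and unbounded orbits) both sides of the corollary's biconditional force $|\Lambda(G)|\geq 2$, ensuring $\Lambda(G)[2]\cap M\neq\emptyset$: the $(\Rightarrow)$ direction uses Proposition \ref{fix}(a) together with the inclusion $\Sigma(M;A)\subseteq\Sigma(M;\Z)$ from Remark \ref{elementary}(4) to rule out $|\Lambda(G)|=1$, while the $(\Leftarrow)$ direction uses the non-emptiness of the weak convex hull implicit in its cocompactness.

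Next, to perform the translation, I would pick $b'\in\Lambda(G)[2]\cap M$ and use the canonical control map $h(x)=\{b'\}$ on a finitely generated free presentation $\epsilon:F\twoheadrightarrow A$. Since $\Lambda(G)[2]$ is $G$-invariant, $h(F)\subseteq Gb'\subseteq\Lambda(G)[2]$. I would then filter $F$ by
\begin{equation*}
F_m := h^{-1}(fB_m(b')),\qquad m\geq 1.
\end{equation*}
Each $F_m$ is $G_{b'}$-invariant (since $B_m(b')$ is), and because $B_m(b')\cap Gb'$ is finite by discreteness, $F_m$ is finitely generated as a $\Z G_{b'}$-module --- the same filtration-and-cosets argument used in the proof of Corollary \ref{smaller}. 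Moreover $F=\bigcup_m F_m$.

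It will then follow that ``$A$ has bounded support over $\Lambda(G)[2]$'' is equivalent to $A=\epsilon(F_m)$ for some $m$: if $A$ has bounded support over a bounded $K\subseteq\Lambda(G)[2]$ with $K\subseteq B_m(b')$, every $a\in A$ has a representative in $F_m$; conversely, $A=\epsilon(F_m)$ provides bounded support over $B_m(b')\cap\Lambda(G)[2]$. Since $F_m$ is finitely generated over $\Z G_{b'}$, this in turn is equivalent to $A$ being finitely generated over $G_{b'}$, hence --- by commensurability of $G_{b'}$ and $G_b$ under discrete orbits --- over $G_b$. The argument is a direct adaptation of Corollary \ref{smaller}, and I expect no substantive obstacle beyond the bookkeeping involved in choosing $b'\in\Lambda(G)[2]$ so that the canonical control map takes values inside the $G$-invariant set $\Lambda(G)[2]$.
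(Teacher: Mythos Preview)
Your approach is correct and essentially matches the paper's: derive the corollary from the equivalence $(ii)\Leftrightarrow(iii)$ in Theorem~\ref{alternatives2} and use discreteness of orbits to convert ``$A$ has bounded support over $\Lambda(G)[2]$'' into ``$A$ is finitely generated over $G_b$'', exactly as in Corollary~\ref{smaller}. The paper itself offers no more than the sentence preceding the corollary, so your filtration argument and the appeal to commensurability of point stabilizers are precisely the details that the paper leaves implicit.

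One small remark: your justification that the $(\Leftarrow)$ side forces $|\Lambda(G)|\geq 2$ (``non-emptiness of the weak convex hull implicit in its cocompactness'') is not quite airtight, since an empty set is vacuously cocompact. The cleaner reason is the one the paper uses in proving $(iii)\Rightarrow(ii)$: if $\Lambda(G)[2]\cap M=\emptyset$ then the nonzero module $A$ cannot have bounded support over it, and your translation shows that finite generation over $G_{b'}$ (with $b'\in\Lambda(G)[2]$) is the same condition. Equivalently, you may simply note that the very choice of $b'\in\Lambda(G)[2]\cap M$ needed to set up the filtration presupposes $|\Lambda(G)|\geq 2$; so the corollary should be read with this implicit non-degeneracy, as the paper's Theorem~\ref{alternatives2}(iii) already is.
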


\begin{cor}\label{finite type1} With $M$ as in Theorem \ref{alternatives2}, assume $G$-orbits in $M$ are discrete and that the stabilizer $G_b$ of some point $b$ is finite (i.e. the $G$-action is properly discontinuous). If $\Sigmacirc (M;{\Z})$ is non-empty then $G$ is of type $F_{\infty}$.
\end{cor}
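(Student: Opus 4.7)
The plan is to reduce the statement to the classical fact that word-hyperbolic groups are of type $F_\infty$. The first step will be to identify $\Sigmacirc(M;\Z)$ with $\Lambda(G)$. Since $\Sigmacirc(M;\Z) \subseteq \Sigma(M;\Z)$ and Remark \ref{singleton} rules out $\Sigma(M;\Z)$ being a singleton, the singleton alternative of Theorem \ref{alternatives2} is excluded. Thus that theorem will give $\Sigmacirc(M;\Z) = \Sigma(M;\Z) = \Lambda(M;\Z) = \Lambda(G)$ with $|\Lambda(G)|\geq 2$. Applying Corollary \ref{finitely generated2} to $A=\Z$ --- which is automatically finitely generated as a $G_b$-module since $G_b$ is finite and $\Z$ is a finitely generated abelian group --- will then yield that the weak convex hull $\Lambda(G)[2]$ is $G$-cocompact.

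The second step is to promote this set-theoretic cocompactness to a cocompact isometric $G$-action on a proper geodesic Gromov-hyperbolic space. Because $M$ is Gromov-hyperbolic, thinness of geodesic and ideal triangles will show that $\Lambda(G)[2]$ is quasi-convex in $M$. I would then take the closed $r$-neighborhood $N := \overline{N_r(\Lambda(G)[2])}$ for $r$ larger than the hyperbolicity constant; this is a closed, path-connected, $G$-invariant subset of $M$, and equipped with the induced length metric, $N$ is a proper geodesic Gromov-hyperbolic space on which $G$ acts by isometries properly discontinuously (inherited from the action on $M$) and cocompactly (inherited from cocompactness on $\Lambda(G)[2]$).

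The conclusion will then follow from two classical results in coarse geometry. By the \v{S}varc--Milnor lemma, $G$ is finitely generated and quasi-isometric to $N$, and hence is a word-hyperbolic group. Any word-hyperbolic group is of type $F_\infty$: for sufficiently large parameter $d$ the Rips complex $P_d(G)$ is a finite-dimensional contractible simplicial complex on which $G$ acts cocompactly with finite cell stabilizers, which is well known to imply $F_\infty$.

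I expect the main obstacle to be the middle step: promoting the cocompact $G$-action on the possibly non-geodesic set $\Lambda(G)[2]$ to a cocompact isometric action on a proper geodesic Gromov-hyperbolic space. Quasi-convexity of $\Lambda(G)[2]$ in $M$ is what makes the thickening $N$ work, but one must verify that the length metric on $N$ is geodesic, proper, and Gromov-hyperbolic with a uniformly controlled constant, all standard but requiring some care.
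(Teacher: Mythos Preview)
Your approach is correct and genuinely different from the paper's. After the common first step (ruling out the singleton alternative and obtaining cocompactness of the weak convex hull $\Lambda(G)[2]$ from Theorem~\ref{alternatives2}), the paper does \emph{not} invoke \v{S}varc--Milnor or word-hyperbolicity. Instead, it builds the increasing filtration $L_n := \Lambda(G)[n]\cap M$ of cocompact $G$-subspaces of $M$, observes that the inclusions $L_n \hookrightarrow L_{n+1}$ become highly connected (since each successive step is a geodesic coning), and then applies Brown's Finiteness Criterion \cite{Bn87}. Because the $L_n$ are not $G$-CW-complexes, the paper replaces them by the nerves of suitable finite open covers of $G\backslash L_n$, producing finite $G$-complexes $K_n$ to which Brown's Criterion applies directly.

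Your route via \v{S}varc--Milnor and the Rips complex is shorter and yields the stronger conclusion that $G$ is word-hyperbolic, not merely of type $F_\infty$. The cost is precisely the ``middle step'' you flag: one must verify that $\Lambda(G)[2]$ is quasiconvex in $M$ (this follows from $\delta$-thinness of ideal triangles, which forces $\Lambda(G)[3]$, and inductively each $\Lambda(G)[n]$, to lie within a uniform neighborhood of $\Lambda(G)[2]$), and that the thickening $N$ with its induced length metric is a proper geodesic space. The paper's approach sidesteps these metric subtleties entirely, at the expense of the nerve-replacement argument. One minor point: your exclusion of the singleton case is phrased slightly loosely --- knowing $\Sigma(M;\Z)$ is not a singleton does not by itself prevent $\Sigmacirc(M;\Z)$ from being one --- but the gap closes immediately once you note (via Theorem~\ref{closure3}) that a singleton $\Sigmacirc(M;\Z)$ would be a $G$-fixed point, whereupon Proposition~\ref{fix} gives the contradiction.
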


\begin{proof} Since $\Sigmacirc (M;{\Z})$ cannot be a singleton, we have 
$$\Sigmacirc (M;{\Z})=\Sigma (M;{\Z})=\Lambda (G)$$  
By Lemma \ref{cocompact} the sets $\Lambda (G)[n]\cap M$ form an increasing sequence of $G$-subsets of $M$ each of which is cocompact. The increasing convexity properties of these sets shows that, given $k$,  we may assume, passing to a subsequence, that each inclusion of $\Lambda (G)[n]\cap M$ into $\Lambda (G)[n+1]\cap M$
is $(k+1)$-connected. If these $G$-spaces $\Lambda (G)[n]$ were $G$-complexes, the hypothesis on the stabilizers would allow us to apply Brown's Criterion \cite{Bn87} to conclude that $G$ is of type $F_k$, and hence $F_{\infty}$ because $k$ is arbitrary. 

Since they are not complexes, this argument needs some refinement. We
abbreviate $\Lambda (G)[n]\cap M$ to $L_{n}$. We first discuss the
case where $G$ acts freely as well as properly. Let $\mathcal U$ be a
finite open cover of the compact space $G\backslash L_n$ by sets small enough that their
closures are evenly covered by closed balls in $L_n$.
We denote the nerve of $\mathcal U$ by $N_n$, a finite complex of dimension, say, $d$. There is a  ``canonical map"\footnote{The construction of $g$ and some other
details omitted here are described in detail in the proof of Proposition
$A$ of \cite{On05}.} $g:G\backslash L_{n}\to N_n$. We let $K_{n}$ denote
the first barycentric subdivision of $N_n$. A map $f:K_{n}\to G\backslash
L_{n+d}$ can be defined as follows: each vertex $v\in K_{n}$ is mapped
to a point $x_v$ in the relevant intersection of members of $\mathcal
U$. This is then extended skeleton by skeleton to all of $K_{n}$ using
(short) geodesic coning at each stage. (For example, a simplex of the
$1$-skeleton is mapped to a short geodesic joining the images of its two
vertices.) One then constructs a homotopy $H$ in $G\backslash L_{n+d+1}$
between $f\circ g$ and the inclusion $G\backslash L_{n}\to G\backslash
L_{n+d+1}$ where every track of the homotopy is a short geodesic.

The cover $\mathcal U$ defines a $G$-cover $\widetilde {\mathcal U}$
of $L_n$ whose nerve $\widetilde {N_n}$ and its first barycentric
subdivision $\widetilde {K_n}$ are $G$-complexes.  There are lifts
${\tilde g}:L_{n}\to {\widetilde N_n}$,  ${\tilde f}:{\widetilde K_{n}}\to
L_{n+d}$ and a homotopy $\tilde H$ between ${\tilde f}\circ {\tilde
g}$ and the inclusion $L_{n}\to L_{n+d+1}$, where every track of the
homotopy is a short geodesic. Thus, identifying $\widetilde {N_n}$ with
$\widetilde {K_n}$ as topological spaces, we get a commutative diagram
of $G$-spaces and $G$-maps 

$$ \xymatrix{ &K_{n}\ar[r]^{{\tilde g}\circ
{\tilde f}} \ar[dr]^{{\tilde f}} &K_{n+d+1}\\ &{L_  {n}} \ar[u]^{{\tilde
g}} \ar[r]^{\iota}
     &{L_{n+d+1}}\ar[u]_{{\tilde g}}
} $$ 

Brown's Criterion can now be applied to $K_n$'s.

When the action is not free but merely proper, a well-known variant of this argument applies. The details are omitted as they can easily be extracted from the  proof of Proposition $A$ of \cite{On05}.
 
 \end{proof}

\subsection{The case when $M$ is hyperbolic $n$-space ${\H}^n$ }\label{Hn}

Here, $M={\H}^n$ and $\Gamma $ is an infinite discrete subgroup of Isom(${\H}^n$). We discuss how $\Sigma ({\H}^{n};\Z)$ and $\Sigmacirc ({\H}^{n};\Z)$ are related to standard properties of discrete hyperbolic groups. 

A point $e\in \partial {\H}^n$ is a {\it parabolic fixed point} if it is fixed by a
parabolic element of $\Gamma $; all parabolic fixed points lie in 
the limit set $\Lambda (\Gamma )$. A limit point $e$ is {\it conical} if there is a sequence of orbit points in ${\H}^n$ converging to $e$ which lie at a bounded distance from a geodesic ray ending at $e$; all conical limit points are horospherical limit points\footnote{See Theorem 15 of \cite{NW92} for an example of a Kleinian group having horospherical limit points which are not conical. Another example is given on page 95 of \cite{Ap00}.}. One knows that parabolic fixed points cannot be conical limit points. In the Fuchsian case ($n=2$) it is a fact that no parabolic fixed point is a horospherical limit point\footnote{Suppose $e=\gamma (\infty )$ is horospherical, and also is fixed by the parabolic element $p$. Then a sequence of orbit points converges to $e$ through horoballs, and, applying powers of $p$ to these, one can get them all to be within a bounded distance from the image of $\gamma$ (because $<p>$ acts cocompactly on horocircles at $e$). But it is well-known (see, for example, Lemma 3.1.2 of \cite{Bo93}) that conical limit points cannot be parabolic fixed points.}. We do not know if this holds in general, but it does hold in an important special case:

\begin{lemma}\label{parabolic} If a parabolic fixed point $e\in \Lambda
(\Gamma)$ is bounded (in the sense of Section 3 of \cite{Bo93}), then
$e\notin \Sigma ({\H}^{n};\Z)$.  
\end{lemma}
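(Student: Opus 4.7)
The plan is to exploit the classical fact that every bounded parabolic fixed point $e$ admits a \emph{precisely invariant horoball}: a horoball $C$ at $e$ with the property that for every $g\in\Gamma$ either $gC=C$ (equivalently $g\in\Gamma_e$) or $gC\cap C=\emptyset$. This is a standard consequence of the bounded-parabolic hypothesis in the sense of Section~3 of \cite{Bo93}, obtained via the cusp-neighborhood construction developed there. Given such a $C$, I would show that the Busemann value $\beta_e$ is bounded above on the orbit $\Gamma b$; any strictly deeper horoball at $e$ then misses $\Gamma b$, so $e$ fails to be a horospherical limit point of the orbit and hence lies outside $\Sigma(\H^n;\Z)$.

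The heart of the argument analyzes $\Gamma b\cap C$. Suppose $g_1 b$ and $g_2 b$ both lie in $C$. Then $b\in g_1^{-1}C\cap g_2^{-1}C$, so the two $\Gamma$-translates $g_1^{-1}C$ and $g_2^{-1}C$ of $C$ meet; precise invariance then forces $g_2 g_1^{-1}\in\Gamma_e$. Consequently $\Gamma b\cap C$ is contained in a single coset $\Gamma_e\cdot g_1 b$. Since every element of $\Gamma_e$ fixes $e$ and therefore preserves each horosphere at $e$, this coset lies on a single horosphere, and so $\beta_e$ takes one value on $\Gamma b\cap C$ (when that intersection is non-empty at all).

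Combining this observation with the trivial bound $\beta_e(x)\le t_0$ for $x\notin C$, where $t_0$ is the $\beta_e$-level of the bounding horosphere of $C$, yields a uniform upper bound for $\beta_e$ on the entire orbit $\Gamma b$. Any horoball $HB_{e,s}$ with $s$ larger than this bound is therefore disjoint from $\Gamma b$, and since $\Sigma(\H^n;\Z)$ is precisely the horospherical limit set of any orbit $\Gamma b$ (see Section~\ref{intro}), we conclude $e\notin\Sigma(\H^n;\Z)$. The only genuinely substantive step is the invocation of a precisely invariant horoball at a bounded parabolic fixed point; once that geometric tool is in hand, the rest of the argument is elementary.
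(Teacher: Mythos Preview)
Your argument is correct and follows essentially the same route as the paper: both proofs invoke the strictly (precisely) invariant horoball at a bounded parabolic fixed point provided by Bowditch's cusp-region construction, and conclude that the orbit $\Gamma b$ cannot penetrate arbitrarily deep horoballs at $e$. Your version is somewhat more explicit than the paper's one-line proof, which simply asserts that the strictly invariant horoballs ``cannot contain points of the orbit $\Gamma b$'' (implicitly after choosing $b$ outside all cusp regions); you instead allow $b$ to be arbitrary and show that any orbit points inside $C$ lie in a single $\Gamma_e$-coset, hence on a single horosphere.

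One small point to tighten: the inference ``every element of $\Gamma_e$ fixes $e$ and \emph{therefore} preserves each horosphere at $e$'' is not valid in general, since a hyperbolic isometry with axis ending at $e$ fixes $e$ but translates horospheres. What you need here is the standard fact that in a discrete subgroup of $\mathrm{Isom}(\H^n)$ a parabolic fixed point cannot also be fixed by a hyperbolic element; hence $\Gamma_e$ consists of parabolic (and possibly elliptic) isometries, all of which do preserve each horosphere at $e$.
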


\begin{proof} By Proposition 4.4 of \cite{Bo93}, one can associate to
each bounded parabolic fixed point $e$ a standard parabolic region
$C(e)\subseteq  {\H}^{n}\cup \partial {\H}^{n}-\Lambda (\Gamma )$ such
that the collection of these regions is strictly invariant under $\Gamma
$; see \cite{Bo93}. These regions contain a strictly invariant collection of horoballs, so they cannot contain points of the orbit $\Gamma b$.  
\end{proof}

\begin{thm}\label{conical1} If $\Gamma $ is geometrically finite (in the sense of \cite{Bo93}) then $\Sigma ({\H}^{n};\Z)$ is the set of all conical limit points. Hence $\Lambda
(\Gamma)$ is the disjoint union 
$$\Sigma ({\H}^{n};\Z)\cup \{\text{\rm parabolic fixed points}\}$$   
\end{thm}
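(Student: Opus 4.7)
The plan is to combine the general structure of the limit set for geometrically finite groups with Lemma \ref{parabolic}, observing that for the trivial module $A=\Z$, the horospherical limit set $\Sigma(\H^n;\Z)$ coincides with the set of horospherical limit points of any orbit $\Gamma b$. This last identification is immediate from the definition: for $A=\Z$, every element of $A$ is supported over a horoball $HB_e$ precisely when there is some $g\in\Gamma$ with $gb\in HB_e$.

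The first containment $\Sigma(\H^n;\Z)\subseteq\{\text{conical limit points}\}$ goes as follows. By Remark \ref{limitset} and observation (4) we have $\Sigma(\H^n;\Z)\subseteq\Lambda(\Gamma)$. Now Bowditch's structure theorem for geometrically finite groups (Section 3 of \cite{Bo93}) states that $\Lambda(\Gamma)$ decomposes as the disjoint union of its conical limit points and its parabolic fixed points, and that \emph{every} parabolic fixed point of a geometrically finite group is bounded in the sense of \cite{Bo93}. By Lemma \ref{parabolic} no bounded parabolic fixed point lies in $\Sigma(\H^n;\Z)$, so $\Sigma(\H^n;\Z)$ must be contained in the set of conical limit points.

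For the reverse containment I would verify the standard fact that every conical limit point is horospherical. If $e\in\partial\H^n$ is conical, then by definition there is a sequence $(g_n b)$ of orbit points lying within a bounded distance $R$ of a geodesic ray $\gamma$ ending at $e$ and with $\beta_e(g_n b)\to-\infty$ (equivalently, $\beta_e(g_n b)\to\infty$ in the paper's sign convention where $\beta_e\to\infty$ at $e$). Given any horoball $HB_{e,t}$, choose $n$ large enough that $\beta_e(g_n b)>t+R$; the triangle inequality for the Busemann function then forces $g_n b\in HB_{e,t}$. Hence every horoball at $e$ contains an orbit point, i.e.\ $e\in\Sigma(\H^n;\Z)$.

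Combining the two containments gives $\Sigma(\H^n;\Z)=\{\text{conical limit points}\}$, and the stated decomposition of $\Lambda(\Gamma)$ follows immediately from Bowditch's theorem. The substantive ingredient is Lemma \ref{parabolic}; the rest is bookkeeping with the definitions and a quotation of the geometrically finite structure theory, so I do not anticipate any real obstacle.
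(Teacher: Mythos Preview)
Your proposal is correct and follows essentially the same route as the paper: both invoke Bowditch's (GF2) decomposition of $\Lambda(\Gamma)$ into conical limit points and bounded parabolic fixed points, then apply Lemma \ref{parabolic} to exclude the latter from $\Sigma(\H^n;\Z)$. The only difference is that the paper's proof is even terser, since the inclusion ``conical $\subseteq$ horospherical'' was already recorded as a fact in the text preceding Lemma \ref{parabolic}, whereas you spell out the (standard) Busemann-function argument for it.
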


\begin{proof} By Definition $(GF2)$ of \cite{Bo93} we know that $\Lambda
(\Gamma)$ is the disjoint union of the conical limit points and the
bounded parabolic fixed points. Since $\Sigma ({\H}^{n};\Z)\subseteq \Lambda
(\Gamma )$, Lemma \ref{parabolic} implies what is claimed.  
\end{proof}

\begin{thm}\label{gf} $\Sigmacirc ({\H}^{n}; {\Z})$ is
non-empty if and only if $\Gamma $ is geometrically finite and has no
parabolic elements.
\end{thm}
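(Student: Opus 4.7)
The plan is to derive both directions from Theorem~\ref{alternatives2}, Proposition~\ref{fix}, Theorem~\ref{conical1}, and the classical equivalence, for a discrete $\Gamma\leq\text{Isom}({\H}^n)$, between acting cocompactly on the convex hull of its limit set (convex cocompactness) and being geometrically finite with no parabolic elements.

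For $(\Leftarrow)$, assume $\Gamma$ is geometrically finite with no parabolic elements. Theorem~\ref{conical1} identifies $\Sigma({\H}^n;{\Z})$ with the set of conical limit points, which in the absence of parabolic fixed points exhausts $\Lambda(\Gamma)$. Since $\Gamma$ is infinite and discrete, $\Lambda(\Gamma)\neq\emptyset$; being closed and $\Gamma$-invariant, Corollary~\ref{invariant} yields $\Lambda(\Gamma)\subseteq\Sigmacirc({\H}^n;{\Z})$, so $\Sigmacirc({\H}^n;{\Z})\neq\emptyset$.

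For $(\Rightarrow)$, assume $\Sigmacirc({\H}^n;{\Z})\neq\emptyset$. I first rule out $|\Sigmacirc|=1$. If $\Sigmacirc({\H}^n;{\Z})=\{e\}$, Theorem~\ref{closure3} forces $\text{cl}(\Gamma e)\subseteq\{e\}$, so $\Gamma$ fixes $e$. Proposition~\ref{fix} (applied to $A={\Z}$) then leaves only two options, each contradicting $|\Sigmacirc|=1$: either $\Sigma({\H}^n;{\Z})=\emptyset$ (impossible since $\emptyset\neq\Sigmacirc\subseteq\Sigma$), or $\Sigmacirc({\H}^n;{\Z})=\Lambda(\Gamma)$ contains at least two points. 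Hence $|\Sigmacirc({\H}^n;{\Z})|\geq 2$, and Theorem~\ref{alternatives2} gives
$$\Sigmacirc({\H}^n;{\Z})=\Sigma({\H}^n;{\Z})=\Lambda({\H}^n;{\Z})=\Lambda(\Gamma)$$
together with cocompactness of the weak convex hull $\Lambda(\Gamma)[2]\cap{\H}^n$. Because geodesics between distinct points of ${\H}^n\cup\partial{\H}^n$ are unique, this weak convex hull coincides (up to a cocompactness-equivalence) with the usual convex hull $\text{conv}(\Lambda(\Gamma))$, so $\Gamma$ acts cocompactly on $\text{conv}(\Lambda(\Gamma))$. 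By the classical characterization (see~\cite{Bo93}) this is precisely the statement that $\Gamma$ is geometrically finite and contains no parabolic element.

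The main obstacle is this last step: importing the classical Kleinian-group equivalence ``cocompact on $\text{conv}(\Lambda)$ $\Leftrightarrow$ geometrically finite with no parabolic elements''. A self-contained substitute for the ``no parabolics'' half would use Lemma~\ref{parabolic}: if $p\in\Gamma$ were parabolic with fixed point $e\in\Lambda(\Gamma)$, cocompactness of $\Gamma$ on $\text{conv}(\Lambda(\Gamma))$ would force $e$ to be a bounded parabolic fixed point in the sense of~\cite{Bo93}; Lemma~\ref{parabolic} would then place $e$ outside $\Sigma({\H}^n;{\Z})=\Lambda(\Gamma)$, a contradiction. Absence of parabolics together with the cocompact action on $\text{conv}(\Lambda(\Gamma))$ then reduces directly to one of the standard definitions of geometric finiteness.
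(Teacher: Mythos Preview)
Your proof is correct and follows essentially the same route as the paper's. Both directions match: for $(\Leftarrow)$ you and the paper both use Theorem~\ref{conical1} to get $\Sigma({\H}^n;{\Z})=\Lambda(\Gamma)$ and then invoke closedness plus $G$-invariance; for $(\Rightarrow)$ both rule out the singleton case (you spell out the Proposition~\ref{fix} argument that the paper compresses into ``by Theorem~\ref{alternatives2} it must contain at least two points''), then extract cocompactness of the convex hull from Theorem~\ref{alternatives2} and appeal to \cite{Bo93}.

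The only substantive difference is in how you package the conclusion of the forward direction. The paper first deduces geometric finiteness from compactness of the convex core (Definition~GF4 in \cite{Bo93}), and \emph{then} derives the absence of parabolics by combining $\Sigma({\H}^n;{\Z})=\Lambda(\Gamma)$ with Theorem~\ref{conical1} (which requires geometric finiteness as a hypothesis). You instead invoke the single classical equivalence ``convex cocompact $\Leftrightarrow$ geometrically finite with no parabolics'' as a black box. Both are legitimate citations of \cite{Bo93}; the paper's route has the mild advantage of making visible exactly where Theorem~\ref{conical1} enters, while yours is more economical. Your alternative sketch via Lemma~\ref{parabolic} would need the extra step that cocompactness on the convex hull forces any parabolic fixed point to be \emph{bounded}, which is true but not proved in the paper.
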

\begin{proof}
Assume that $\Sigmacirc (\Gamma ;{\Z})$ is non-empty. By Theorem \ref{alternatives2} it must contain at least two points. Then the proof of Theorem \ref{alternatives2} implies that $C\cap {\H}^n$ is cocompact, where $C$ is the convex hull of $\Lambda (\Gamma)$. In \cite{Bo93} $\Gamma \backslash C$
is called the {\it convex core} of the orbifold $\Gamma \backslash {\H}^{n}$;
and we use \cite{Bo93} to deduce from the compactness of this convex
core that $\Gamma $ is geometrically finite (see Definition GF4 of
\cite{Bo93}.) By Theorem \ref{alternatives2} we also know that
$\Sigma ({\H}^{2};{\Z})=\Lambda (\Gamma)$, hence by Theorem \ref{conical1}
$\Gamma $ contains no parabolic element.

Conversely, if $\Gamma $ is geometrically finite and contains no
parabolic element then Theorem \ref{conical1} asserts that  $\Sigma
({\H}^{n};{\Z})=\Lambda (\Gamma)$. Since  $\Lambda (\Gamma)$ is
$G$-invariant and closed, we infer $\Sigmacirc ({\H}^{n};{\Z})=\Lambda
(\Gamma)\neq \emptyset$.  
\end{proof}	

Combining Corollary \ref{finite type1} and Theorem \ref{gf} we get:
\begin{cor}If $\Gamma $ is geometrically finite and has no parabolic elements
then $\Gamma $ has type $F_{\infty}$. 
\end{cor}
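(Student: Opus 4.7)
The plan is to simply combine the two results cited in the statement, verifying that the hypotheses match. By Theorem \ref{gf}, the assumption that $\Gamma$ is geometrically finite with no parabolic elements is equivalent to $\Sigmacirc({\H}^{n};\Z)\neq \emptyset$. So the task reduces to checking that Corollary \ref{finite type1} applies to the $\Gamma$-action on ${\H}^{n}$.

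First I would observe that ${\H}^{n}$ is a proper $CAT(0)$ space which is Gromov-hyperbolic, so the standing hypotheses of Section \ref{hyperbolic2} (and hence of Theorem \ref{alternatives2} and Corollary \ref{finite type1}) are met; note that the given $G$-module is $\Z \neq 0$ and the assumption $\Sigmacirc(M;\Z) \neq \emptyset$ rules out the degenerate case of bounded orbits. Next I would note that $\Gamma$ is a discrete subgroup of $\text{Isom}({\H}^{n})$, so by the standard properties of discrete isometry groups of ${\H}^{n}$, the $\Gamma$-orbits in ${\H}^{n}$ are discrete and the stabilizer $\Gamma_{b}$ of each point $b\in {\H}^{n}$ is finite. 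These are exactly the two auxiliary hypotheses of Corollary \ref{finite type1}.

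Putting this together: Theorem \ref{gf} gives $\Sigmacirc({\H}^{n};\Z) \neq \emptyset$, and then Corollary \ref{finite type1}, applied to the properly discontinuous action of $\Gamma$ on the Gromov-hyperbolic proper $CAT(0)$ space ${\H}^{n}$, yields that $\Gamma$ is of type $F_{\infty}$. There is essentially no obstacle: the result is a direct packaging of the two earlier statements, and the only small thing to check is that the structural assumptions on $M$ and on the action in Corollary \ref{finite type1} are automatic in the hyperbolic discrete-group setting.
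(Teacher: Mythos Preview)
Your proof is correct and takes essentially the same approach as the paper, which simply states that the corollary follows by combining Corollary \ref{finite type1} and Theorem \ref{gf}. Your additional verification that the hypotheses of Corollary \ref{finite type1} (discrete orbits, finite point stabilizers, Gromov-hyperbolic proper $CAT(0)$ space) are satisfied in the discrete hyperbolic group setting is routine but worth making explicit.
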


\subsection{Two examples of $\Sigma ({\H}^2;A)$ with $A$ a non-trivial $G$-module}\label{H2}

Let $p$ be a natural number $>1$, and let $G$ be the group of matrices of the form 
\[ \left[ \begin{array}{cc}
p^{k} & b \\
0 & p^{-k} \\
\end{array} \right]\] 
 where $k\in {\Z}$ and $b\in {\Z}[\frac{1}{p}]$.

Then $G$ is the semidirect product of the normal subgroup $N\cong
{\Z}[\frac{1}{p}]$ and the infinite cyclic group generated by

$t=\left[ \begin{array}{cc}
p & 0 \\
0 & p^{-1} \\
\end{array} \right]$  
(which acts on ${\Z}[\frac{1}{p}]$ by multiplication with $p^2$); i.e. $G\cong N\rtimes <t>$. This $G$ has the finite presentation
$$<a,t\mid tat^{-1}=a^{p^{2}}>$$

where $a$ represents the element $1\in {\Z}[\frac{1}{p}]$ or, equivalently, the matrix  
$\left[ \begin{array}{cc}
1 & 1 \\
0 & 1 \\
\end{array} \right]$.

We consider two $G$-modules $A$ and $B$, both having underlying abelian group ${\Z}[\frac{1}{p}]$ and trivial $N$ action; thus both are $<t>$-modules. The difference between them is that
\begin{itemize}
\item $t$ acts on $A$ by multiplication with $p^2$
\item $t$ acts on $B$ by multiplication with $p^{-2}$
\end{itemize}  
Both $A$ and $B$ are generated as $<t>$-modules by $a=1\in {\Z}[\frac{1}{p}]$, and as abelian groups by $\{\frac{1}{p^{2k}}\mid k\in {\N}\}$.

The group $G$ acts by M\"{o}bius transformations on the upper half plane model
of ${\H}^2$, and the point $\infty \in \partial {\H}^2$ is fixed under
this action. Moreover, $\infty $ is both a parabolic and a hyperbolic
fixed point; (each $n(b):=
\left[ \begin{array}{cc}
1 & b \\
0 & 1 \\
\end{array} \right]\in N$ 
is parabolic, while each $h\in G-N$ is hyperbolic.) The axis $L_0$ of the
generator $t$ connects $\infty $ with $0\in \partial {\H}^2$; the axis
of $n(b)tn(-b)$ is the line $n(b)L_{0}=:L_b$ which connects $\infty $
to $b\in {\Z}[\frac{1}{p}]$. The end points of these axes are conical
limit points, hence $\Sigma ({\H}^2;{\Z})$ contains the dense subset
${\Z}[\frac{1}{p}]\cup \infty$, and hence the argument in the proof of
Lemma \ref{parabolic}{\it (b)} shows that $\Sigmacirc ({\H}^{2};{\Z})=
\Sigma ({\H}^{2};{\Z})=\partial {\H}^{2}$.

To find $\Sigma ({\H}^{2};A)$ we consider the $G$-homomorphism
$\epsilon : {\Z}G\twoheadrightarrow A$ given by $\epsilon (1_{G})=1\in
A={\Z}[\frac{1}{p}]$. Thus $\epsilon (n)=1$ for $n\in N$, and $\epsilon
(t)=p^2$. We choose the control function $h:{\Z}G\to f{\H}^2$ which
maps every $g\in G$ to the singleton $\{g.i\}$. For a fixed $k\in {\Z}$
we have $a=p^{2k}\in A$, while the elements $c_{j}:=p^{2(k+j)}t^{-j}\in
{\Z}G$ lie in $\epsilon ^{-1}(a)$ for all $j\in {\N}$ such that $j\geq
k$. Their images under $h$ are $$h(c_{j})=h(t^{-j})=\{p^{-2j}.i\}\subseteq
{\H}^{2},$$ so that $0\in \partial {\H}^2$ is exhibited as an accumulation
point of $\epsilon ^{-1}(a)$ in the horospherical sense. As the control
images $h(c_{j})$ are singletons, the argument used in the proof of
Lemma \ref{parabolic}{\it (b)} applies in this situation and shows,
not only that the $G$-orbit of $0\in {\R}$ (i.e. set of $p$-adic
rationals ${\Z}[\frac{1}{p}]\subseteq {\R}$) lies in $\Sigma ({\H}^{2};A)$,
but also that each $r\in {\R}\subseteq \partial {\H}^{2}$ lies in $\Sigma
(G;A)$. The argument fails only for the boundary point  $\infty \in
\partial {\H}^2$ Indeed, we have: 
\begin{prop} 
\begin{enumerate}[(i)] 
\item $\Sigma ({\H}^{2};A)=\partial {\H}^2-\{\infty\}$
\item $\Sigmacirc ({\H}^{2};A)=\emptyset$ 
\item $\Lambda ({\H}^{2};A)=\partial {\H}^2$ 
\end{enumerate}
\end{prop}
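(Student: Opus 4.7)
The plan is to analyze preimages $c\in\epsilon^{-1}(a)$ under the augmentation $\epsilon:\Z G\twoheadrightarrow A$, exploiting the identity $\epsilon(n(b)t^k)=p^{2k}$ (immediate from $N$ acting trivially on $A$ and $t$ acting by multiplication by $p^2$) together with the explicit action $n(b)t^k\cdot i=p^k b+p^{2k}i$ in the upper half-plane.

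For (i), the inclusion $\partial{\H}^2-\{\infty\}\subseteq\Sigma({\H}^2;A)$ is obtained in the text preceding the proposition. To prove $\infty\notin\Sigma({\H}^2;A)$, I test $a=1$: any $c=\sum_i n_i\,n(b_i)t^{k_i}\in\epsilon^{-1}(1)$ satisfies $\sum_i n_i p^{2k_i}=1$, and requiring $h(c)\subseteq\{\operatorname{Im} z\geq T\}$ with $T>1$ forces $p^{2k_i}\geq T$, hence $k_i\geq 1$, for every $i$. Setting $m:=\min_i k_i\geq 1$, the integer $\sum_i n_i p^{2k_i}$ is then divisible by $p^{2m}\geq p^2$, which contradicts its equaling $1$.

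For (ii), I apply Theorem \ref{closure3}: $\Sigmacirc({\H}^2;A)$ contains the closure of each of its $G$-orbits. If $e\in\Sigmacirc({\H}^2;A)$, then $\text{cl}(Ge)\subseteq\Sigma({\H}^2;A)=\partial{\H}^2-\{\infty\}$, so $\infty\notin\text{cl}(Ge)$. But $\infty\in\text{cl}(Ge)$ for every $e\in\partial{\H}^2$: trivially when $e=\infty$, and when $e\in\R$ because $n(b)\cdot e=e+b\to\infty$ as $b\to\infty$ through $\Z[1/p]$. Hence $\Sigmacirc({\H}^2;A)=\emptyset$.

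For (iii), since every cone-topology neighborhood of a boundary point contains a horoball there, we have $\Sigma\subseteq\Lambda$, so part (i) yields $\partial{\H}^2-\{\infty\}\subseteq\Lambda({\H}^2;A)$; it remains to show $\infty\in\Lambda$. Via the Cayley transform $\phi(z)=(z-i)/(z+i)$, which sends $\infty\mapsto 1$ in the closed Poincar\'e disk, cone neighborhoods of $\infty$ correspond to Euclidean caps around $1$; crucially these caps \emph{do} include points $b+i\in{\H}^2$ with $|b|$ large, which lie in no small horoball at $\infty$. For any $a=\sum_{j\in J}m_j p^{2l_j}\in A$ and any real $b$, set $c_b:=\sum_j m_j\,n(b)t^{l_j}\in\Z G$; then $\epsilon(c_b)=a$, and the finite set $h(c_b)=\{p^{l_j}b+p^{2l_j}i:j\in J\}$ consists of points $z$ for which $|\phi(z)-1|=2/|z+i|\to 0$ as $b\to\infty$, so $h(c_b)$ lies in any prescribed cone-neighborhood of $\infty$ once $b$ is large enough. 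The conceptual point separating (i) and (iii) is that the parabolic subgroup $N\subseteq G$ translates horocycles at $\infty$ along themselves without pushing into smaller horoballs: this sideways motion suffices for cone-topology support at $\infty$ but not for horospherical support there.
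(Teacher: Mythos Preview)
Your proof is correct. For (i), you use the same divisibility contradiction as the paper, but more directly: the paper first reduces $c$ to an element supported on $\{t^k\}_{k\in\Z}$ by successively replacing each $g=n(b)t^k$ in the support with $t^k$ (using $t^k-g\in\ker\epsilon$ and $h(t^k-g)\subseteq\{\operatorname{Im}z\geq q\}$), and only then applies the divisibility argument. You bypass this reduction by observing immediately that $\operatorname{Im}(n(b)t^k\cdot i)=p^{2k}$ regardless of $b$, so the horoball constraint already forces $k_i\geq 1$ for all terms of $c$. This is cleaner and loses nothing.

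For (ii) and (iii) the paper's proof block is silent; your arguments fill these in correctly. Your use of Theorem~\ref{closure3} together with $\Sigmacirc\subseteq\Sigma$ for (ii) is exactly right (one could equivalently cite Theorem~\ref{T:6.1} directly), and your point that $\infty\in\text{cl}(Ge)$ for every $e$ via $n(b)\cdot e=e+b\to\infty$ is the needed observation. For (iii), your parabolic-translation construction $c_b=\sum_j m_j\,n(b)t^{l_j}$ is the natural witness, and your closing remark that sideways motion along horocycles suffices for cone-neighborhood support but not for horoball support pinpoints the geometric distinction between $\Lambda$ and $\Sigma$ at the parabolic fixed point.
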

\begin{proof}Assume we have found $c\in {\Z}G$ such that $\epsilon
(c)=1\in A$ and $h(c)\subseteq \{z\in {\C}\mid \text {Im}(z)\geq q\}$
for some large number $q$. Let $g=
\left[ \begin{array}{cc} p^{k} & b
\\ 0 & p^{-k} \\ \end{array} \right] \in G$ be a group element which
occurs in the support of $c$. Thus $h(g)=p^{2k}i+p^{k}b\in h(c)$. We
note that $t^{k}-g\in {\Z}G$ is in the kernel of $\epsilon $ and
that $h(t^{k}-g)=\{p^{2k}.i, p^{2k}.i+p^{k}b\}$. This shows that we
can replace $c$ by the element $c':=c+m(t^{k}-g)$ without losing the
two conditions $\epsilon (c')=1$ and $h(c')\subseteq \{z\in {\C}\mid
\text{Im}\geq q\}$. Repeating this procedure eventually replaces $c$
by a modified element $c''\in {\Z}G$ with $c''=\Sigma m_{k}t^{k}$, with
$\epsilon (c'')=1$ and $h(c'')=\{z\in {\C}\mid \text {Im}(z)\geq q\}$.
Now $h(c'')=\{p^{2k}.i\mid m_{k}\neq 0\}$, hence $p^{2k}\geq q$ for all
$k$ with $m_{k}\neq 0$. When $q>1$ this implies $k>0$; but $\epsilon
(c)=\Sigma _{k=1}^{l}m_{k}p^{2k}$ is divisible by $2p$, hence $\epsilon
(c)\neq 1$. This shows that $\infty$ is not a horospherical limit point
of $1\in A$.
\end{proof}
\begin{prop} 
$\Sigmacirc ({\H}^{2};B)=\Sigma ({\H}^{2};B)=\Lambda ({\H}^{2};B)=\{\infty \}$
\end{prop}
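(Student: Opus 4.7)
The plan is to compute $\Sigma({\H}^{2};B)$ directly from the definitions, and then deduce the equalities for $\Sigmacirc$ and $\Lambda$.

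For $\infty\in \Sigma({\H}^{2};B)$: $B$ is cyclic as a $G$-module, generated by $a=1$, so every $\beta\in B$ has the form $\beta=\sum_{j} n_{j}\, p^{-2\ell_{j}}$ with $n_{j},\ell_{j}\in \Z$. For any $k\geq 0$ the element $c=\sum_{j} n_{j}\, p^{2k}\, t^{\ell_{j}+k}\in \Z G$ satisfies $\epsilon(c)=\beta$ and $h(c)=\{p^{2(\ell_{j}+k)}i : j\}$, which lies arbitrarily deep in any horoball at $\infty$ as $k\to \infty$. For $e\in \R=\del {\H}^{2}\setminus\{\infty\}$ I will show that the generator $1\in B$ is not supported over sufficiently deep horoballs at $e$: a horoball at $e$ of Euclidean radius $r_{0}$ is the disk tangent to $\R$ at $e$ with center $e+ir_{0}$, and plugging $g\cdot i=p^{k}b+p^{2k}i$ into $(x-e)^{2}+(y-r_{0})^{2}\leq r_{0}^{2}$ gives $p^{4k}\leq 2p^{2k}r_{0}$, hence $p^{2k}\leq 2r_{0}$. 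So any representation $\epsilon(c)=1$ with support in the horoball forces every $t$-exponent $k_{g}$ of $g=n(b)t^{k_{g}}$ in the support of $c$ to satisfy $k_{g}\leq -K$, with $K$ arbitrarily large as $r_{0}\to 0$. But $1=\sum_{g}m_{g}p^{-2k_{g}}$ with all $k_{g}\leq -K$ presents $1$ as an integer multiple of $p^{2K}$, impossible for $K\geq 1$. Thus $\Sigma({\H}^{2};B)=\{\infty\}$.

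Since $\infty$ is $G$-fixed, Remark \ref{fixedpoint} gives $\infty\in \Sigmadoublecirc({\H}^{2};B)\subseteq \Sigmacirc({\H}^{2};B)$, and by Theorem \ref{T:6.1} $\Sigmacirc({\H}^{2};B)=\{e:\text{cl}(Ge)\subseteq\{\infty\}\}$. For any $e\in \R$ the orbit $Ge=\{p^{2k}e+p^{k}b:k\in\Z,\, b\in\Z[\tfrac{1}{p}]\}$ is a nonempty subset of $\R$, so $\text{cl}(Ge)\not\subseteq\{\infty\}$, giving $\Sigmacirc({\H}^{2};B)=\{\infty\}$. By property H2 (cone neighborhoods contain horoballs), $\Sigma({\H}^{2};B)\subseteq \Lambda({\H}^{2};B)$, hence $\infty\in \Lambda({\H}^{2};B)$. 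For $e\in \R$ I will show $e\notin \Lambda({\H}^{2};B)$: a narrow cone neighborhood of $e$ based at $i$ with angular cutoff $\theta$ and distance cutoff $R$ consists of points lying on geodesics from $i$ to boundary points within angle $\theta$ of $e$, beyond hyperbolic distance $R$ from $i$. Every such geodesic descends to $\R$ at exponential rate (the vertical geodesic from $i$ to $0$ has height $e^{-s}$ at hyperbolic distance $s$, and nearby geodesics to boundary points close to $e$ behave comparably), so the imaginary parts of points in the neighborhood are bounded by a quantity tending to $0$ as $R\to\infty$. Thus the support points $p^{k}b+p^{2k}i$ of group elements in such a neighborhood again satisfy $k\leq -K$ with $K$ as large as desired, and the divisibility obstruction from the first paragraph precludes $1\in B$ from being supported there.

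The main obstacle is the cone-neighborhood estimate in the last step: unlike horoballs, cone neighborhoods of $e\in \R$ in ${\H}^{2}$ are not round Euclidean disks, so one must verify the exponential-descent observation about geodesics from $i$ to finite boundary points in order to bound their height. Once this is in place, the divisibility argument of the first paragraph closes everything and yields $\Lambda({\H}^{2};B)=\{\infty\}$.
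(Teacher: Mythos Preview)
Your proof is correct, and the core mechanism---the divisibility obstruction coming from the bound on the imaginary parts of orbit points lying in horoballs (resp.\ cone neighborhoods) at finite boundary points---is exactly the paper's. The organization differs in two minor ways. First, for $\infty\in\Sigmacirc$, the paper directly exhibits the $G$-equivariant push $\varphi(\lambda)=\lambda\cdot p^{2}t$ (right multiplication), which lifts $\text{id}_B$ since $\epsilon(p^{2}t)=1$ and multiplies all imaginary parts by $p^{2}$; this push is precisely what manufactures your representatives $c=\sum_j n_j p^{2k}t^{\ell_j+k}$. Second, for the finite boundary points the paper does not treat each $e\in\R$ directly: instead it observes that the orbit $Ge$ is dense in $\partial{\H}^{2}$ for every $e\neq\infty$, so (since $\Lambda$ is $G$-invariant and closed) it suffices to exclude the single point $0$, which it does by the same divisibility argument you give. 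Your uniform treatment of all $e\in\R$ avoids the appeal to density and closedness of $\Lambda$, at the cost of the cone-neighborhood height estimate you flag at the end. That estimate is genuine but elementary once stated cleanly: in the upper half-plane model the cone topology at a finite boundary point $e\in\R$ coincides with the Euclidean topology there, so a basic cone neighborhood of $e$ is contained in a Euclidean half-disk of small radius about $e$, and every point of such a half-disk has imaginary part bounded by that radius. With this in hand your argument closes exactly as in the horoball case. (Your deduction of $\Sigmacirc=\{\infty\}$ via Theorem~\ref{T:6.1} is correct but redundant: once $\Sigma=\{\infty\}$ and $\infty\in\Sigmacirc$, the inclusion $\Sigmacirc\subseteq\Sigma$ already forces $\Sigmacirc=\{\infty\}$.)
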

\begin{proof} Here the augmentation map $\epsilon :{\Z}G\twoheadrightarrow B$ is given by $\epsilon (n)=1\in B={\Z}[\frac{1}{p}]$ and $\epsilon (t)=p^{-2}$. We consider the $G$-map $\varphi :{\Z}G\to {\Z}G$ given by right multiplication with $p^{2}t$. Since $\epsilon (p^{2}t)=p^{2}p^{-2}=1$, $\varphi $ lifts the identity $\text{Id}_B$. Moreover we see that passing from $h(\lambda )$ to $h(\lambda p^{2}t)$ amounts to multiplying the imaginary part of each point $z\in h(\lambda )$ by $p^2$. Hence $\varphi $ pushes all control images towards $\infty $, i.e. $\infty \in \Sigmacirc ({\H}^{2};B)$. This shows that 
$\{\infty \}\subseteq \Sigmacirc ({\H}^{2};B)\subseteq \Sigma ({\H}^{2};B)=\Lambda ({\H}^{2};B)$.

For the opposite inclusion we note that the $G$-orbit $Ge$ is dense in $\partial {\H}^2$ for each $e\in \partial {\H}^{2}-\{\infty \}$. Hence if $\Lambda ({\H}^{2};B)$ contained a point $e\neq \infty$ it would contain all of $\partial {\H}^2$. An argument similar to the one showing that $\infty \notin \Sigma ({\H}^{2};A)$ shows that $0\notin \Sigma ({\H}^{2};B)$. The details are omitted.
\end{proof}


\newpage
\renewcommand{\appendixtocname}{Appendix}
\section*{Appendix: The connection with (tropical) algebraic geometry: \\$\Sigma (G\otimes {\R};A)$ when $G$ is abelian}\label{tropical}

$\;\;\;\;\;\;\;\;\;\;\;\;\;\;\;\;\;\;\;\;\;\;\;\;\;\;\;\;\;\;\;\;\;\;\;\;\;\;\;\;\;$ by Robert Bieri

\renewcommand{\thesubsection}{\Alph{subsection}}

\setcounter{subsection}{0}
\numberwithin{thm}{subsection}
\numberwithin{equation}{subsection}

In Section \ref{translation} we have seen that when
$M=G_{\text{ab}}\otimes {\R}$ is the canonical Euclidean $G$-space
then the horospherical limit set $\Sigma(M;A)$ coincides with $\Sigma
^{0}(G;A)$, the $BNSR$-geometric invariant of the pair $(G,A)$ in
dimension zero. To compute $\Sigma (G_{\text{ab}}\otimes {\R};A)=\Sigma
^{0}(G;A)$  is still not an easy matter. The direct approach, based
solely on the definition, is only successful in specific examples; and
no general alternative method is known --- except in the case when the
group $G$ is abelian. In that case the group ring ${\Z}G$ is commutative
and Noetherian so that powerful methods from commutative algebra are
available. That is the background of this Appendix.

When $G$ is an abelian group $\Sigma (G\otimes {\R};A)=\Sigma ^{0}(G;A)$  is
really the older geometric invariant which was introduced in \cite{BS80},
while the powerful method to compute it together with its geometric
consequences was established only in \cite{BGr84} --- see Part C of Subsection \ref{subsection}.

Part \ref{global} of this Appendix is a review of the main results of that
paper: It establishes that the complement of $\Sigma ^{0}(G;A)$ in $\del
{\R}^n$ is the image of a polyhedral subset $\Delta \subseteq {\R}^n$
under the radial projection\footnote{Throughout, whenever we apply $\rho $ to a set it is understood that the point $0$ is to be removed from that set first.} $\rho :{\R}^{n}-\{0\}\to \del {\R}^n$. In the
late nineties $\Delta$ was recognized (e.g. \cite{St02}, \cite{EKL06} and \cite{MS15}) as the integral
version of the tropical variety associated to the annihilator ideal $I$ of
$A$. See Corollary \ref{radial} below.

Part B is concerned with the result of Einsiedler, Kapranov, and Lind
\cite{EKL06} establishing a direct connection between $\Delta $ and the
affine algebraic variety $V$ of $I$. In their language $\Delta $ is the
non-Archimedean part of the adelic amoeba of $V$. 

\subsection {$\Sigma (G;A)$ in terms of the global tropical variety of the annihilator ideal of $A$}\label{global}

\subsubsection{\bf The set-up}

In this Appendix $G$ will always denote a finitely generated abelian group, and we will write $\Sigma (G;A)$ rather than $\Sigma ^{0}(G;A)$.

By a {\it character} on $G$ we mean a homomorphism $\chi :G\to {\R}$ into the additive group of real numbers, and we write $G_{\chi }\subseteq G$ for the submonoid $\{g\in G\mid \chi (g)\geq 0\}$. Two characters $\chi $ and $\chi '$ are {\it equivalent}
if $\chi =r\chi '$ for some positive constant $r\in {\R}$. We
write $[\chi]={\R}_{> 0}\chi $ for the equivalence class of $\chi$,
$\del \text{Hom}(G,{\R})$ for $\{[\chi ] \mid 0\neq \chi \in \text{Hom}(G,{\R})\}$,
and $\rho :\text{Hom} (G,{\R})-\{0\}\to \del \text{Hom} (G,{\R})$  for the {\it radial projection} $\rho (\chi )=[\chi ]$.

The affine $G$-space $\text{Hom} (G,{\R})$ is isomorphic to $M=G\otimes
{\R}\cong {\R}^n$, and the half-spaces and corresponding filtrations of
$M$ can be described by characters $\chi $ on $G$ and their rays $[\chi]$
without reference to an inner product on $M$. Thus we can interpret the
horospherical limit set $\Sigma (G;A)$ as a subset of the sphere $\del \text{Hom}(G,{\R})$,
with the advantage that we can ignore the metric on $M$ when we study
the functorial properties of $\Sigma (M;A)$ with respect to the group argument.

Some of the computational difficulties with $\Sigma (G;A)$
disappear when the base ring $\Z$ is replaced by a field. In order to
cover both cases throughout, we let $D$ denote a Noetherian domain and we assume that $A$ is a finitely generated $DG$-module. Then we consider the (open) subset of the sphere $\del \text{Hom}(G,{\R})$

\begin{equation}\label{A.0}
\Sigma _{D}(G;A)=\{[\chi ]\mid A \text{ is finitely generated as a } 
DG_{\chi }\text{-module}\}.
\end{equation}

Thus $\Sigma _{{\Z}}(G;A)=\Sigma (G;A)$ --- see Part A of Subsection \ref{subsection}.

It is often more convenient to work with the complement of $\Sigma
_{D}(G;A)$ in $\del \text{Hom}(G,{\R})$; we denote it by $\Sigma
_{D}(G;A)^c$.

\subsubsection{\bf $\Sigma _{D}(G;A)$ in terms of valuations}\label{computation}

The first step in computing $\Sigma _{D}(G;A)$ was available
at the time of \cite{BS80} and reappears as the special case of our
Corollary \ref{initial} where $M$ is the canonical Euclidean $G$-space
$G\otimes {\R}$. As the translation action of $G$ on $M$ induces the
trivial action on $\del M$, that corollary recovers Formula (2.3) of \cite{BS80}
which asserts

\begin{equation}\label{A.1}
\Sigma _{D}(G;A)=\{[\chi ]\mid \text{ there exists }\lambda \in DG \text{ with }\lambda A=0 \text{ and }\lambda _{\chi }=1\}
\end{equation}

\noindent where $\lambda _{\chi }$ stands for the {\it initial part of} $\lambda $,
collecting the monomials $mg$ of $\lambda $ with minimal $\chi (g)$. This
shows, in particular, that $\Sigma _{D}(G;A)=\Sigma _{D}(G;DG\slash
I)$ where $I = \text{Ann}_{DG}(A)$ denotes the annihilator ideal of $A$ in the group algebra $DG$.

By (\ref{A.1}) we can (and did) compute $\Sigma _{D}(G;A)$ when the
annihilator ideal of $A$ is principal. But to compute it in more general
situations requires a new ingredient: valuation theory\footnote{Theorem
1.2 of \cite{BS81} provided a first description of the complement 
$\Sigma _{D}(G;DG\slash I)^{c}$ in terms of valuations on $R$; and Theorem 8.1 of
\cite{BGr84} improved this result by showing, (\ref{complement}) below,
that only real valuations on $R$ are needed.} on the commutative ring
$R=DG\slash I$.

Here are the basic facts on valuations. We write ${\R}_{\infty}$ for ${\R}\cup \{\infty\}$. By a (non-Archimedean real) {\it valuation} on a commutative ring with unity $R$ we mean a map $w:R\to {\R}_{\infty}$ with the properties
$$w(0)=\infty, w(1)=0, w(ab)=w(a)+w(b), w(a+b)\geq \text {min}\{w(a), w(b)\}$$
for all $a,b \in R$. 

We write $\text{val}(R)$ for
the set of all valuations on $R$. Two valuations on $R$ are {\it equivalent}
if they coincide up to multiplication by a positive real constant. If
$J$ is an ideal in $R$ then composition with the canonical projection
identifies $\text{val}(R\slash J)$ with $\{w\in \text{val}(R)\mid
w(J)=\infty\}$. The {\it center}, $w^{-1}(\infty)$, of the valuation $w$
is always a prime ideal in $R$; in particular, it cannot contain a unit of
$R$. The valuation $w$ is {\it centerless} if $w^{-1}(\infty)=\{0\}$. Each valuation $w$ on $R$ factorizes canonically via a unique centerless valuation $w'$ on $R\slash w^{-1}(\infty )$. The centerless valuation which only takes on the values $0$ and $\infty $ will be denoted by $0$.

As above, we take $R =DG\slash I$ where $I$ is the annihilator ideal of the $DG$-module $A$. The image of
$G$ under the canonical projection $\kappa :DG\twoheadrightarrow R$ is a
group of units of $R$; hence $w(\kappa (G))\subseteq {\R}$ for every $w\in
\text{val}(R)$.  Each valuation $w$ on $R$ induces a character on $G$,
$\chi _{w}:=w\kappa |_{G}:G\to {\R}$ and a valuation on $D$ $w\kappa|_{D}:D\to
{\R}_\infty$. By Theorem 8.1 of \cite{BGr84} every element of $\Sigma
_{D}(G;R)^{c}$ is represented by a character $\chi _{w}$ on $G$ induced by
valuations on $R$ with $w(D)\geq 0$\footnote{This should be $w\kappa$, but from now on we drop the $\kappa $ for ease of notation.}, while the converse was already observed in [BS81],
i.e. we have \footnote{George Bergman
\cite{Be71} proved that if $D=k$ is a field then the right hand sides of
(\ref{A.1}) and (\ref{complement}) coincide, and he conjectured that
this is a polyhedral set. Therefore $\Sigma _{k}(G;kG\slash I)^{c}$ is called the {\it Bergman fan} of $I$.}

\begin{equation}\label{complement}
\Sigma _{D}(G;R)^{c}=\rho (\{w\kappa |_{G}\mid w\in \text{val}(R), w\kappa (D)\geq 0\}). 
\end{equation}

\noindent In simple cases when all valuations on $R$ are known --- e.g., when $R$
is a ring of rational numbers --- (\ref{complement}) computes $\Sigma _{D}(G;R)^{c}$ immediately. In that sense (\ref{complement}) is a more powerful tool than (\ref{A.1}).

\subsubsection{\bf The ``local" tropical variety $\Delta ^{v}(G;K)$ of the ideal $I\leq DG$}\label{secA2}

Let $\kappa :DG\thra R=DG\slash I$ be as above. Given a valuation $v:D\to
{\R}_{\infty}$ with $\text{ker}(\kappa |D)\subseteq v^{-1}(\infty )$ and a
commutative ring $K\supseteq R$ containing $R$ as a subring, we consider
the subset of $\text{Hom}(G,{\R})$,

\begin{equation*}
\Delta ^{v}(G;K):=\{w\kappa |_{G}\mid w\in \text{val}(K) \text{ with }w\kappa|_{D}=v\}
\end{equation*}

\noindent (Usually $K$ will be $R$ itself.) Then  (\ref{complement}) breaks up as 

\begin{equation}\label{union}
\Sigma _{D}(G;R)^c=\bigcup _{v}\rho (\Delta ^{v}(G;R))
\end{equation}
\noindent where $v$ runs through all valuations on $D$ with $v(D)\geq 0$.

It turns out that the sets $\Delta ^{v}(G;R)$ are the key to the
geometry of $\Sigma _{D}(G;R)^c$; they are much better behaved than
their images $\rho (\Delta ^{v}(G;R))$ under the radial projection. In
fact they have some excellent geometric properties which, although
distorted under radial projection, still impose restrictions on
$\rho (\Delta ^{v}(G;R))$ and hence on $\Sigma _{D}(G;R)^c$. Later
developments confirmed the importance of the $\Delta ^{v}(G;R)$: they reappeared
--- independent of \cite{BGr84} but enhanced with the computational power
of Gr\"{o}bner basis techniques --- as the central objects of Tropical
Geometry\footnote{Tropical Geometry emerged in the early nineties as a
systematic attempt to investigate the analogue of algebraic geometry over
the {\it tropical semi-ring} $({\R}, \oplus, \odot)$, the real numbers
with addition $a\oplus b=\text { min}\{a,b\}$ and $a\odot b=a+b$. See
for example \cite{St96} or \cite{RST05}.  By the turn of the
millenium it had mellowed from a $PL$ analogue of algebraic geometry
into a powerful tool for applications in classical algebraic geometry; see
\cite{M05}, \cite{MS15}.}. We call $\Delta ^{v}(G;R)$ {\it the tropical variety}\footnote{In the
literature the tropical variety of a polynomial ideal $I$ is usually
defined as the intersection of the singularity locus of the tropical
polynomials associated to the elements of $I$. The equivalence of the
two definitions drops out as a byproduct of the proof of the main results in 
\cite{BGr84}; see Section \ref{proofs} } {\it of the ideal $I$ with respect to the valuation $v$.}

A closed subset $\Delta \subseteq {\R}^n$ is {\it polyhedral}
if it is a finite union of finite intersections of closed half spaces;
it is {\it rationally defined} over the additive subgroup $S\leq {\R}$
if the half-spaces are defined by linear equations of the form $f(x)=s$ where $f(x)$ has rational coefficients and $s\in S$. Here is the first main result on $\Delta ^{v}(G;R)$:

\begin{thm}\cite{BGr84}\label{polyhedral1}The subset $\Delta ^{v}(G;R)\subseteq \text{\rm Hom}(G,{\R})$ is polyhedral and rationally defined over the additive group generated by ${\Z}$ and the finite values in $v(D)$.
\end{thm}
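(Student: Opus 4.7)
The plan is to follow the valuation-theoretic route set up in Subsection \ref{computation} and combine it with a polyhedral decomposition of $\text{Hom}(G,\R)$ coming from initial ideals.

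First, I reduce to the case where $G$ is torsion-free. Every character $\chi\colon G\to\R$ vanishes on the torsion subgroup $T\le G$, so a valuation on $R$ extending $v$ and realizing $\chi$ factors through $D(G/T)/\bar I$ for the induced ideal $\bar I$, and $\Delta^{v}(G;R)$ is identified with $\Delta^{v}(G/T;R/TR)$ as a subset of $\text{Hom}(G,\R)=\text{Hom}(G/T,\R)$. Thus I assume $G=\Z^n$, so that $DG$ is the Laurent polynomial ring $D[x_1^{\pm 1},\dots,x_n^{\pm 1}]$ and a character is a weight vector in $\R^n$. For $\lambda=\sum_{g}d_{g}g\in DG$ write $\text{trop}_{v,\chi}(\lambda):=\min\{v(d_g)+\chi(g)\mid g\in\text{supp}(\lambda)\}$.

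Next I establish the valuation extension criterion: $\chi\in\Delta^{v}(G;R)$ if and only if for every nonzero $\lambda\in I$ the minimum $\text{trop}_{v,\chi}(\lambda)$ is attained by at least two terms of $\lambda$. The forward direction is the ultrametric inequality: a valuation $w$ with $w|_G=\chi$ and $w|_D=v$ must give $w(\kappa(\lambda))=\infty$ when $\lambda\in I$, which forces at least two summands to compete for the minimum. The converse, which is the analytic core, requires building an actual valuation on $R$ from the combinatorial data; I do this by passing to the graded ring associated to the filtration by $\text{trop}_{v,\chi}$, verifying that the hypothesis makes this graded ring nonzero (so it has a minimal prime), and then applying a Chevalley-type extension theorem to produce a real-valued valuation on the fraction field of a suitable quotient with the prescribed restriction to $G$ and to $D$.

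The polyhedrality then reduces to a Gr\"obner-style finiteness statement. Since $D$ is Noetherian and $G$ is finitely generated abelian, $DG$ is Noetherian, so $I$ is finitely generated, say by $\lambda_{1},\dots,\lambda_{r}$. For each $\chi$ the initial ideal $\text{in}_{v,\chi}(I)$ is generated (in the appropriate graded sense) by the leading forms $\lambda_{v,\chi}$ of elements of $I$. I show by Noetherian induction on the supports of elements produced from $\lambda_{1},\dots,\lambda_{r}$ by the standard $S$-polynomial-like reductions that the map $\chi\mapsto \text{in}_{v,\chi}(I)$ takes only finitely many values, and that each level set is the relative interior of a polyhedral cone whose defining inequalities are of the form $v(d_g)+\chi(g)\gtrless v(d_{g'})+\chi(g')$, with $(g,g')$ ranging over pairs of exponents appearing in a finite augmented generating family. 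This produces a finite polyhedral subdivision of $\text{Hom}(G,\R)$ whose defining inequalities have integer coefficients on the $\chi$-side and constants lying in the $\Z$-span of the finite values of $v(D)$. The set $\Delta^{v}(G;R)$ is then the union of those closed cells on which $\text{in}_{v,\chi}(I)$ contains no nonzero monomial, giving the desired polyhedral, rationally defined subset.

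The hard part is the finiteness statement for the initial ideals and the polyhedrality of its cells of constancy. Without modern Gr\"obner-fan machinery the proof requires a careful inductive argument: one partitions $\text{Hom}(G,\R)$ according to the combinatorial type (supports and $\min$-achieving subsets) of each $\lambda_{i}$ with respect to $v$ and $\chi$, and then uses Noetherianity to show that on each open polyhedral stratum the initial ideal is in fact generated by $\lambda_{1,v,\chi},\dots,\lambda_{r,v,\chi}$ together with finitely many syzygy-driven consequences, so that only finitely many initial ideals arise. Tracking coefficients through this argument yields the rationality statement, since the only constants entering the linear inequalities are differences $v(d_g)-v(d_{g'})$ of values of $v$ on coefficients of the generators.
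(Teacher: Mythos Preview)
Your approach is valid in outline but takes a genuinely different route from the one the paper describes. The paper does not itself prove Theorem~\ref{polyhedral1}; it cites \cite{BGr84} and sketches that proof in Subsection~\ref{proofs}. The method there is what the authors call ``Tomography by Generic Projections'': after reducing to the case where $R$ is a domain containing $k=\text{Frac}(D)$, one first handles the case where $G$ is generated by a transcendence basis $Y$ of $K=\text{Frac}(R)$ together with one element $\alpha$ algebraic over $k(Y)$, using the Newton polygon to read off all valuation extensions and thereby obtaining polyhedrality directly. Then one shows that $G$ contains a dense family of such subgroups $H$, that the restriction map $\text{Hom}(G,\R)\to\text{Hom}(H,\R)$ carries $\Delta^v(G;R)$ onto $\Delta^v(H;R)$, and that finitely many such $H$ suffice to exhibit $\Delta^v(G;R)$ as an intersection of polyhedral preimages.

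Your Gr\"obner/initial-ideal approach is precisely the alternative the paper mentions at the end of Subsection~\ref{proofs} as the ``tropical geometers' proof'': shorter and algorithmic, but less tied to the intrinsic geometry. Two cautions. First, the standard Gr\"obner-fan finiteness is developed over a valued \emph{field}; you work over a valued Noetherian domain $D$, and your sketch does not make the reduction to the field case that the paper performs (passing to minimal primes and then to $k=\text{Frac}(D)$). Second, your ``converse'' step---producing an actual valuation on $R$ with prescribed $\chi$ and $v$ from the tropical/initial-ideal condition---is the substantive analytic input and deserves more than an appeal to a Chevalley-type theorem; in \cite{BGr84} this is handled by Theorem~6.1 there, and in the tropical literature it is the content of the ``fundamental theorem'' of tropical geometry. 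Your sketch is honest about where the work lies, but as written neither hard step is actually carried out.
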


{\bf $\Delta ^{v}(G;R)$ in terms of valuations on fields}

Each valuation $w$ on $R$ with $w\kappa \mid _{D}=v$ factorizes via
$R\thra R\slash Rv^{-1}(\infty )$ and hence via some $R\slash RP_i$,
where $\{P_{1},\dots P_{m}\}$ is the set of prime ideals in $R$ which
are minimal over $Rv^{-1}(\infty )$. This shows that

\begin{equation*}
\Delta ^{v}(G;R)=\bigcup _{i=1}^{m}\Delta ^{v'}(G;R\slash P_{i})
\end{equation*}

\noindent and thus reduces computation to the case where $R$ is a domain. Thus computation is reduced to the case where $R$ is a domain,
$D\subseteq R$ and $v:D\to {\R}_{\infty}$ is centerless.

In this situation we can refer to Theorem 6.1 of \cite{BGr84} which asserts that if $w:R\to {\R}_{\infty}$ is a valuation on $R$ which extends the centerless valuation $v$ then there is a centerless valuation $w':R\to {\R}_{\infty}$, extending $v$, with 
$w\kappa \mid _{G}= w'\kappa \mid _{G}$. Hence
\begin{equation*}
\Delta ^{v}(G;R)=\Delta ^{v}(G;K)
\end{equation*}
\noindent when $v$ is centerless and $K$ is any field that contains the domain $R$.

\subsubsection{\bf The case when $D$ is a Dedekind domain}

When $D$ is a Dedekind domain every non-trivial prime ideal of $D$
is maximal, the non-centerless valuations $v$ on $D$ with $v(D)\geq 0$
have their values in $\{0,{\infty}\}$ and hence have singleton equivalence classes;
the same holds for the trivial valuation on $D$. Any other non-negative
valuation $v$ on $D$ is equivalent to one of the familiar ${\mathfrak
p}$-adic valuations $v_{\mathfrak p}$ associated to the non-trivial prime
ideals ${\mathfrak p}$, and given by the exponent of $\mathfrak p$ in the
unique prime decomposition of the ideals of $D$. Thus, associated to every
non-trivial centerless valuation $v$ on $D$ is a ${\mathfrak p}$-adic
valuation $v_{\mathfrak p}$ equivalent\footnote{Two valuations are {\it equivalent} if one is a positive real multiple of the other.} to $v$, and a non-centerless
valuation $\widehat v$ with ${\mathfrak p}={\widehat v}^{-1}(\infty)$.

It turns out that when $R=DG\slash I$ is a domain containing $D$ then the tropical varieties 
$\Delta ^{{\widehat v}}(G;R)$ and $\Delta ^{0}(G;R)$ are determined by, and determine, the local behavior of $\Delta ^{v}(G;R)$ at $0$ and $\infty $. To explain this we introduce some terminology on a polyhedral subset $\Delta $ of the affine space
${\R}^n$. To describe the behavior of $\Delta $ in the neighborhood of a
point $x\in \Delta $ we consider the union of all rays $[x,e) \subseteq
{\R}^n$ which emanate from $x$ by starting with an initial segment of
positive length in $\Delta $. We call this the {\it local cone} at $x$
and we denote it by $\text{LC}_{x}(\Delta)$; its boundary at infinity
$\del \text{LC}_{x}(\Delta)$, is also called the {\it link of} $x$ {\it in} $\Delta $, 
denoted $\text{\rm lk}_{\Delta}(x)$. To
describe $\Delta $ in the neighborhood of infinity, we consider the
union of all rays $[0,e)$ emanating from the base point $0\in {\R}^n$
with the property that $e\in \del {\R}^n$ is represented by a ray in $\Delta $. We
call this the {\it local cone} of $\Delta $ {\it at infinity} and we
denote it by $\text{LC}_{\infty}(\Delta)$. Its boundary at infinity,
$\del (\text{LC}_{\infty}(\Delta))$, coincides with $\del \Delta$, the
set of all equivalence classes of rays in $\Delta $. 

\begin{thm}\label{local}(\cite{BGr84}) Let $D$ be a Dedekind domain endowed with a non-zero centerless valuation $v$ and embedded in the domain $R=DG\slash I$. Then we have 

$\Delta ^{\widehat v}(G;R)={\rm LC}_{0}(\Delta ^{v}(G;R))$
 
and

$\Delta ^{0}(G;R)=\text{\rm LC}_{\infty}(\Delta ^{v}(G;R))$.
\end{thm}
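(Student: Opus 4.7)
The plan is to establish each of the two equalities by a double inclusion, exploiting the one-parameter family of tropical varieties $\Delta^{rv}(G;R)=r\Delta^v(G;R)$ obtained from the observation that if $w$ on $R$ extends $v$ then $rw$ extends $rv$, with character scaled by $r$. Since $D$ is Dedekind and $v$ is, up to equivalence, the centerless $\mathfrak{p}$-adic valuation, the family $rv$ interpolates pointwise between the two extreme valuations on $D$: as $r\to 0^+$, $rv(d)\to 0$ for every non-zero $d\in D$, so $rv\to 0$; as $r\to\infty$, $rv(d)\to\infty$ for $d\in\mathfrak{p}\setminus\{0\}$ while $rv(d)=0$ for $d\notin\mathfrak{p}$, so $rv\to\widehat{v}$. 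It is this pair of pointwise limits that couples the $r\to 0$ end of $\Delta^v$ to the generic fibre $\Delta^0$ via $\text{LC}_\infty$, and the $r\to\infty$ end to the special fibre $\Delta^{\widehat{v}}$ via $\text{LC}_0$.

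For the inclusions $\Delta^0(G;R)\supseteq \text{LC}_\infty(\Delta^v)$ and $\Delta^{\widehat{v}}(G;R)\supseteq \text{LC}_0(\Delta^v)$ I would argue by a rescaling-and-limit procedure. A direction represented by $\chi$ lying in $\text{LC}_\infty(\Delta^v)$ is witnessed by valuations $w_n$ on $R$ extending $v$ whose characters $\chi_n=w_n\kappa|_G$ tend to infinity in the direction of $\chi$; choosing scalars $r_n\to 0^+$ so that $r_n\chi_n\to \chi$, the rescaled valuations $r_n w_n$ extend $r_n v\to 0$, with characters converging to $\chi$. A limit $u_\infty$ taken in the compact Zariski-Riemann space of valuations on the fraction field of $R$ then extends $0$ and has character $\chi$, putting $\chi$ in $\Delta^0$. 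The parallel argument with scalars $r_n\to\infty$ (so that $r_n v\to\widehat{v}$), applied to characters in $\Delta^v$ along a short initial segment of a ray issuing from the origin, gives the $\text{LC}_0$ inclusion into $\Delta^{\widehat{v}}$.

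The reverse inclusions require producing, from a given valuation extending $0$ (respectively $\widehat{v}$), a whole ray of valuations on $R$ extending $v$ with the prescribed asymptotic direction. Here I would use composite places. Given a valuation $w_0$ on $R$ extending $0$ and any valuation $w$ on $R$ extending $v$ (the existence of the latter being the only non-vacuous case), the residue field $k_w$ of $w$ contains $D/\mathfrak{p}$; since $w_0$ is trivial on $D$ it descends compatibly, and composition of places supplies for each $t>0$ a valuation $w_t$ on $R$ whose restriction to $D$ remains $v$ and whose character is $\chi_w+t\chi_{w_0}$, producing an honest ray in $\Delta^v$ toward $[\chi_{w_0}]$ as $t\to\infty$. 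For $\Delta^{\widehat{v}}\subseteq \text{LC}_0(\Delta^v)$ the analogous construction starts from a valuation extending $\widehat{v}$, which necessarily factors through $R/\mathfrak{p}R$, and composes it with an extension of $v$ in the opposite order to yield a family shrinking toward the origin along the direction of the prescribed character.

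The principal obstacle will be making the ``limit'' and ``composite'' constructions of valuations rigorous, since the pointwise sum of two valuations is not in general a valuation. Both fixes proceed through the Zariski-Riemann space of valuation rings inside the fraction field $K$ of $R$: its compactness supplies the required limits, and place composition (equivalently, specialization of valuation rings) supplies the additive-looking perturbations without ever summing values pointwise. Polyhedrality of $\Delta^v(G;R)$, established in Theorem~\ref{polyhedral1}, reduces the verifications to rational interior points of the relevant polyhedra, where explicit representing valuations are available; once the inclusions have been checked on such a dense set, both equalities follow by closure.
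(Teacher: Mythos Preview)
The paper does not prove this theorem itself; it is quoted from \cite{BGr84}, and the outline in Section~\ref{proofs} indicates that the argument there first treats the case where $G$ is generated by a transcendence basis $Y$ of $K=\mathrm{Frac}(R)$ over $k=\mathrm{Frac}(D)$ together with one further algebraic element $\alpha$, using the Newton polygon of the minimal polynomial of $\alpha$ to determine exactly which real values an extension of $v$ can assign to $\alpha$, and then reduces the general case to this one by ``tomography by generic projections'' --- restricting to finitely many such subgroups $H\leq G$ so that $\Delta^v(G;R)$ is recovered as the intersection of the preimages of the computable sets $\Delta^v(H;R)$. Your scaling-and-limit approach is a genuinely different route; the observation $\Delta^{rv}(G;R)=r\,\Delta^v(G;R)$ and the pointwise limits $rv\to 0$, $rv\to\widehat v$ are correct and a reasonable starting point.

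The real gap is the composite-places step for the reverse inclusions. Composition of a valuation $w$ on $K$ with a valuation on its residue field $k_w$ yields a \emph{single} valuation whose value group is a lexicographic extension, not embeddable in $\mathbb{R}$ when both factors are non-trivial; it cannot produce the one-parameter family of real-valued valuations ``$w_t$ with character $\chi_w + t\chi_{w_0}$'' that you assert. Your phrase ``since $w_0$ is trivial on $D$ it descends compatibly'' has no content: $w_0$ lives on $K$, not on $k_w$, and there is no map along which it could descend (and if you compose in the other order --- first $w_0$, then an extension of $v$ to $k_{w_0}\supseteq D$ --- the composite restricted to $D$ takes the value $(0,v(d))$ in a lex product, again not real-valued). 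The Zariski--Riemann compactness argument for the forward inclusions has a parallel defect: limits in that space are valuation \emph{rings}, not $\mathbb{R}$-valued valuations, and nothing in your sketch forces the character attached to a limit point to equal the limit of the characters. The forward inclusions can probably be rescued by arguing directly with the initial-term description (\ref{A.1}) and closedness of the $\Delta$-sets, but for the reverse inclusions you need an actual construction of real-valued valuations on $R$ with prescribed character and prescribed restriction to $D$, and place composition does not supply one; this is exactly where \cite{BGr84} brings in the Newton polygon.
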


Theorem \ref{local} shows, in particular, that the sets $\rho (\Delta
^{\widehat v}(G;R))$ are redundant in the union on the right hand side of
(\ref{union}), and since the radial projection $\rho (\Delta ^{v}(G;R))$
depends only on the equivalence class of $v$, we find that (\ref{union})
reduces to

\begin{equation}\label{specunion}
\Sigma _{D}(G;R)^{c}=\rho (\bigcup _{{\mathfrak p}\in \text{Spec}(D)}\Delta ^{v_{\mathfrak p}}(G;R))
\end{equation}

The next theorem shows that almost all of the sets $\Delta ^{v_{\mathfrak p}}(G;R)$ are redundant:

\begin{thm}(\cite {BGr84})\label{finiteset} With $D$ as in Theorem \ref{local}, there is a finite set $S\subseteq \text{\rm Spec}(D)$ with the property that 
$$\Delta ^{v_{\mathfrak p}}(G;R)=\Delta ^{0}(G;R)$$
\noindent for all prime ideals ${\mathfrak p}\notin S$.
\end{thm}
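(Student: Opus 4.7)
The plan is to isolate a finite set of ``bad'' primes from a finite generating set of the annihilator ideal $I$ and show that off this set, no new tropical directions are produced by the $\mathfrak p$-adic valuations beyond those already in $\Delta^{0}(G;R)$. First I would exploit Theorem \ref{local}: the identity $\text{LC}_{\infty}(\Delta^{v_{\mathfrak p}}(G;R)) = \Delta^{0}(G;R)$ holds for every non-trivial $\mathfrak p$, so the two polyhedra in question always agree at infinity. What remains is therefore to show that, for all but finitely many $\mathfrak p$, $\Delta^{v_{\mathfrak p}}(G;R)$ already \emph{coincides} with its own asymptotic cone $\Delta^{0}(G;R)$, rather than strictly containing extra polyhedral pieces closer to the origin.

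To pick out the exceptional set $S$, I would fix a finite $DG$-generating set $f_{1},\dots ,f_{m}$ of $I$ and let $C\subseteq D\setminus\{0\}$ be the finite collection of coefficients that appear when the $f_i$ are expanded in the $D$-basis $G$ of $DG$. Since each nonzero element of a Dedekind domain lies in only finitely many prime ideals, the set $S := \{\mathfrak p\in \text{Spec}(D) \mid v_{\mathfrak p}(c)>0 \text{ for some } c\in C\}$ is finite, and for $\mathfrak p\notin S$ every coefficient in $C$ is a $\mathfrak p$-adic unit. If necessary I would further enlarge $S$ (still finitely) to absorb primes at which the minimal-prime decomposition of $\mathfrak pR$ degenerates, so that the earlier reduction to centerless valuations on the residue domains of $R$ goes through uniformly for $\mathfrak p\notin S$.

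The heart of the argument is to construct, for $\mathfrak p\notin S$, a correspondence between valuations $w$ on $R$ extending $v_{\mathfrak p}$ and valuations $w'$ on $R$ extending the trivial valuation $0$, with $w\kappa|_{G}=w'\kappa|_{G}$; once this is done, $\Delta^{v_{\mathfrak p}}(G;R)$ and $\Delta^{0}(G;R)$ have the same underlying set of characters, and since both are polyhedral and rationally defined over $\mathbb Z$ by Theorem \ref{polyhedral1}, they are equal. The main obstacle will be this comparison, because it must turn the coefficient-level genericity of $\mathfrak p$ into a genuine bijection of valuations on $R$ preserving their restriction to $G$. One has to handle the residue characteristic of $D/\mathfrak p$ potentially introducing extra centerless valuations on the reduction, ramification of $v_{\mathfrak p}$ when extended to the residue domains of $R$, and the possibility that $\mathfrak p R$ fails to be prime; each of these phenomena is controlled by discriminant-like data of the finitely many generators $f_{1},\dots,f_{m}$ and is therefore triggered by only finitely many $\mathfrak p$, all of which can be absorbed into $S$.
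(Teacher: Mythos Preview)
The paper does not give its own proof of this statement; it is quoted from \cite{BGr84}, and the only indication of method is the sketch in Section~\ref{proofs}, where everything is reduced via ``tomography by generic projections'' to finitely many subgroups $H=\text{gp}(Y,\alpha)$ with $Y$ a transcendence basis and $\alpha$ algebraic over $k(Y)$. For each such $H$ the set $\Delta^{v}(H;R)$ is read off from the Newton polygon of the minimal polynomial of $\alpha$, and the finite set $S$ consists of the primes of $D$ at which some coefficient of one of these finitely many minimal polynomials fails to be a unit. Off $S$ the Newton polygons for $v_{\mathfrak p}$ and for the trivial valuation coincide, hence so do the finitely many $\Delta^{v}(H;R)$, and therefore their intersection $\Delta^{v}(G;R)$.

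Your proposal is different in kind, and it has a genuine gap. The step you call ``the heart of the argument'' --- producing, for every valuation $w$ on $R$ with $w|_{D}=v_{\mathfrak p}$, a valuation $w'$ with $w'|_{D}=0$ and $w'\kappa|_{G}=w\kappa|_{G}$, and conversely --- is precisely the content of the theorem, and you have not constructed it. You list the phenomena that could obstruct such a correspondence (residue characteristic, ramification, $\mathfrak p R$ not prime) and assert they are ``controlled by discriminant-like data of the finitely many generators $f_{1},\dots,f_{m}$'', but there is no well-defined discriminant attached to an arbitrary generating set of a Laurent-polynomial ideal, and no argument is given that any such invariant governs the restriction-to-$G$ map on valuations. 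In particular, your set $S$ --- the primes dividing the coefficients of a chosen generating set of $I$ --- is not known to suffice: an arbitrary generating set is not a tropical basis, so $\bigcap_{i}\text{Trop}_{v}(f_{i})$ can strictly contain $\Delta^{v}(G;R)$, and the primes at which this discrepancy occurs need have nothing to do with the coefficients of the $f_{i}$. What makes the \cite{BGr84} argument go through is exactly that the generic projections reduce to \emph{principal} ideals (single minimal polynomials), where Newton polygons give $\Delta^{v}(H;R)$ on the nose and the dependence on $v$ is transparently through finitely many coefficient values.
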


As a consequence we find that $$\Delta ^{D}(G;R):= \bigcup _{{\mathfrak
p}\in \text{\rm Spec}(R)}\Delta ^{v_{\mathfrak p}}(G;R)$$ \noindent is
in fact a finite union. Hence $\Delta ^{D}(G;R)$ is polyhedral and is
a tropical variety. We call it {\it the global tropical variety of the
ideal} $I=\text{Ann}(A)$. We have now reached the goal of expressing
our invariant in terms of the radial projection of a tropical variety:

\begin{cor}\label{radial} If $D$ is a Dedekind domain, $A$ a finitely generated $DG$-module, and $R=DG\slash \text{\rm Ann}_{DG}(A)$, then 
\begin{equation}
\Sigma _{D}(G;A)^{c}=\rho (\Delta ^{D}(G;R))
\end{equation}
\noindent and is a rational polyhedral subset of the sphere $\del \text{Hom}(G,{\R})$.
\end{cor}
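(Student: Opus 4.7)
The plan is to assemble the results already established in Part A; no genuinely new argument is needed.

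First, by (\ref{A.1}) the set $\Sigma_D(G;A)$ depends only on the annihilator ideal $I = \text{Ann}_{DG}(A)$; hence $\Sigma_D(G;A) = \Sigma_D(G;R)$ and it suffices to prove the corollary with $R$ in place of $A$. Now equation (\ref{specunion}) --- itself a consequence of (\ref{complement}) together with Theorem \ref{local}, which discards as redundant the tropical varieties attached to non-centerless valuations on $D$ since these are local cones at $0$ of tropical varieties already appearing in the union --- gives
$$\Sigma_D(G;R)^c = \rho\Bigl(\bigcup_{\mathfrak{p} \in \text{Spec}(D)} \Delta^{v_{\mathfrak{p}}}(G;R)\Bigr) = \rho(\Delta^D(G;R)),$$
the second equality being the definition of the global tropical variety. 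This is the displayed equation of the corollary.

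Next, I would invoke Theorem \ref{finiteset} to produce a finite exceptional set $S \subseteq \text{Spec}(D)$ outside of which $\Delta^{v_{\mathfrak{p}}}(G;R) = \Delta^0(G;R)$. Consequently $\Delta^D(G;R)$ collapses to the finite union
$$\Delta^D(G;R) = \Delta^0(G;R) \cup \bigcup_{\mathfrak{p} \in S} \Delta^{v_{\mathfrak{p}}}(G;R).$$
By Theorem \ref{polyhedral1} each summand is a rational polyhedral subset of $\text{Hom}(G,\R)$; since the $\mathfrak{p}$-adic valuations $v_{\mathfrak{p}}$ take values in $\Z$ on $D$ (after the usual normalization), rationality is in fact over $\Q$. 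A finite union of rational polyhedral sets is again rational polyhedral.

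Finally, to transfer polyhedrality from $\text{Hom}(G,\R)$ to the sphere $\del \text{Hom}(G,\R)$ I would pass through the conic hull: for any rational polyhedron $P$, the cone $\R_{\geq 0} \cdot P$ is a rational polyhedral cone, and $\rho(P)$ is by definition the rational polyhedral subset of $\del \text{Hom}(G,\R)$ cut out by that cone. Applying this to $\Delta^D(G;R)$ completes the proof. There is essentially no obstacle here --- the argument is pure assembly --- and the only substantive observation is the standard fact that the conic hull of a rational polyhedron is a rational polyhedral cone, which is what permits polyhedrality to survive the radial projection.
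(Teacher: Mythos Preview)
Your proposal is correct and follows essentially the same assembly as the paper: reduce to $R$ via (\ref{A.1}), invoke (\ref{specunion}) for the displayed identity, use Theorem \ref{finiteset} to make the union finite, and apply Theorem \ref{polyhedral1} to each piece. The paper treats the corollary as an immediate consequence of the preceding paragraph and gives no further argument; your added remarks on rationality over $\Q$ and on passing to the sphere via the conic hull are details the paper leaves implicit.
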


\subsubsection{\bf Pure dimension and balanceability of $\Delta ^{v}$}

Here $D$ is an arbitrary domain endowed with a centerless valuation
$v:D\to {\R}_{\infty}$ and embedded in the domain $R=DG\slash I$. If
$k=\text{Frac}(D)$ denotes the field of fractions of $D$ we can extend
$v$ to $k\to {\R}_{\infty}$ and replace $R$ by $k\otimes _{D}R$. Thus
we may assume without loss of generality that $k\subseteq R$. We write
$\text{tr.deg}_{k}R$ for the transcendence degree of $R$ over $k$ (the
maximum number of algebraically independent elements of $R$ over $k$).

Balanceability is a local property that a polyhedral subset $\Delta \subseteq {\R}^n$ may or may not possess at a point $x\in \Delta $. We say that $\Delta $ is {\it balanceable} at $x\in \Delta $ if the convex hull of $\text{LC}_{x}(\Delta)$ is an affine subspace of ${\R}^n$; in other words, when the convex hull of the link $\text{\rm lk}_{\Delta }(x)$ is a subsphere of $\del {\R}^n$.

\begin{thm}\cite{BGr84}\label{dimension} For every valuation $v:k\to {\R}_{\infty}$ and every domain $R$ the polyhedral set $\Delta ^{v}(G;R)$ has the following properties:
\begin{enumerate}[\rm (i)]
\item $\Delta ^{v}(G;R)$ is of pure dimension equal to $\text{\rm tr.deg}_{k}R$;
\item $\Delta ^{v}(G;R)$ is balanceable at every point $x\in \Delta ^{v}(G;R)$.
\end{enumerate}
\end{thm}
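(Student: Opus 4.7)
The plan is to reduce to the field case. By the discussion in Subsection \ref{secA2}, after replacing $R$ by the field of fractions of one of the quotients $R/P_i$ and then by a field $K$ containing that domain, we may assume $R = K$ is a field extension of $k$ with $\text{tr.deg}_{k}K = d$. Every element of $\Delta^{v}(G;K)$ then arises as $\chi = w\kappa|_{G}$ for a real-valued valuation $w$ of $K$ extending $v$.

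For the upper bound in (i), I would invoke Abhyankar's inequality: for any such $w$, the rational rank of $w(K^{\times})/v(k^{\times})$ plus the residue transcendence degree is at most $d$. Since the values of $\chi = w\kappa|_{G}$ sit inside $w(K^{\times})$, the local dimension of $\Delta^{v}$ in $\text{Hom}(G,\R)$ near $\chi$ is bounded by the rational rank contributed by $\chi$, so no polyhedral face of $\Delta^{v}$ can have dimension exceeding $d$. For the matching lower bound, given $\chi_{0} \in \Delta^{v}$ realized by a valuation $w_{0}$, I would choose a transcendence basis $t_{1},\ldots,t_{d}$ of $K/k$ and independently perturb the real values $w_{0}(t_{1}),\ldots,w_{0}(t_{d})$ in a small neighborhood. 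Each such perturbation extends essentially uniquely along the algebraic closure of $k(t_{1},\ldots,t_{d})$ inside $K$, producing a $d$-parameter family of valuations. The resulting characters sweep out a $d$-dimensional polyhedral neighborhood of $\chi_{0}$ in $\Delta^{v}$, verifying pure dimensionality.

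For balanceability at $\chi \in \Delta^{v}$, I would pass to the associated graded ring $\text{gr}_{w}(R) = \bigoplus_{r \in \R}\{a \in R \mid w(a) \geq r\}/\{a \in R \mid w(a) > r\}$ of a valuation $w$ realizing $\chi$. The local cone $\text{LC}_{\chi}(\Delta^{v}(G;R))$ can be identified, via an analogue of Theorem \ref{local}, with the tropical variety at infinity of $\text{gr}_{w}(R)$, which has the same transcendence degree $d$ as $R$. Applying part (i) to this auxiliary ring shows $\text{LC}_{\chi}(\Delta^{v})$ is itself a cone of pure dimension $d$. Its $\R_{>0}$-invariance under rescaling, combined with the symmetry coming from the multiplicative action of $G$ on $\text{gr}_{w}(R)$, forces the linear span of $\text{LC}_{\chi}(\Delta^{v})$ to be a full $d$-dimensional affine subspace of $\text{Hom}(G,\R)$, which is precisely balanceability.

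The hard part will be the lower bound in (i): one must produce real-valued (not merely ordered-group-valued) extensions of $v$, and show that the $d$ perturbation parameters remain independent after restriction to the finitely generated abelian group $G$. The independence step is nontrivial because restriction can collapse value-group directions; \cite{BGr84} handles it by a careful choice of $t_{1},\ldots,t_{d}$ whose $G$-classes are controlled, together with an Archimedean truncation argument forcing the value group into $\R$ without losing the $d$ independent degrees of freedom contributed by the transcendence basis.
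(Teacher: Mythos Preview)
The paper does not prove this theorem; it cites \cite{BGr84} and, in the subsection ``Remark on the proofs'', outlines the method used there. That outline is quite different from yours: \cite{BGr84} proceeds by \emph{tomography by generic projections} --- one exhibits a dense family of subgroups $H=\text{gp}(Y,\alpha)\leq G$ with $Y$ a transcendence basis of $K$ over $k$ and $\alpha$ algebraic over $k(Y)$, computes each $\Delta^{v}(H;R)$ directly by a Newton-polygon argument (possible because $H$ is ``transcendence basis plus one''), checks that restriction $\text{Hom}(G,\R)\to\text{Hom}(H,\R)$ maps $\Delta^{v}(G;R)$ onto $\Delta^{v}(H;R)$, and then recovers $\Delta^{v}(G;R)$ as a finite intersection of preimages $\text{res}_{G,H}^{-1}(\Delta^{v}(H;R))$. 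Pure-dimensionality and balanceability are read off from these computable projections rather than from Abhyankar-type inequalities or graded-ring constructions.

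Your sketch is a plausible alternative route, but it has a genuine gap in part (ii). The claim that ``the symmetry coming from the multiplicative action of $G$ on $\text{gr}_{w}(R)$'' forces the span of $\text{LC}_{\chi}(\Delta^{v})$ to be a linear subspace is not an argument: $G$ acts on $\text{gr}_{w}(R)$ for any commutative $R$, yet balanceability can fail when $R$ is not a domain, so the group action alone cannot be what produces it. What is actually needed is that the local cone, being itself a $\Delta^{0}$ of a domain of the same transcendence degree, cannot sit in an open half-space --- and that is exactly the nontrivial content of (ii), so invoking it here is circular. Your upper bound in (i) via Abhyankar is reasonable once polyhedrality (Theorem~\ref{polyhedral1}) is in hand, since one can then pick a rational interior point of a top-dimensional face and read off a rational-rank contradiction; but as written you conflate the dimension of a face with the rational rank of a single valuation, and that step needs to be spelled out. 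The lower bound by perturbing values on a transcendence basis is the right intuition, and you correctly flag the real difficulty: controlling what happens after restriction to $G$. That is precisely the obstacle the tomography method of \cite{BGr84} is designed to circumvent.
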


As  ``pure dimension", ``balanced", and the ``local cone" constructions
carry over to finite unions, the assertions of Theorem \ref{dimension}
carry over, {\it mutatis mutandis}, to $\Delta ^{D}(G;R)$ when $D$
is a Dedekind domain. Moreover, if $D=k$ is a field then $\Delta
^{D}(G;R)=\Delta ^{0}(G;R)$ is a cone based at $0$, and in that situation
those properties are also preserved under radial projection. Hence we have

\begin{cor} At each of its points $x$, $\Sigma _{k}(G;R)^c$ is balanced \footnote{For a polyhedral subset $L$ of a sphere, the link $\text{lk}_{L}(x)$ of $x$ in $L$ and the property ``balanced at $x$" are to be interpreted in the tangent space 
at $x$.} and of pure dimension equal to $\text{\rm tr.deg}_{k}R-1$.
\end{cor}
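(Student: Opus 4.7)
The plan is to derive the corollary directly from Corollary \ref{radial} and Theorem \ref{dimension}, exploiting the special feature that in the field case the global tropical variety $\Delta^{D}(G;R)=\Delta^{0}(G;R)$ is a cone based at the origin, so the radial projection $\rho$ behaves like a linear ``quotient by scaling''.

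First I would unpack the cone property. When $D=k$ is a field the only centerless valuation on $k$ is the trivial one (valuation $0$), so Corollary \ref{radial} reduces to $\Sigma_{k}(G;R)^{c}=\rho(\Delta^{0}(G;R))$. Because replacing a valuation $w$ on $R$ by $rw$, for any $r>0$, produces another valuation extending $v=0$, the set $\Delta^{0}(G;R)\subseteq \text{Hom}(G,\mathbb{R})$ is stable under multiplication by positive scalars; i.e.\ it is a polyhedral cone with apex at $0$. Hence $\rho$, restricted to $\Delta^{0}(G;R)-\{0\}$, is the canonical map identifying a ${\mathbb R}_{>0}$-ray with its equivalence class on the sphere $\partial \text{Hom}(G,{\mathbb R})$.

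Next I would deduce the pure dimension statement. By Theorem \ref{dimension}(i), $\Delta^{0}(G;R)$ is a polyhedral cone of pure dimension $d:=\text{tr.deg}_{k}R$. Every maximal cell is a $d$-dimensional convex cone with apex $0$, so its image under $\rho$ is a $(d-1)$-dimensional polyhedral cell in the sphere, and these images cover $\rho(\Delta^{0}(G;R))$. Thus $\Sigma_{k}(G;R)^{c}$ is polyhedral of pure dimension $d-1$.

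Finally I would handle balanceability at a boundary point $[\chi]\in\Sigma_{k}(G;R)^{c}$. Pick a nonzero representative $\chi\in\Delta^{0}(G;R)$. Since the set is a cone, the local cone $\text{LC}_{\chi}(\Delta^{0}(G;R))$ contains the line $\mathbb{R}\chi$ (the ray through $0$ and the opposite ray from the cone structure along $\chi$), and the quotient of $\text{LC}_{\chi}(\Delta^{0}(G;R))$ by this line is naturally identified, via the differential of $\rho$, with the local cone of $\rho(\Delta^{0}(G;R))$ at $[\chi]$ in the tangent space to the sphere. Theorem \ref{dimension}(ii) asserts that the convex hull of $\text{LC}_{\chi}(\Delta^{0}(G;R))$ is a linear subspace of $\text{Hom}(G,{\mathbb R})$; quotienting by $\mathbb{R}\chi$ produces a linear subspace of the tangent space at $[\chi]$, which is exactly the condition ``balanced at $[\chi]$'' in the spherical sense stipulated in the footnote.

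The main obstacle, and the step I would need to write out carefully, is the compatibility between the two notions of ``local cone'' and ``balanced'' under the radial projection: one has to check that $d\rho_{\chi}$ sends $\text{LC}_{\chi}(\Delta^{0})$ onto the local cone of the image at $[\chi]$ (with kernel $\mathbb{R}\chi$) and that this commutes with taking convex hulls. Once that linear-algebra lemma is in hand, both conclusions fall out of Theorem \ref{dimension} with a uniform loss of one in dimension.
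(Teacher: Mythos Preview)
Your approach is essentially the same as the paper's, which simply observes that when $D=k$ is a field one has $\Delta^{D}(G;R)=\Delta^{0}(G;R)$, that this is a cone based at $0$, and that pure dimension and balanceability are preserved under radial projection of a cone. You have supplied the details the paper leaves implicit, and your treatment of the local-cone/link compatibility under $\rho$ is the right way to flesh this out.

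One small correction: your justification for $\Sigma_{k}(G;R)^{c}=\rho(\Delta^{0}(G;R))$ is misstated. It is not true that ``the only centerless valuation on $k$ is the trivial one'' --- every valuation on a field is centerless, and fields like $\mathbb{Q}$ carry plenty of nontrivial ones. The correct reason is that formula (\ref{union}) ranges only over valuations $v$ with $v(D)\geq 0$, and on a field this forces $v\equiv 0$ (since $v(a)+v(a^{-1})=0$); equivalently, $\mathrm{Spec}(k)$ consists of the zero ideal alone, so (\ref{specunion}) collapses to the single term $\Delta^{0}(G;R)$. With that fix the argument goes through.
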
 

It is obvious that the conical set $\Delta ^{0}(G;R)^{c}$ cannot be balanceable if it is contained in an open half space and is non-zero. Hence we find that when $R$ is a domain and $\Sigma _{k}(G;R)^{c}$ lies in an open hemisphere then  $\Sigma _{k}(G;R)^{c}$ is empty and $R$ is algebraic over $k$.

When $D$ is not a field, $\Sigma _{k}(G;R)^c$ is, in general, neither
of ``pure dimension" nor ``balanced" at each of its points. But the
failure of each of these properties is well understood (see Sections
8.6 and 8.7 of \cite{BGr84}), so similar applications are available.

\subsubsection{\bf Remark on the proofs}\label{proofs}

After the reduction to the case
where $R=DG\slash I$ is a domain containing the field of fractions
$k=\text{Frac}(D)$, the proofs of Theorems \ref{polyhedral1} through \ref{finiteset} given in
\cite{BGr84} can be outlined in a nutshell: Given  a valuation $v$
on $k$, and an irreducible polynomial $f(X)\in k[X]$, the set of
all slopes of the Newton polygon of $f(X)$ coincides with the set
of all values that an extension of $v$ can achieve on the roots of
$f(X)$. This old and well-understood fact makes it easy to work out
(and prove polyhedrality of) $\Delta^{v}(G;R)$ when $I$ is a principal
ideal. It works equally well in the somewhat more general case when $G$
is generated by a set $Y\cup \{\alpha \}$ where $Y$ is a transcendence
basis of $K=\text{Frac}(R)$ over $k$, and $\alpha $ is algebraic over
$k(Y)$.  
 
The naive idea of iterating the Newton polygon
argument and working out all possible values on the generators of the
group $G$ does yield a polyhedral set containing $\Delta ^{v}(G;R)$.
But in order to compute $\Delta ^{v}(G;R)$ itself one would have to
pin down which of the possible values on the generators can be achieved
simultaneously; i.e., one would have to impose the algebraic dependence
of the generators. The method of \cite{BGr84} to get around such difficult
questions, probably the main contribution of that paper, can be described
as ``Tomography by Generic Projections". The idea is to show that $G$
contains a ``dense" set of subgroups $H=\text{gp}(Y,\alpha )$, generated
by a transcendence basis $Y$ of $K$ together with one additional element
$\alpha $ which is algebraic over $k(Y)$. By the Newton polygon argument,
above, one can compute the sets $\Delta ^{v}(H;R)$ and observe that
they are polyhedral. Then it remains to observe that the restriction
map $\text{res}_{G,H}:\text{Hom}(G,{\R})\to \text{Hom}(H,{\R})$ yields
a surjection $\Delta ^{v}(G;R)\twoheadrightarrow \Delta ^{v}(H;R)$,
and that one can always find a finite number of such subgroups $H$ to
exhibit $\Delta ^{v}(G;R)$ as a finite intersection of polyhedral sets
$\text{res}_{G,H}^{-1}(\Delta ^{v}(H;R))$.

The tropical geometers' proof that their varieties are polyhedral is based on Gr\"{o}bner bases. That proof is independent of, and shorter than, the original one and readily provides algorithms for explicit computations. On the other hand, the Gr\"{o}bner basis algorithms are less compatible with the geometry of $\Delta ^{v}(G;R)$ than the original technique based on exhibiting $\Delta ^{v} (G;R)$ in terms of computable images under restriction maps. In fact, \cite{BGr84} establishes and applies the tomography by generic projections as a general method to reduce questions on tropical varieties to the principal ideal case.

\subsection{Connection with algebraic geometry}

\subsubsection {\bf From the tropical to the algebraic variety}

It is extremely rewarding to interpret the polyhedral sets $\Delta
^{v}(G;R)$ in the light of algebraic geometry. Here $k$ is a field endowed with a valuation $v:k\to {\R}_{\infty}$, 
and we consider the special case when the
group $G$ is free abelian of rank $n$ with a specified basis ${\mathbf
X}=\{X_{1},\dots,X_{n}\}$, so that the group algebra $kG$ is the
Laurent polynomial ring $k[{\mathbf X}]$. The field $k$ is embedded in its
algebraic closure $\bar k$ and we assume that $\bar k$ is endowed with a
non-trivial valuation ${\bar v}:{\bar k}\to {\R}_{\infty}$ extending $v$. In this situation
we consider, along with an ideal $I\leq kG$, its toric variety $V=V(I)\subseteq 
({\bar k}-{0})^n$.

For simplicity we assume throughout that $V$ is irreducible, i.e.
$I$ is a prime ideal. As above, $R=kG\slash I$ and $K=\text{Frac}(R)$. By Theorem \ref{dimension} we already know that $\Delta ^{v}(G;R)=\Delta ^{v}(G;K)$ (see Section \ref{secA2}) is of pure dimension equal to the dimension of $V$; but now we can mate the variety $V$ with the polyhedral
set $\Delta ^{v}(G;K)$ more intimately by means of the natural maps

\begin{equation}\label{A.7}
\tau _{\alpha} :V\to \text{Hom}(G,{\R})
\end{equation} 

\noindent which are defined for each additive character $\alpha :{\bar k}-\{0\}\to {\R}$ as follows: 
Given $z\in V$, $\tau_{\alpha}(z):G\to {\R}$ is the composite map
$$
\xymatrix{
&G\ar[r]^{\kappa |_{G}} &R\ar[r]^{\text{ev}_{z}} &{{\bar k}-\{0\}}\ar[r]^{\alpha } &{\R}\\
}
$$
\noindent where $\kappa :kG\thra R$ is the canonical projection, and
$\text{ev}_{z}:R\to {\bar k}$ is evaluation of the rational functions on
$V$ at $z\in V$. We note that when $\alpha = {\bar v}$ is a valuation
extending $v$ then ${\bar v}\circ\text{ev}_{z}:R\to {\R}_{\infty}$
is a valuation on $R$, and hence $\tau _{\bar v}(V)\subseteq \Delta
^{v}(G;K).$ The important result here is due to Einsiedler,
Kapranov and Lind \cite{EKL06} and asserts

\begin{thm}(\cite{EKL06})\label{equal} The image of $\tau _{\bar v}$ is dense in $\Delta ^{v}(G;K)$.
\end{thm}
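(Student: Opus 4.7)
The plan is to translate ``density'' into a concrete approximation statement: for each $\chi \in \Delta^v(G;K)$, each finite $T \subseteq G$, and each $\varepsilon > 0$, we must produce $z \in V$ with $|\bar v(\mathrm{ev}_z(\kappa(g))) - \chi(g)| < \varepsilon$ for every $g \in T$. By definition of $\Delta^v(G;K)$ there is a valuation $w$ on $K$ extending $v$ with $\chi = w\kappa|_G$, so the task is to approximate the abstract character $w\kappa|_G$ by a point-evaluation character $\bar v \circ \mathrm{ev}_z \circ \kappa|_G$.

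My first move is the ``tomography by generic projections'' reduction recalled in Section \ref{proofs}: choose a transcendence basis $Y \subset G$ of $K/k$ and an element $\alpha \in G$, so that $H := \mathrm{gp}(Y,\alpha)$ contains $T$ (after replacing $T$ by a finite $\Z$-multiple if the inclusion is only up to finite index) and $\alpha$ is algebraic over $k(Y)$. Then the problem for $\chi$ on $T$ is controlled by the problem for $\chi|_H$, which, since $H$ is generated by a transcendence basis together with one algebraic element, is a \emph{hypersurface} problem: it lives on the irreducible variety in $(\bar k^\ast)^{|Y|+1}$ cut out by the minimal polynomial $f(Y,\alpha) = \sum_{j,\mathbf m} c_{j,\mathbf m}\, Y^{\mathbf m}\alpha^j$ of $\alpha$ over $k(Y)$.

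For this hypersurface step, the condition $\chi|_H \in \Delta^v(H;K)$ translates via (\ref{A.1}) into the Newton-polygon statement that
\begin{equation*}
\min_{j,\mathbf m}\bigl\{v(c_{j,\mathbf m}) + \chi(\mathbf m\cdot Y) + j\,\chi(\alpha)\bigr\}
\end{equation*}
is attained for at least two distinct exponents $j$. Since $\bar v(\bar k^\ast)$ is dense in $\R$, pick $y \in (\bar k^\ast)^{|Y|}$ with $\bar v(y_i)$ close to $\chi(Y_i)$, in sufficiently generic position that no cancellation occurs among the leading $\bar v$-terms of the coefficients $c_j(y) \in \bar k$. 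The Newton polygon of $f(y,\alpha) \in \bar k[\alpha]$ then carries a segment of slope close to $-\chi(\alpha)$, so the classical Newton polygon theorem yields a root $\alpha_0 \in \bar k$ with $\bar v(\alpha_0)$ close to $\chi(\alpha)$. The pair $(y,\alpha_0)$ sits on the hypersurface, and since $V$ is finite (and generically \'etale) over that hypersurface, it lifts to a point $z \in V$ with essentially the same $\bar v$-profile on $H$, yielding the desired approximation on $T$.

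The main obstacle I expect is controlling the ``generic position'' of $y$ and, more seriously, ensuring that when the construction must be carried out simultaneously for several hypersurface subgroups $H_1,\dots,H_s$ (needed because no single $H$ of the form $\mathrm{gp}(Y,\alpha)$ may capture enough of $G$), the per-$H_i$ approximations can be realized by one and the same point $z \in V(\bar k)$. I would address this by combining Theorem \ref{dimension} --- pure-dimensionality of $\Delta^v(G;K)$ with dimension $\dim V$ --- with an open-mapping argument: as $y$ varies over the $\bar v$-fibre above $\chi|_Y$, the resulting points $(y,\alpha_0) \in V$ sweep out a Zariski-open piece of an irreducible component of $V$, whose image under $\tau_{\bar v}$ is therefore full-dimensional in $\Delta^v(G;K)$. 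Since $\overline{\tau_{\bar v}(V)}$ is a closed full-dimensional subset of the pure-dimensional polyhedral complex $\Delta^v(G;K)$ whose image under each restriction $\mathrm{res}_{G,H_i}$ is dense in $\Delta^v(H_i;K)$, a tomographic matching forces $\overline{\tau_{\bar v}(V)} = \Delta^v(G;K)$.
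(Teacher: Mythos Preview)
The paper does not give its own proof of this theorem: it is simply quoted from \cite{EKL06}, so there is no in-paper argument to compare against.

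As a self-standing sketch, your proposal is on the right track in its ingredients (reduction to a hypersurface subgroup $H=\mathrm{gp}(Y,\alpha)$, Newton polygon to realize the target slope by a root, density of $\bar v(\bar k^\ast)$ in $\R$), and you correctly identify the real difficulty: the hypersurface step only controls $\chi|_H$, not $\chi$ on all of $G$. Your proposed resolution, however, does not close that gap. The assertion that a closed full-dimensional subset $S=\overline{\tau_{\bar v}(V)}$ of the pure-dimensional complex $\Delta^v(G;K)$ which surjects onto each $\Delta^v(H_i;K)$ must equal $\Delta^v(G;K)$ is not a valid deduction: a union of maximal cells can be full-dimensional and still project onto every $\Delta^v(H_i;K)$ without exhausting the complex. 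The tomography of \cite{BGr84} exhibits $\Delta^v(G;K)$ as an \emph{intersection} of preimages $\mathrm{res}_{G,H_i}^{-1}(\Delta^v(H_i;K))$, and knowing that $S$ surjects onto each factor says nothing about $S$ filling that intersection. Likewise, the lifting step ``$V$ is finite over the hypersurface, so $(y,\alpha_0)$ lifts to $z\in V$ with essentially the same $\bar v$-profile on $H$'' hides a nontrivial issue: finite extensions can shift valuations, and in any case you need control on \emph{all} coordinates of $z$, not just those in $H$.

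What is actually needed (and what \cite{EKL06} supplies) is an argument that produces, for a given $\chi\in\Delta^v(G;K)$, a point $z\in V(\bar k)$ approximating $\chi$ on a full basis of $G$ simultaneously. One route is to work directly with the valuation $w$ on $K$ realizing $\chi$: pass to a model over a complete algebraically closed extension of $(k,v)$ and use that, over such a field, every $K$-point of the generic fibre specializes to a $\bar k$-point whose $\bar v$-coordinates equal the $w$-values. Alternatively, one can induct on $\dim V$, fibering $V$ over a lower-dimensional torus and applying the hypersurface step fibrewise; but then the induction hypothesis, not a tomographic surjectivity claim, is what glues the coordinates together.
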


In particular $\text{\rm cl}(\tau _{\bar v}(V))$ depends only on $v$ and not on the particular extension ${\bar v}$ to 
${\bar k}$. Note that composing $\tau _{\alpha }$ with the evaluation-at-${\mathbf X}$ isomorphism
$\text{Hom}(G,{\R})\cong {\R}^n$ recovers the componentwise evaluation $\alpha :V\to {\R}^n$, and identifies 
$\tau _{\alpha }(V)$ with $\alpha (V)\subseteq {\R}^n$. Einsiedler, Kapranov and Lind define the {\it tropicalization}
of $V$ to be ${\mathcal T}_{\bar v}(V):=\text{\rm cl}{\bar v}(V)\subseteq {\R}^n$, and they use  the identification
$\Delta ^{v}(G;K)={\mathcal T}_{\bar v}(V)$, referring to Theorems \ref{polyhedral1} and \ref{dimension} for its properties. By Theorem \ref{local}, $\text{\rm
LC}_{\infty}(\Delta ^{v}(G;K))=\Delta ^{0}(G;K)$ for every valuation $v$;
and by definition of the local cone, $\rho (LC_{\infty}(\Delta))=\del
\Delta $. Hence, by (\ref{union}) with $D=k$, we find

$$\Sigma _{k}(G;R)^{c}=\rho (\Delta ^{0}(G;R))=\rho (\text{\rm
LC}_{\infty}(\Delta ^{v}(G;R))) =\del \Delta ^{v}(G;R).$$ 
\noindent
In other words 
\begin{cor}\label{independent} $\del {\mathcal T}_{\bar
v}(V)$ is independent of $v$ and coincides with $\Sigma _{k}(G;K)^{c}$.
\end{cor}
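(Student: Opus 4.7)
The plan is to observe that the corollary is essentially a direct translation of the displayed equation immediately preceding its statement, once we combine it with Theorem \ref{equal}. First I would unwind the identification between $\tau_{\bar v}(V) \subseteq \text{Hom}(G,\R)$ and $\bar v(V) \subseteq \R^n$: composing $\tau_\alpha$ with the evaluation-at-$\mathbf{X}$ isomorphism $\text{Hom}(G,\R)\cong \R^n$ sends $\tau_{\bar v}(z)$ to $(\bar v(z_1),\dots,\bar v(z_n))$, which is exactly $\bar v(V)$ componentwise. Under this identification the tropicalization ${\mathcal T}_{\bar v}(V) = \text{cl}(\bar v(V))$ corresponds to $\text{cl}(\tau_{\bar v}(V))$.

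Next I would invoke Theorem \ref{equal}, which tells us $\tau_{\bar v}(V)$ is dense in $\Delta^v(G;K)$. Since $\Delta^v(G;K)$ is a closed (indeed polyhedral, by Theorem \ref{polyhedral1}) subset of $\text{Hom}(G,\R)$, we get $\text{cl}(\tau_{\bar v}(V)) = \Delta^v(G;K)$, hence under the evaluation identification
\begin{equation*}
{\mathcal T}_{\bar v}(V) = \Delta^v(G;K).
\end{equation*}
Passing to boundaries at infinity, $\partial {\mathcal T}_{\bar v}(V) = \partial \Delta^v(G;K)$.

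Now I would quote the chain of equalities displayed just before the corollary, namely
\begin{equation*}
\Sigma _{k}(G;R)^{c}=\rho (\Delta ^{0}(G;R))=\rho (\text{LC}_{\infty}(\Delta ^{v}(G;R))) = \partial \Delta ^{v}(G;R),
\end{equation*}
together with the identification $\Delta^v(G;R)=\Delta^v(G;K)$ (established in Subsection \ref{secA2} via the centerlessness reduction). Combining this with the equation from the previous paragraph yields
\begin{equation*}
\partial {\mathcal T}_{\bar v}(V) = \Sigma_k(G;R)^c = \Sigma_k(G;K)^c.
\end{equation*}
The right-hand side makes no reference to $\bar v$ (nor indeed to $v$ itself, since the valuation only enters the definition of $\Delta^v$ through the data used to construct it, and the complement of $\Sigma_k$ is defined purely in terms of the module $R$), so $\partial {\mathcal T}_{\bar v}(V)$ is independent of $v$, completing the proof.

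There is no real obstacle; the content is entirely in the inputs (Theorem \ref{equal} and the local-cone/boundary identity of Theorem \ref{local}), and the only care needed is bookkeeping the identification $\Delta^v(G;R) = \Delta^v(G;K) = {\mathcal T}_{\bar v}(V)$ so that the density statement of Einsiedler--Kapranov--Lind is compatible with the polyhedral geometry developed earlier.
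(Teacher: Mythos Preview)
Your proposal is correct and follows essentially the same approach as the paper: the corollary is stated as ``In other words'' following the displayed chain of equalities, and your argument unpacks exactly that identification, invoking Theorem \ref{equal} to equate $\mathcal{T}_{\bar v}(V)$ with the closed polyhedral set $\Delta^v(G;K)$ and then reading off the boundary via the displayed equation.
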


\subsubsection{\bf Amoebas}

Each of our valuations $v:k\to {\R}_{\infty}$ on the field $k$ can be transformed into a non-Archimedean norm on $k$ by putting 
$$|a|_{v}:=e^{-v(a)}, \;\;\;\;\;\;\; a\in k$$
which satisfies the strong (i.e. ultrametric) triangle inequality 

$$|a+b|_{v}\leq \text{max}(|a|_{v}, |b|_{v}) \;\;\;\;\;\;\;\;a,b\in k$$

Now $k$ and its algebraic closure $\bar k$ might carry other ---
Archimedean --- norms and it is often important to consider {\it all}
norms simultaneously.  Each norm $|.|:{\bar k}\to {\R}_{\geq 0}$ defines
an additive character $\text{\rm ln}|.|:{\bar k}-\{0\}\to {\R}$. Hence,
following \cite{EKL06}, we consider the set ${\mathcal A}(V):=-\text{\rm
cl}\tau _{\text{\rm ln}|.|}(V) \subseteq \text{Hom}(G,{\R})\cong {\R}^n$
for every norm $|.|$ on ${\bar k}$, and call this the {\it amoeba} of
$V$ with respect to $|.|$. If $|.|=|.|_v$ is given by a non-Archimedean
valuation $v$ we have ${\mathcal A}(V)=-\Delta ^{v}(G,K)$ by Theorem
\ref{equal}, and, by Theorem \ref{polyhedral1}, ${\mathcal A}(V)$
is polyhedral. As polyhedrality relies on the ultrametric triangle
inequality, the same behavior cannot be expected in the Archimedean
case. Indeed, when $V\subseteq {\C}^2$ is a generic complex algebraic
curve then ${\mathcal A} (V)\subseteq {\R}^2$ is a $2$-dimensional set
with differentiable frontier and finite limit set, the shape of which
explains the picturesque name that, along with the foundations of amoeba
theory, was introduced in \cite{GKZ94}.

In \cite{Be71} George Bergman used $\text{ln}|V|:=\{(\text{ln}|z_{1}|,\dots ,
\text{ln}|z_{n}|)\mid (z_{1},\dots , z_{n})\in V\}$ to define his {\it
logarithmic limit set} $V_{\infty}$, which can be interpreted as the
set of all cone-topology limit points\footnote{In keeping with the
picturesque language, one could say that Bergman's logarithmic limit
set of $V$ is the set of all directions of the tentacles of the amoeba
${\mathcal A}(V)$.} $V_{\infty}:=\Lambda (\text{ln}|V|)$.  As mentioned
in a footnote related to (\ref{complement}), Bergman conjectured that
$\Sigma _{k}(G;kG\slash I)^{c}$ is polyhedral, and he showed that if this
conjecture holds true it would imply $V_{\infty}=\Sigma _{k}(G;kG\slash
I)^{c}$. As Theorem \ref{polyhedral1} established Bergman's conjecture
we thus have the following supplement to Corollary \ref{independent}:

\begin{cor} $\Sigma _{k}(G;kG\slash I)^{c}$  coincides not only with the boundaries $\del
T_{\bar v}(V)$ of the tropicalizations of $V$ with respect to all valuations
$\bar v$ but also with the logarithmic limit set $V_{\infty}$.
\end{cor}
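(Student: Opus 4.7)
The plan is to deduce both assertions by assembling results already established in the Appendix, with essentially no new geometric input required. Concretely, I will treat the two equalities $\partial\mathcal{T}_{\bar v}(V)=\Sigma_{k}(G;kG/I)^{c}=V_{\infty}$ separately: the first is a direct consequence of Corollary \ref{independent} together with the observation of Section \ref{computation} that $\Sigma_{D}(G;A)$ depends only on the annihilator ideal of $A$; the second is Bergman's implication \cite{Be71} applied to the polyhedrality delivered by Theorem \ref{polyhedral1}.

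For the first equality, Corollary \ref{independent} asserts that $\partial\mathcal{T}_{\bar v}(V)$ is independent of the chosen extension $\bar v$ of $v$ and equals $\Sigma_{k}(G;K)^{c}$, where $K=\mathrm{Frac}(R)$ and $R=kG/I$. Since $I$ is prime, $I$ is the annihilator ideal of both $R$ and $K$, so by identity (\ref{A.1}) $\Sigma_{k}(G;R)=\Sigma_{k}(G;kG/I)=\Sigma_{k}(G;K)$. This yields the stated equality of the tropical boundary with $\Sigma_{k}(G;kG/I)^{c}$ for every valuation $\bar v$ on $\bar k$ extending a non-trivial valuation on $k$.

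For the second equality, I will invoke the implication proved in \cite{Be71}: if $\Sigma_{k}(G;kG/I)^{c}$ is a polyhedral subset of $\partial\mathrm{Hom}(G,\mathbb{R})$, then the Bergman logarithmic limit set $V_{\infty}=\Lambda(\ln|V|)$ coincides with $\Sigma_{k}(G;kG/I)^{c}$, regardless of the norm $|\cdot|$ (Archimedean or not) used to form $\ln|V|$. The required polyhedrality follows from Theorem \ref{polyhedral1}, which gives that each $\Delta^{v}(G;R)$ is polyhedral; passing to the boundary at infinity via Theorem \ref{local} and formula (\ref{union}) shows $\Sigma_{k}(G;R)^{c}=\rho(\Delta^{0}(G;R))=\partial\Delta^{v}(G;R)$ is polyhedral as required.

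Putting both steps together produces the chain $V_{\infty}=\Sigma_{k}(G;kG/I)^{c}=\partial\mathcal{T}_{\bar v}(V)$, proving the corollary. The only non-formal ingredient is the appeal to Bergman's implication; the potential obstacle is making sure the hypotheses of that implication (finite generation, relevance of the norm, and the algebraic-closure-versus-$k$ set-up) truly match our setting, but this is routine since Bergman works in precisely the Laurent-polynomial framework $kG=k[X_{1}^{\pm 1},\ldots,X_{n}^{\pm 1}]$ considered here.
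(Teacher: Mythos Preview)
Your proposal is correct and follows essentially the same route as the paper: the first equality is read off from Corollary \ref{independent} (you add the small clarification, via (\ref{A.1}), that $\Sigma_k(G;K)^c=\Sigma_k(G;kG/I)^c$), and the second equality is Bergman's conditional result from \cite{Be71} combined with the polyhedrality supplied by Theorem \ref{polyhedral1}. This is exactly the argument the paper sketches in the paragraph immediately preceding the corollary.
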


{\bf Remark:} Up to equivalence, the field $\Q$ of rational numbers admits, along
with all $p$-adic norms, only the usual absolute value. Hence, if $I$ is an ideal in the integral group ring ${\Z}G$, the adelic amoeba of $R={\Z}G\slash I$ considered in \cite{EKL06} is just the union of
the negative global tropical variety $-\Delta ^{\Z}(G;R)$
and the complex amoeba $\text{ln}|V(I)|$. By referring to \cite{ELMW01},
Einsiedler, Kapranov and Lind show that the adelic amoeba contains key
information on the dynamics of the action of $G$ on the Pontryagin dual
of a finitely generated $G$-module $A$ with $I=\text{Ann}_{G}(A)$:
the so-called ``non-expansive set of
this dynamical system'' is the radial projection of the adelic amoeba of
$R$. Hence, by (\ref{specunion}) the expansive set is the union of the negative
invariant $-\Sigma (G;A)^c$ and the complement $\rho (\text{ln}|V|)$
of the radial projection of the complex amoeba of the variety $V(I)$.


\def\cprime{$'$}
\providecommand{\bysame}{\leavevmode\hbox to3em{\hrulefill}\thinspace}
\providecommand{\MR}{\relax\ifhmode\unskip\space\fi MR }
\providecommand{\MRhref}[2]{%
  \href{http://www.ams.org/mathscinet-getitem?mr=#1}{#2}
}
\providecommand{\href}[2]{#2}

\end{document}